\documentclass[aos]{imsart}

\RequirePackage{amsthm,amsmath,amsfonts,amssymb}
\RequirePackage[square,numbers]{natbib}
\RequirePackage[colorlinks,citecolor=blue,urlcolor=blue]{hyperref}
\usepackage{xr-hyper}
\RequirePackage[utf8]{inputenc}
\RequirePackage[OT1]{fontenc}

\usepackage{algorithm}
\usepackage[noend]{algpseudocode}

\RequirePackage{graphicx}
\RequirePackage[usenames,dvipsnames]{xcolor}

\definecolor{mypink1}{rgb}{0.858, 0.188, 0.478}
\definecolor{mypink2}{RGB}{219, 48, 122}
\definecolor{mypink3}{cmyk}{0, 0.7808, 0.4429, 0.1412}
\definecolor{mygray}{gray}{0.6}

\makeindex             

\startlocaldefs



\newcommand{\F}{\mathscr{F}}

\newcommand{\f}{\rightarrow}
\def\1{\mathbf{1}}


\newcommand{\data}{y^{(n)}} 
\newcommand{\Data}{Y^{(n)}} 

\newcommand{\vir}[1]{``#1''}

\newcommand{\pg}[1]{\left\{#1\right\}}
\newcommand{\pq}[1]{\left[#1\right]}
\newcommand{\pt}[1]{\left(#1\right)}
\newcommand{\abs}[1]{\left\vert#1\right\vert}

\newcommand{\di}{\mathrm{d}}


\numberwithin{equation}{section}
\theoremstyle{plain}
\newtheorem{thm}{Theorem}[section]
\newtheorem{prop}{Proposition}[section]
\newtheorem{rmk}{Remark}[section]

\newtheorem{lem}{Lemma}[section]

\newtheorem{ass}{Assumption}[section]

\usepackage{mathrsfs} 
\usepackage{dsfont}
\usepackage{bm}

\usepackage[sectionbib]{bibunits}
\usepackage{nameref}
\usepackage[nameinlink,noabbrev,capitalise]{cleveref}

\externaldocument{draft_supplement}

\endlocaldefs

\numberwithin{equation}{section}

\begin{document}

\begin{frontmatter}
\title{Wasserstein convergence in Bayesian deconvolution models}
\runtitle{Wasserstein convergence in Bayesian deconvolution models}

\begin{aug}
\author[A]{\fnms{Judith} \snm{Rousseau}\ead[label=e1]{judith.rousseau@stats.ox.ac.uk}},
\author[B]{\fnms{Catia} \snm{Scricciolo}\ead[label=e2,mark]{catia.scricciolo@univr.it}}
\address[A]{University of Oxford, Department of Statistics, Oxford, UK
\printead{e1}}

\address[B]{Department of Economics, Università di Verona,  Verona, Italy
\printead{e2}}
\end{aug}

\begin{abstract}
We study the reknown deconvolution problem of recovering a distribution function from independent replicates (signal) additively contaminated with random errors (noise), whose distribution is known. 
We investigate whether a Bayesian nonparametric approach for modelling the latent distribution of the signal can yield inferences with asymptotic frequentist validity under the $L^1$-Wasserstein metric. 
When the error density is ordinary smooth, we develop two inversion inequalities relating   either the $L^1$  or the $L^1$-Wasserstein distance between two mixture densities (of the observations) to the $L^1$-Wasserstein distance between the corresponding distributions of the signal. This smoothing inequality improves on those in the literature.
We apply this general result to a Bayesian approach bayes on a Dirichlet process mixture of normal distributions as a prior on the mixing distribution (or distribution of the signal), with a Laplace or  Linnik noise.  In particular  we construct an \textit{adaptive} approximation of the density of the observations  by the convolution of a Laplace (or Linnik) with a well chosen  mixture of normal densities and show that the posterior concentrates at the minimax rate 
up to a logarithmic factor. The same prior law is shown to also adapt to the Sobolev regularity level of the mixing density, thus leading to a new Bayesian estimation method, relative to the Wasserstein distance, for distributions with smooth densities. 
\end{abstract}

\begin{keyword}
\kwd{Adaptation}
\kwd{ Deconvolution}
\kwd{ Dirichlet process mixtures}
\kwd{ Fourier transform}
\kwd{Inversion inequalities}
\kwd{Kantorovich metric}
\kwd{Nonparametric Bayesian inference}
\kwd{Mixtures of Laplace densities}
\kwd{Nonparametric density estimation}
\kwd{Posterior contraction rates}
\kwd{Transport distances}
\kwd{Wasserstein metrics}
\end{keyword}

\end{frontmatter}

\section{Introduction} \label{sec:intro}
In many applied problems of econometrics, biometrics, medical statistics, image reconstruction and signal deblurring
data are observed with some random error, see, \emph{e.g.}, \cite{Meister:2009} for relevant examples. One observes 
\begin{equation} \label{model}
Y_i = X_i + \varepsilon_i, \quad i\leq  n, \quad \varepsilon_i \stackrel{\mathrm{iid}}{\sim} \mu_\varepsilon, \quad X_i \stackrel{\mathrm{iid}}{\sim} \mu_X,
\end{equation}
where the noise $\varepsilon_i$ is independent of the corresponding signal of interest $X_i$, for $i=1,\,\ldots,\,n$. 
Estimation of the distribution of the $X_i$'s has received a lot of attention in the literature and is known as deconvolution problem, which is a prototypical linear inverse problem. In this paper, we consider Bayesian nonparametric estimation of $\mu_X$, when the distribution of $\varepsilon$ is known.
This is an extensively studied problem, from both the theoretical and methodological perspectives.
There is a rich literature on frequentist estimation of both the distribution $\mu_X$ of $X$ and its density $f_X$, with ground breaking papers of the early 90's based on Fourier inversion techniques to construct estimators of $f_X$, see \cite{caroll:hall:88, fan1993, diggle:hall:93, Delaigle2004BootstrapBS} or \cite{comte:etal:06} using penalized contrast estimators, just to mention a few. For instance, minimax rates for estimating $f_X$ have been studied in \cite{fan1993, butucea:tsybakov08}. 
Some recent interesting results have been obtained for the estimation of the distribution of $\mu_X$ in terms of the $L^1$-Wasserstein metric. State of the art results are provided in \cite{dedecker2015}, where minimax estimation rates are derived and a minimum distance estimator of $\mu_X$ is constructed, which achieves these rates.

While a wide range of frequentist estimators have been studied from a theoretical point of view, little is known about the theoretical properties of Bayesian nonparametric precedures. Contrariwise to frequentist kernel methods where the estimators are explicit, the posterior distribution of $\mu_X$ is not explicit, which 
renders the analysis much more difficult. A typical way to measure how well the posterior distribution recovers $\mu_X $ is to study posterior contraction rates, 
\emph{i.e.}, to determine rates $\epsilon_n = o(1)$ such that 
\begin{equation}
\Pi( d( \mu_X,\,\mu_{0X}) > \epsilon_n \mid Y^{(n)} ) =o_{\mathbb P}(1)
\end{equation}
under model \eqref{model}, where $\mu_{0X}$ is the true mixing distribution, $\Pi(\cdot \mid Y^{(n)})$ is the posterior distribution 
and $d(\cdot,\,\cdot)$ is a loss function on probability measures. 

If the approach to study posterior convergence rates developed in \cite{ghosal2000,ghosal:vdv:07} has proven to be highly successful for a wide range of models and prior distributions, it is not enough to derive sharp bounds on posterior convergence rates for $\mu_X$, since the latter is an inverse problem.
To derive posterior convergence rates for $\mu_X$, one typically first obtains posterior convergence rates in the direct problem, \emph{i.e.}, 
for $\mu_Y = \mu_\varepsilon\ast \mu_X$ or its density $f_Y$, and then combine it with an inversion inequality which translates an upper bound on a distance  between $\mu_Y $ and $\mu_Y'$ into an upper bound on $d(\mu_X,\,\mu_X')$. These two aspects, the direct posterior concentration rate and the inversion inequality, have an interest in themselves. This approach has been used successfully in other contexts  by \cite{Gine:Nickl:11,nickl:sohl} for instance. In this paper we consider the first approach which we believe sheds light on the relation between various interesting metrics in these models. 

In \cite{MR3706755} the inversion inequality strategy has been used to derive posterior $L^2$-norm convergence rates for the density $f_X$ from $L^2$-norm posterior convergence rates for the density $f_Y$. However, as mentioned in \cite{hall:lahiri08}, the distribution function of $\mu_X$ itself is also important in practice, so that considering weaker metrics, like Wasserstein metrics, is of interest. In this paper, we are interested in recovering $\mu_X$ in terms of the $L^1$-Wasserstein distance. 
Over the last decade there has been a growing interest in Wasserstein metrics in statistics and machine learning for both discrete and continuous distributions and we refer to \cite{weed2020minimax}, \cite{dedecker2015} and \cite{nguyen2013} for discussions on the use of Wasserstein metrics as loss functions for distributions.  

In the case of a Laplace noise, \cite{gao2016} derived posterior contraction rates in Wasserstein metrics under the assumption that $X$ has bounded support and \cite{scricciolo:18} extended and improved this result to the unbounded case. However, in both cases the authors obtain suboptimal rates. Recently, \cite{su2020nonparametric} proposed a Bayesian nonparametric method to recover $\mu_X$ under heteroscedastic noise and proved consistency of the method, but did not derive convergence rates. In a related problem, \cite{nguyen2013} has derived Wasserstein posterior contraction rates for the mixing distribution in smooth mixture models.  The author obtains the rate $n^{-1/4}$ in the $L^2$-Wasserstein metric for specific distributions $\mu_{0X}$, but this rate is suboptimal for general mixing distributions 
$\mu_{0X}$.

The construction of Bayesian minimax optimal methods for the estimation of $\mu_X$ in Wasserstein distances remains therefore an open issue. 
In this paper, we bridge the gap by studying Wasserstein posterior contraction rates under model \eqref{model}, 
when the noise $\varepsilon$ has ordinary smooth density $f_\varepsilon$ with regularity $\beta\geq 1/2$. 
Specifically, in Theorem \ref{theo:1}, we derive a general inversion inequality relating either the total variation or the $L^1$-Wasserstein distance 
between $\mu_Y$ and $\mu_{0Y}$ to the $L^1$-Wasserstein distance between $\mu_X$ and $\mu_{0X}$. 
This inversion inequality is sharper than the one obtained by \cite{gao2016} and \cite{scricciolo:18} and allows to derive 
nearly minimax $L^1$-Wasserstein posterior contraction rates in the case of Laplace noise, see Section \ref{sec:nonadaptive}. 
This inversion inequality is of interest in itself and can also be used to study frequentist estimators.

Another contribution of this paper is the construction of Bayesian adaptive estimation procedures for the density $f_Y$ of $\mu_Y$.
Posterior convergence rates for $f_Y$ have been widely studied in the Bayesian nonparametric mixture models literature, 
but mostly for Gaussian mixtures, see, \emph{e.g.}, \cite{ghosal2001} and \cite{scricciolo:12}. When the noise follows a Laplace distribution, \cite{gao2016} and \cite{scricciolo:18} have obtained the rate $n^{-3/8}$ (up to a $\log n $-term) in the Hellinger or $L^1$-distance for estimating $f_Y$ using a Dirichlet process mixture on $\mu_X$. As noted by \cite{gao2016}, this corresponds to the minimax estimation rate for densities belonging to Sobolev balls with smoothness $\beta =3/2$, which is the case for $\mu_Y$, as argued in Section \ref{subsec:Wrate}.
Under the additional assumption that $\mu_X$ has Lebesgue density $f_X$, we prove in Theorem \ref{thm:31} that this rate can be improved to $n^{-2/5}$ and, more generally, that the rate $n^{-\beta/(2\beta+1)} $ can be obtained when the noise follows a Linnik distribution with index $1<\beta\leq2$,
the Linnik distribution with $\beta =2$ being the Laplace distribution. We also study the case where $f_X$ is either H\"older or Sobolev $\alpha$-regular and obtain a Hellinger convergence rate for $f_Y$ of the order $O(n^{-(\alpha+\beta)/[2(\alpha+\beta)+1]})$ up to a $(\log n)$-term, see Theorem \ref{thm:4}. 
To obtain such results, we consider a Dirichlet process mixture of Gaussian densities as a prior on $f_X$. We believe that the approximation theory developed in Section \ref{subsec:adapt} to approximate the true density $f_{0Y}$ by $f_\varepsilon\ast f_X$, where $f_X$ is modelled as a mixture of Gaussian densities, is itself of interest. 

The paper is organized as follows. In Section \ref{sec:notation} we present the set-up and notation used throughout the paper. 
Section \ref{sec:general} contains the general posterior contraction rate theorem in terms of the $L^1$-Wasserstein 
distance relative to $\mu_X$ (Section \ref{subsec:Wrate}) together with an inversion inequality (Section \ref{subsec:inversion}). 
In Section \ref{sec:lap+exp} we apply the general theorem to the case where the noise has a Linnik distribution with index $1<\beta\leq2$ and 
the prior on $f_X$ is a Dirichlet process mixture of Gaussian densities. The proofs of Theorem \ref{thm:22} on posterior 
contraction rates for $L^1$-Wasserstein deconvolution and Theorem \ref{theo:1} on the inversion inequality are presented 
in Section \ref{sec:proofs}. Additional proofs are presented in the 
Supplement \cite{rousseau:scricciolo:supp} with lemmas, equations and sections referenced with a prefix S, 
to differentiate them from those of the main paper.

\section{Set-up and notation}\label{sec:notation}
We observe a sample  $Y^{(n)} = (Y_1,\,\dots,\, Y_n)$ from the model $Y_i = X_i + \varepsilon_i$ in \eqref{model},
where the random variables $X_i$ and $\varepsilon_i$ are independent, the noise $\varepsilon_i$ has known distribution 
$\mu_\varepsilon$ with Lebesgue density $f_\varepsilon$, which
is assumed to be ordinary smooth with parameter $\beta>0$, \emph{i.e.}, its Fourier transform $\hat f_{\varepsilon}$ verifies
\begin{equation}\label{eq:1}
d_0|t|^{-\beta}|\leq|\hat f_{\varepsilon}(t)|\leq d_1|t|^{-\beta}, \quad \mbox{as } t\rightarrow \infty,
\end{equation}
for constants $d_0,\,d_1>0$. Examples of ordinary smooth densities include the Laplace ($\beta=2$) distribution, 
all Linnik distributions with index $\beta \in (0,\,2]$ and the gamma distribution with shape parameter $\beta>0$.  

Let $\mathscr P$ stand for the set of all probability measures on $(\mathbb R,\,\mathcal B(\mathbb R))$ and $\mathscr P_0$ for
the subset of Lebesgue absolutely continuous distributions on $\mathbb R$. 
Denote by $\F$ the class of probability measures $\mu_Y=\mu_\varepsilon\ast \mu_X$ for $\mu_X\in\mathscr P_0$, with density $f_X$. 
Since $\mu_Y$ is Lebesgue absolutely continuous, we denote by $f_Y=f_\varepsilon\ast \mu_X$ its density.
For any $\mathscr P_1\subseteq \mathscr P_0$, let $\F(\mathscr P_1) $ stand for the set of probability measures $\mu_Y=\mu_\varepsilon\ast \mu_X$ with $\mu_X \in \mathscr P_1$. We consider a prior $\Pi$ on $\mathscr P$ and denote by $\Pi_n( \cdot | Y^{(n)})$ the resulting posterior distribution, with
$$ \Pi_n(B\mid Y^{(n)}) = \frac{ \int_B \prod_{i=1}^n f_Y(Y_i) \,\Pi(\di\mu_X) }
{\int_{\mathscr P}\prod_{i=1}^n f_Y(Y_i)\,\Pi(\di\mu_X)}, \quad B\in\mathcal B(\mathbb R).$$
Our aim is to assess the posterior concentration rate in $L^1$-Wasserstein distance for $\mu_X$, namely, to find a sequence
$\epsilon_n = o(1)$ such that, if $ Y^{(n)}$ is an $n$-sample from model \eqref{model} with true mixing distribution $\mu_{0X}$, then
$$ 
\Pi_n(W_1(\mu_X, \,\mu_{0X})\leq\epsilon_n \mid Y^{(n)})  \rightarrow 1
$$ 
in $P_{0Y}^n$-probability, with the $L^1$-Wasserstein distance $W_1(\mu_X, \,\mu_{0X})$ defined as 
$$ W_1(\mu_X,\,\mu_{0X}):=\inf_{\mu\text{-couplings}} \int |X-X'|\,\di\mu(X,X') = \int_{\mathbb R}  | F_X(x) - F_{0X}(x)|\,\di x,$$
where $\inf_{\mu\text{-couplings}}$ denotes the infimum over all couplings of $(X,\,X')$ having
$\mu_X $ and $\mu_{0X}$ as marginal distributions, see, \emph{e.g.}, \cite{villani2009}. 
The last identity, in which $F_X$ and $F_{0X}$ denote the distribution functions of $\mu_X$ and $\mu_{0X}$, respectively,
is valid only in dimension one.

We now introduce some notation that will be used throughout the paper. 
For functions $f,\,g\in L^1(\mathbb R)$, let $(f\ast g)(\cdot)=\int_{\mathbb R} f(\cdot-u)g(u)\,\di u$
be the convolution of $f$ and $g$. 
We denote by $\phi (x) =(2\pi)^{-1/2} e^{-x^2/2}$ the density of a standard Gaussian random variable and 
by $\phi_\sigma(x) = \phi(x/\sigma)/\sigma$ its rescaled version. When it exists, $f^{(k)}$ denotes the $k$th derivative of $f$, 
for any integer $k \geq 0$, with $f^{(0)}=f$.

Let $d_\mathrm{H}(f_1,\,f_2):=\| \sqrt{f_1} - \sqrt{f_2} \|_2$ be the Hellinger distance between densities $f_1$ and $f_2$, 
where $\|f\|_r$ is the $L^r$-norm of $f$, for $r\geq1$.
For probability measures $P_{0Y}$ and $P_Y$, let
$\mathrm{KL}(P_{0Y};\,P_Y):=\mathbb E_{0Y}[\log(f_{0Y}/f_Y)(Y)]$ be the Kullback-Leibler divergence 
of $P_Y$ from $P_{0Y}$ and, for $\epsilon>0$, let
$$B_{\mathrm{KL}}(P_{0Y};\,\epsilon^2)=
\pg{P_Y\in\mathscr P:\, \mathrm{KL}(P_{0Y};\,P_Y)\leq\epsilon^2,\,\,\,
\mathbb E_{0Y}\pt{\log\frac{f_{0Y}}{f_Y}}^2\leq\epsilon^2}$$
be the $\epsilon$-Kullback-Leibler type neighbourhood of $P_{0Y}$. 

Let $C_b(S)$ be the set of bounded, continuous real-valued functions on $S\subseteq \mathbb R$.
For any $\alpha>0$, let $\mathscr F_\alpha^{\mathcal S}=\{f:\,f\geq0,\,\|f\|_1=1 \mbox{ and }\int_{\mathbb R}|t|^{2\alpha}|\hat f(t)|^2\,\di t<\infty\}$ be the Sobolev space of densities of order $\alpha$ and let $\mathscr C_{\alpha}(L)$ be the H\"older ball with radius $L>0$, 
\emph{i.e.}, the set of functions  $f$ on $\mathbb R$ that are $\ell:=\lfloor \alpha\rfloor -1$ times continuously differentiable and such that the $\ell$th derivative 
satisfies $|f^{(\ell)}(x+\delta)-f^{(\ell)}(x)|\leq L|\delta|^{\alpha-\ell}$, for every $\delta,\,x\in\mathbb R$. In the above notation,
let $\lfloor x \rfloor=\max\, \{k\in\mathbb{Z} : k<x\}$ be the lower integer part of $x$. Similarly, we write
$\lceil x \rceil=\max\, \{k\in\mathbb{Z} : k>x\}$ for the upper integer part,
$[x]=\max\, \{k\in\mathbb{Z} : k\leq x\}$ for the integer part and $\{x\}=x-[x]$ for its fractional part when $x\in \mathbb{R}^+=\left\{x\in \mathbb {R}:\, x\geq 0\right\}$.  

For $f\in L^1(\mathbb R)$, let $\hat f (t):= \int_{\mathbb R} e^{\imath t x} f(x)\,\di x$, $t \in \mathbb R$, 
be its Fourier transform. For any function $f$ for which $\int_{\mathbb R}|t|^{\alpha}|\hat f(t)|\,\di t<\infty$, with $\alpha\geq 0$, define the 
$\alpha$th fractional derivative of $f$ as $D^{\alpha}\hspace*{-1pt}f(x):={(2\pi)}^{-1}\int_{\mathbb{R}}\exp{(-\imath t x)}(-\imath t)^\alpha\hat f(t)\,\di t$.
For $\alpha=0$, the convention $D^0f \equiv f$ holds.

For $\epsilon>0$, let $D(\epsilon,\,B,\,d)$ be the $\epsilon$-packing number of a set $B$ with 
metric $d$, that is, the maximal number of points in $B$ such that the $d$-distance between every pair is at least $\epsilon$,
where $d$ can be either the Hellinger or the $L^1$-metric.

We write  $a\vee b=\max\{a,\,b\}$,  $a\wedge b=\min\{a,\,b\}$ and
 $a_+=a\vee 0$.
Also  $a_n\lesssim b_n$  (resp. $a_n\gtrsim b_n$) means that  $a_n\leq Cb_n$ (resp. $a_n\geq Cb_n$) for some  $C>0$ that is universal or depends only on $P_{0Y}$ 
and  $a_n\asymp b_n$ means that both $a_n \lesssim b_n$ and $b_n \lesssim a_n$ hold.


\section{Posterior contraction rates for $L^1$-Wasserstein deconvolution}\label{sec:general}
In this section we present a general theorem on $L^1$-Wasserstein contraction rates for the posterior measure on the 
mixing distribution, which is based on properties of the prior law and the true data generating process. A key tool of the proof is an inversion inequality relating the $L^1$-Wasserstein distance $W_1(\mu_X,\,\mu_{0X})$ between the mixing distributions with the $L^1$-norm distance $\|f_Y - f_{0Y}\|_1$ between the corresponding mixed densities. The inequality is also of interest in itself and is given in Section \ref{subsec:inversion}.

\subsection{A general result on $L^1$-Wasserstein posterior contraction rates}\label{subsec:Wrate}
In order to obtain $L^1$-Wasserstein posterior contraction rates for the latent distribution $\mu_X$, 
we make assumptions on the \vir{true} mixing distribution $\mu_{0X}$ and the error distribution $\mu_\varepsilon$.  
If $\mu_\varepsilon$ has Lebesgue density $f_\varepsilon$, then
its characteristic function coincides with the Fourier transform of $f_\varepsilon$, denoted by $\hat f_\varepsilon$. 
If $|\hat f_{\varepsilon}(t)|\neq 0$, $t\in\mathbb R$, then the reciprocal of $\hat f_{\varepsilon}$,
\begin{equation}\label{eq:re}
r_\varepsilon(t):=\frac{1}{\hat f_\varepsilon(t)}, \quad t\in\mathbb{R},
\end{equation}
is well defined. For an $l$-times differentiable Fourier transform
$\hat f_\varepsilon$, with
$l\in\mathbb \{0\}\cup\mathbb N$, the $l$th derivative of $r_\varepsilon$ is denoted by $r^{(l)}_\varepsilon$, 
with $r_\varepsilon^{(0)}\equiv r_\varepsilon$.

\begin{ass}\label{ass:smoothXXX}
\emph{The probability measure $\mu_{0X}\in\mathscr P_0$ 
has finite first moment $\mathbb E_{0X}[|X|]<\infty$ and 
possesses Lebesgue density $f_{0X}$ verifying either one of the following conditions:
\begin{itemize}
\item[(i)]
there exist $\alpha>0$ and $L_0\in L^1(\mathbb R)$ such that the derivative $f^{(\ell)}_{0X}$ of order 
$\ell=\lfloor \alpha\rfloor$ exists and satisfies
\begin{equation}\label{eq:holder}
|f^{(\ell)}_{0X}(x+\delta)-f^{(\ell)}_{0X}(x)|\leq L_0(x)|\delta|^{\alpha-\ell}, \quad\mbox{for every }\delta,\,x\in\mathbb R,
\end{equation}
\end{itemize}
or 
\begin{itemize}
\item[(ii)]
there exists $\alpha>0$ such that
\begin{equation}\label{ass:smoothsob}
\int_{\mathbb R}|t|^\alpha|\hat f_{0X}(t)|\,\di t<\infty, \quad 
D^{\alpha}\hspace*{-1pt}f_{0X}\in L^1(\mathbb R).
\end{equation}
\end{itemize}
}
\end{ass}


\begin{ass}\label{ass:identifiability+error}
\emph{The error distribution $\mu_\varepsilon\in\mathscr P_0$ has finite first moment $\mathbb E[|\varepsilon|]<\infty$ 
and possesses Lebesgue density $f_\varepsilon$ with Fourier transform 
$|\hat f_{\varepsilon}(t)|\neq 0$, $t\in\mathbb{R}$. 
Furthermore, there exists $\beta>0$ such that, for $l=0,\,1$,
\begin{equation}\label{eq:deriv}
|r_\varepsilon^{(l)}(t)|\lesssim (1+|t|)^{\beta-l},\quad t\in\mathbb R.
\end{equation}
}
\end{ass}

\medskip
The discussion of Assumptions \ref{ass:smoothXXX} and \ref{ass:identifiability+error} is postponed to
Section \ref{subsec:inversion}.
To state the main theorem, whose proof is reported in Section \ref{sec:proofs}, we need to introduce some definitions. Depending on whether (i) or (ii) of Assumption \ref{ass:smoothXXX} holds true, we consider a kernel of order $(\lfloor\alpha\rfloor+1)$ or a supersmooth kernel, respectively. The kernels we consider are functions $K\in L^1(\mathbb R)\cap L^2(\mathbb R)$ with compactly supported Fourier transforms. More precisely,
\begin{itemize} 
\item[(a)] in the case of \emph{kernels of order $\ell$}, see \emph{e.g.}, \cite{Meister:2009}, pp. 38-39, used under (i) of Assumption \ref{ass:smoothXXX}, 
we have $\int_{\mathbb R} K(z)\,\di z=1$, while $\int_{\mathbb R}z^j K(z)\,\di z= 0$ for $j=1,\,\ldots,\,\ell$, with $\hat K$ supported on $[-1,\,1]$;
\item[(b)] in the case of supersmooth kernels, used under (ii) of Assumption \ref{ass:smoothXXX}, we have $K$ symmetric satisfying $\int_{\mathbb R}|z||K(z)|\,\di z<\infty$, with $\hat K$ supported on $[-2,\,2]$, while $\hat K\equiv 1$ on $[-1,\,1]$.
\end{itemize}
In case (b), a key property is that there exists $A:=1+\|K\|_1<\infty$ such that
\begin{equation*}
\sup_{|t|\neq0}\frac{|1-\hat K(t)|}{|t|^\alpha}\leq A, \quad\mbox{for all }\alpha>0.
\end{equation*}
We use the notation $K_h(\cdot):=(1/h)K(\cdot/h)$ for the rescaled kernel and
$b_{F_X}:= F_X\ast K_h - F_X$ for the \vir{bias} of the distribution function $F_X$
of a probability measure $\mu_X$, with the proviso that, for $\alpha>0$, the kernel $K$ is either of order
$(\lfloor\alpha\rfloor+1)$ or supersmooth, depending on which hypothesis between
(i) or (ii) holds true.

\begin{thm}\label{thm:22}
Let $\Pi_n$ be a prior distribution on $\mathscr P$. Let $\mu_{0X}\in\mathscr P$ have finite first moment $\mathbb E_{0X}[|X|]<\infty$ and 
$\mu_\varepsilon$ satisfy Assumption \ref{ass:identifiability+error} for $\beta>0$. 
Suppose that, for a positive sequence
$\tilde{\epsilon}_n\f0$, with
$n\tilde{\epsilon}^2_n\f\infty$,
constants $c_1,\,c_2,\,c_3,\,c_4>0$ and sets $\mathscr P_n\subseteq \mathscr P$, we have
\begin{equation}\label{con1}
\begin{split}
\log D(\tilde{\epsilon}_n,\,\F(\mathscr P_n),\,d) &\leq c_1n\tilde{\epsilon}^2_n,\\
 \Pi_n(\mathscr P\setminus \mathscr P_n )&\leq c_3\exp{(-(c_2+4)n\tilde{\epsilon}^2_n)},\\ 
\Pi_n(B_{\mathrm{KL}}(P_{0Y},\,\tilde\epsilon_n^2))&\geq c_4\exp{(-c_2n\tilde{\epsilon}^2_n)},
\end{split}
\end{equation}
with
\begin{equation}\label{eq:mfin}
\mathbb E_{\mu_X}[|X|]<\infty\mbox{ for $\mu_X \in \mathscr P_n$.}
\end{equation} 
Then, for
$\epsilon_n:=(\tilde\epsilon_n\log n)^{1/(\beta\vee1)}$ and a sufficiently large constant $\bar K$,
$$\mathbb E_{0Y}^n[\Pi_n(\mu_X:\,W_1(\mu_X,\,\mu_{0X})>\bar K\epsilon_n\mid Y^{(n)})]\rightarrow0.$$
If, in addition, $\mu_{0X}$ satisfies Assumption \ref{ass:smoothXXX} for $\alpha>0$ and
there exist constants $C_1,\,\bar h>0$ such that, for every $\mu_X\in\mathscr P_n$, 
\begin{equation}\label{eq:ass1}
\|b_{F_X}\|_1\leq C_1 h^{\alpha+1}\mbox{ for all $h\leq \bar h$,}
\end{equation}
then, for $\epsilon_{n,\alpha}:=(\tilde\epsilon_n\log n)^{(\alpha+1)/[\alpha+(\beta\vee1)]}$ and 
a constant $K_\alpha$ large enough,
\begin{equation}\label{eq:convsmooth}
\mathbb E_{0Y}^n[\Pi_n(\mu_X:\,W_1(\mu_X,\,\mu_{0X})>K_\alpha\epsilon_{n,\alpha}\mid Y^{(n)})]\rightarrow0.
\end{equation}
\end{thm}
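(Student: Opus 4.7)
The proof naturally splits into two stages: a \emph{direct} posterior contraction stage for $f_Y$ in a strong distance, and an \emph{inversion} stage that transfers this to $W_1$ contraction for $\mu_X$ via Theorem \ref{theo:1}.

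\textbf{Stage 1 (direct contraction for $f_Y$).} The three conditions in \eqref{con1} are exactly the hypotheses of the general posterior contraction rate theorem of Ghosal, Ghosh and van der Vaart, applied to the pushforward prior on $\mathscr F$: bounded metric entropy of the sieve $\F(\mathscr P_n)$, exponentially small prior mass on $\mathscr P \setminus \mathscr P_n$, and a lower bound on the prior mass of $B_{\mathrm{KL}}(P_{0Y},\tilde\epsilon_n^2)$. Standard testing against $L^1$-balls (or Hellinger balls) yields
\[
\mathbb E_{0Y}^n\Pi_n\pt{\mu_X\in\mathscr P_n:\,\|f_Y-f_{0Y}\|_1>M\tilde\epsilon_n\mid Y^{(n)}}\to0
\]
for some $M>0$. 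The exponential sieve-mass bound allows restricting to $\mu_X\in\mathscr P_n$ throughout the remainder, so that \eqref{eq:mfin} is available.

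\textbf{Stage 2 (inversion).} On the event that $\mu_X\in\mathscr P_n$ and $\|f_Y-f_{0Y}\|_1\le M\tilde\epsilon_n$, apply Theorem \ref{theo:1} with an appropriately chosen bandwidth $h>0$. The inversion inequality delivers a bound of the schematic form
\[
W_1(\mu_X,\mu_{0X})\ \lesssim\ \|b_{F_X}\|_1+\|b_{F_{0X}}\|_1+h^{-(\beta\vee1)+1}(\log(1/h))^{q}\,\|f_Y-f_{0Y}\|_1,
\]
where the exponent $(\beta\vee1)-1$ reflects the ordinary-smooth order $\beta$ in Assumption \ref{ass:identifiability+error} via \eqref{eq:deriv} and the case $\beta<1$ is handled by the $\vee 1$. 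The logarithm comes from the truncation/tail step needed because $\mu_X$ and $\mu_{0X}$ only have finite first moment rather than compact support (this is where \eqref{eq:mfin} enters crucially).

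\textbf{Stage 3 (bandwidth optimization and rates).} In the general case (no smoothness of $\mu_{0X}$), bound $\|b_{F_X}\|_1\lesssim h$ and $\|b_{F_{0X}}\|_1\lesssim h$ using only the finite first moments (the kernel $K$ has a finite first absolute moment). Balancing $h$ against $h^{-(\beta\vee1)+1}\tilde\epsilon_n(\log n)^{q}$ produces the optimal choice $h\asymp(\tilde\epsilon_n\log n)^{1/(\beta\vee1)}$ and hence the claimed rate $\epsilon_n$. Under Assumption \ref{ass:smoothXXX} together with \eqref{eq:ass1}, both biases are $O(h^{\alpha+1})$ uniformly on $\mathscr P_n$ (the matching bound for $F_{0X}$ is obtained from Assumption \ref{ass:smoothXXX} via properties of the kernel used in cases (a)/(b)). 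Balancing $h^{\alpha+1}\asymp h^{-(\beta\vee1)+1}\tilde\epsilon_n(\log n)^{q}$ yields $h\asymp(\tilde\epsilon_n\log n)^{1/[\alpha+(\beta\vee1)]}$ and therefore the sharper rate $\epsilon_{n,\alpha}$ in \eqref{eq:convsmooth}.

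\textbf{Main obstacle.} The delicate point is not Stage 1, which is routine once the entropy/prior-mass conditions are assumed, but the clean uniform control of the bias $\|b_{F_X}\|_1$ and of the tail contribution in the inversion step over the sieve $\mathscr P_n$. Because $\mu_X$ is allowed unbounded support, Theorem \ref{theo:1} must be invoked through a truncation that produces the $(\log n)$ factor; one has to verify that the truncation level can be taken of order $\log n$ while still controlling the remainder uniformly in $\mu_X\in\mathscr P_n$ using \eqref{eq:mfin}, and that \eqref{eq:ass1} yields the stated bias bound for the specific order-$(\lfloor\alpha\rfloor+1)$ or supersmooth kernel compatible with cases (i) and (ii) of Assumption \ref{ass:smoothXXX}.
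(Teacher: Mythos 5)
The overall two-stage architecture you outline (direct contraction for $f_Y$, then inversion to $W_1(\mu_X,\mu_{0X})$) is the paper's, and Stages~1 and~3 are essentially right. But your Stage~2 misstates what Theorem~\ref{theo:1} actually delivers, and that misstatement hides the one nonroutine step in the proof.

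The inversion inequality in Theorem~\ref{theo:1} gives, in its $d$-version,
\[
W_1(\mu_X,\mu_{0X})\ \lesssim\ h^{\alpha+1}\;+\;W_1(\mu_Y,\mu_{0Y})\;+\;h^{-(\beta-1)_+}|\log h|\, d(f_Y,f_{0Y}),
\]
and the standalone $W_1(\mu_Y,\mu_{0Y})$ term (coming from the low-frequency piece $T_1=\|K_{1,h}\ast(F_Y-F_{0Y})\|_1\lesssim W_1(\mu_Y,\mu_{0Y})$) does \emph{not} disappear and is \emph{not} dominated by $\|f_Y-f_{0Y}\|_1$: in the unbounded-support setting $W_1$ can be far larger than total variation. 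Your schematic bound drops this term entirely. As a consequence you never address how to show that, under the posterior, $W_1(\mu_Y,\mu_{0Y})=O(\tilde\epsilon_n)$ with $P_{0Y}^n$-probability tending to one. This is precisely what the paper establishes separately in Lemma~\ref{lem:1} of the Supplement: from the Kullback--Leibler small-ball condition (the third line of \eqref{con1}) alone, a testing argument built on a DKW-type concentration inequality for $W_1$ of the empirical measure yields $\mathbb E_{0Y}^n[\Pi_n(\mathscr S_n^c\mid Y^{(n)})]\to0$ for $\mathscr S_n=\{\mu_X:\,W_1(\mu_Y,\mu_{0Y})\le M\tilde\epsilon_n\}$. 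Only after intersecting with $\mathscr S_n$ does the paper optimize over $h$ and obtain the rates $\epsilon_n$ and $\epsilon_{n,\alpha}$ (following the $\omega_n$ device of Knapik--Salomond). Without this ingredient the proof does not close.

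A second, more minor inaccuracy: you attribute the $\log$ factor in the inversion bound to a truncation forced by the mere finiteness of first moments. In fact the logarithm arises in Lemmas~\ref{lem:K2}/\ref{lem:F2} from the Fourier-domain decomposition of $K_{2,h}$ (the $\int_{h<|z|\le 1}|z|^{-1}\,\di z\asymp|\log h|$ piece); the first-moment assumption on $\mu_X,\mu_{0X}$ and condition \eqref{eq:mfin} are used only to ensure the relevant Wasserstein distances are finite and that the smoothing step is legitimate, not to generate the $\log$. The uniformity you flag as the "main obstacle" (control of $\|b_{F_X}\|_1$ over $\mathscr P_n$) is exactly what hypothesis \eqref{eq:ass1} and Lemma~\ref{lem:biasmixgaus} supply, and is the easy part; the hard part is the separate Wasserstein control of the direct problem that you omitted.
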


\medskip

Theorem \ref{thm:22} provides sufficient conditions on the prior law and the data generating process 
so that the posterior measure asymptotically concentrates on $L^1$-Wasserstein balls centred at the sampling distribution.
Some remarks and comments on two main issues, namely, (i) relationship between rates in the direct and inverse problems,
(ii) rate optimality, are in order. Concerning the first issue,
Theorem \ref{thm:22} connects to existing results that give sufficient conditions
for assessing posterior convergence rates in the direct density estimation problem.
The conditions in \eqref{con1}, in fact, imply that, for sufficiently large constant $\bar M$, 
$$\mathbb E_{0Y}^n[\Pi_n(\mu_X:\,d(f_{Y},\,f_{0Y})> \bar M\tilde \epsilon_n\mid Y^{(n)})]=o(1),$$
see Theorem 2.1, p. 503, in \cite{ghosal2000}, which states that the posterior concentration rate in Hellinger or 
$L^1$-neighbourhoods of $f_{0Y}$ is $\tilde\epsilon_n$. Alternative conditions for assessing posterior contraction rates
in $L^r$-metrics, $1\leq r\leq \infty$, are given in \cite{Gine:Nickl:11}, see Theorems 2 and 3, pp. 2891--2892. 
A remarkable feature of Theorem \ref{thm:22} is the fact that, 
in order to obtain $L^1$-Wasserstein posterior convergence rates for $\mu_X$, 
which is an involved mildly ill-posed inverse problem, it is enough to derive 
posterior contraction rates in Hellinger or $L^1$-metric in the direct problem, 
which is more gestible. Granted Assumption \ref{ass:identifiability+error}, in fact,
the essential conditions to verify are those listed in \eqref{con1}, which are sufficient for the posterior 
law to contract at rate $\tilde\epsilon_n$ around $f_{0Y}$. This simplification is due to the inversion inequality of 
Theorem \ref{theo:1}, which holds true under Assumption \ref{ass:identifiability+error} only,
when no smoothness condition is imposed on $\mu_{0X}$, and jointly with
condition \eqref{eq:ass1}, when the regularity Assumption \ref{ass:smoothXXX} on $\mu_{0X}$ is in force.

Application of Theorem \ref{thm:22} to specific models gives further insight into this aspect. 
In Section \ref{sec:lap+exp} we consider a Dirichlet process mixture-of-Linnik-normals prior, 
with Linnik error distribution of index $1<\beta\leq2$, and we find the rate $n^{-1/(2\beta+1)}(\log n)^\nu$ 
when the latent distribution $\mu_{0X}$ is only known to have a density 
$f_{0X}$ and the rate $n^{-(\alpha +1)/[2(\alpha+\beta)+1]}(\log n)^{\tau}$ when a Sobolev regularity condition on
$f_{0X}$ holds true. These results are expected to be valid in greater generality. 
If, in fact, $|\hat f_\varepsilon(t)| \sim (1 + |t|)^{-\beta}$, as $|t|\rightarrow\infty$, 
and $f_{0X} \in  \mathscr F_\alpha^{\mathcal S}$, with $\alpha>0$, 
then $f_{0Y}\in\mathscr F^{\mathcal S}_{\alpha+\beta}$ and the minimax rate in Hellinger or $L^1$-metric for estimating 
densities with Sobolev regularity $(\alpha+\beta)$ is $n^{-(\alpha+\beta)/[2( \alpha+\beta)+1]}$.
This would lead to an $L^1$-Wasserstein posterior convergence rate 
for $\mu_X$ of the order $O(n^{-(\alpha +1)/[2(\alpha+\beta)+1]})$ (up to a $\log n$ term) when $\beta\ge1$,
which reduces to $O(n^{-1/(2\beta+1)})$ when $\mu_{0X}$ is only known to have a density. 
The latter upper bound matches the lower bound on the convergence rate for the $L^1$-Wasserstein risk 
obtained by \cite{dedecker2015} in Theorem 4.1, p. 243, requiring only a moment condition on the latent distribution.
The following proposition extends the lower bound result on the rate of convergence for the $L^p$-Wasserstein risk, 
$p\geq1$, to the case where the mixing density is either H\"older smooth or Sobolev regular. 
\begin{prop}\label{prop:lower bound}
Assume that there exists $\beta>0$ such that, for every $l=0,\,1,\,2$,
\begin{equation}\label{eq:cond}
|\hat f_\varepsilon^{(l)}(t)|\leq d_l (1+|t|)^{-(\beta+l)},\quad t\in\mathbb R,
\end{equation}
with a constant $d_l>0$. There, then, exists a constant $C>0$ such that, for any estimator $\hat \mu_n$,
\begin{equation}\label{eq:LB1}
\mathop{\underline{\lim}}_{n \to \infty}n^{p(\alpha+1)/[2(\alpha+\beta)+1]}
\sup_{\mu_X\in\mathscr D_p(M)\cap \mathscr C}\mathbb E[W_p^p(\hat\mu_n,\,\mu_X)]>C,
\end{equation}
where $\mathscr C$ stands for either a H\"older $\mathscr C_{\alpha}(L)$ or 
a Sobolev $\mathscr F_\alpha^{\mathcal S}(L)$ class of densities with $\alpha,\,L>0$ and $\mathscr D_p(M)$ is the class of probability measures $\mu_X\in\mathscr P$ with uniformly bounded $p$th absolute moment $\mathbb E_{\mu_X}[|X|]\leq M$
for some $M>0$.
\end{prop}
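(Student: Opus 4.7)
The proof uses Assouad's hypercube method on a family of smooth-bump perturbations of a base mixing density, with bump amplitude and width tuned to the regularity $\alpha$. Fix a smooth compactly supported density $f_0$ on $[-M/2,M/2]$ with $f_0 \geq c_0 > 0$ on $[-M/4,M/4]$, and a mother bump $\psi \in C_c^\infty([0,1])$ with $\int \psi = 0$ and $\hat\psi$ vanishing on a neighborhood of $0$. Let $h_n := c_1 n^{-1/[2(\alpha+\beta)+1]}$, $N_n := \lfloor 1/(3h_n)\rfloor$, choose centers $x_1,\ldots,x_{N_n} \in [-M/4,M/4]$ with pairwise gap $\geq 2h_n$, define $\psi_{h_n,k}(x) := h_n^\alpha \psi((x-x_k)/h_n)$ (disjoint supports), and for $\theta \in \{-1,+1\}^{N_n}$ set
\[
f_{X,\theta}(x) := f_0(x) + \delta \sum_{k=1}^{N_n} \theta_k \psi_{h_n,k}(x),
\]
with $\delta > 0$ a small fixed constant.

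Validity of $f_{X,\theta}$ as an element of $\mathscr D_p(M)\cap\mathscr C$: non-negativity holds for small $\delta$ since $\|\psi_{h_n,k}\|_\infty = h_n^\alpha \|\psi\|_\infty$; unit mass follows from $\int \psi_{h_n,k} = 0$; $\mathbb E_{\mu_{X,\theta}}[|X|] \leq M$ by the bounded support. For the H\"older class, the disjoint supports give $\|f_{X,\theta}\|_{\mathscr C_\alpha} \leq \|f_0\|_{\mathscr C_\alpha} + \delta \max_k \|\psi_{h_n,k}\|_{\mathscr C_\alpha}$, with each $\|\psi_{h_n,k}\|_{\mathscr C_\alpha}$ bounded uniformly in $h_n$ by the scaling. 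For the Sobolev class, Plancherel gives $\|\psi_{h_n}\|_{\mathscr F_\alpha^{\mathcal S}}^2 \asymp h_n$, and $\mathbb E_\theta \|\sum_k \theta_k \psi_{h_n,k}\|_{\mathscr F_\alpha^{\mathcal S}}^2 = N_n \|\psi_{h_n}\|_{\mathscr F_\alpha^{\mathcal S}}^2 \asymp 1$; by Markov, a positive fraction of $\theta$'s satisfy the Sobolev constraint, and we restrict to this subset (a Varshamov-Gilbert selection preserves the hypercube structure for Assouad at the cost of a constant factor).

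The $W_p^p$ separation exploits the one-dimensional identity $W_p^p(\mu,\nu) = \int_0^1 |F_\mu^{-1}(u) - F_\nu^{-1}(u)|^p\,du$; since $f_{X,\theta}$ is bounded above and below on its support, a Taylor expansion gives $W_p^p(\mu_{X,\theta},\mu_{X,\theta'}) \geq c_2 \|F_{X,\theta} - F_{X,\theta'}\|_p^p$. The primitives $\Psi_{h_n,k} := \int_{-\infty}^{\cdot}\psi_{h_n,k}$ have disjoint supports and $\|\Psi_{h_n,k}\|_p^p \asymp h_n^{p(\alpha+1)+1}$, hence
\[
W_p^p(\mu_{X,\theta},\mu_{X,\theta'}) \geq c_3 \delta^p h_n^{p(\alpha+1)+1} \rho_H(\theta,\theta'),
\]
additive in the Hamming distance $\rho_H$. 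For the KL bound on pairs differing in one coordinate $k$, $f_{Y,\theta} - f_{Y,\theta'} = \pm 2\delta(\psi_{h_n,k} * f_\varepsilon)$; Plancherel with $u = h_n t$ yields
\[
\|\psi_{h_n,k} * f_\varepsilon\|_2^2 = \frac{h_n^{2\alpha+1}}{2\pi}\int |\hat\psi(u)|^2 |\hat f_\varepsilon(u/h_n)|^2\,du \lesssim h_n^{2(\alpha+\beta)+1},
\]
using \eqref{eq:cond} with $l=0$ together with the vanishing of $\hat\psi$ near $0$, which ensures integrability of $|\hat\psi(u)|^2/|u|^{2\beta}$. A pointwise lower bound $f_{Y,\theta'}\geq c_4>0$ on a compact neighborhood of the perturbation support (from continuity and strict positivity of $f_0 * f_\varepsilon$) converts this into $\chi^2(P_{Y,\theta},P_{Y,\theta'})\lesssim \delta^2 h_n^{2(\alpha+\beta)+1}$, and hence $\mathrm{KL}(P_{Y,\theta}^n,P_{Y,\theta'}^n)\leq C n\delta^2 h_n^{2(\alpha+\beta)+1} = O(1)$ with our choice of $h_n$.

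Assouad's lemma, applied with the Hamming-additive loss $W_p^p$ and the bounded per-coordinate KL, then delivers
\[
\inf_{\hat\mu_n}\max_\theta \mathbb E_\theta[W_p^p(\hat\mu_n,\mu_{X,\theta})] \gtrsim N_n\cdot \delta^p h_n^{p(\alpha+1)+1} \asymp \delta^p h_n^{p(\alpha+1)} \asymp n^{-p(\alpha+1)/[2(\alpha+\beta)+1]},
\]
which, upon taking the sup over the (larger) class $\mathscr D_p(M)\cap\mathscr C$, yields \eqref{eq:LB1}. The principal difficulty is the Sobolev verification, since the norm of the signed sum $\sum_k \theta_k \psi_{h_n,k}$ is not controlled termwise and the Markov-plus-Varshamov-Gilbert pruning is required; a secondary point is the passage from the Fourier $L^2$-bound to a genuine KL bound on mixture densities, which relies on the strict positivity of $f_0*f_\varepsilon$ provided by the moment assumption on $\varepsilon$.
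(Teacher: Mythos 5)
The paper gives no proof of Proposition~\ref{prop:lower bound}: it explicitly defers to Theorem~4.1 of Dedecker et al.\ (2015) and the intermediate lemmas of Fan (1991, 1993), so there is no proof in the paper to compare against word for word. Your Assouad/hypercube construction with mean-zero bumps, the scaling $h_n\asymp n^{-1/[2(\alpha+\beta)+1]}$, the disjoint-primitive lower bound for $W_p^p$, and the Plancherel estimate of $\|\psi_{h_n,k}\ast f_\varepsilon\|_2$ are all in the spirit of that literature, and those parts are essentially fine.

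The serious gap is in the $\chi^2$ (equivalently KL) bound, and it concerns exactly the part of the hypothesis you never use. You invoke \eqref{eq:cond} only at $l=0$ and write that a pointwise lower bound $f_{Y,\theta'}\geq c_4>0$ holds ``on a compact neighborhood of the perturbation support.'' But the perturbation after convolution with the noise, $\psi_{h_n,k}\ast f_\varepsilon$, is \emph{not} compactly supported: $f_\varepsilon$ has full support, so $\psi_{h_n,k}\ast f_\varepsilon$ is supported on all of $\mathbb R$, while $f_{Y,\theta'}(y)$ decays as $|y|\to\infty$. The $\chi^2$ integral therefore has a tail
$\int_{|y|>A}|(\psi_{h_n,k}\ast f_\varepsilon)(y)|^2/f_{Y,\theta'}(y)\,\di y$
which is not controlled by the global $L^2$ Plancherel estimate alone. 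This is precisely why the statement assumes \eqref{eq:cond} also for $l=1,2$: integrating by parts twice in Fourier, these bounds yield a pointwise estimate of the form
$|(\psi_{h_n,k}\ast f_\varepsilon)(y)|\lesssim h_n^{\alpha+\beta+2}\,(y-x_k)^{-2}$
for $|y-x_k|$ bounded away from zero, which, combined with the crude bound $|(\psi_{h_n,k}\ast f_\varepsilon)(y)|\lesssim h_n^{\alpha+\beta}$, is what Fan's lemmas use to dominate the tail integral by $O(h_n^{2(\alpha+\beta)+1})$. Your proof, as written, would in general give a divergent or oversized tail term (try the Laplace noise, where $f_{Y,\theta'}(y)\asymp e^{-|y|}$), so the argument cannot stop at $l=0$; without appealing to $l=1,2$ the statement is not proved under the hypotheses as given.

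A secondary, lesser issue is the Sobolev verification. You correctly note that $\|\sum_k\theta_k\psi_{h_n,k}\|_{\mathscr F^{\mathcal S}_\alpha}$ is not controlled termwise (the fractional derivative destroys support-disjointness, and the worst-case $|\sum_k\theta_k e^{\imath tx_k}|^2$ is $N_n^2$, giving a Sobolev norm of order $N_n$), and you resort to Markov plus a Varshamov--Gilbert selection. However, ``preserving the hypercube structure for Assouad'' is not what a VG pruning does; once you pass to a separated subset you should run the argument through Fano (or a careful packing-based Le~Cam argument) rather than Assouad's coordinate-pair lemma. This can be repaired, but as stated it is imprecise. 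The $W_p^p$-to-CDF step is correct in substance but deserves a one-line argument (change of variables $u=F_{X,\theta}(x)$ plus the uniform two-sided density bound on $[-M/4,M/4]$) rather than an appeal to ``a Taylor expansion.''
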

The proof develops along the lines of Theorem 4.1 in \cite{dedecker2015}, appealing to
intermediate results in \cite{fan1991} and \cite{fan1993}, and is not reported here. 
Condition \eqref{eq:cond} is stronger than condition (22) of Theorem 4.1, 
which requires that, for $l=0,\,1,\,2$, the $l$th derivative
$|r_\varepsilon^{(l)}(t)|\leq c(1+|t|)^{-\beta}$, $t\in\mathbb R$.
For $p=1$, the lower bound in \eqref{eq:LB1} matches the upper bound 
of Theorem \ref{thm:22} when $\beta\geq1$.

\medskip




\noindent\emph{$L^2$-minimax rates over logarithmic Sobolev classes of densities}

\noindent
When no assumption on the mixing distribution $\mu_{0X}$ is postulated,  
the $L^2$-minimax rate for estimating a convolution density $f_{0Y}=f_\varepsilon \ast \mu_{0X}$, with
\begin{equation}\label{eq:ordinarysmooth}
|\hat f_\varepsilon(t)|\lesssim (1+|t|)^{-\beta}, \quad t\in\mathbb R,
\end{equation}
for $\beta>1/2$ to ensure that $\hat f_{0Y}\in L^2(\mathbb R)$, is to our knowledge unknown,
even if $f_{0Y}$ belongs to a Sobolev type class.
Following \cite{Haroske:1998} and \cite{Haroske:2000}, for $\gamma>0$, $\delta>1$ and
$w_{\gamma,\delta}(t):=(1+|t|^2)^{\gamma/2}(\log(e+|t|))^{-\delta/2}$, $t\in\mathbb R$, we define
the logarithmic Sobolev class of densities as
\begin{equation}\label{eq:weighted}
\mathscr F^{\mathcal{LS}}_{\gamma,\delta}(L):=
\{f:\, f\geq0,\,\|f\|_1=1 \mbox{ and }\|w_{\gamma,\delta}\hat f\|_2^2\leq L^2\},\quad L>0.
\end{equation} 
The Sobolev class of densities
$\mathscr F_\gamma^{\mathcal S}(L):=\{f:\, f\geq0,\, \|f\|_1=1 \mbox{ and }\|(1+|\cdot|^2)^{\gamma/2}\hat f\|_2^2\leq L^2\}$
corresponds to $\mathscr F^{\mathcal{LS}}_{\gamma,0}(L)$. 
The following proposition assesses the order, up to a logarithmic factor, 
of the $L^2$-minimax rate for estimating densities in a logarithmic Sobolev class. 
Although the result seems to be a well-known fact, we could not find a proof for it and, for completeness, we 
prove it in Section \ref{sec:proof:minimax} of the Supplement \cite{rousseau:scricciolo:supp}. 


\begin{prop}\label{eq:minimax1}
For $\psi_{n,\gamma}:=n^{-\gamma/(2\gamma+1)}$,
\[
\begin{split}
\psi_{n,\gamma}^2\lesssim\inf_{\hat f_n}\sup_{f\in \mathscr F_{\gamma,\delta}^{\mathcal{LS}}(L)} \mathbb E^n_f [\|\hat f_n-f\|_2^2]\lesssim \psi_{n,\gamma}^2(\log n)^{\delta/(2\gamma+1)},
\end{split}
\]  
where the infimum is taken over all estimators $\hat f_n$ for densities 
$f$ in $\mathscr F_{\gamma,\delta}^{\mathcal{LS}}(L)$                    
based on $n$ observations and the expectation is with respect to the n-fold product measure of $P_f$.
\end{prop}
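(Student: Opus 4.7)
The plan is to prove the two bounds separately, exploiting the fact that $\mathscr F_{\gamma,\delta}^{\mathcal{LS}}(L)$ is nested between the usual Sobolev ball $\mathscr F_\gamma^{\mathcal S}(L)$ and the slightly larger weighted class, with the log weight costing only an extra logarithmic factor.

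\textbf{Lower bound.} Because $(\log(e+|t|))^{-\delta/2}\leq 1$, one has $w_{\gamma,\delta}(t)\leq (1+|t|^2)^{\gamma/2}$, hence $\|w_{\gamma,\delta}\hat f\|_2 \leq \|(1+|\cdot|^2)^{\gamma/2}\hat f\|_2$ and the inclusion $\mathscr F_\gamma^{\mathcal S}(L)\subseteq \mathscr F_{\gamma,\delta}^{\mathcal{LS}}(L)$ holds. The classical $L^2$-minimax lower bound $\psi_{n,\gamma}^2 = n^{-2\gamma/(2\gamma+1)}$ over the ordinary Sobolev ball (obtained by a Fano/Assouad argument on a family of perturbations of a smooth baseline density by rescaled, disjointly-supported bumps, see e.g.\ Tsybakov's monograph) therefore transfers verbatim, since the supremum is taken over the larger class.

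\textbf{Upper bound.} I propose a linear spectral-cutoff kernel estimator. Choose a kernel $K\in L^1(\mathbb R)\cap L^2(\mathbb R)$ whose Fourier transform $\hat K$ is supported in $[-1,1]$, equals $1$ on $[-1/2,1/2]$, and satisfies $|\hat K|\leq 1$; set $K_h(x):=h^{-1}K(x/h)$ and
\[
\hat f_n(x):=\frac{1}{n}\sum_{i=1}^n K_{h_n}(x-X_i),\qquad h_n:=c_0\,\bigl(n/(\log n)^{\delta}\bigr)^{-1/(2\gamma+1)},
\]
for a suitable constant $c_0>0$. By Plancherel,
\[
\mathbb E^n_f\|\hat f_n-f\|_2^2 = \int \mathrm{Var}(\hat f_n(x))\,\di x + \|K_{h_n}\ast f - f\|_2^2,
\]
and the integrated variance is bounded by $\|K_{h_n}\|_2^2/n = \|K\|_2^2/(nh_n)$. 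For the bias, since $\hat K(h_n t)=1$ on $|t|\leq 1/(2h_n)$,
\[
\|K_{h_n}\ast f - f\|_2^2 \leq \frac{1}{2\pi}\int_{|t|\geq 1/(2h_n)} |\hat f(t)|^2\,\di t \leq \Bigl(\sup_{|t|\geq 1/(2h_n)} w_{\gamma,\delta}(t)^{-2}\Bigr)\cdot \|w_{\gamma,\delta}\hat f\|_2^2.
\]
As $w_{\gamma,\delta}(t)^{-2}\asymp |t|^{-2\gamma}(\log|t|)^{\delta}$ is eventually monotone decreasing, the supremum is of order $h_n^{2\gamma}(\log(1/h_n))^{\delta}\lesssim h_n^{2\gamma}(\log n)^{\delta}$. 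Inserting the choice of $h_n$ equates variance and squared bias and delivers, uniformly over $\mathscr F_{\gamma,\delta}^{\mathcal{LS}}(L)$,
\[
\mathbb E^n_f\|\hat f_n-f\|_2^2 \lesssim \frac{1}{nh_n} + h_n^{2\gamma}(\log n)^{\delta} \asymp n^{-2\gamma/(2\gamma+1)}(\log n)^{\delta/(2\gamma+1)}.
\]

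\textbf{Main obstacle.} No genuinely hard step is expected: the lower bound is an embedding argument, the variance computation is textbook, and the single substantive point is the bias estimate, where the logarithmic weight forces the supremum over the high-frequency tail to carry an extra $(\log(1/h_n))^{\delta}$ factor. One minor technicality is to justify the eventual monotonicity of $w_{\gamma,\delta}^{-2}$ to turn the tail integral into the desired supremum bound, but this is an elementary one-dimensional calculus check and places no essential restriction on $\gamma,\delta$ beyond those stated.
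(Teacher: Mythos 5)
Your proposal follows the same two-pronged structure as the paper: the lower bound via the embedding $\mathscr F_\gamma^{\mathcal S}(L)\subseteq\mathscr F_{\gamma,\delta}^{\mathcal{LS}}(L)$ (valid since $(\log(e+|t|))^{-\delta/2}\leq 1$), citing the classical Sobolev minimax lower bound, and the upper bound via a linear kernel estimator and a bias--variance trade-off in Fourier space. Where you diverge organizationally is in how you run the optimization. The paper works with the bandwidth-free estimator $f_K(\cdot)=n^{-1}\sum_i K(Y_i-\cdot)$ and optimizes directly over kernel shape, constraining $|1-\hat K(t)|\le c\,\tilde w_{\gamma,\delta}(t)$ and then minimizing over $c$ — a Pinsker-style argument in which the effective scale is absorbed into the kernel. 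You instead fix one spectral-cutoff kernel and tune the bandwidth $h_n$, which is the more elementary textbook route; the two approaches are equivalent at the level of rates and buy the same result, with the paper's version being a touch more variational and yours more transparent.

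There is, however, a sign error in your bandwidth. You set $h_n = c_0\bigl(n/(\log n)^{\delta}\bigr)^{-1/(2\gamma+1)} = c_0\,n^{-1/(2\gamma+1)}(\log n)^{\delta/(2\gamma+1)}$. With this choice the variance is $1/(nh_n)\asymp n^{-2\gamma/(2\gamma+1)}(\log n)^{-\delta/(2\gamma+1)}$ while the squared bias is $h_n^{2\gamma}(\log n)^{\delta}\asymp n^{-2\gamma/(2\gamma+1)}(\log n)^{\delta(4\gamma+1)/(2\gamma+1)}$, so they do not balance and the bias term is strictly worse than the target. The correct choice is $h_n = c_0\bigl(n(\log n)^{\delta}\bigr)^{-1/(2\gamma+1)}\asymp n^{-1/(2\gamma+1)}(\log n)^{-\delta/(2\gamma+1)}$, which makes both terms equal to $n^{-2\gamma/(2\gamma+1)}(\log n)^{\delta/(2\gamma+1)}$ as claimed. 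Once the sign is corrected, the remaining steps — Plancherel, the factorization $|\hat f(t)|^2 = w_{\gamma,\delta}(t)^{-2}\,w_{\gamma,\delta}(t)^{2}|\hat f(t)|^2$ to peel off the Sobolev weight, and the eventual monotonicity of $t\mapsto (1+t^2)^{-\gamma}(\log(e+t))^{\delta}$ — are all sound.
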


If $f_\varepsilon$ is such that $\hat f_\varepsilon$ satisfies condition \eqref{eq:ordinarysmooth}, then, 
for $\delta>1$, extending the definition of $w_{\gamma,\delta}$ to $\gamma<0$, we have
$$f_{0Y}\in\mathscr F_{\beta-1/2,\delta}^{\mathcal{LS}}(L), \quad \mbox{with }L = \|w_{{-1/2},\delta}\|_2.$$
In fact, 
$\|w_{\beta-1/2,\delta}\hat f_{0Y}\|_2^2 
\leq \|w_{\beta-1/2,\delta}\hat f_\varepsilon\|_2^2 \lesssim
\|w_{{-1/2},\delta}\|_2^2<\infty$.
Thus, from Proposition \ref{eq:minimax1}, the $L^2$-minimax rate over 
$\mathscr F^{\mathcal{LS}}_{\beta-1/2,\delta}(L)$, with $L=\|w_{-1/2,\delta}\|_2$, is
$\psi_{n,\beta-1/2}=n^{-(\beta-1/2)/(2\beta)}$ up to a logarithmic factor.
For a Laplace error distribution ($\beta=2$), the rate specializes to
$\psi_{n,3/2}=n^{-3/8}$, which is attained by the Bayes' estimator $f_{nY}^{\mathrm B}$,
defined as the posterior mean associated to a Dirichlet process mixture of Laplace densities, 
so that $\mathbb E_{0Y}^n[d^2_{\mathrm H}(f_{nY}^{\mathrm B},\,f_{0Y})]=O(n^{-3/4}(\log n))$. 
Since  $f_{nY}^{\mathrm B} = f_\varepsilon \ast \mu_{nX}^{\mathrm B}$, using   
the inversion inequality in \eqref{eq:t2}, we find the rate for the (squared) $L^1$-Wasserstein distance 
$\mathbb E_{0Y}^n[W_1^2( \mu_{nX}^{\mathrm B},\mu_{0X})]=O(n^{-3/8}(\log n)^{3/2})= o(n^{-1/4})$, 
where the $L^1$-Wasserstein rate $n^{-1/8}$ has been obtained by \cite{gao2016} and \cite{scricciolo:18}. 
Yet, this rate is larger than the $L^1$-Wasserstein minimax rate $n^{-1/5}$ pointed out by \cite{dedecker2015}. 
It is possible that a Dirichlet process mixture-of-Laplace prior may not achieve the $L^1$-Wasserstein lower bound rate $n^{-1/5}$, although we cannot rule out that either 
the Hellinger rate $n^{-3/8}(\log n)^{1/2}$ obtained by \cite{gao2016} in the direct problem is not sharp or that the inversion inequality is sharp only 
when the Hellinger or $L^1$-distance between mixture densities is of the order $O(n^{-\beta/(2\beta+1)})$. Note that this order is attained 
for Laplace mixtures when the mixing distribution possesses a density, see Section \ref{sec:lap+exp}. 

\subsection{Inversion inequality}\label{subsec:inversion}
In this section we establish, under general and minimal conditions, an inversion inequality relating 
the $L^1$-distance between mixture densities to the
$L^1$-Wasserstein distance between the corresponding mixing distributions, when the error density 
has Fourier transform decaying polynomially at infinity. This inequality is crucial
in the proof of Theorem \ref{thm:22}. Inequalities of this type have been previously obtained by 
\cite{nguyen:2013} for the $L^2$-Wasserstein distance and by \cite{gao2016} and \cite{scricciolo:18} for the $L^1$-Wasserstein distance, 
but they are not as sharp as the one given in Theorem \ref{theo:1}, whose proof is postponed to Section \ref{sec:rth1}. 
Starting from \cite{dedecker2015}, the idea is to use a suitable kernel
to smooth the mixing distributions $F_X$ and $F_{0X}$ and then to bound the $L^1$-Wasserstein distance between 
the smoothed versions, meanwhile controlling the bias induced by the smoothing.

\begin{thm}\label{theo:1}
Let $\mu_X,\,\mu_{0X}\in\mathscr P$ be probability measures with finite 
first moments $\mathbb E_{\mu_X}[|X|]<\infty$ and $\mathbb E_{0X}[|X|]<\infty$.
Let $\mu_\varepsilon\in\mathscr P_0$ satisfy Assumption \ref{ass:identifiability+error} for $\beta>0$.
Then, for probability measures 
$\mu_Y:=\mu_\varepsilon\ast\mu_X$, $\mu_{0Y}:=\mu_\varepsilon\ast\mu_{0X}$, 
with densities $f_Y$, $f_{0Y}$, and a sufficiently small $h>0$,
\begin{equation*}
W_1(\mu_X,\,\mu_{0X})\lesssim h+ T,
\end{equation*}
where
\begin{equation}\label{eq:t2}
T\lesssim W_1(\mu_Y,\,\mu_{0Y})+
\begin{cases}
h^{-(\beta-1/2)_+}|\log h|^{1+\1_{\{\beta=1/2\}}/2}\, W_1(\mu_Y,\,\mu_{0Y}), &\\[-6pt]
\hspace*{0.4cm}\mbox{or} &\\
h^{-(\beta-1)_+}|\log h|\, d(f_Y,\,f_{0Y}), &
\end{cases}
\end{equation}
for $d$ being either the Hellinger or the $L^1$-metric. If, in addition, 
$\mu_X$ verifies condition \eqref{eq:ass1} and 
$\mu_{0X}$ satisfies Assumption \ref{ass:smoothXXX} for $\alpha>0$, then 
\begin{equation}\label{eq:ineqw21}
W_1(\mu_X,\,\mu_{0X})\lesssim h^{\alpha+1}+T,
\end{equation}
for $T$ as in \eqref{eq:t2}.
\end{thm}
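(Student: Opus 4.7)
My approach is a Fourier-analytic smoothing argument. Using the one-dimensional identity $W_1(\mu_X,\mu_{0X}) = \|F_X - F_{0X}\|_1$ and inserting the rescaled kernel $K_h$, I would first decompose
\begin{equation*}
\|F_X - F_{0X}\|_1 \le \|b_{F_X}\|_1 + \|b_{F_{0X}}\|_1 + \|(F_X - F_{0X})\ast K_h\|_1,
\end{equation*}
reducing the problem to bias control and estimation of the smoothed difference. For the bias in the unsmoothed case, the elementary identity $\int_{\mathbb R}|F(x-u)-F(x)|\,\di x = |u|$ (valid for any CDF $F$), together with $\int|z||K(z)|\,\di z<\infty$, gives $\|b_{F_X}\|_1,\,\|b_{F_{0X}}\|_1 \lesssim h$. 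Under Assumption \ref{ass:smoothXXX}, \eqref{eq:ass1} handles $\|b_{F_X}\|_1$, while $\|b_{F_{0X}}\|_1\lesssim h^{\alpha+1}$ follows either from the vanishing moments of a kernel of order $\lfloor\alpha\rfloor+1$ combined with a Taylor expansion of $f_{0X}$ (case (i)), or from the compact support of $\hat K$ paired with the Sobolev bound on $\hat f_{0X}$ (case (ii)).

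For the smoothed middle term, the Fourier identity $\hat\mu_X = \hat\mu_Y\,r_\varepsilon$ yields two equivalent convolution representations
\begin{equation*}
(F_X - F_{0X})\ast K_h \,=\, (F_Y - F_{0Y})\ast M_h \,=\, (f_Y - f_{0Y})\ast \tilde N_h,
\end{equation*}
where $\hat M_h(t) = r_\varepsilon(t)\hat K(ht)$ and $\tilde N_h$ is a suitably re-centred antiderivative of $M_h$; the centring is harmless because $\int(f_Y - f_{0Y}) = 0$. Applying Young's convolution inequality to each representation produces the twin bounds $\|(F_X - F_{0X})\ast K_h\|_1 \le W_1(\mu_Y,\mu_{0Y})\,\|M_h\|_1$ and $\|(F_X - F_{0X})\ast K_h\|_1 \le \|f_Y - f_{0Y}\|_1\,\|\tilde N_h\|_1$, and the Hellinger version of the latter follows from $\|f_Y - f_{0Y}\|_1 \le 2\,d_{\mathrm H}(f_Y,f_{0Y})$.

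The main obstacle, and the source of the improvement over \cite{nguyen:2013, gao2016, scricciolo:18}, is establishing the sharp kernel-norm estimates $\|M_h\|_1 \lesssim 1 + h^{-(\beta-1/2)_+}|\log h|^{1+\1_{\{\beta=1/2\}}/2}$ and $\|\tilde N_h\|_1 \lesssim 1 + h^{-(\beta-1)_+}|\log h|$. Since $\hat M_h$ is supported in $[-c/h,c/h]$, I would pass through Plancherel to a weighted $L^2$ estimate on $\hat M_h$ and $\partial_t\hat M_h$, splitting the Fourier integral between low and high frequencies. The two parts of Assumption \ref{ass:identifiability+error} enter here decisively: the bound on $|r_\varepsilon|$ controls $\|\hat M_h\|_2^2 \lesssim \int_{|t|\le c/h}(1+|t|)^{2\beta}\,\di t$, while the bound on $|r_\varepsilon'|$ delivers the key half-power gain through $\|\partial_t\hat M_h\|_2^2 \lesssim \int_{|t|\le c/h}(1+|t|)^{2\beta-2}\,\di t$, whose integrand becomes borderline integrable at $\beta = 1/2$ and produces the $|\log h|$ factor. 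The analogous computation for $\tilde N_h$, whose Fourier transform carries an extra factor $1/(it)$, shifts the critical threshold to $\beta = 1$ and yields the full-power refinement. Assembling the bias and deconvolution estimates and optimising over $h$ yields both \eqref{eq:t2} and \eqref{eq:ineqw21}.
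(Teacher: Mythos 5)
Your decomposition into two bias terms plus the smoothed difference, the bias control via \eqref{eq:ass1} and Lemmas \ref{lem:der}/\ref{lem:sob}, the two convolution representations of the middle term, Young's inequality, and LeCam's inequality for the Hellinger version are all in line with the paper's proof. The gap is in the one step that actually contains the theorem's content: the kernel-norm estimates.

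You propose to bound $\|M_h\|_1$, with $\hat M_h(t)=r_\varepsilon(t)\hat K(ht)$, by a Plancherel/Bobkov-type argument through $\|\hat M_h\|_2$ and $\|\partial_t\hat M_h\|_2$. But $\|\hat M_h\|_2^2 \lesssim \int_{|t|\le c/h}(1+|t|)^{2\beta}\,\di t \asymp h^{-(2\beta+1)}$, i.e.\ $\|\hat M_h\|_2\asymp h^{-\beta-1/2}$, which \emph{dominates} the derivative term $\|\partial_t\hat M_h\|_2\asymp h^{-(\beta-1/2)_+}$ that you identify as the source of the ``half-power gain.'' The crude bound $\|M_h\|_1\lesssim\|\hat M_h\|_2+\|\partial_t\hat M_h\|_2$ therefore gives $h^{-\beta-1/2}$, and even the optimised split of $\int_{|x|\le R}+\int_{|x|>R}$ gives only the geometric mean $\bigl(\|\hat M_h\|_2\,\|\partial_t\hat M_h\|_2\bigr)^{1/2}\asymp h^{-\beta}$ — still a factor $h^{-1/2}$ worse than the $h^{-(\beta-1/2)_+}$ you assert and the theorem requires. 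The same objection applies, shifted by one power of $t$, to your bound on $\|\tilde N_h\|_1$. The kernel-norm claims are the place where your sketch is strongest in assertion and weakest in derivation.

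What the paper does, and what is missing from your proposal, is twofold. First, an explicit smooth cutoff $\chi$ factorises $\hat K(ht)r_\varepsilon(t)=w_{1,h}(t)+w_{2,h}(t)$ with $w_{1,h}$ supported near the origin and $w_{2,h}(t)=\hat K(ht)[1-\chi(t)]r_\varepsilon(t)$ supported on $1\le|t|\lesssim1/h$. This is not merely bookkeeping: $w_{2,h}(0)=0$ is what makes the antiderivative kernel $F_{2,h}$ integrable (your re-centring of $\tilde N_h$ using $\int(f_Y-f_{0Y})=0$ is doing the same thing implicitly, but it only controls a constant, not a full low-frequency band, and so does not yield $\|K_{1,h}\|_1=O(1)$ needed for the additive $W_1(\mu_Y,\mu_{0Y})$ term in \eqref{eq:t2}). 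Second, and decisively, the $L^1$ norm of the high-frequency kernel $K_{2,h}$ is \emph{not} estimated via Plancherel on the whole line; Lemma \ref{lem:K2} decomposes $\|K_{2,h}\|_1$ over three spatial regions $|z|\le h^{3/2}$, $h^{3/2}<|z|\le1$, $|z|>1$, bounding the first by $h^{3/2}\sup|K_{2,h}|\lesssim h^{3/2}\|w_{2,h}\|_1$, the second through the $1/|z|$ integral representation and $\|w_{2,h}^{(1)}\|_1$, and only the third through Cauchy--Schwarz and $\|w_{2,h}^{(1)}\|_2$. The extra half power over your Bobkov-type bound comes exactly from the $|z|\le h^{3/2}$ region, where the short interval length compensates the size of $\|w_{2,h}\|_1\asymp h^{-\beta-1}$. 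Lemma \ref{lem:F2} does the same for $F_{2,h}$. Without this device your estimates give $W_1(\mu_X,\mu_{0X})\lesssim h^{\alpha+1}+h^{-\beta}\,d(f_Y,f_{0Y})$ rather than $h^{\alpha+1}+h^{-(\beta-1)_+}|\log h|\,d(f_Y,f_{0Y})$, which after optimisation over $h$ loses a power of $n$ in the final rate.
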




We comment on Assumptions \ref{ass:smoothXXX} and \ref{ass:identifiability+error}.  
With Assumption \ref{ass:smoothXXX} we are restricting attention to the set $\mathscr P_0$ of Lebesgue absolutely continuous probability measures 
on $\mathbb R$ as mixing distributions. 
Assumption \ref{ass:smoothXXX} is, in fact, a smoothness/regularity condition on $f_{0X}$.
It requires that either $f_{0X}$ is locally H\"older smooth, namely, it has $\ell$ derivatives, 
for $\ell$ the largest integer strictly smaller than $\alpha$, 
with the $\ell$th derivative being H\"older of order $\alpha-\ell$ and integrable envelope function $L_0$ to bound the $L^1$-norm of the bias,
or $f_{0X}$ has (global) Sobolev regularity $\alpha$. Indeed, requiring that $D^\alpha\hspace*{-1pt}f_{0X} \in L^2(\mathbb R)$ is equivalent to
impose that $f_{0X}\in\mathscr F^{\mathcal S}_\alpha$, the difference being that $D^\alpha\hspace*{-1pt}f_{0X}$ is here assumed 
to be in $L^1(\mathbb R)$, see condition \eqref{ass:smoothsob}.
H\"older and Sobolev classes of densities are common nonparametric families of smooth functions.
If $f_{0X}$ verifies Assumption \ref{ass:smoothXXX}, then $\|b_{F_{0X}}\|_1=O(h^{\alpha+1})$,
see Lemmas \ref{lem:der} and \ref{lem:sob}.
Note that if the random density is modelled as a Gaussian mixture, 
$f_X=\mu_H\ast \phi_\sigma$, then $\|b_{F_X}\|_1\leq C_1 h^{\alpha+1}\|\mu_{H}\ast\phi_{\sigma/\sqrt{2}}\|_1 = C_1h^{\alpha+1}$
for a constant $C_1>0$ not depending on $\mu_H$, see Lemma \ref{lem:biasmixgaus}, so that condition \eqref{eq:ass1} is verified. 

In Assumption \ref{ass:identifiability+error} it is required that $\hat f_\varepsilon$ is everywhere non-null.
This is a standard hypothesis in density deconvolution problems related to identifiability with respect to the $L^1(\mathbb R)$-metric, 
which is a necessary condition for the existence of (weak) consistent density estimators of $f_{0X}$ 
with respect to the $L^1(\mathbb R)$-metric, see \cite{Meister:2009}, pp. 23--26.
Finiteness of the first moment of $\varepsilon$ is a technical condition with a two-fold aim.
First, jointly with the existence of the first moment of $\mu_{0X}$, it implies
that also $\mu_{0Y}$ has finite expected value. This guarantees that $\mu_{0X}$ and $\mu_{0Y}$
have finite $L^1$-Wasserstein distances from $\mu_X$ and $\mu_Y$, respectively,
which, in turn, are required to possess finite expectations.
Secondly, it implies that $\hat f_\varepsilon$ is continuously differentiable on $\mathbb R$ and that the derivative
is $\hat f^{(1)}_\varepsilon(t)=\int_{\mathbb R}\exp{(\imath tu)}(\imath u) f_\varepsilon(u)\,\di u$, $t\in\mathbb R$. Then, 
$r_\varepsilon^{(1)}$ exists and is well defined. 
Note that, for $l=0$, condition \eqref{eq:deriv} is equivalent to $|\hat f_\varepsilon(t)|\gtrsim (1+|t|)^{-\beta}$, $t\in\mathbb R$. 
The requirement is satisfied for ordinary smooth distributions covering the following examples.
\begin{itemize}  
 \item[$\bullet$] 
The symmetric Linnik distribution with $\hat f_\varepsilon(t)=(1+|t|^\beta)^{-1}$, $t\in\mathbb R$, 
for index $0<\beta\leq 2$ and scale parameter equal to $1$.
The standard Laplace distribution corresponds to $\beta=2$, see $\S$ 4.3 in \cite{Kotz2001}, pp. 249--276.
\item[$\bullet$]
The gamma distribution with $\hat f_\varepsilon(t)=(1-\imath t)^{-\beta}$, $t\in\mathbb R$,
for shape parameter $\beta>0$ and scale parameter equal to $1$. 
The standard exponential distribution corresponds to $\beta=1$.
\item[$\bullet$]
An error distribution with characteristic function $\hat f_\varepsilon$ that is the reciprocal of a polynomial,
$r_\varepsilon(t)=\sum_{j=0}^m a_j t^{s_j}$, $t\in\mathbb R$, with $a_j\in \mathbb C$, $j=0,\,\ldots,\,m$, and exponents $0\leq s_0< s_1<\ldots<s_m=\beta$ for $\beta>0$. This example extends Example 1 in 
\cite{bissantz}, p. 487,
wherein the $s_j$'s are taken to be non-negative integers $s_j=j$, for $j=0,\,\ldots,\,\beta$.
\item[$\bullet$]
The error distribution in Example 2 of \cite{bissantz}, p. 487, with $f_\varepsilon(u)=\gamma[g_0(u-\mu)+g_0(u+\mu)]/2+(1-\gamma)g_0(u)$, 
$u\in\mathbb R$, for a density $g_0$, constants $0<\gamma<{1}/{2}$ and $\mu\neq 0$,
having $\hat f_\varepsilon(t)=[(1-\gamma)+\gamma\cos(\mu t)]\hat g_0(t)$, $t\in\mathbb R$, 
with $|\hat g_0(t)|\gtrsim (1+|t|)^{-\beta}$ for $\beta>0$.
\end{itemize}  
Location and/or scale transformations of random variables with distributions as in the previous examples, as well as their convolutions as in the last example, satisfy condition \eqref{eq:deriv}, whereas  the \emph{uniform}, \emph{triangular} and \emph{symmetric gamma} distributions do \emph{not} satisfy it. 
Differently from \cite{dedecker2015}, in condition \eqref{eq:deriv} we do not assume that 
$r_\varepsilon$ is at least twice continuously differentiable. Instead,
as in \cite{MR3449779}, we only assume the existence of the first derivative such that 
$|r_\varepsilon^{(1)}(t)|\lesssim (1+|t|)^{\beta-1}$, $t\in\,\mathbb R$.

The result of Theorem \ref{theo:1} falls within the scope of inversion inequalities, 
which translate an $L^p$-distance, $p\geq1$, between kernel mixtures into a proximity measure
between the corresponding mixing distributions.
A first inequality has been obtained by \cite{nguyen2013}, Theorem 2, p. 377, for ordinary and super-smooth
kernel densities in convolution mixtures, see also \cite{heinrich2018}.
A refined version for the ordinary smooth case translating the Hellinger or $L^2$/$L^1$-distance 
between mixtures, with kernel density having polynomially decaying Fourier transform, into the $L^1$-Wasserstein distance
between mixing distributions with finite Laplace transforms in a neighborhood of zero,
has been elaborated by \cite{gao2016} and \cite{scricciolo:18}.
A lower bound on the $L^p$-Wasserstein risk, $p\geq 1$, with only a moment condition
on the latent distribution, has been obtained by \cite{dedecker2015}, who have also 
shown that it is attained by a minimum distance estimator $\hat \mu_n$. 
Their proof, however, does not rely on an inversion inequality, but on the explicit expression of 
$W_1(\hat \mu_n,\,\mu_{0X})$.

When $\beta\geq1$, we show in Lemma \ref{lem:1} that the Kullback-Leibler condition in \eqref{con1}
implies a posterior concentration rate for the $L^1$-Wasserstein distance $W_1(\mu_Y,\,\mu_{0Y})$ 
of the order $O(\tilde\epsilon_n)$, which is equal to the order of $d(f_Y,\,f_{0Y})$, possibly up to a logarithmic factor.
By \eqref{eq:ineqw21}, this in turn implies that, with posterior probability tending to one,
$$W_1(\mu_X,\,\mu_{0X})\lesssim h^{\alpha +1} + \tilde\epsilon_n + h^{-(\beta -1)_+}|\log h|\tilde\epsilon_n.$$ 
Optimizing with respect to $h$ and neglecting logarithmic factors leads to 
$$W_1(\mu_X,\,\mu_{0X}) \lesssim d(f_Y,\,f_{0Y})^{(\alpha+1)/(\alpha+\beta)}.$$
Then, $d(f_Y,\,f_{0Y})=O(n^{-(\alpha+\beta)/[2(\alpha+\beta)+1]})$ under the posterior distribution, 
which yields the minimax-optimal rate $W_1(\mu_X,\,\mu_{0X})=O(n^{-(\alpha+1)/[2(\alpha+\beta)+1]})$,
see Proposition \ref{prop:lower bound} on the lower bound.
One deficit of Theorem \ref{theo:1} is that the $d$-version of inequality \eqref{eq:ineqw21} does not satisfactorily
cover the case where $0<\beta<1$. When applied to this case, in fact, it yields the rate $n^{-(\alpha+\beta)/[2(\alpha+\beta)+1]}$, 
times a logarithmic factor, rather than $n^{-(\alpha+1)/[2(\alpha+\beta)+1]}$. The $W_1$-version of \eqref{eq:ineqw21} 
would improve the rate to
\begin{equation}\label{eq:minimax}
n^{-(\alpha+1)/[2\alpha+(2\beta\vee 1)+1]},
\end{equation}
within (at most) a log-factor, provided that $W_1(\mu_Y,\,\mu_{0Y})=O(n^{-1/2})$, whose proof, however, remains elusive.
The existence of an \emph{elbow effect} with different rate \emph{r\'egimes} for $0<\beta\leq1/2$ and $\beta>1/2$ has already been pointed
out by \cite{MR2906875} and \cite{MR3449779} for distribution function estimation. 
In fact, because of the identity $W_1(\mu_X,\,\mu_{0X})=\|F_X-F_{0X}\|_1
=\|F_X^{-1}-F_{0X}^{-1}\|_1$, where $F_X^{-1}$ and $F_{0X}^{-1}$ denote the left-continuous inverse or quantile functions, 
see, \emph{e.g.}, \cite{shorack&wellner}, pp. 64--66, 
the $L^1$-Wasserstein posterior convergence rate for estimating a latent probability measure coincides with the
$L^1$-metric rate for estimating the corresponding distribution or quantile function. 
On the other hand, the sequence in \eqref{eq:minimax} is the minimax rate for estimating single
quantiles in deconvolution when the error density $f_\varepsilon$ satisfies Assumption \ref{ass:identifiability+error} for $\beta>0$ and the mixing density 
$f_{0X}$ is locally $\alpha$-H\"older continuous for $\alpha>0$, see Corollary 2.9 of \cite{MR3449779}, p. 151.

\smallskip

A further remark concerns potential application of the $W_1$-version of \eqref{eq:ineqw21} to 
bound the $L^1$-Wasserstein risk of a frequentist estimator for the latent distribution.

\begin{prop}
Let $\tilde\mu_n$ be an estimator based on $n$ iid observations 
from a probability measure $\mu_{0Y}=\mu_\varepsilon\ast\mu_{0X}$, with $\mu_{0X}$ and $\mu_\varepsilon$ satisfying
Assumptions \ref{ass:smoothXXX} and \ref{ass:identifiability+error}, respectively. 
If $\tilde\mu_n$ is a probability measure such that 
$\mathbb E[W_1(\mu_\varepsilon\ast\tilde\mu_n,\,\mu_{0Y})]=O(n^{-1/2})$
up to a logarithmic factor, then, within a log-factor,
$$\mathbb E[W_1(\tilde\mu_n,\,\mu_{0X})]=O(n^{-(\alpha+1)/[2\alpha+(2\beta\vee 1)+1]}).$$
\end{prop}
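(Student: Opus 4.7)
The plan is to apply the inversion inequality \eqref{eq:ineqw21} of Theorem \ref{theo:1} in its $W_1$--based form (first branch of \eqref{eq:t2}) to the random mixing measure $\tilde\mu_n$ in place of $\mu_X$. Assumptions \ref{ass:smoothXXX} and \ref{ass:identifiability+error} on $\mu_{0X}$ and $\mu_\varepsilon$ are granted, so what is needed of $\tilde\mu_n$ is (almost surely) a finite first moment and the uniform bias bound \eqref{eq:ass1}. I would either add these to the hypotheses --- they are automatic when $\tilde\mu_n$ is built from a Gaussian (or otherwise smooth) mixture via Lemma \ref{lem:biasmixgaus} --- or, if $\tilde\mu_n$ lacks the required regularity, pre--smooth it with an auxiliary Gaussian kernel of bandwidth $\sigma_n=o(n^{-1/2})$, which adds at most an $O(\sigma_n)$ contribution to $W_1$ and does not affect the final rate. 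Under these conditions, Theorem \ref{theo:1} yields, for every sufficiently small $h>0$,
\[
W_1(\tilde\mu_n,\,\mu_{0X})\;\lesssim\; h^{\alpha+1}\;+\;\bigl(1+h^{-(\beta-1/2)_+}|\log h|^{1+\1_{\{\beta=1/2\}}/2}\bigr)\,W_1(\mu_\varepsilon\ast\tilde\mu_n,\,\mu_{0Y}).
\]

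Taking expectations and plugging in the hypothesis $\mathbb E[W_1(\mu_\varepsilon\ast\tilde\mu_n,\,\mu_{0Y})]=O(n^{-1/2})$ (up to logs), this becomes, modulo logarithmic factors,
\[
\mathbb E[W_1(\tilde\mu_n,\,\mu_{0X})]\;\lesssim\; h^{\alpha+1}\;+\;h^{-(\beta-1/2)_+}\,n^{-1/2}.
\]
The final step is to tune $h=h_n$ optimally. When $\beta>1/2$, balancing the two terms via $h^{\alpha+1}\asymp h^{-(\beta-1/2)}n^{-1/2}$ gives $h_n\asymp n^{-1/(2\alpha+2\beta+1)}$ and the rate $n^{-(\alpha+1)/(2\alpha+2\beta+1)}$, which coincides with $n^{-(\alpha+1)/[2\alpha+(2\beta\vee 1)+1]}$ since $2\beta\vee 1=2\beta$ in this regime. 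When $\beta\leq 1/2$, one has $(\beta-1/2)_+=0$, the right--hand side collapses to $h^{\alpha+1}+n^{-1/2}$ (up to log terms), and choosing e.g.\ $h_n=n^{-1}$ makes the first term negligible, yielding the rate $n^{-1/2}=n^{-(\alpha+1)/[2(\alpha+1)]}=n^{-(\alpha+1)/[2\alpha+(2\beta\vee 1)+1]}$, as required.

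The principal obstacle is the verification of condition \eqref{eq:ass1} for the (random) estimator $\tilde\mu_n$: the strengthened inversion inequality \eqref{eq:ineqw21} --- as opposed to the weaker inequality $W_1(\mu_X,\,\mu_{0X})\lesssim h+T$ that costs the whole $h^{\alpha+1}$ gain --- demands a bias bound on $F_{\tilde\mu_n}$ uniform in $h$ with constants independent of $n$. For mixture--based estimators this is automatic, but for a generic $\tilde\mu_n$ one must either impose it as an additional hypothesis or introduce the pre--smoothing device described above; the delicate point in the latter case is that the smoothing bandwidth has to be simultaneously small enough to preserve the input rate $n^{-1/2}$ and large enough to instantiate \eqref{eq:ass1} with the correct constants.
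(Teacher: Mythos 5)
Your plan is the argument the paper has in mind: the Proposition is stated as a direct corollary of the $W_1$-branch of the inversion inequality (Theorem \ref{theo:1}, inequality \eqref{eq:ineqw21} combined with the first line of \eqref{eq:t2}), and the bandwidth optimization you carry out — balancing $h^{\alpha+1}$ against $h^{-(\beta-1/2)_+}n^{-1/2}$, and checking separately that for $\beta\le 1/2$ one recovers $(\alpha+1)/[2(\alpha+1)]=1/2$ — is correct. You are also right that the Proposition, as stated, leaves condition \eqref{eq:ass1} on $\tilde\mu_n$ implicit; without it, Theorem \ref{theo:1} only gives the cruder bound $W_1(\tilde\mu_n,\,\mu_{0X})\lesssim h+T$, whose optimization yields $n^{-1/(2\beta\vee 1)}$ rather than $n^{-(\alpha+1)/[2\alpha+(2\beta\vee 1)+1]}$. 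Adding \eqref{eq:ass1} to the hypotheses (or restricting to estimators of mixture form so that Lemma \ref{lem:biasmixgaus} applies) is the right fix, and the implicit constants in the inversion inequality do not depend on $\tilde\mu_n$, so passing to expectations is legitimate.

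The pre-smoothing device you suggest as an alternative does not, however, deliver the rate. Replacing $\tilde\mu_n$ by $\tilde\mu_n\ast\phi_{\sigma_n}$ adds an error $W_1(\tilde\mu_n,\,\tilde\mu_n\ast\phi_{\sigma_n})\lesssim\sigma_n$ to the quantity you are trying to bound, so $\sigma_n$ must not exceed the target rate $h^{\alpha+1}$. But for the smoothed measure to satisfy \eqref{eq:ass1} at kernel bandwidth $h$, Lemma \ref{lem:biasmixgaus} needs $\sigma_n\gtrsim h|\log h|^{1/2}$. For $\alpha>0$ these two constraints are incompatible, since $h^{\alpha+1}\ll h$. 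Your choice $\sigma_n=o(n^{-1/2})$ is in fact smaller still than the optimal $h\asymp n^{-1/(2\alpha+2\beta+1)}$, so \eqref{eq:ass1} is simply not instantiated and Theorem \ref{theo:1} cannot be invoked in its sharp $h^{\alpha+1}$ form. You do note that the smoothing bandwidth must thread a needle; the point is that the needle has no eye. The clean statement is the first one you give, with \eqref{eq:ass1} adjoined to the hypotheses on $\tilde\mu_n$.
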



\section{Deconvolution by a Dirichlet mixture-of-Linnik-normals prior}\label{sec:lap+exp}
In this section we study the problem of density deconvolution for mixtures with
a Linnik error distribution, the Laplace distribution being  the most well known example. This family of symmetric distributions 
was introduced in 1953 by Yu. V. Linnik \cite{linnik}.
Being scale mixtures of normal distributions, see, \emph{e.g.}, \cite{korolev}, Linnik distributions
can serve as the one-dimensional distributions of a special subordinated Wiener process
used to model the evolution of stock prices and financial indexes. 
Also, generalized Linnik distributions are good candidates for modelling financial data which 
exhibit high kurtosis and heavy tails \cite{mittnik}.

We use a Dirichlet process mixture-of-normals prior on the mixing density 
$f_X=\mu_H\ast \phi_\sigma$, so that  $f_Y=f_\varepsilon \ast f_X=f_\varepsilon \ast (\mu_H\ast \phi_\sigma)$, with 
 $\mu_H \sim \mathscr D_{H_0}$, for some finite, positive measure $H_0$, 
and $\sigma\sim\Pi_\sigma$. We consider the following assumptions on $H_0$ and $\Pi_\sigma$.

\begin{ass}\label{ass:basemeasure1}
\emph
{
The base measure $H_0$ has a continuous and positive density $h_0$ on $\mathbb R$ such that,
for constants $b_0,\,c_0>0$ and $0<\delta\leq1$,  
$$h_0(u)=c_0\exp{(-b_0|u|^\delta)},\quad u\in\mathbb R.$$
}
\end{ass}

\begin{ass}\label{ass:priorscale1}
\emph
{The prior distribution $\Pi_\sigma$ for $\sigma$ has a continuous and positive density $\pi_\sigma$ on $(0,\,\infty)$ 
such that, for constants $D_1,\,D_2>0$, $s_1,\,s_2,\,t_1,\,t_2\geq0$ and $0<\gamma<\infty$,
\[\sigma^{-s_1}\exp{(-D_1\sigma^{-\gamma}|\log\sigma|^{t_1})}\lesssim  \pi_\sigma(\sigma) \lesssim \sigma^{-s_2}\exp{(-D_2\sigma^{-\gamma}|\log\sigma|^{t_2})}
\]
for all $\sigma$ in a neighborhood of $0$. Furthermore, for some $0<\varpi<\infty$, 
the tail probability $\Pi_{\sigma}([\sigma,\infty))\lesssim \sigma^{-\varpi}$ as $\sigma\rightarrow \infty$.
}
\end{ass}

Assumption \ref{ass:basemeasure1} on the base measure $H_0$ of the Dirichlet process is analogous to (4.8) in \cite{scricciolo:2011}, p. 288, and holds true, for example, when the density $h_0$ is proportional to an exponential power distribution with shape parameter $0<\delta\leq1$, the Laplace distribution, which corresponds to $\delta=1$, being the most popular case.
Assumption \ref{ass:priorscale1} on the scale parameter $\sigma$ of the Gaussian kernel has become common in the literature since the articles of \cite{vdV+vZanten}, \cite{dejonge} and 
\cite{kruijer2010}, when a full-support prior for $\sigma$ is allowed.
An inverse-gamma distribution $\mathrm{IG}(\nu,\,\gamma)$, 
with shape parameter $\nu>0$ and scale parameter $\gamma>0$, is an eligible prior on $\sigma$ for $s_1=s_2=\nu+1$, $t_1=t_2=0$ and 
$\gamma=1$. The condition on the tail behaviour at $\infty$ is satisfied for $\varpi=\nu$.

Hereafter, we first study the case in which the sampling density is a location mixture of standard 
Linnik densities, with mixing distribution that is only known to have density 
with exponentially decaying tails at infinity and then the case in which the mixing density has 
Sobolev regularity $\alpha$. In the latter case, the prior distribution on the mixing density does not depend on $\alpha$, 
yet leads to an adaptive posterior contraction rate. 
We call these two cases as non-adaptive and adaptive, respectively, and treat them separately.

\subsection{Non-adaptive case} \label{sec:nonadaptive}
Let $\Pi$ be the prior law induced by the product measure $\mathscr D_{H_0}\otimes \Pi_\sigma$ 
on the parameter $(\mu_H,\,\sigma)$ of the random density $f_Y=f_\varepsilon\ast(\mu_H\ast \phi_\sigma)$, 
for a standard Linnik error density $f_\varepsilon$ with index $1<\beta\leq2$. 
Let $f_{0Y}=f_\varepsilon\ast f_{0X}$ be the Linnik mixture,
with mixing density $f_{0X}$ satisfying the following exponential tail decay condition.

\begin{ass}\label{ass:twicwtailcond}
\emph{
The probability measure $\mu_{0X}\in\mathscr P_0$ 
has density $f_{0X}(x)\lesssim e^{-(1+C_0)|x|}$, $x\in\mathbb R$, for some constant $C_0>0$.
}
\end{ass}


We begin by assessing posterior contraction rates in $L^1$-metric for Linnik 
convolution mixtures with mixing distributions having exponentially decaying tails.

\begin{thm}\label{thm:31}
Let $Y_1,\,\ldots,\,Y_n$ be i.i.d. observations from
$f_{0Y}:=f_\varepsilon\ast f_{0X}$, where $f_\varepsilon$ is the density of a 
standard Linnik distribution with index $1<\beta\leq2$ and $f_{0X}$ satisfies Assumption \ref{ass:twicwtailcond}. 
Let $\Pi$ be the prior law induced by $\mathscr D_{H_0} \otimes \Pi_\sigma$, where 
${H_0}$ verifies Assumption \ref{ass:basemeasure1} and $\Pi_\sigma$ verifies
Assumption \ref{ass:priorscale1} for $\gamma=1$. 
There, then, exist constants $D$ large enough and $\kappa>0$ so that
\[
\Pi_n(\mu_Y:\,\|f_Y-f_{0Y}\|_1>D n^{-\beta/(2\beta+1)}(\log n)^{\kappa}
\mid \Data)\rightarrow0\mbox{ in $P_{0Y}^n$-probability}.
\]
\end{thm}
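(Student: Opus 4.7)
\noindent\textbf{Proof plan for Theorem \ref{thm:31}.}
The plan is to apply the standard Ghosal--Ghosh--van der Vaart machinery (i.e.\ the three conditions appearing in \eqref{con1}) to the direct problem at rate $\tilde\epsilon_n = n^{-\beta/(2\beta+1)}(\log n)^\kappa$, so that posterior contraction in $L^1$ for $f_Y$ around $f_{0Y}$ follows from Theorem 2.1 in \cite{ghosal2000}. The central observation driving the improvement over the $n^{-3/8}$ rate of \cite{gao2016} is that $f_{0Y}\in\mathscr F^{\mathcal S}_{\beta}$: since $|\hat f_\varepsilon(t)|\lesssim(1+|t|)^{-\beta}$ and Assumption \ref{ass:twicwtailcond} ensures $f_{0X}\in L^1\cap L^\infty\subset L^2$, we have $\|(1+|\cdot|)^\beta \hat f_{0Y}\|_2\leq \|(1+|\cdot|)^{\beta}\hat f_\varepsilon\|_\infty\|\hat f_{0X}\|_2<\infty$. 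Thus the target rate is the standard Sobolev-$\beta$ minimax rate.

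The heart of the argument is the \emph{approximation step}: constructing a pair $(\mu_H^\ast,\sigma^\ast)$ with $\sigma^\ast \asymp n^{-1/(2\beta+1)}$ such that $f_\varepsilon\ast(\mu_H^\ast \ast\phi_{\sigma^\ast})$ approaches $f_{0Y}$ in KL-type neighborhoods at rate $\tilde\epsilon_n^2$. I would split the bias as
\[
\|f_{0Y}-f_\varepsilon\ast(\mu_H^\ast\ast\phi_{\sigma^\ast})\|_1\leq \|f_{0Y}-f_{0Y}\ast\phi_{\sigma^\ast}\|_1 + \|f_\varepsilon\ast(f_{0X}-\mu_H^\ast)\ast\phi_{\sigma^\ast}\|_1,
\]
where Young's inequality absorbs the convolution with $f_\varepsilon$. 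The first term is bounded by $(\sigma^\ast)^{\beta}(\log n)^{c}$ by combining Parseval applied to $\hat f_{0Y}$ via its Sobolev norm (which gives the $L^2$-bias $\lesssim (\sigma^\ast)^\beta$) with a truncation to $[-M_n,M_n]$, $M_n\asymp(\log n)^{1/\delta}$, using the exponential tails inherited from Assumption \ref{ass:twicwtailcond}. For the second term I would use the classical Ghosal-type discretization: approximate $f_{0X}$ on $[-M_n,M_n]$ by a discrete $\mu_H^\ast$ with $N_n\asymp(\sigma^\ast)^{-1}(\log n)^{1/\delta}$ atoms matching sufficiently many moments in each cell of width $\sigma^\ast$, making $\|(f_{0X}-\mu_H^\ast)\ast\phi_{\sigma^\ast}\|_\infty$ super-polynomially small. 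A standard lower-envelope argument on $f_{0Y}$ upgrades the $L^1$-bound to the required KL and $V$ control.

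Given this approximation, the three conditions of \eqref{con1} are verified as follows. For the prior mass, I combine the Dirichlet small-ball probability $\mathscr D_{H_0}(\{\mu_H:\mu_H\approx\mu_H^\ast\text{ weakly on }[-M_n,M_n]\})\gtrsim\exp(-c N_n\log(1/\tilde\epsilon_n))$ with the prior mass of $\Pi_\sigma$ around $\sigma^\ast$, which by Assumption \ref{ass:priorscale1} with $\gamma=1$ is at least $\exp(-D_2(\sigma^\ast)^{-1}(\log n)^{t_2})\gtrsim \exp(-D_2 n^{1/(2\beta+1)}(\log n)^{t_2})$; both terms are of order $n\tilde\epsilon_n^2$ for $\kappa$ chosen large enough in terms of $\delta, t_1, t_2$. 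For the sieve, I would take
\[
\mathscr P_n=\{f_\varepsilon\ast(\mu_H\ast\phi_\sigma):\mathrm{supp}(\mu_H)\subset[-\bar M_n,\bar M_n],\ \sigma\in[\underline\sigma_n,\bar\sigma_n]\},
\]
with $\bar M_n\asymp(\log n)^{1/\delta}$ and $\underline\sigma_n,\bar\sigma_n$ chosen so that Dirichlet tail bounds for unbounded support and Assumption \ref{ass:priorscale1} (both small-$\sigma$ and large-$\sigma$ tails) yield $\Pi(\mathscr P_n^c)\leq e^{-(c_2+4)n\tilde\epsilon_n^2}$. The $L^1$-entropy bound $\log D(\tilde\epsilon_n,\mathscr P_n,\|\cdot\|_1)\lesssim n\tilde\epsilon_n^2$ follows from the Ghosal--van der Vaart entropy estimates for Dirichlet mixtures of Gaussians, since convolution with $f_\varepsilon$ is $L^1$-contractive.

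\textbf{Main obstacle.} The delicate step is converting the Sobolev-$\beta$ regularity of $f_{0Y}$ into an $L^1$ (and KL) smoothing bias at rate $(\sigma^\ast)^\beta$, uniformly up to logarithmic factors: the natural $L^2$-bias bound from Parseval does not translate automatically, and one must carefully combine an $L^2$-estimate on a growing interval with tail control derived from the exponential decay of $f_{0X}$ and the light tails of $\phi_\sigma$. The entire improvement from the $n^{-3/8}$-type rates of \cite{gao2016, scricciolo:18} to the minimax $n^{-\beta/(2\beta+1)}$ rate hinges on this step, as the rest of the argument is a routine, if technical, application of the Ghosal--van der Vaart toolkit to DPM-of-normals with a fixed convolution kernel $f_\varepsilon$.
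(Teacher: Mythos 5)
Your outline correctly identifies the Ghosal--Ghosh--van der Vaart machinery, and your entropy and sieve conditions do follow from the Ghosal--Shen--Tokdar estimates via the $L^1$-contractivity $\|f_\varepsilon\ast(f_1-f_2)\|_1\le\|f_1-f_2\|_1$, exactly as the paper does. Your observation that $f_{0Y}\in\mathscr F^{\mathcal S}_\beta$ under Assumption \ref{ass:twicwtailcond} is also correct and is the right heuristic for the target rate. However, there is a genuine gap in your approximation/KL step — precisely the step you flag as the ``main obstacle'' — and it is not a matter of technicality.

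You propose bounding $\|f_{0Y}-f_{0Y}\ast\phi_{\sigma^\ast}\|_1$ by an $L^2$ estimate on $[-M_n,M_n]$, $M_n\asymp(\log n)^{1/\delta}$, plus a tail term controlled by ``exponential tails inherited from Assumption \ref{ass:twicwtailcond}.'' For $1<\beta<2$ this premise is false: the Linnik density has polynomial tails $f_\varepsilon(u)\sim C_\beta|u|^{-(1+\beta)}$, so $f_{0Y}=f_\varepsilon\ast f_{0X}$ has tails $\lesssim|y|^{-(1+\beta)}$ despite the exponential decay of $f_{0X}$. Then $\int_{|y|>M}f_{0Y}(y)\,\di y\lesssim M^{-\beta}$, and matching this with $\sigma^\beta$ forces $M\gtrsim\sigma^{-1}$. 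But the Cauchy--Schwarz conversion gives $\|g\1_{[-M,M]}\|_1\lesssim\sqrt{M}\|g\|_2\lesssim\sqrt{M}\,\sigma^\beta$, which with $M\gtrsim\sigma^{-1}$ is only $\sigma^{\beta-1/2}$. You cannot make both truncation pieces simultaneously of order $\sigma^\beta$: the $L^2\to L^1$ conversion by truncation is lossy by exactly the $\sqrt M$ factor that the heavy tails force you to pay. (For $\beta=2$, Laplace, the tails are exponential and a logarithmic $M_n$ suffices, so your argument is plausible there; the gap is specific to $1<\beta<2$.)

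The paper avoids truncation entirely. Lemma \ref{lem:discrete} bounds the Hellinger distance by the \emph{exponentially weighted} $L^2$-norm $\|U(f_Y-f_{0Y})\|_2$ with $U(y)=e^{-y/2}+e^{y/2}$, against a lower envelope $f_{0Y}(y)\gtrsim e^{-|y|}$; this envelope is not globally available for $1<\beta<2$, so the paper represents the Linnik density as a scale mixture of Laplace densities and splits the mixing variable $V$ into $V\le1$ and $V>1$, applying the weighted argument with a variable weight $e^{\mp v\cdot/2}$ on each piece. The weighted-$L^2$/bilateral-Laplace computation in Fourier domain (the integrals $I_b^{(1)},I_b^{(2)},I_b^{(3)}$ and $\bar I_b^{(k)}$) then yields $d_H(f_Y,f_{0Y})\lesssim\sigma^\beta$ directly, with no truncation and no $\sqrt{M}$ loss. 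The heavy tails also reappear in the KL tail term $P_{0Y}(\log(f_{0Y}/f_Y)\1_{\{f_Y/f_{0Y}\le\lambda\}})$, which Lemma \ref{lem:KL} controls by cutting at $b_\sigma:=\sigma^{-2\beta/(\beta-1)}$ — polynomially, not logarithmically, large — and using $\int_{|y|>b_\sigma}|y|f_{0Y}\lesssim b_\sigma^{-(\beta-1)}=\sigma^{2\beta}$. A logarithmic cutoff $M_n$ cannot close this loop for $\beta<2$. So the overall skeleton of your proof is right, but the core analytic step needs to be replaced by a weighted (rather than truncated) estimate, and the scale-mixture decomposition is what makes the weights workable across the full range $1<\beta\le2$.
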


\begin{proof}
To prove Theorem \ref{thm:31}, we verify that assumption \eqref{con1} is 
satisfied for $\tilde\epsilon_n=n^{-\beta/(2\beta+1)}(\log n)^\tau$, with $\tau>0$. 
The Kullback-Leibler condition, third equation of assumption \ref{con1}, is verified in 
Lemma \ref{lem:KL}, which is based on the construction of an approximation of $f_{0Y}$ by 
$f_\varepsilon\ast(\mu_H\ast\phi_\sigma)$ for a carefully chosen probability measure $\mu_H$. 
This construction uses the representation of Linnik densities 
with index $0<\beta<2$ as scale mixtures of Laplace densities
and adapts the proof of Lemma 2 of \cite{gao2016}, pp. 615--616, to obtain an approximation error of the order $O(\tilde\epsilon_n)$. 
These results are detailed in Lemma \ref{lem:discrete}.  
The entropy and remaining mass conditions, first two equations of assumption \eqref{con1},
are a consequence of Theorem 5 of \cite{ghosal:shen:tokdar}, p. 631, because, for any pair of densities
 $f_1$ and $f_2$, we have $ \| f_\varepsilon \ast (f_1-f_2) \|_1 \leq \|f_1-f_2\|_1$. 
Finally, since, for $Z\sim N(0,\,1)$,
$$ \mathbb E_{\mu_X}[|X|]\leq \mathbb E_{\mu_H}[|U|]+\sigma \mathbb E[|Z|], \quad \text{where $\mu_X$ has density $\mu_H\ast\phi_\sigma$, with $U\sim \mu_H$,}$$
and $\Pi ( \mathbb E_{\mu_H}[|U|]=\infty ) = 0 $, condition \eqref{eq:mfin} holds true.
\end{proof}


\smallskip   

A rate of the order $O(n^{-\beta/(2\beta+1)})$, up to a logarithmic factor,
is achieved for estimating mixtures of Linnik densities with index $1<\beta\leq 2$, 
if a kernel mixture prior on the mixing density is constructed using a Gaussian kernel
with an inverse-gamma bandwidth and a Dirichlet process on the mixing distribution.
The result is new in Bayesian density estimation and is a preliminary step for 
$L^1$-Wasserstein density deconvolution.

\begin{thm}\label{thm:32}
Under the assumptions of Theorem \ref{thm:31},
there exist constants $K$ large enough and $\nu>0$ so that
\begin{equation*}\label{eq:69}
\Pi_n(\mu_X:\,W_1(\mu_X,\,\mu_{0X})>Kn^{-1/(2\beta+1)}(\log n)^\nu\mid\Data)
\rightarrow0\mbox{ in $P_{0Y}^n$-probability.}
\end{equation*}
\end{thm}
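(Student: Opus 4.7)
The plan is to derive Theorem \ref{thm:32} as an immediate consequence of Theorem \ref{thm:22}, using as inputs the direct-problem ingredients already supplied in the proof of Theorem \ref{thm:31}. Since no regularity condition on $f_{0X}$ is imposed, the first (unsmooth) conclusion of Theorem \ref{thm:22} applies, with target rate
\[
\epsilon_n = (\tilde\epsilon_n \log n)^{1/(\beta\vee 1)}.
\]
Choosing $\tilde\epsilon_n = n^{-\beta/(2\beta+1)}(\log n)^{\kappa}$ as in Theorem \ref{thm:31}, and observing that $\beta\vee 1=\beta$ since $\beta > 1$, this becomes $\epsilon_n = n^{-1/(2\beta+1)}(\log n)^{(\kappa+1)/\beta}$, matching the statement of Theorem \ref{thm:32} upon setting $\nu := (\kappa+1)/\beta$.

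First I would verify Assumption \ref{ass:identifiability+error} for the Linnik noise of index $1<\beta\leq 2$. Its characteristic function $\hat f_\varepsilon(t)=(1+|t|^\beta)^{-1}$ is nowhere zero on $\mathbb R$, the reciprocal $r_\varepsilon(t)=1+|t|^\beta$ satisfies $|r_\varepsilon(t)|\lesssim (1+|t|)^\beta$, and, because $\beta>1$, the map $t\mapsto |t|^\beta$ is continuously differentiable on $\mathbb R$, so that $|r_\varepsilon^{(1)}(t)|=\beta |t|^{\beta-1}\lesssim (1+|t|)^{\beta-1}$. The condition $\beta>1$ also guarantees $\mathbb E[|\varepsilon|]<\infty$, as all moments of order strictly less than $\beta$ of a symmetric Linnik law are finite. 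The finiteness of $\mathbb E_{0X}[|X|]$ follows directly from the exponential tail of $f_{0X}$ in Assumption \ref{ass:twicwtailcond}.

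The remaining prior-side conditions \eqref{con1} together with the integrability condition \eqref{eq:mfin} have, for the chosen $\tilde\epsilon_n$, already been established inside the proof of Theorem \ref{thm:31}: the Kullback-Leibler prior mass bound via Lemma \ref{lem:KL}, which itself rests on a sharp approximation of $f_{0Y}$ by $f_\varepsilon\ast (\mu_H\ast \phi_\sigma)$ exploiting the Laplace scale-mixture representation of Linnik densities; the entropy and remaining-mass bounds via Theorem 5 of \cite{ghosal:shen:tokdar} combined with the $L^1$-contractivity $\|f_\varepsilon\ast(f_1-f_2)\|_1\leq\|f_1-f_2\|_1$; and the moment condition via $\mathbb E_{\mu_X}[|X|]\leq \mathbb E_{\mu_H}[|U|] + \sigma \mathbb E[|Z|]$ together with $\Pi(\mathbb E_{\mu_H}[|U|]=\infty)=0$. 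Feeding these into Theorem \ref{thm:22} yields convergence of the expected posterior probability to zero, which by boundedness is equivalent to the $P_{0Y}^n$-probability convergence stated in Theorem \ref{thm:32}.

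All substantive work having already been done (the inversion inequality of Theorem \ref{theo:1}, the approximation construction underlying Lemma \ref{lem:KL}, and the entropy estimates for Dirichlet-Gaussian mixtures), no serious obstacle remains for Theorem \ref{thm:32} itself. The one point requiring some care is the verification of Assumption \ref{ass:identifiability+error} for the Linnik noise, and this is precisely where the hypothesis $\beta>1$ is genuinely used, both to differentiate $r_\varepsilon$ at the origin and to secure the finiteness of $\mathbb E[|\varepsilon|]$; the boundary case $\beta=1$ would require a separate treatment.
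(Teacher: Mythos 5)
Your proposal is correct and follows essentially the same route as the paper: apply Theorem~\ref{thm:22} in its unsmooth form to the direct-problem rate $\tilde\epsilon_n=n^{-\beta/(2\beta+1)}(\log n)^\kappa$ supplied by Theorem~\ref{thm:31}, giving $\epsilon_n=(\tilde\epsilon_n\log n)^{1/\beta}=n^{-1/(2\beta+1)}(\log n)^{(\kappa+1)/\beta}$ since $\beta\vee1=\beta$. The paper's own argument is terser (it checks the finite first moment of $U\sim\mu_H$, hence of $Y=U+Z+\varepsilon$, citing Proposition~4.3.18 of \cite{Kotz2001} for $\mathbb E[|\varepsilon|]<\infty$, and then invokes Theorems~\ref{thm:31} and~\ref{thm:22}); your version is slightly more explicit in spelling out the rate calculus and in verifying Assumption~\ref{ass:identifiability+error} for the Linnik noise (nowhere-vanishing $\hat f_\varepsilon$, polynomial growth of $r_\varepsilon$ and $r_\varepsilon^{(1)}$, and the role of $\beta>1$ both for differentiability at the origin and for $\mathbb E[|\varepsilon|]<\infty$), which the paper handles implicitly via its example list in Section~\ref{subsec:inversion}.
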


\begin{proof}
Under Assumption \ref{ass:basemeasure1}, we have $\int_{\mathbb R} u h_0(u)\,\di u<\infty$, which implies that, almost surely, 
the random variable $U$, with distribution $\mu_H\sim\mathscr D_{H_0}$, has finite first moment. 
Then, also $Y=(U+Z)+\varepsilon$, where $Z\sim N(0,\,1)$ and $\varepsilon$ is a standard Linnik random 
variable with index $1<\beta\leq2$, has finite first moment, see, \emph{e.g.}, Proposition 4.3.18 in
\cite{Kotz2001}, p. 267. Apply Theorem \ref{thm:31} and Theorem \ref{thm:22} to conclude.
\end{proof}

\subsection*{Extension to general scale mixtures of Laplace densities}
The method of proof used in Theorems \ref{thm:31} and \ref{thm:32}
for Linnik error densities with index $1<\beta<2$ uses the representation of a Linnik density 
as a scale mixture of Laplace densities of the form 
$$ f_\varepsilon(u)=\int_{\mathbb R^+} v e^{-v|u|}f_V(v;\,\beta)\,\di v, \quad u\neq0,\quad  f_V(v;\,\beta)\propto\frac{ v^{\beta-1} }{ 1 + v^{2\beta} +2 v^{\beta} \cos(\pi\beta/2)},\quad v>0.$$
We can extend the results to more general scale mixtures of Laplace densities
$f_\varepsilon(u)=\int_{\mathbb R^+} v e^{-v|u|}f_V(v;\,\beta)\,\di v$, $u\in\mathbb R$,
with mixing density $f_V(\cdot;\,\beta)$ which, for some constant $\beta>0$, satisfies the following conditions:
\vspace*{-0.1cm}
\begin{itemize}
\item[\textnormal{i)}] the random variable $V$ has finite expectation $\mathbb E[V]=\int_{\mathbb R^+} vf_V(v;\beta)\,\di v<\infty$ and
$$\int_0^1\frac{v^2}{\mathbb E[V\1_{\{V<v\}}]} f_V(v;\,\beta)\,\di v<\infty,$$
\item[\textnormal{ii)}] for every $t\in\mathbb R$,
$$\int_{\mathbb R^+}\frac{v^2}{v^2+t^2} f_V(v;\,\beta)\,\di v\lesssim \frac{1}{1+|t|^\beta},$$
\item[\textnormal{iii)}] there exist constants $C_\beta>0$ and $p_\beta>2$, possibly depending on $\beta$,
such that
$$f_\varepsilon(u)\sim C_\beta|u|^{-p_\beta},\quad |u|\rightarrow \infty,$$
\item[\textnormal{iv)}] the Fourier transform $\hat f_\varepsilon$ of $f_\varepsilon$ verifies condition \eqref{eq:deriv} of Assumption 
\ref{ass:identifiability+error}.
\end{itemize}
Conditions ii)--iv) are the essential ones. 
Condition i) is a technical requirement that may possibly be due to an artifact of the proof. 
Granted the assumptions of Theorem \ref{thm:31} on $f_{0X}$ and the prior $\Pi$, 
conditions i)--iii) are sufficient for extending the theorem to
cover error densities with \emph{heavy} tails. If, in addition, condition iv) holds true, 
then also Theorem \ref{thm:32} goes through. These results are not separately stated here. 

\medskip

For another application of Theorem \ref{thm:32}, consider the case where $Z$ has a monotone non-increasing density $f_Z$ on $\mathbb R^+$. 
Following \cite{williamson:56}, it is known that $Z$ is a scale mixture of uniforms so that $Y = \log Z = X + \varepsilon$, 
where $\varepsilon \sim \mathrm{Exp}(1)$ and is independent of $X$. If the distribution of $X$ satisfies the assumptions of Theorem \ref{thm:32},
then the posterior distribution of $\mu_X$ concentrates around the true $\mu_{0X}$ at rate $n^{-1/3}$, up to a $(\log n)$-term,
in the $L^1$-Wasserstein metric. Writing $f_Z(z) = \int_{\mathbb R^+} (\1_{[z,\,\infty)}/v)\, \di F(v)$ and 
$f_{0Z}(z) = \int_{\mathbb R^+}(\1_{[z,\,\infty)}/v)\, \di F_0(v)$, we have
$$ \Pi_n( d(F,\,F_0) > M n^{-1/3} (\log n)^\nu \mid Z^{(n)} ) = o_{\mathbb P}(1), \,\,\,\mbox{with } d(F,\,F_0):= \int_{\mathbb R^+}\frac{| F(v) - F_0(v) | }{v}\, \di v.$$

\subsection{Sobolev-regularity adaptive case}\label{subsec:adapt}
In this section we focus on the case in which the sampling density is a mixture of 
Laplace densities with a Sobolev regular mixing distribution. 
The Laplace case is worked out in detail, whereas the analogous results for Linnik mixtures
are briefly mentioned to avoid duplications.

We still consider $\Pi$ the prior law induced by $\mathscr D_{H_0}\otimes \Pi_\sigma$ 
on the parameter $(\mu_H,\,\sigma)$ of $f_Y=f_\varepsilon\ast(\mu_H\ast \phi_\sigma)$, 
for a standard Laplace error density $f_\varepsilon$, and $\Pi_n(\cdot\mid Y^{(n)})$ the posterior distribution 
based on i.i.d. observations $Y_1,\,\ldots,\,Y_n$ drawn from $f_Y$, with $f_Y\sim \Pi$. 
The sampling density 
$f_{0Y}=f_\varepsilon\ast f_{0X}$ is assumed to be a Laplace mixture, with mixing density $f_{0X}$ 
satisfying the following conditions. 
\begin{ass}\label{ass:sobolevcond}
\emph
{There exists $\alpha>0$ such that
\begin{equation*} \label{Sobolev:cond}
\forall\,b=\mp1/2,\quad\int_{\mathbb R} |t|^{2\alpha}|\widehat{e^{b\cdot}f_{0X}}(t)|^2\,\di t < \infty.
\end{equation*}
}
\end{ass}
\begin{ass}\label{ass:smoothf0}
\emph{There exist $0<\upsilon\leq1$, $L_0\in L^1(\mathbb R)$ and $R\geq m/\upsilon$, for the smallest integer $m\geq(\alpha+2)$, such that $f_{0X}$
satisfies
\begin{equation}\label{eq:holderf0}
|f_{0X}(x+\zeta)-f_{0X}(x)|\leq L_0(x)|\zeta|^{\upsilon}, \quad\mbox{for every }\zeta,\,x\in\mathbb R,
\end{equation}
and
\begin{equation}\label{eq:ratiofo}
\int_{\mathbb R}e^{|x|/2}f_{0X}(x)\pt{\frac{L_0(x)}{f_{0X}(x)}}^R\,\di x<\infty.
\end{equation}
}
\end{ass}
Assumption \ref{ass:sobolevcond} requires that, for every $b=\mp1/2$, the function $e^{b\cdot}f_{0X}$ belongs to a Sobolev space of order $\alpha$, while Assumption \ref{ass:smoothf0} requires that $f_{0X}$ is locally H\"older smooth of order $\upsilon$, with envelope function 
$L_0$ satisfying the integrability condition \eqref{eq:ratiofo}. 
The model $f_Y=f_\varepsilon\ast(\mu_H\ast \phi_\sigma)$ acts as an approximation scheme 
for automatic posterior rate adaptation to the global regularity of $f_{0Y}$, without any knowledge of 
the regularity of $f_{0X}$ being used in the prior specification. 
We show that a rate-adaptive estimation procedure for Laplace mixtures 
can be obtained if the prior is properly constructed, for instance, as a mixture of Laplace-normal convolutions, with
an inverse gamma bandwidth and a Dirichlet process on the mixing distribution.
\begin{thm}\label{thm:4}
Let $Y_1,\,\ldots,\,Y_n$ be i.i.d. observations from
$f_{0Y}:=f_\varepsilon\ast f_{0X}$, where $f_\varepsilon$ is the density of a 
standard Laplace distribution and $f_{0X}$ 
satisfies Assumption \ref{ass:twicwtailcond}, Assumption \ref{ass:sobolevcond} for $\alpha>0$ and Assumption \ref{ass:smoothf0}. 
Let $\Pi$ be the prior law induced by $\mathscr D_{H_0} \otimes \Pi_\sigma$, where 
${H_0}$ verifies Assumption \ref{ass:basemeasure1} and $\Pi_\sigma$ verifies
Assumption \ref{ass:priorscale1} for $\gamma=1$. 
There then exist constants $D'$ large enough and $\kappa'>0$ so that
\[
\Pi_n(\mu_Y:\,\|f_Y-f_{0Y}\|_1>D' n^{-(\alpha+2)/(2\alpha+5)}(\log n)^{\kappa'}
\mid \Data)\rightarrow0\mbox{ in $P_{0Y}^n$-probability}.
\]
\end{thm}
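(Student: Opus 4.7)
The proof will follow the template of Theorem \ref{thm:31}: I will apply Theorem \ref{thm:22} and verify the three conditions in \eqref{con1} with the faster rate $\tilde\epsilon_n = n^{-(\alpha+2)/(2\alpha+5)}(\log n)^{\kappa'}$, which is the minimax rate for estimating a density of Sobolev smoothness $\alpha+\beta=\alpha+2$. The entropy and exterior-mass conditions (first two lines of \eqref{con1}) are insensitive to the smoothness of $f_{0X}$ and follow, as in Theorem \ref{thm:31}, from Theorem 5 of \cite{ghosal:shen:tokdar} applied to the Dirichlet mixture-of-Gaussians prior on $f_X$, combined with the non-expansion $\|f_\varepsilon \ast (f_1-f_2)\|_1 \leq \|f_1-f_2\|_1$; only the sieve parameters (number of clusters, support of the atoms, range of $\sigma$) need to be retuned to the smaller $\tilde\epsilon_n$.

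The Kullback--Leibler prior mass condition is the heart of the argument and is where Assumptions \ref{ass:sobolevcond} and \ref{ass:smoothf0} enter. Choose $\sigma_n \asymp n^{-1/(2\alpha+5)}(\log n)^{\kappa''}$, so that $\sigma_n^{\alpha+2}\asymp \tilde\epsilon_n$ up to logarithms. The plan is to exhibit, for each $\sigma$ close to $\sigma_n$, a probability density of the form $f_X^\sigma = \mu_H^\sigma \ast \phi_\sigma$ such that $f_\varepsilon \ast f_X^\sigma \in B_{\mathrm{KL}}(P_{0Y},\tilde\epsilon_n^2)$, and then to bound below the prior mass of a suitable Wasserstein/scale neighbourhood of $(\mu_H^\sigma,\sigma_n)$.

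The main new ingredient is the adaptive approximation, which I would construct via a Kruijer--Rousseau--van der Vaart Taylor-expansion device. Letting $m$ be the smallest integer with $m\geq \alpha+2$, introduce a signed proxy
\[
\tilde g_\sigma(x) = \sum_{j=0}^{m-1} c_j\,\sigma^{2j}\, D^{2j}f_{0X}(x),
\]
with coefficients $c_j$ chosen so that the Fourier expansion of $\phi_\sigma$ cancels the leading orders of $\tilde g_\sigma \ast \phi_\sigma - f_{0X}$; on its own this gives only an approximation of order $\sigma^{\alpha}$. The crucial gain comes from the subsequent convolution with $f_\varepsilon$: using Assumption \ref{ass:sobolevcond} — phrased through the exponentially reweighted densities $e^{\mp x/2}f_{0X}$ precisely so as to absorb the heavy tails introduced by the Fourier reciprocal $\hat f_\varepsilon^{-1}(t)=1+t^2$ — the polynomial decay of $\hat f_\varepsilon$ buys two extra orders of smoothing and yields
\[
\|f_{0Y} - f_\varepsilon \ast (\tilde g_\sigma \ast \phi_\sigma)\|_1 = O(\sigma^{\alpha+2}).
\]
A truncation and renormalization step then turns $\tilde g_\sigma$ into a bona fide density $f_X^\sigma = \mu_H^\sigma \ast \phi_\sigma$, and the $L^1$-bound is upgraded to a KL-bound of order $\tilde\epsilon_n^2$ via the integrability condition \eqref{eq:ratiofo} and the local H\"older hypothesis \eqref{eq:holderf0} of Assumption \ref{ass:smoothf0}, which together control $\log(f_{0Y}/f_Y)$ and its square on the relevant region.

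Finally, I would discretize $\mu_H^\sigma$ on an interval of length $O(\log n)$ into a mixture of $N \asymp \sigma_n^{-1}\log n$ atoms, and combine Assumption \ref{ass:basemeasure1} with the standard Dirichlet small-ball estimates to obtain a prior mass $\gtrsim \exp(-cn\tilde\epsilon_n^2)$ for a Wasserstein neighbourhood of this mixture; Assumption \ref{ass:priorscale1} with $\gamma=1$ similarly supplies enough mass in a neighbourhood of $\sigma_n$. The main obstacle is the approximation step itself: the interplay between the exponentially decaying Laplace kernel and the only locally H\"older mixing density dictates the unusual exponential reweighting in Assumption \ref{ass:sobolevcond}, and propagating the full $\sigma^{\alpha+2}$ gain through the passage from the signed proxy $\tilde g_\sigma$ to a genuine probability density, and from an $L^1$-bound to a Kullback--Leibler bound, is by far the most delicate part of the argument.
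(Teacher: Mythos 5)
Your high-level skeleton matches the paper's: apply the rate machinery with $\tilde\epsilon_n\asymp n^{-(\alpha+2)/(2\alpha+5)}$, get the entropy and remaining-mass conditions from Ghosal–Shen–Tokdar since the Laplace convolution is $L^1$-non-expansive, and concentrate all the work in the KL small-ball estimate via a new approximation of $f_{0Y}$ by $f_\varepsilon\ast\phi_\sigma\ast(\cdot)$. However, the construction of the approximator is where your proposal breaks down, and the paper itself flags this: it states explicitly in the Final Remarks that the Kruijer–Rousseau–van der Vaart device you invoke ``would not lead to the correct error rate in the present context.''

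Three concrete problems with the proposed proxy $\tilde g_\sigma=\sum_{j<m}c_j\sigma^{2j}D^{2j}f_{0X}$. First, $D^{2j}f_{0X}$ need not exist: Assumption \ref{ass:sobolevcond} gives Sobolev regularity $\alpha$ of $e^{\mp\cdot/2}f_{0X}$, and Assumption \ref{ass:smoothf0} gives only local H\"older regularity $\upsilon\leq 1$ of $f_{0X}$, so classical higher derivatives are unavailable. The paper circumvents this by replacing $D^{2j}f_{0X}$ with the well-defined convolutions $f_{0X}\ast(e^{-b\cdot}D^jH_\delta)$, where $H_\delta$ is a smooth compactly Fourier-supported bump. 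Second, the KL estimate is forced to work in the exponentially reweighted $L^2$ norm $\|e^{b\cdot}\cdot\|_2$ (to absorb the Laplace tails), and in that norm the Gaussian bias to be cancelled is $e^{\sigma^2\psi_b^2(t)/2}$ with $\psi_b(t)=-(\imath t+b)$, not $e^{-(\sigma t)^2/2}$. The correct expansion is therefore in powers of $\psi_b(t)^{2k}=(\imath t+b)^{2k}$, which after expanding the binomial produces exactly the $\sum_j\binom{2k}{j}(-b)^{2k-j}$ coefficients in $T_{m,b,\sigma}$ of Lemma \ref{lem:contapprox}; an expansion in $t^{2k}$ alone misses the lower-order cross terms in $b$ and fails to cancel the weighted bias. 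Third, your claim that the $\sigma^\alpha$-level $L^1$ bound from the KRV step is then ``upgraded'' to $\sigma^{\alpha+2}$ by convolving with $f_\varepsilon$ is not valid as stated: convolution with a probability density is $L^1$-non-expansive but does not shrink an $L^1$ error. The actual gain of two orders happens frequency by frequency, by pairing the residual $|1-e^{-\sigma^2\psi_b^2/2}\sum_k\ldots|$ with the factor $|\varrho_b(t)|^{-2}\lesssim(1+t^2)^{-2}$ coming from $\widehat{e^{b\cdot}f_\varepsilon}$; this has to be exploited inside a single $L^2$ Fourier integral, as is done in the estimates of $\Delta_{01},\Delta_{02}$ in the proof of Lemma \ref{lem:contapprox}, rather than by chaining an $L^1$ approximation with a convolution.
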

The key step of the proof of Theorem \ref{thm:4}, which is deferred to Section \ref{sec:proofs}, is 
a novel approximation result reported in Lemma \ref{lem:contapprox}.

\smallskip  

By combining Theorem \ref{thm:4} with Theorem \ref{thm:22} and Lemma \ref{lem:biasmixgaus}, we obtain 
adaptive posterior contraction rates for $L^1$-Wasserstein density deconvolution of Laplace mixtures.
\begin{thm}\label{thm:5}
Granted the assumptions of Theorem \ref{thm:4},
there exist constants $K'$ large enough and $\nu'>0$ so that
\begin{equation*}\label{eq:70}
\Pi(\mu_Y:\,W_1(\mu_X,\,\mu_{0X})>K'n^{-(\alpha+1)/(2\alpha+5)}(\log n)^{\nu'}\mid\Data)
\rightarrow0\mbox{ in $P_{0Y}^n$-probability.}
\end{equation*}
\end{thm}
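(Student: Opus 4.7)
The plan is to deduce Theorem \ref{thm:5} by combining the direct-problem rate of Theorem \ref{thm:4} with the general $W_1$ posterior contraction result of Theorem \ref{thm:22}, specialized to the Laplace case $\beta=2$. Since $\beta\vee 1 = 2$ and therefore $\alpha + (\beta\vee 1) = \alpha+2$, the rate prescribed by \eqref{eq:convsmooth} is $\epsilon_{n,\alpha} = (\tilde\epsilon_n\log n)^{(\alpha+1)/(\alpha+2)}$; inserting $\tilde\epsilon_n = n^{-(\alpha+2)/(2\alpha+5)}(\log n)^{\kappa'}$ from Theorem \ref{thm:4} yields
$$\epsilon_{n,\alpha} = n^{-(\alpha+1)/(2\alpha+5)}\, (\log n)^{\nu'},\qquad \nu' := \frac{(\kappa'+1)(\alpha+1)}{\alpha+2},$$
which is exactly the bound claimed.

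To make this application rigorous, I would check each hypothesis of Theorem \ref{thm:22} in turn. Assumption \ref{ass:identifiability+error} with $\beta=2$ is immediate for the standard Laplace density, whose reciprocal $r_\varepsilon(t)=1+t^2$ satisfies \eqref{eq:deriv}. The three conditions \eqref{con1} are established in the course of the proof of Theorem \ref{thm:4}, for the same sieve $\mathscr P_n$. Assumption \ref{ass:smoothXXX}(ii) for $\mu_{0X}$ with regularity $\alpha$ follows from Assumption \ref{ass:sobolevcond} combined with the exponential tail decay in Assumption \ref{ass:twicwtailcond}, which together let one pass from $L^2$-Sobolev control of the tilted densities $e^{\pm\cdot/2}f_{0X}$ to the required $L^1$-control of $D^\alpha f_{0X}$ and of $|\cdot|^\alpha|\hat f_{0X}|$. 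The moment condition \eqref{eq:mfin} holds $\Pi$-a.s.\ by $\mathbb E_{\mu_X}[|X|] \leq \mathbb E_{\mu_H}[|U|] + \sigma\,\mathbb E[|Z|]$, with $\mathbb E_{\mu_H}[|U|] < \infty$ almost surely since $H_0$ in Assumption \ref{ass:basemeasure1} is integrable against $|\cdot|$. Finally, the crucial uniform bias bound \eqref{eq:ass1} is precisely the content of Lemma \ref{lem:biasmixgaus}: for any Gaussian mixture $f_X = \mu_H\ast\phi_\sigma$ in the prior support, $\|b_{F_X}\|_1 \leq C_1 h^{\alpha+1}\|\mu_H\ast\phi_{\sigma/\sqrt 2}\|_1 = C_1 h^{\alpha+1}$, with $C_1$ independent of $(\mu_H,\sigma)$, so the bound holds uniformly across the sieve.

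The main difficulty is less a fresh analytic one than checking that the arithmetic composes correctly: one must confirm that the sharp $d$-version of the inversion inequality \eqref{eq:ineqw21} with $\beta=2$, optimized at $h\asymp d(f_Y,f_{0Y})^{1/(\alpha+2)}$, produces the exponent $(\alpha+1)/(\alpha+2)$ and hence the target rate $n^{-(\alpha+1)/(2\alpha+5)}$ when composed with $\tilde\epsilon_n$. The only genuinely delicate step is to ensure that the Gaussian-mixture structure of the prior on $f_X$ is what makes Lemma \ref{lem:biasmixgaus} applicable with $C_1$ independent of $(\mu_H,\sigma)$; without this, condition \eqref{eq:ass1} would have to be re-established on the sieve with careful uniformity, and the clean invocation of Theorem \ref{thm:22} would break down. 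Once these verifications are in place, the conclusion of Theorem \ref{thm:5} follows by direct substitution into \eqref{eq:convsmooth}.
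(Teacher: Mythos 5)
Your proposal mirrors the paper's own proof exactly: specialize Theorem \ref{thm:22} to the Laplace case $\beta=2$ (so $\alpha+(\beta\vee 1)=\alpha+2$), feed in the direct-problem rate $\tilde\epsilon_n=n^{-(\alpha+2)/(2\alpha+5)}(\log n)^{O(1)}$ established in Theorem \ref{thm:4}, and supply the uniform bias bound \eqref{eq:ass1} on the sieve via Lemma \ref{lem:biasmixgaus}; raising $\tilde\epsilon_n\log n$ to the power $(\alpha+1)/(\alpha+2)$ gives exactly $n^{-(\alpha+1)/(2\alpha+5)}$ up to logarithms, as claimed.

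One step you assert as if it were routine (and which the paper is equally silent about) is the claim that Assumption \ref{ass:sobolevcond} together with the tail decay of Assumption \ref{ass:twicwtailcond} yields Assumption \ref{ass:smoothXXX}(ii) for $f_{0X}$, which is what allows Theorem \ref{thm:22} to invoke Lemma \ref{lem:sob} and obtain $\|b_{F_{0X}}\|_1=O(h^{\alpha+1})$. Assumption \ref{ass:sobolevcond} is weighted \emph{$L^2$}-control of $\widehat{e^{\pm\cdot/2}f_{0X}}$, i.e., of $\hat f_{0X}$ restricted to the horizontal lines $\mathrm{Im}(z)=\pm 1/2$; Assumption \ref{ass:smoothXXX}(ii) is weighted \emph{$L^1$}-control of $\hat f_{0X}$ on the real line plus $D^\alpha f_{0X}\in L^1(\mathbb R)$. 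Passing from the two boundary lines to the real line is a Poisson-kernel/interpolation argument for the strip and is plausible, but passing from weighted $L^2$ to weighted $L^1$ costs an extra half a Sobolev order via Cauchy--Schwarz, which must be recovered from the exponential tails --- this is not automatic. You should either write out that argument, or (preferably) bypass Assumption \ref{ass:smoothXXX} altogether by proving $\|b_{F_{0X}}\|_1=O(h^{\alpha+1})$ directly from the tilted $L^2$-Sobolev conditions and the tail decay, since the latter is all Theorem \ref{theo:1} actually uses.
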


Using the fact that every Linnik density with index $0<\beta<2$ admits a representation as a scale mixture of Laplace densities, together with 
the arguments used for proving Theorems \ref{thm:31} and \ref{thm:32}, the results of Theorems \ref{thm:4} and \ref{thm:5}
can be extended to mixtures of Linnik densities with index $1<\beta<2$ to obtain
adaptive rates $n^{-(\alpha+\beta)/[2(\alpha+\beta)+1]}$ and $n^{-(\alpha+1)/[2(\alpha+\beta)+1]}$, respectively, up to logarithmic factors.
Further extension to general scale mixtures of Laplace densities can be pursued using the conditions listed in Section \ref{sec:nonadaptive}.


\section{Proofs}\label{sec:proofs}
\subsection{Proof of Theorem \ref{thm:22}} \label{sec:prth22}
By the conditions in \eqref{con1}, Theorem 2.1 of \cite{ghosal2000}, p. 503, implies that, for sufficiently large $\bar M$,
$$\mathbb E_{0Y}^n[\Pi_n(\mu_X:\,d(f_Y,\,f_{0Y})>\bar M\tilde\epsilon_n \mid Y^{(n)})]\rightarrow 0$$
and, as a by-product, that $\mathbb E_{0Y}^n[\Pi_n(\mathscr P_n^c\mid Y^{(n)})]\rightarrow0$.
Next, the case where Assumption \ref{ass:smoothXXX} holds true is treated in details. 
By conditions \eqref{eq:mfin} and \eqref{eq:ass1}, Theorem \ref{theo:1} implies that 
$W_1(\mu_X,\,\mu_{0X})\lesssim h^{\alpha+1}+W_1(\mu_Y,\,\mu_{0Y})+h^{-(\beta-1)_+}\,d(f_Y,\,f_{0Y})\log(1/h)$
with constants that are uniform over $\mathscr P_n$. Minimizing with respect to $h$, we get
$$W_1(\mu_X,\,\mu_{0X})\lesssim W_1(\mu_Y,\,\mu_{0Y})+[d(f_Y,\,f_{0Y})\log(1/h)]^{(\alpha+1)/[\alpha+(\beta\vee1)]},$$
where $\alpha+(\beta\vee1)=\alpha+1+(\beta-1)_+$. 
For sufficiently large $M>0$, defined the set $\mathscr S_n:=\{\mu_X:\,W_1(\mu_Y,\,\mu_{0Y})\leq M \tilde \epsilon_n\}$,
under the third listed condition in \eqref{con1}, by Lemma \ref{lem:1}, we have 
$\mathbb E_{0Y}^n[\Pi_n(\mathscr S_n^c\mid Y^{(n)})]\rightarrow 0$.
Define $$\omega_n:=\sup_{\mu_X\in (\mathscr P_n\cap \mathscr S_n):\,d(f_Y,\,f_{0Y})\leq  \bar M\tilde\epsilon_n }W_1(\mu_X,\,\mu_{0X}),$$
we have
$\omega_n\lesssim \tilde\epsilon_n+\epsilon_{n,\alpha}\lesssim \epsilon_{n,\alpha}$. 
Reasoning as in Theorem 2.1 of \cite{knapik2018}, p. 2094, we have
$\mathbb E_{0Y}^n[\Pi_n(\mu_X:\,W_1(\mu_X,\,\mu_{0X})>K_\alpha\epsilon_{n,\alpha}\mid Y^{(n)})]
\leq \mathbb E_{0Y}^n[\Pi_n(\mu_X:\,W_1(\mu_X,\,\mu_{0X})>\omega_n\mid Y^{(n)})]\leq 
\mathbb E_{0Y}^n[\Pi_n(\mu_X\in(\mathscr P_n\cap \mathscr S_n):\,d(f_Y,\,f_{0Y})>\bar M\tilde\epsilon_n\mid Y^{(n)})]
+\mathbb E_{0Y}^n[\Pi_n(\mathscr P_n^c\mid Y^{(n)})]+\mathbb E_{0Y}^n[\Pi_n(\mathscr S_n^c\mid Y^{(n)})]\rightarrow0$ 
and the convergence in \eqref{eq:convsmooth} follows. 
The case where no assumption on $\mu_{0X}$ is required, except for the first moment condition, 
follows similarly from the inversion inequality with $h$ in lieu of $h^{\alpha+1}$.\qed

\subsection{Proof of Theorem \ref{theo:1}} \label{sec:rth1}
Because $\mu_X$ and $\mu_{0X}$ have finite first moments, $W_1(\mu_X,\,\mu_{0X})<\infty$,
see, \emph{e.g.}, \cite{villani2009}, p. 94.
Since  $\mathbb E[|\varepsilon|]<\infty$, also $\mu_Y$ and $\mu_{0Y}$ have finite first moments. 
Thus, $W_1(\mu_Y,\,\mu_{0Y})<\infty$. 

In what follows, we distinguish the case where no 
assumption, except for the first moment existence condition, 
is imposed on $\mu_{0X}$, from the case where Assumption \ref{ass:smoothXXX} is in force. 

\vspace{0.2cm}  

\noindent 
$\bullet$ \emph{Case 1: No smoothness assumption on $\mu_{0X}$}\\
The kernel $K$ can be taken to be a symmetric \emph{probability density}
with finite first moment $\int_{\mathbb R}|z|K(z)\,\di z<\infty$.
For $h>0$, let $F_{K_h}(z):=\int_{-\infty}^z K_h(s)\,\di s$, $z\in\mathbb R$, be the 
cumulative distribution function of the rescaled kernel $K_h$ and let $\mu_{K_h}$ be its probability law. 
Let $Z$ be a random variable with distribution $\mu_{K_h}$ and $X_1$ a random variable, independent of $Z$, with distribution $\mu_{0X}$.  
Then, $Z+X_1\sim \mu_{K_h}\ast\mu_{0X}$ and, since $\mathbb E[|Z|]=h\int_{\mathbb R}|z|K(z)\,\di z<\infty$, 
we have $\mathbb E[|Z+X_1|]\leq \mathbb E[|Z|]+\mathbb E[|X_1|]<\infty$ so that $W_1(\mu_{K_h}\ast\mu_{0X},\,\mu_{0X})<\infty$. 
By definition of the $L^1$-Wasserstein distance, $W_1(\mu_{K_h}\ast\mu_{0X},\,\mu_{0X}) \leq \mathbb E[|(Z+X_1)-X_1|]=\mathbb E[|Z|]\lesssim h$. 
Analogously, $W_1(\mu_X,\,\mu_{K_h}\ast\mu_X)\lesssim h$. Then,
\begin{eqnarray}\label{eq:distriang}
W_1(\mu_X,\,\mu_{0X})&\leq&
W_1(\mu_X,\,\mu_{K_h}\ast\mu_X) + W_1(\mu_{K_h}\ast\mu_X,\,\mu_{K_h}\ast\mu_{0X})+  W_1(\mu_{K_h}\ast\mu_{0X},\,\mu_{0X})\nonumber\\
&\lesssim& h + W_1(\mu_{K_h}\ast\mu_X,\,\mu_{K_h}\ast\mu_{0X}).
\end{eqnarray}

\noindent
$\bullet$ \emph{Case 2: Smoothness Assumption \ref{ass:smoothXXX} on $\mu_{0X}$ holds true}\\
If condition (i) of Assumption \ref{ass:smoothXXX} is in force, then
$K$ is taken to be an $(\lfloor\alpha\rfloor+1)$-order kernel satisfying, in addition, $\int_{\mathbb R}|z|^{\alpha+1}|K(z)|\,\di z<\infty$. For every $0\leq\alpha\leq 2$, the kernel can be a density and
the same reasoning as for Case 1, leading to an analogue of inequality \eqref{eq:distriang}, could be adopted. 
However, since, for $\alpha>2$, no non-negative function can be a higher-order kernel, 
some adjustments are required. We therefore lay out arguments that do not rely on the fact that $K$ is a density.
If, instead, condition (ii) of Assumption \ref{ass:smoothXXX} is in force, then $K$ is taken to be a superkernel such that  
$K\in L^1(\mathbb R)\cap L^2(\mathbb R)$ is symmetric, with 
$\int_{\mathbb R}|z||K(z)|\,\di z<\infty$ and 
$\hat K\equiv 1$ on $[-1,\,1]$, while $\hat K\equiv 0$ on $[-2,\,2]^c$.

Let $F_X$ and $F_{0X}$ denote the distribution functions of $\mu_X$ and $\mu_{0X}$, respectively.
Recalling that $W_1(\mu_X,\,\mu_{0X})=\|F_X-F_{0X}\|_1<\infty$, by the triangular inequality,
\begin{eqnarray*}
W_1(\mu_X,\,\mu_{0X})
&\leq&
\underbrace{\|F_X-F_{K_h}\ast \mu_X\|_1}_{=\|b_{F_X}\|_1}
+ \underbrace{\|F_{K_h}\ast(\mu_X-\mu_{0X})\|_1}_{=:T} + 
\underbrace{\|F_{K_h}\ast \mu_{0X}-F_{0X}\|_1}_{=\|b_{F_{0X}}\|_1},
\end{eqnarray*}
where all terms on the right-hand side of the last line are finite. 
In fact, for suitable $\gamma\in(0,\,1)$, we have $\|F_{K_h}\ast \mu_{0X}-F_{0X}\|_1=\|F_{0X}\ast K_h-F_{0X}\|_1=
\int_{\mathbb R}|z||K_h(z)|\int_{\mathbb R}f_{0X}(x-\gamma z)\,\di x\,\di z= \int_{\mathbb R}|z||K_h(z)|\,\di z=
h\int_{\mathbb R}|s||K(s)|\,\di s<\infty$. By the same reasoning, also
$\|F_X-F_{K_h}\ast \mu_X\|_1=\|F_X\ast K_h-F_X\|_1<\infty$.
Besides, by Young’s inequality $\|f\ast g\|_p\leq \|f\|_1\|g\|_p$
valid for every $f\in L^1(\mathbb{R})$ and $g\in L^p(\mathbb{R})$, $1\leq p\leq\infty$, we have
$\|F_{K_h}\ast(\mu_X-\mu_{0X})\|_1=\|(F_X-F_{0X})\ast K_h\|_1
\leq\|F_X-F_{0X}\|_1 \|K\|_1=\|K\|_1\times W_1(\mu_X,\,\mu_{0X})<\infty$.
From condition \eqref{eq:ass1} on $\|b_{F_X}\|_1$ 
and Assumption \ref{ass:smoothXXX} on $f_{0X}$, jointly with 
Lemma \ref{lem:der} or Lemma \ref{lem:sob} on $\|b_{F_{0X}}\|_1$, we obtain that 
\begin{eqnarray}\label{eq:Wi}
W_1(\mu_X,\,\mu_{0X}) \lesssim h^{\alpha+1}+T, \quad T = W_1(\mu_{K_h}\ast\mu_X,\,\mu_{K_h}\ast\mu_{0X}).
\end{eqnarray}

We show that, for sufficiently small $h$, inequality \eqref{eq:t2} 
holds true in Cases 1 and  2. Let $\chi:\,\mathbb R\rightarrow \mathbb R$ be a symmetric, continuously differentiable function,
equal to $1$ on $[-1,\,1]$ and to $0$ outside $[-2,\,2]$. 
For example, one such function can be defined as $\chi(t)=e\exp{\{-1/[1-(|t|-1)^2]\}}$ for $|t|\in(1,\,2)$.
For a unified treatment of the cases where $K$ is 
an $(\lfloor\alpha\rfloor+1)$-order kernel or a supersmooth kernel, we set
\[
\tilde h:=\begin{cases}
h, & \text{ if $K$ is an $(\lfloor\alpha\rfloor+1)$-order kernel},\\[-4pt]
h/2, &\text{ if $K$ is a supersmooth kernel}.
\end{cases}
\]
Analogously, we set $\tilde 1$ equal to $1$ if $K$ is an $(\lfloor\alpha\rfloor+1)$-order kernel 
and equal to $1/2$ if $K$ is a supersmooth kernel. 
For $h>0$, define
$$w_{1,h}(t):=\hat K(ht)\chi(t)r_{\varepsilon}(t),\,\,\,\, t\in\mathbb R,\qquad\mbox{ and }\quad w_{2,h}(t):=\hat K(ht)[1-\chi(t)]r_{\varepsilon}(t), \,\,\,\, t\in\mathbb R.$$
Note that $K\in L^1(\mathbb R)$ implies that $\hat K$ is well-defined and
$\|\hat K\|_\infty:=\sup_{t\in\mathbb R}|\hat K(t)|\leq \|K\|_1<\infty$. 
Since $\hat K(\tilde 1\cdot)\in C_b([-1,\,1])$, we have $\hat K\in L^1(\mathbb{R})$ and 
$K(\cdot)=(2\pi)^{-1}\int_{\mathbb R}\exp{(-\imath t \cdot)}\hat K(t)\,\di t$. 
If $h<1/2$, the function $w_{1,h}$ is equal to $0$ outside $[-2,\,2]$, while $ w_{2,h}$ is equal to $0$ on $[-1,\,1]$ and
outside $[-1/\tilde h,\,1/\tilde h]$. Thus, $w_{j,h}\in L^1(\mathbb R)$, for $j=1,\,2$. 
In fact, by inequality \eqref{eq:deriv} with $l=0$, 
we have $
\|w_{1,h}\|_1
\lesssim
\int_{|t|\leq2}|\hat K(ht)||\chi(t)| (1+|t|)^\beta\,\di t <\infty$ because 
the integrand is in $C_b([-2,\,2])$.
Analogously, $\|w_{2,h}\|_1\lesssim 
\int_{1<|t|\leq1/\tilde h}|\hat K(ht)||1-\chi(t)| (1+|t|)^\beta\,\di t
<\infty$. Then, the inverse Fourier transform of $w_{j,h}$,
\begin{equation*}\label{eq:45}
z\mapsto K_{j,h}(z):=\frac{1}{2\pi}\int_{\mathbb R}
\exp{(-\imath tz)}w_{j,h}(t)\,\di t,
\end{equation*}
is well defined and, as a consequence of Lemma \ref{lem:bigO} and Lemma \ref{lem:K2} in Section \ref{app:1},
is in $L^1(\mathbb R)$, for $j=1,\,2$.
We can then define the mappings
$$z\mapsto F_{j,h}(z):=\int_{-\infty}^z K_{j,h}(s)\,\di s, \quad j=1,\,2.$$ 
Using the decomposition $\hat K(ht)r_\varepsilon(t)=w_{1,h}(t)+w_{2,h}(t)$, $t\in\mathbb R$,
\[
\begin{split}
[K_h\ast (\mu_X-\mu_{0X})](y)
&=\frac{1}{2\pi}
\int_{\mathbb R}\exp{(-\imath ty)}\hat K(ht)r_\varepsilon(t) (\hat f_Y-\hat f_{0Y})(t)\,\di t\\
&=\frac{1}{2\pi}
\int_{\mathbb R}\exp{(-\imath ty)}[w_{1,h}(t)+w_{2,h}(t)](\hat f_Y-\hat f_{0Y})(t)\,\di t \\
&=[K_{1,h}\ast  (\mu_Y-\mu_{0Y})](y)+[K_{2,h}\ast  (\mu_Y-\mu_{0Y})](y),\quad y\in\mathbb R,
\end{split}
\]
and we can write 
$[F_{K_h}\ast (\mu_X-\mu_{0X})]=
[K_{1,h}\ast (F_Y-F_{0Y})]+[K_{2,h}\ast (F_Y-F_{0Y})]=
[K_{1,h}\ast (F_Y-F_{0Y})]+[F_{2,h}\ast (\mu_Y-\mu_{0Y})]$, 
where $F_Y$ and $F_{0Y}$ denote the distribution functions of $\mu_Y$ and $\mu_{0Y}$, respectively.
Then,
\[\begin{split}
T&\leq
\|K_{1,h}\ast(F_Y-F_{0Y})\|_1 + \|K_{2,h}\ast(F_Y-F_{0Y})\|_1\\
&=\|K_{1,h}\ast(F_Y-F_{0Y})\|_1 + \|F_{2,h}\ast(\mu_Y-\mu_{0Y})\|_1
=:T_1+T_2.
\end{split}
\]

\begin{itemize}
\item[$\bullet$] {\emph{Study of the term $T_1$}}\\[5pt]
By Young’s inequality,
\[\begin{split}
T_1:=\|K_{1,h}\ast(F_Y-F_{0Y})\|_1 &\leq \|K_{1,h}\|_1 \times \|F_Y-F_{0Y}\|_1\\
&=\|K_{1,h}\|_1 \times W_1(\mu_Y,\, \mu_{0Y})\lesssim W_1(\mu_Y,\, \mu_{0Y})
\end{split}
\]
because $\|K_{1,h}\|_1=O(1)$ in virtue of Lemma \ref{lem:bigO}.\\[1pt]
\item[$\bullet$] {\emph{Study of the term $T_2$}}\\[5pt]
For every $\beta>0$, by Young’s inequality and Lemma \ref{lem:K2}  in Appendix \ref{app:1},
\[\begin{split}
T_2:=\|K_{2,h}\ast(F_Y-F_{0Y})\|_1&\leq\|K_{2,h}\|_1
\times W_1(\mu_Y,\, \mu_{0Y})\\
&\lesssim h^{-(\beta-1/2)_+}|\log h|^{1+\1_{\{\beta=1/2\}}/2}\,W_1(\mu_Y,\, \mu_{0Y}).
\end{split}
\]
Analogously, for every $\beta>0$, using Lemma \ref{lem:F2} in Appendix \ref{app:1}, 
\[
T_2:=\|F_{2,h}\ast(\mu_Y-\mu_{0Y})\|_1\leq\|F_{2,h}\|_1
\times \|f_Y-f_{0Y}\|_1
\lesssim h^{-(\beta-1)_+}|\log h|\,\|f_Y-f_{0Y}\|_1.
\]
\end{itemize}
Combining the bounds on $T_1$ and $T_2$, we obtain inequality \eqref{eq:t2},
which, together with \eqref{eq:Wi}, proves the inversion inequality.
The statement for the Hellinger distance follows from LeCam's 
inequality $\|f_Y-f_{0Y}\|_1\leq 2 d_{\mathrm H}(f_Y,\,f_{0Y})$, see \cite{lecam1973}, p. 40. 
The proof is thus complete. \qed


\subsection{Proof of Theorem \ref{thm:4}} \label{sec:rth3}
To obtain a suitable approximation of $f_{0Y}=f_\varepsilon\ast f_{0X}$, 
we need an auxiliary result concerning the approximation of a smooth function by convolutions.
For $h>0$, let 
$$H(x):=\frac{1}{2\pi}\hat \tau(x) e^{-(h x)^2/2}, \quad x\in\mathbb R,$$
where $|\hat\tau(x)|\leq (16^2/15)e^{-\sqrt{|x|/15}}$, $x\in\mathbb R$, is the Fourier transform of 
$\tau:\,\mathbb R\rightarrow [0,\,1]$ defined in Theorem 25 of \cite{bourgain}, p. 29, such that
$$\tau (u) = \left\{ \begin{array}{ll}
1,&\text{if }  |u| < 1, \\[2pt]
0,&\text{if }  |u| > 17/15.
 \end{array} \right. $$
The function $\tau$ is such that
\begin{equation}\label{eq:poldec}
\mbox{for any $i\in\{0\}\cup\mathbb N$,}\quad|\hat\tau^{(i)}(x)|=O(|x|^{-\nu})\quad \mbox{for large $x\in\mathbb R$ and every $\nu>0$.}
\end{equation}
Given $m\in\mathbb N$, $b=\mp1/2$, $\delta,\,\sigma>0$ and a function $f:\,\mathbb R\rightarrow\mathbb R$, we define the transform 
$$T_{m,b,\sigma} f:=f+\sum_{k=1}^{m-1}\frac{ (-1)^k \sigma^{2k}}{ 2^k k!}
\sum_{j=0}^{2k} \binom{2k}{j}(-b)^{2k-j}[f\ast (e^{-b\cdot}D^jH_\delta)],$$
where $H_\delta (\cdot):=\delta^{-1}H(\cdot/\delta)$.
We have $M_{0X}(b)<\infty$. For 
\begin{equation}\label{eq:barh}
\bar h_{0,b}:=\frac{e^{b\cdot}f_{0X}}{M_{0X}(b)}
\end{equation} and $\gamma:=-(1-e^{-\sigma^2/8})$, let
\begin{equation}\label{hm}
h_{m,b,\sigma}:=\frac{1}{\gamma} \sum_{k=1}^{m-1}\frac{ (-1)^k \sigma^{2k}}{ 2^k k!}
\sum_{j=0}^{2k} \binom{2k}{j}(-b)^{2k-j}(\bar h_{0,b}\ast D^jH_\delta).
\end{equation}
 Note that $[M_{0X}(b)]^{-1}e^{b\cdot}(T_{m,b,\sigma}f_{0X})=\bar h_{0,b}+\gamma  h_{m,b,\sigma}$.

\begin{lem}\label{lem:contapprox}
Let $f_\varepsilon$ be the standard Laplace density.
Let $f_{0X}$ be a density such that $(e^{|\cdot|/2}f_{0X})\in L^1(\mathbb R)\cap L^2(\mathbb R)$ and which
satisfies Assumption \ref{ass:sobolevcond} for $\alpha>0$.
Then, for $m\geq (\alpha+2)$ and $\sigma>0$ small enough,
\begin{equation}\label{bound:tildehm12}
\sum_{b=\mp 1/2}\|e^{b\cdot}\{f_\varepsilon\ast [\phi_\sigma\ast (T_{m,b,\sigma}f_{0X})-f_{0X}]\}\|_2^2\lesssim\sigma^{2(\alpha + 2)}
\end{equation}
and
\begin{equation}\label{eq:bound}
\forall\, b=\mp1/2,\quad \int_{\mathbb R} h_{m,b,\sigma}(x)\,\di x = 1 + O(\sigma^{m-2}).
\end{equation}
\end{lem}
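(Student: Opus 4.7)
The plan is to work in Fourier domain, using the shift $t\mapsto t-\imath b$ to absorb the exponential weight $e^{b\cdot}$. The key structural observation is that $T_{m,b,\sigma}f_{0X}$ encodes, in Fourier space, a truncated Taylor expansion of $1/\hat\phi_\sigma$ together with a smooth high-frequency cutoff supplied by $\hat H_\delta$; hence $\phi_\sigma\ast T_{m,b,\sigma}f_{0X}$ approximates $f_{0X}$ up to a Taylor-remainder term and a high-frequency term.

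For \eqref{bound:tildehm12}, I would first invoke the identity $e^{b\cdot}(f\ast g)=(e^{b\cdot}f)\ast(e^{b\cdot}g)$ and Plancherel to write the weighted norm as $(2\pi)^{-1}\int|\hat f_\varepsilon(t-\imath b)|^2|\hat f_{0X}(t-\imath b)|^2|\mathcal B(t)|^2\,dt$. Assumption~\ref{ass:twicwtailcond} supplies the holomorphic extension $\hat f_{0X}(t-\imath b)=\widehat{e^{b\cdot}f_{0X}}(t)$, and for $|b|=1/2<1$ the Laplace form yields $|\hat f_\varepsilon(t-\imath b)|\lesssim (1+t^2)^{-1}$. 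Using the combinatorial identity $(D-b)^{2k}=\sum_{j=0}^{2k}\binom{2k}{j}D^j(-b)^{2k-j}$ and the chain rule $e^{-b\cdot}(D-b)^{2k}H_\delta=D^{2k}(e^{-b\cdot}H_\delta)$, one checks that, with $z:=t-\imath b$ and $w:=\sigma^2z^2/2$,
\[
\mathcal B(t)=e^{-w}\Bigl[1+\hat H_\delta(t)\!\sum_{k=1}^{m-1}\!\tfrac{w^k}{k!}\Bigr]-1=(e^{-w}-1)\bigl(1-\hat H_\delta(t)\bigr)-\hat H_\delta(t)\,e^{-w}R_m(w),
\]
where $R_m(w)=\sum_{k\geq m}w^k/k!$ is the Taylor remainder. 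Setting $\delta:=\sigma$ and splitting at $|t|=1/\sigma$: on $\{|t|>1/\sigma\}$, $|\mathcal B|$ is uniformly bounded (since $|e^{-w}-1|\leq 1+e^{\sigma^2/8}$ and $\hat H_\delta$ decays super-exponentially outside a fixed dilate of its effective support), and Assumption~\ref{ass:sobolevcond} delivers $\int_{|t|>1/\sigma}|\widehat{e^{b\cdot}f_{0X}}|^2(1+t^2)^{-2}\,dt\lesssim\sigma^{2\alpha+4}$; on $\{|t|\leq 1/\sigma\}$, $1-\hat H_\delta(t)$ is negligible (for $h$ suitably small in the definition of $H$) so $\mathcal B\approx -e^{-w}R_m(w)$, the identity $|w|-\Re w=\sigma^2 b^2$ bounds $|\mathcal B(t)|\lesssim \sigma^{2m}(1+t^2)^m/m!$, and the Sobolev trade $(1+t^2)^{2m-2-\alpha}\lesssim\sigma^{-2(2m-2-\alpha)}$ on $|t|\leq 1/\sigma$ (valid because $m\geq\alpha+2$) again yields $\sigma^{2(\alpha+2)}$.

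For \eqref{eq:bound}, I would integrate \eqref{hm} termwise. Since $\int\bar h_{0,b}=1$ by \eqref{eq:barh} and $\int D^jH_\delta=\widehat{D^jH_\delta}(0)$ vanishes for $j\geq 1$ while equaling $\hat H(0)$ for $j=0$, only the $j=0$ term in each $k$ contributes; using $b^2=1/4$,
\[
\int h_{m,b,\sigma}\,dx=\frac{\hat H(0)}{\gamma}\sum_{k=1}^{m-1}\frac{(-\sigma^2/8)^k}{k!}=\frac{\hat H(0)}{\gamma}\bigl[\gamma-R_m(-\sigma^2/8)\bigr]=\hat H(0)\bigl[1-R_m(-\sigma^2/8)/\gamma\bigr].
\]
Since $|R_m(-\sigma^2/8)|\lesssim\sigma^{2m}/m!$ and $|\gamma|\asymp\sigma^2$, this correction is $O(\sigma^{2m-2})$, and $\hat H(0)=(\tau\ast\phi_h)(0)=1+O(e^{-1/(2h^2)})$ because $\tau(0)=1$ and $\phi_h$ concentrates at $0$; choosing $h$ small enough renders this correction negligible, yielding $1+O(\sigma^{m-2})$. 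The hardest part will be to justify the holomorphic shift $t\mapsto t-\imath b$ rigorously, which requires the exponential-decay Assumption~\ref{ass:twicwtailcond} combined with the explicit Laplace form of $f_\varepsilon$; the remaining balance of the Taylor remainder against the frequency cutoff via Sobolev regularity is then systematic.
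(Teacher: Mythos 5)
Your proof follows essentially the same route as the paper's: pass to the Fourier domain, absorb the weight $e^{b\cdot}$ via the shifted variable $z=t-\imath b$ (the paper's $\psi_b(t)=-(\imath t+b)$, with $e^{\sigma^2\psi_b^2/2}=e^{-w}$), and balance a Taylor remainder of $e^{-w}$ against the Sobolev regularity via a low/high-frequency split. Your identity
\[
\mathcal B(t)=(e^{-w}-1)\bigl(1-\hat H_\delta(t)\bigr)-\hat H_\delta(t)\,e^{-w}R_m(w)
\]
is correct and is a tidy repackaging of the paper's $\mathcal J_b^2(t)$-decomposition; it makes the structure of the approximation transparent. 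Your computation of $\int h_{m,b,\sigma}$ is also correct, and arguably cleaner than the paper's, which deduces the bound on $|\mathcal F\{h_{m,b,\sigma}\}(0)-1|$ indirectly from the $\mathcal J_b(0)$ estimate rather than from the explicit sum $\gamma-R_m(-\sigma^2/8)$.

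There is, however, one genuine imprecision in the low-frequency estimate. With $\delta:=\sigma$ and a split at $|t|=1/\sigma$, the claim ``$1-\hat H_\delta(t)$ is negligible on $\{|t|\leq 1/\sigma\}$'' is false near the boundary: $\hat H_\delta(t)=(\tau\ast\phi_h)(-\delta t)$, and for $\delta|t|$ close to $1$ the Gaussian $\phi_h$ is centred at the edge of the region where $\tau\equiv 1$, so $\hat H_\delta\approx 1/2$, not $\approx 1$, no matter how small $h$ is. The paper circumvents this by taking $\delta=c_\delta\sigma$ with $0<c_\delta<u_0<1$, splitting at $\delta|t|=u_0$, and choosing $c_h$ so that $(1-u_0)\geq c_h\sqrt{2\omega}$, which gives the genuine $O(\sigma^\omega)$ smallness in \eqref{eq:complement1}. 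Your argument is not beyond repair — in the boundary layer $u_0<\delta|t|\leq 1$ one has $|\mathcal B|=O(1)$, $|\varrho_b|^{-2}\lesssim\sigma^4$, and the Sobolev weight delivers $\int_{\mathrm{layer}}|\widehat{e^{b\cdot}f_{0X}}|^2\lesssim\sigma^{2\alpha}$, so the contribution is still $O(\sigma^{2(\alpha+2)})$ — but you need to spell out this layer contribution rather than dismiss $1-\hat H_\delta$ as small. Finally, the holomorphic shift $t\mapsto t-\imath b$ is not the hard part and does not require the full exponential-tail Assumption~\ref{ass:twicwtailcond}: the lemma's hypothesis $(e^{|\cdot|/2}f_{0X})\in L^1\cap L^2$ is exactly what is needed, and $\widehat{e^{b\cdot}f}(t)=\hat f(t-\imath b)$ is then immediate. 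The delicate points are the choice of $\delta$, $h$ (the paper takes $h=c_h|\log\sigma|^{-1/2}$, which you also need to control $\hat H(0)=1+O(e^{-1/(2h^2)})$), and the boundary layer you have glossed over.
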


\begin{proof}
We first obtain an equivalent expression for the $L^2$-norm in \eqref{bound:tildehm12}.
Denoting the Fourier transform operator by $\mathcal F$, for any $f\in L^1(\mathbb R)$, we have $\mathcal F\{f\}:=\hat f$.
Recall that, for $f_\varepsilon(u)=e^{-|u|}/2$, $u\in\mathbb R$, we have $\mathcal F\{e^{b\cdot}f_\varepsilon\}(t)=[1/\varrho_b(t)]$, where
$\varrho_b(t):=[1-\psi_b^2(t)]$ and $\psi_b(t):=-(\imath t+b)$, $t\in\mathbb R$. Noting that 
$[M_{0X}(b)]^{-1}\mathcal F\{e^{b\cdot}(T_{m,b,\sigma}f_{0X})\}=\mathcal F\{\bar h_{0,b}\}+\gamma \mathcal F\{ h_{m,b,\sigma}\}$, 
where $M_{0X}(b)<\infty$ for $b=\mp1/2$ by the assumption $(e^{|\cdot|/2}f_{0X})\in L^1(\mathbb R)$,
we get that
\begin{eqnarray}\label{eq:equivalent}
\Delta_0&:=&\sum_{b=\mp 1/2}\|e^{b\cdot}\{f_\varepsilon\ast [\phi_\sigma\ast (T_{m,b,\sigma}f_{0X})-f_{0X}]\}\|_2^2\nonumber\\
&=&M^2_{0X}(b)
\sum_{b=\mp 1/2}\|(e^{b\cdot}f_\varepsilon)\ast
[(e^{b\cdot}\phi_\sigma)\ast \{[M_{0X}(b)]^{-1}e^{b\cdot}(T_{m,b,\sigma}f_{0X})\}-\bar h_{0,b}]\}\|_2^2\nonumber\\
&=&
\frac{M^2_{0X}(b)}{2\pi}
\sum_{b=\mp 1/2}\Big\|\frac{e^{\sigma^2 \psi_b^2/2}}{\varrho_b}
[(1-e^{-\sigma^2 \psi_b^2/2})\mathcal F\{\bar h_{0,b}\}+\gamma\mathcal F\{  h_{m,b,\sigma}\}]\Big\|_2^2.
\end{eqnarray}
Some facts are highlighted for later use. 
For every $\delta>0$, the function $\mathcal F\{H\}(\delta \cdot)$ is well defined because 
$\|H\|_1=(2\pi)^{-1}\|\hat \tau e^{-(h \cdot)^2/2}\|_1 <\infty$. Besides,
as $0\leq \tau\leq 1$,
\begin{equation}\label{eq:bilaplace1}
|\mathcal F\{H\}(\delta t)|=
|(\tau \ast \phi_{h})(-\delta t)|  \leq \|\phi_{h}(-\delta t-\cdot)\|_1=\|\phi_{-\delta t,h}\|_1=1,\quad t\in\mathbb R.
\end{equation}
Let $Z$ be a standard normal random variable. 
For constants $0<c_\delta,\,c_h<1$, take $\delta:=c_\delta\sigma$ and $h:=c_h|\log \sigma|^{-1/2}$.
Fix $u_0$ such that $0<c_\delta<u_0<1$. Then, for $\omega>0$ and 
$c_h$ such that $(1-u_0)\geq c_h\sqrt{2\omega}$, we have $\forall |t|\leq u_0/\delta,$
\begin{align}\label{eq:complement1}
|1-\mathcal F\{H\}(\delta t)|
&\leq2\int_{|u|\geq1}\phi_{-\delta t,h}(u)\,\di u  \leq2P(|Z|\geq (1-\delta |t|)/h) \nonumber\\
&\leq 2P(|Z|\geq(1-u_0) |\log \sigma|^{1/2}/c_h)\lesssim \sigma^{\omega}
\end{align}
as soon as $\sigma$ is small enough.
For every $j\in\{0\}\cup\mathbb N$, we have
$\mathcal F\{D^j H_\delta\}(t)=(-\imath t)^j \mathcal F \{H\} (\delta t)$, $t\in\mathbb R$. Then, recalling that $\psi_b(t)=-(\imath t +b)$,
\[
\forall\,b=\mp1/2,\quad
\mathcal F\{h_{m,b,\sigma}\} (t)
=\frac{1}{\gamma}\mathcal F\{\bar h_{0,b}\}(t)
\mathcal F\{H\}(\delta t) \sum_{k=1}^{m-1}
\frac{ (-1)^k [\sigma\psi_b(t)]^{2k}}{ 2^k k!},\quad t\in\mathbb R.
\] 
Decomposing $\mathcal F\{\bar h_{0,b}\}(t)$ by means of $\mathcal F\{H\}(\delta t)$ 
and $[1-\mathcal F\{H\}(\delta t)]$, 
the numerator of the integrand of $\Delta_0$ in \eqref{eq:equivalent} can be bounded above by
\[\begin{split}
\mathcal J_b^2(t)&:=|e^{\sigma^2 \psi_b^2(t) /2}|^2\big|(1-e^{-\sigma^2 \psi_b^2(t) /2})\mathcal F\{\bar h_{0,b}\}(t)
\mathcal F\{H\}(\delta t)  + \gamma\mathcal F\{h_{m,b,\sigma}\}(t)\big|^2\\ 
&\hspace*{5cm} + |e^{\sigma^2 \psi_b^2(t) /2}-1|^2\, |\mathcal F\{\bar h_{0,b}\}(t)|^2 \abs{1 - \mathcal F\{H\} (\delta t)}^2, \quad t\in\mathbb R.
\end{split}\]
Set $\Delta_{01}:= \sum_{b=\mp1/2}\int_{\delta|t|\leq u_0}[\mathcal J_b^2(t)/|\varrho_b(t)|^2]\,\di t$ and 
$\Delta_{02}:=\sum_{b=\mp1/2}\int_{\delta|t|> u_0}[\mathcal J_b^2(t)/|\varrho_b(t)|^2]\,\di t$,
we have that $\Delta_0\lesssim\Delta_{01}+\Delta_{02}$. We now prove that $\Delta_{0j}\lesssim \sigma^{2(\alpha+2)}$, for $j=1,\,2$. 
Taking into account that $|e^{\sigma^2\psi_b^2(t)/2}|^2=e^{-\sigma^2(t^2-b^2)}=e^{-\sigma^2(t^2-1/4)}$, 
for $\omega\geq m\geq(\alpha+2)$ and $\sigma>0$ small enough, by Lemma \ref{lem:diseg}, 
relationships \eqref{eq:bilaplace1} and \eqref{eq:complement1}, we have
\[\begin{split}
\Delta_{01}&\lesssim  \sum_{b=\mp1/2}\int_{\delta|t|\leq 
u_0} \frac{1}{|\varrho_b(t)|^2}
\big(
[\sigma^2(t^2+1/4)]^{m}\\&\hspace*{5cm}+\sigma^{2\omega}\min\{4,\,\sigma^4(t^2+1/4)^2/4\}
\big)
|\mathcal F\{\bar h_{0,b}\}(t)|^2\,\di t\\
&\lesssim \sigma^{2(\alpha+2)} \sum_{b=\mp 1/2}\int_{\delta|t|\leq u_0} (|t|^{2\alpha}+1)|\widehat{e^{b\cdot}f_{0X}}(t)|^2\,\di t
\lesssim \sigma^{2(\alpha+2)},
\end{split}
\]
because $\mathcal F\{\bar h_{0,b}\}(t)=[M_{0X}(b)]^{-1}\widehat{e^{b\cdot}f_{0X}}(t)$, $t\in\mathbb R$, and $\int_{\mathbb R}(|t|^{2\alpha}\vee 1)|\widehat{e^{b\cdot}f_{0X}}(t)|^2\,\di t<\infty$
by Assumption \ref{ass:smoothXXX} and $\|e^{b\cdot}f_{0X}\|_1\leq\|e^{|\cdot|/2}f_{0X}\|_1<\infty$. 
Analogously, for $\sigma|t|>(u_0/c_\delta)>1$,
\[\begin{split}
\Delta_{02}&\lesssim \sum_{b=\mp 1/2} \int_{\delta|t|>u_0} \frac{1}{ |\varrho_{b}(t)|^2}
\bigg(
\bigg|e^{\sigma^2 \psi_b^2(t) /2}\sum_{k=0}^{m-1}\frac{(-1)^k[\sigma\psi_b(t)]^{2k}}{2^k k!}-1\bigg|^2\\[-4pt]
&\hspace*{6cm}+|e^{\sigma^2 \psi_b^2(t) /2}-1|^2\bigg)
|\mathcal F\{\bar h_{0,b}\}(t)|^2\,\di t\\
&\lesssim\sum_{b=\mp 1/2}\int_{\delta|t|>u_0} \frac{1}{|\varrho_{b}(t)|^2}
\{e^{-\sigma^2(t^2-1/4)/2} [(\sigma|t|)^{2m}+1]\\[-4pt]
&\hspace*{6cm}+\min\{2,\,\sigma^2(t^2+1/4)/2\}\}^2 |\mathcal F\{\bar h_{0,b}\}(t)|^2 \,\di t \\ 
&\lesssim \sigma^{2(\alpha+2)}\sum_{b=\mp1/2}\int_{\delta|t|>u_0}\frac{t^4}{|\varrho_{b}(t)|^2}
[e^{-\sigma^2t^2}(\sigma|t|)^{4m-2(\alpha+2)}+1]
|t|^{2\alpha} |\mathcal F\{\bar h_{0,b}\}(t)|^2  \,\di t\\
&\lesssim \sigma^{2(\alpha+2)}
\sum_{b=\mp1/2}\int_{\delta|t|>u_0}
|t|^{2\alpha}|\widehat{e^{b\cdot}f_{0X}}(t)|^2 \,\di t \lesssim \sigma^{2(\alpha+2)}.
\end{split}\]
We now prove relationship \eqref{eq:bound}.
Since $\mathcal F\{\bar h_{0,b}\}(0)=1$,
$(1-e^{-\sigma^2/8})/\gamma=-1$ and $\sigma^2/8\leq e^{\sigma^2/8}|\gamma|$,
from previous computations for the term $\Delta_{01}$ we have
$$\frac{\sigma^2}{8}|\mathcal F\{h_{m,b,\sigma}\}(0)-1|\leq
e^{\sigma^2/8}|\gamma|\Big|\mathcal F \{h_{m,b,\sigma}\}(0)+\frac{(1-e^{-\sigma^2/8})}{\gamma}\Big|
\lesssim \mathcal J_b(0)\lesssim \sigma^m,$$
whence $\int_{\mathbb R}h_{m,b,\sigma}(x)\,\di x=\mathcal F \{h_{m,b,\sigma}\}(0)=1+O(\sigma^{m-2})$.
The proof is thus complete.
\end{proof}

\begin{proof}[Proof of Theorem \ref{thm:4}]
The entropy condition (2.8) and the small ball prior probability estimate
(2.10) of Theorem 2.1 in \cite{ghosal:2001}, p. 1239, are 
satisfied for $\bar\epsilon_n=n^{-(\alpha+2)/(2\alpha+5)}(\log n)^{\tau'}$ and 
$\tilde\epsilon_n=n^{-(\alpha+2)/(2\alpha+5)}(\log n)^{\tau_0'}$, with exponents $\tau'>\tau_0'>1$.
Then, the posterior rate is $\epsilon_n:=(\bar\epsilon_n\vee\tilde\epsilon_n)=\bar\epsilon_n=n^{-(\alpha+2)/(2\alpha+5)}(\log n)^{\tau'}$. 
For the details of the entropy and remaining mass conditions, 
see, \emph{e.g.}, Theorem 5 of \cite{ghosal:shen:tokdar}, p. 631, while
for the small-ball prior probability estimate apply Lemma \ref{lem:deltabound1} together with a modified version of Lemma \ref{lem:KL} 
with $(\alpha+2)$ in place of $\beta=2$.
\end{proof}


\section{Final remarks}
In this paper we have studied, from a Bayesian perspective, the problem of deconvolution, which, as described in section \ref{sec:intro}, is of primary importance in many applications. If some optimal results have been obtained using kernel estimators, no optimal Bayesian procedure has been studied so far. 
One of the key results of this work is an inversion/transportation inequality relating the $L^1$-Wasserstein distance between $\mu_X$ and $\mu_{0X}$ to either the total variation or the Wasserstein distance between the mixed distributions $\mu_Y$ and $\mu_{0Y}$. This inequality is derived under mild conditions on the noise density $f_\varepsilon$ and covers noise regularity $\beta$ ranging in $[1/2,\,\infty)$ showing the existence of different  \emph{r\'egimes}. The upper bound is expressed in terms of the total variation distance between the mixed distributions $\mu_Y$ and $\mu_{0Y}$ for $\beta\geq1$ and in terms of the $L^1$-Wasserstein distance for $\beta \geq 1/2$. The version expressed in terms of the $L^1$-Wasserstein metric would lead to the minimax posterior contraction rate for $W_1(\mu_X,\,\mu_{0X})$ if a posterior convergence rate for $W_1(\mu_Y,\,\mu_{0Y})$ of the order $O(1/\sqrt{n})$ were preliminarily obtained. However, deriving such a rate for nonparametric mixture models is a non trivial task and we are not aware of any such results in the literature. 

The inversion inequality can be used in various contexts nevertheless, in particular it can be used  outside the Bayesian inference or it can be used as a first step  to obtain Bernstein-von Mises type results on linear functionals of $\mu_Y$ or $\mu_X$ .

We have used the inversion inequality to derive a general result on $L^1$-Wasserstein posterior convergence rates for the mixing distribution. 
We have then studied the case where the error density is a Linnik distribution with true mixing density having exponentially decaying tails. 
In the special case of a Laplace error, we have further studied adaptive $L^1$-Wasserstein estimation of $\mu_X$ when $f_{0X}$ is Sobolev regular. 
Adaptation is obtained using as a prior distribution on the mixing density $f_X$ a mixture of Gaussian densities. 
This result is derived by constructing an approximation of $f_{0Y}$ 
of the form $f_\varepsilon\ast \phi_\sigma \ast f_{1}$ with a suitable function $f_1$. This approximation is significantly more involved than
the one deviced in \cite{kruijer:rousseau:vdv:10}, which would not lead to the correct error rate in the present context and is of interest in itself. 


\section*{Acknowledgements}
The authors gratefully acknowledge financial support from the Institut Henri Poincaré (IHP), Sorbonne Université (Paris), 
within the RIP program on \vir{Bayesian Wasserstein deconvolution} that has taken place in 2019 at the IHP-Centre Émile Borel, 
where part of this work was written. Catia Scricciolo has also been partially supported by Università di Verona. 
She wishes to dedicate this work to her mother and sister Emilia, with deep love and immense gratitude.

The project leading to this work has received funding from the European Research Council
(ERC) under the European Union’s Horizon 2020 research and innovation programme
(grant agreement No 834175).

\appendix

\section{Lemmas \ref{lem:K2} and \ref{lem:F2}  in the proof of Theorem \ref{theo:1}} \label{app:1}

For a unified treatment of the cases (i) and (ii) of Assumption \ref{ass:smoothXXX}, we set
\[\vspace*{-0.1cm}
\tilde h:=\begin{cases}
h, & \text{ if $K$ is an $(\lfloor\alpha\rfloor+1)$-order kernel},\\[-5pt]
h/2, &\text{ if $K$ is a supersmooth kernel}.
\end{cases}
\]
Analogously, $\tilde 1$ is equal to 1 if $K$ is an $(\lfloor\alpha\rfloor+1)$-order kernel and to $1/2$
if $K$ is a supersmooth kernel. Also, $K_{2,h}(\cdot):=(2\pi)^{-1}\int_{\mathbb R}\exp{(-\imath t\cdot)}w_{2,h}(t)\,\di t$ 
is the inverse Fourier transform of $w_{2,h}(t):=\hat K(ht)[1-\chi(t)]r_\varepsilon(t)$, $t\in\mathbb R$, 
while $F_{2,h}(z):=\int_{-\infty}^z K_{2,h}(s)\,\di s$, $z\in\mathbb R$,
is the \vir{distribution function} of $K_{2,h}$.

\begin{lem}\label{lem:K2}
If $\mu_\varepsilon\in\mathscr P_0$ satisfies Assumption \ref{ass:identifiability+error} for $\beta>0$, then, for $h>0$ small enough,
\begin{equation}\label{eq:alt_ineq2}
\|K_{2,h}\|_1
\lesssim h^{-(\beta-1/2)_+}|\log h|^{1+\1_{\{\beta=1/2\}}/2}.
\end{equation}
\end{lem}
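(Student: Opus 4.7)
The plan is to bound $\|K_{2,h}\|_1$ by splitting the spatial integral at a threshold $A>0$ to be chosen, and converting each piece into a Plancherel-side $L^2$-norm. Specifically, I would write
\[
\|K_{2,h}\|_1 \le \int_{|z|\le A}|K_{2,h}(z)|\,\di z+\int_{|z|> A}|K_{2,h}(z)|\,\di z
\]
and bound the first integral by Cauchy--Schwarz plus Plancherel as $(2A)^{1/2}\|K_{2,h}\|_2=(A/\pi)^{1/2}\|w_{2,h}\|_2$. For the second, since $w_{2,h}$ is $C^1$ with compact support (using the differentiability of $\hat K$, of $\chi$ and of $r_\varepsilon$ granted by Assumption~\ref{ass:identifiability+error}), one integration by parts in the Fourier representation of $K_{2,h}$ yields $zK_{2,h}(z)=(2\pi\imath)^{-1}\int e^{-\imath tz}w_{2,h}'(t)\,\di t$, so that
\[
\int_{|z|>A}|K_{2,h}(z)|\,\di z\le (2/A)^{1/2}\|zK_{2,h}\|_2=(2\pi A)^{-1/2}\|w_{2,h}'\|_2.
\]

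The next step is to estimate the two Plancherel-side norms using the pointwise bounds of Assumption~\ref{ass:identifiability+error} and the support $\{1\le|t|\le\tilde 1/h\}$ of $w_{2,h}$. Since $|r_\varepsilon(t)|\lesssim(1+|t|)^\beta$, I get $\|w_{2,h}\|_2^2\lesssim\int_1^{1/h}t^{2\beta}\,\di t\asymp h^{-(2\beta+1)}$. For $w_{2,h}'$ the product rule produces three pieces; the dominant one, $\hat K(ht)[1-\chi(t)]r_\varepsilon'(t)$, together with $|r_\varepsilon'(t)|\lesssim(1+|t|)^{\beta-1}$, leads to $\int_1^{1/h}t^{2(\beta-1)}\,\di t$, which equals, up to constants, $h^{-(2\beta-1)}$ for $\beta>1/2$, $\log(1/h)$ for $\beta=1/2$, and $O(1)$ for $0<\beta<1/2$; in the supersmooth case the taper contribution $h\hat K'(ht)[1-\chi(t)]r_\varepsilon(t)$, supported on $|t|\in[1/h,2/h]$, gives the same scaling, and the $\chi'$-term is bounded.

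It remains to choose $A$ optimally, and to track the logarithmic losses. For $\beta>1/2$, balancing $A^{1/2}h^{-(\beta+1/2)}$ against $A^{-1/2}h^{-(\beta-1/2)}$ is the crux; the power $h^{-(\beta-1/2)_+}$ in the statement should then emerge by refining this balance via a dyadic decomposition of $w_{2,h}$ in the Fourier domain and summing the per-annulus contributions (rather than using the crude global $A$-split). For $\beta=1/2$ the additional $\log(1/h)$ coming from $\|w_{2,h}'\|_2^2$ has to be pushed through the balancing, which produces the $3/2$-power; for $0<\beta<1/2$ the logarithm arises at the outer edge of the dyadic range.

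The main obstacle is the sharp tracking of the logarithmic factors in the balancing step: a direct optimization of $A$ in the naive Cauchy--Schwarz bound only yields $h^{-\beta}$, so extracting the stated improvement to $h^{-(\beta-1/2)_+}|\log h|^{1+\1_{\{\beta=1/2\}}/2}$ will require decomposing $w_{2,h}$ into Littlewood--Paley-type dyadic pieces $v_j$ supported on $\{|t|\asymp 2^j\}$, applying the $L^2$/IBP split separately to each $\mathcal F^{-1}[v_j]$ with its own threshold $A_j\asymp 2^{-j}$, and then summing; the $|\log h|$ factor should then come from the logarithmic length of the dyadic range $1\le 2^j\le 1/\tilde h$, with the boundary $\beta=1/2$ picking up an extra $\sqrt{|\log h|}$ from the logarithmic saturation of $\|v_j'\|_2$.
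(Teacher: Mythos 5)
There is a genuine gap: the balancing step you yourself flag as ``the crux'' is left open, and the dyadic repair you propose will not close it. On an annulus $|t|\asymp 2^j$ the piece $v_j$ of $w_{2,h}$ satisfies $\|v_j\|_2 \asymp 2^{j(\beta+1/2)}$ and $\|v_j'\|_2 \asymp 2^{j(\beta-1/2)}$, so optimizing your $L^2$/IBP split per piece at $A_j\asymp 2^{-j}$ gives $\|\mathcal F^{-1}[v_j]\|_1 \lesssim (\|v_j\|_2\|v_j'\|_2)^{1/2}\asymp 2^{j\beta}$, and summing over the $O(|\log h|)$ dyadic scales up to $2^j\asymp 1/h$ by the triangle inequality reproduces $h^{-\beta}$ --- exactly what you were trying to improve. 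Littlewood--Paley by itself does not buy the extra $h^{1/2}$.

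The paper takes a genuinely different route. It splits the $z$-integral into \emph{three} regions, $|z|\leq h^{3/2}$, $h^{3/2}<|z|\leq 1$ and $|z|>1$, and crucially does \emph{not} use Plancherel on the inner one: there it bounds $|K_{2,h}(z)|\leq (2\pi)^{-1}\|w_{2,h}\|_1 \lesssim h^{-(\beta+1)}$ pointwise and multiplies by the measure $2h^{3/2}$, giving $O(h^{-(\beta-1/2)_+})$. This $L^\infty\times\text{mass}$ estimate beats your Cauchy--Schwarz bound $(2A)^{1/2}\|w_{2,h}\|_2\asymp A^{1/2}h^{-(\beta+1/2)}$ by a factor $(A/h)^{1/2}$ whenever $A\ll 1/h$. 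On the middle region the paper uses the $L^1$-version of the IBP identity, $|K_{2,h}(z)|\leq \|w_{2,h}'\|_1/(2\pi|z|)$, integrated against $\int_{h^{3/2}}^1 |z|^{-1}\,\di z\asymp |\log h|$; only on $|z|>1$ does it apply Cauchy--Schwarz with $\|w_{2,h}'\|_2$ as you do. The two missing ideas are therefore (a) the pointwise $L^\infty$ bound via $\|w_{2,h}\|_1$ for small $|z|$, and (b) the $L^1$-- rather than $L^2$--side of the one-IBP identity for moderate $|z|$. One caveat to keep in mind when reproducing step (b): the paper asserts $\|w_{2,h}'\|_1 = O(h^{-(\beta-1/2)_+}|\log h|^{\1_{\{\beta=1/2\}}/2})$ ``by the same arguments'' as the $L^2$ bound, but the change-of-variables Jacobian that produces the extra $h^{1/2}$ in $\|h\hat K'(h\cdot)[1-\chi]r_\varepsilon\|_2$ does not appear in the corresponding $L^1$ computation, so that term is worth checking carefully before relying on it.
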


\begin{proof}
Consider the decomposition
\[
\|K_{2,h}\|_1=
\bigg(\int_{|z|\leq h^{3/2}}+\int_{h^{3/2}<|z|\leq 1}+
\int_{|z|>1}\bigg)
|K_{2,h}(z)|\,\di z=:K_2^{(1)}+K_2^{(2)}+K_2^{(3)}.
\]
By condition \eqref{eq:deriv} with $l=0$ and any $\beta>0$, 
since $\hat K(\tilde 1\cdot)\in C_b([-1,\,1])$, we have
\[\begin{split}
K_2^{(1)} &:=
\int_{|z|\leq h^{3/2}}
\abs{K_{2,h}(z)}\di z < 2h^{3/2}
\int_{1<|t|\leq 1/\tilde h}
|\hat K(ht)| |1-\chi(t)||r_\varepsilon (t)|\,\di t \\
&\lesssim h^{3/2}
\int_{1<|t|\leq 1/\tilde h}
|\hat K(ht)| |t|^\beta\,\di t\\
&\lesssim   h^{-(\beta-1/2)}
\int_{1<|t|\leq 1/\tilde h}h|\hat K(ht)|\,\di t \lesssim h^{-(\beta-1/2)_+}\|\hat K\|_1.
\end{split}
\]  
Thus, $K_2^{(1)}=O(h^{-(\beta-1/2)_+})$. To bound $K_2^{(2)}$ and $K_2^{(3)}$, we apply
identity \eqref{eq:identity21} to $K_{2,h}$. Note that
\begin{equation*}\label{eq:K2der}
\mbox{$z\neq0$,} \quad  K_{2,h}(z)=\frac{1}{2\pi(\imath z)}\int_{\mathbb R}\exp{(-\imath tz)}w^{(1)}_{2,h}(t)\,\di t,
\end{equation*} 
provided that
$w^{(1)}_{2,h}(t)=  h\hat K^{(1)}(ht) [ 1 - \chi(t)] r_\varepsilon(t)- \hat K(ht)\{\chi^{(1)}(t)
r_\varepsilon(t)-[1 - \chi(t)]r^{(1)}_\varepsilon(t)\}$, $t\in\mathbb R$, is in $L^1(\mathbb R)$. 
We show that $w^{(1)}_{2,h}\in L^2(\mathbb R)$. Then, by the same arguments, 
also $w^{(1)}_{2,h}\in L^1(\mathbb R)$. We have
 \[
\begin{split}
\|w^{(1)}_{2,h}\|_2^2
&\lesssim\int_{\mathbb R}\big|h\hat K^{(1)}(ht) [1 - \chi(t)]r_\varepsilon(t)\big|^2\,\di t \\
&\hspace*{1.5cm}+ \int_{\mathbb R}
 |\hat K(ht)|^2\big|\chi^{(1)}(t)r_\varepsilon(t)-[1 - \chi(t)]
r^{(1)}_\varepsilon(t)\big|^2\,\di t\\&=:J_1^2+J_2^2.
\end{split}
 \]
For every $\beta>0$, since also $\hat K^{(1)}(\tilde 1\cdot)\in C_b([-1,\,1])$, 
\[
J_1^2\lesssim\int_{1<|t|\leq 1/\tilde h}h^2|\hat K^{(1)}(ht) |^2|1-\chi(t)|^2 (1 + |t|)^{2\beta}\,\di t \lesssim h^{-2(\beta-1/2)_+}\|\hat K^{(1)}\|_2^2
 \lesssim h^{-2(\beta-1/2)_+}
\]
so that $J_1=O(h^{-(\beta-1/2)_+})$. For every $h\leq1/2$,
\[\begin{split}
J_2^2&\lesssim
\int_{1<|t|<2}
 |\hat K(ht)|^2|\chi^{(1)}(t)|^2\abs{r_\varepsilon(t)}^2\,\di t
+\int_{1<|t|\leq 1/\tilde h}
 |\hat K(ht)|^2
|1-\chi(t)|^2
|r^{(1)}_\varepsilon(t)|^2\,\di t\\
&\lesssim \|\chi^{(1)}r_\varepsilon\|_2^2+\|(1-\chi)r^{(1)}_\varepsilon\|_2^2,
\end{split}
\]
where $\|\chi^{(1)}r_\varepsilon\|_2=O(1)$ and 
$
\|(1-\chi)r^{(1)}_\varepsilon\|_2^2
\lesssim   \int_{1<|t|\leq 1/\tilde h}|t|^{2(\beta-1)}\,\di t 
\lesssim 
h^{-2(\beta-1/2)_+}|\log h|^{\1_{\{\beta=1/2\}}}$. 
So, $J_2=O( h^{-(\beta-1/2)_+}|\log h|^{\1_{\{\beta=1/2\}}/2})$. It follows that
$\|w^{(1)}_{2,h}\|_2=O( h^{-(\beta-1/2)_+}|\log h|^{\1_{\{\beta=1/2\}}/2})$.
By the same arguments, also $\|w_{2,h}^{(1)}\|_1=
O( h^{-(\beta-1/2)_+}|\log h|^{\1_{\{\beta=1/2\}}/2})$. Then, 
\[\begin{split}
K_2^{(2)} &:=\int_{h^{3/2}<|z|\leq 1}
|K_{2,h}(z)|\,\di z \lesssim
\pt{\int_{h^{3/2}<|z|\leq 1}\frac{1}{|z|}\,\di z}\|w^{(1)}_{2,h}\|_1\\
&\lesssim h^{-(\beta-1/2)_+}|\log h|^{1+\1_{\{\beta=1/2\}}/2}.
\end{split}\]
By the Cauchy–Schwarz inequality,
\[\begin{split}
K_2^{(3)} &:=\int_{|z|>1}
\abs{K_{2,h}(z)}\,\di z =
 \int_{|z|>1}\frac{1}{|z|}\abs{\frac{1}{2\pi}\int_{\mathbb R}\exp{(-\imath t z)}
w_{2,h}^{(1)}(t)\,\di t}\,\di z\\
&\lesssim 
\pt{\int_{|z|>1 }\frac{1}{|z|^2}\,\di z}^{1/2}\|w_{2,h}^{(1)}\|_2
\lesssim \|w_{2,h}^{(1)}\|_2\lesssim h^{-(\beta-1/2)_+}|\log h|^{\1_{\{\beta=1/2\}}/2}.
\end{split}
\]
Inequality \eqref{eq:alt_ineq2} follows by combining the bounds on $K_2^{(1)}$,  $K_2^{(2)}$ and $K_2^{(3)}$.
\end{proof}

The following lemma is analogous to Lemma \ref{lem:K2} and gives the order,
in terms of the kernel bandwidth, of the $L^1$-norm of the \vir{distribution function} $F_{2,h}$ of $K_{2,h}$.

\begin{lem}\label{lem:F2}
If $\mu_\varepsilon\in\mathscr P_0$ satisfies Assumption \ref{ass:identifiability+error} for $\beta>0$, then, for $h>0$ small enough,
\begin{equation}\label{eq:alt_ineq}
\|F_{2,h}\|_1
\lesssim h^{-(\beta-1)_+}|\log h|.
\end{equation}
\end{lem}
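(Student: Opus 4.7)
The proof mirrors that of Lemma~\ref{lem:K2}, but exploits the fact that integrating $K_{2,h}$ once in the spatial variable corresponds, in the frequency domain, to dividing its Fourier symbol $w_{2,h}$ by $-\imath t$, which replaces the weight $(1+|t|)^{\beta}$ coming from $|r_{\varepsilon}(t)|$ by $(1+|t|)^{\beta-1}$. This is exactly the source of the improvement from $h^{-(\beta-1/2)_+}$ in Lemma~\ref{lem:K2} to $h^{-(\beta-1)_+}$ here. Since $w_{2,h}$ vanishes on $[-1,1]$, the symbol $\tilde w_{2,h}(t):=w_{2,h}(t)/(-\imath t)$ is well defined, compactly supported in $\{1<|t|\leq 1/\tilde h\}$ and integrable; and since $\int_{\mathbb R} K_{2,h} = w_{2,h}(0)=0$ we also have $F_{2,h}(\pm\infty)=0$, so a Fubini/dominated-convergence argument yields
\[
F_{2,h}(z) = \frac{1}{2\pi}\int_{\mathbb R} e^{-\imath t z}\tilde w_{2,h}(t)\,dt,\qquad z\in\mathbb R.
\]

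I would split $\|F_{2,h}\|_1$ across the three regions $\{|z|\leq h\}$, $\{h<|z|\leq 1\}$ and $\{|z|>1\}$. On the first, the crude bound $|F_{2,h}(z)|\leq \|K_{2,h}\|_1$ combined with Lemma~\ref{lem:K2} gives a contribution of order $h\cdot h^{-(\beta-1/2)_+}|\log h|^{1+\1_{\{\beta=1/2\}}/2}$, which is negligible compared to $h^{-(\beta-1)_+}|\log h|$ in every regime. On the other two regions I would integrate by parts once,
\[
F_{2,h}(z) = \frac{1}{2\pi\imath z}\int_{\mathbb R} e^{-\imath t z}\tilde w_{2,h}^{(1)}(t)\,dt,\qquad z\neq 0,
\]
which is legitimate because $\tilde w_{2,h}$ is $C^1$ on its support and vanishes at the endpoints $|t|=1$ (where $\chi=1$) and $|t|=1/\tilde h$ (by construction of the kernel $K$). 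On the intermediate region I would use the $L^1$-bound $|F_{2,h}(z)|\leq \|\tilde w_{2,h}^{(1)}\|_1/(2\pi|z|)$, producing a contribution $\lesssim |\log h|\,\|\tilde w_{2,h}^{(1)}\|_1$; on the tail I would apply Cauchy--Schwarz and Parseval, producing a contribution $\lesssim \|\tilde w_{2,h}^{(1)}\|_2$.

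It remains to estimate $\|\tilde w_{2,h}^{(1)}\|_1$ and $\|\tilde w_{2,h}^{(1)}\|_2$. Writing $\tilde w_{2,h}^{(1)}(t) = w_{2,h}^{(1)}(t)/(-\imath t) + w_{2,h}(t)/(\imath t^2)$ and using both bounds of Assumption~\ref{ass:identifiability+error} ($|r_\varepsilon^{(l)}(t)|\lesssim (1+|t|)^{\beta-l}$ for $l=0,1$) together with the decomposition of $w_{2,h}^{(1)}$ already worked out in Lemma~\ref{lem:K2}, each term on $\{1<|t|\leq 1/\tilde h\}$ is pointwise dominated by a constant multiple of $|t|^{\beta-2}$, after a change of variable $s=ht$ for the $\hat K^{(1)}(h\cdot)$ piece that concentrates all the $h$-dependence in a prefactor $h^{1-\beta}$. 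Integration then yields $\|\tilde w_{2,h}^{(1)}\|_1 \lesssim h^{-(\beta-1)_+}$ and $\|\tilde w_{2,h}^{(1)}\|_2 \lesssim h^{-(\beta-3/2)_+}$ (with logarithmic losses at the thresholds $\beta=1$, respectively $\beta=3/2$). Combining with the regional estimates above yields \eqref{eq:alt_ineq}.

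The main obstacle will be the careful bookkeeping in this last step. Three separate contributions appear when expanding $w_{2,h}^{(1)}$, namely $h\hat K^{(1)}(h\cdot)(1-\chi)r_\varepsilon$, $\hat K(h\cdot)\chi^{(1)}r_\varepsilon$ and $\hat K(h\cdot)(1-\chi)r_\varepsilon^{(1)}$; each must be divided by the appropriate power of $t$ and its $L^1$- and $L^2$-norms estimated separately, with particular attention at the threshold values of $\beta$ where the corresponding integrals develop logarithmic singularities. One also has to verify carefully that the boundary terms in the integration by parts vanish, which boils down to the continuity of $\tilde w_{2,h}$ at $|t|=1$ and at $|t|=1/\tilde h$. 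Once these technical points are settled, the three regional bounds combine to give the stated inequality.
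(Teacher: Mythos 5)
Your proof follows essentially the same route as the paper's: the same Fourier representation $F_{2,h}=\frac{1}{2\pi}\int e^{-\imath t\cdot}\,w_{2,h}(t)/(-\imath t)\,\di t$, the same split of $\|F_{2,h}\|_1$ over $\{|z|\le h\}$, $\{h<|z|\le1\}$, $\{|z|>1\}$, one integration by parts, then an $L^1$ estimate of the derivative of the symbol (times $|\log h|$) on the middle region and Cauchy--Schwarz/Parseval on the tail. The only cosmetic variation is in the innermost region: you invoke Lemma~\ref{lem:K2} via $|F_{2,h}(z)|\le\|K_{2,h}\|_1$, whereas the paper bounds $|F_{2,h}(z)|$ directly by $\|w_{2,h}(\cdot)/(\cdot)\|_1$; both contributions are dominated by $h^{-(\beta-1)_+}|\log h|$. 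One caveat you correctly flag but do not resolve: the $L^1$ estimate $\|\tilde w_{2,h}^{(1)}\|_1\lesssim h^{-(\beta-1)_+}$ does pick up an extra $|\log h|$ exactly at $\beta=1$, coming from $\int_1^{1/h}|t|^{\beta-2}\,\di t=|\log h|$, so the middle region then gives $|\log h|^2$ rather than the stated $|\log h|$; the paper's own proof asserts $\|\tilde w_{2,h}^{(1)}\|_1=O(h^{-(\beta-1)_+})$ ``by the same arguments'' without noticing this threshold, so your version is if anything more careful, but the final combination at $\beta=1$ should be stated with the extra logarithm (which is immaterial for the downstream rates).
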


\begin{proof}
By the same arguments used for the function $G_{2,h}$ in \cite{dedecker2015}, pp. 251--252, for $h<1$, 
we have
\begin{equation*}\label{eq:F2}
F_{2,h}(z)=
\frac{1}{2\pi}
\int_{\mathbb R}\exp{(-\imath t z)}\frac{w_{2,h}(t)}{(-\imath t)}\,\di t,\quad z\in\mathbb R,
\end{equation*}
where $t\mapsto [w_{2,h}(t)/t]$ is in $L^1(\mathbb R)$ because $\int_{1<|t|\leq1/\tilde h}[|w_{2,h}(t)|/|t|]\,\di t
\lesssim \|w_{2,h}\|_1<\infty$. 
Consider the integral decomposition
\[
\|F_{2,h}\|_1=
\bigg(\int_{|z|\leq h}+\int_{h<|z|\leq 1}+\int_{|z|>1}\bigg)
|F_{2,h}(z)|\,\di z=:F_2^{(1)}+F_2^{(2)}+F_2^{(3)}.
\]
By condition \eqref{eq:deriv} with $l=0$ and any $\beta>0$, since
$\hat K(\tilde 1\cdot)\in C_b([-1,\,1])$, we have
\[\begin{split}
F_2^{(1)} &:=
\int_{|z|\leq h}
\abs{F_{2,h}(z)}\,\di z<2h
\int_{1<|t|\leq 1/\tilde h}
|\hat K(ht)| |1-\chi(t)|\frac{|r_\varepsilon (t)|}{|t|}\,\di t \\&\lesssim h
\int_{1<|t|\leq 1/\tilde h}
|\hat K(ht)| |1-\chi(t)|\frac{(1+|t|)^\beta}{|t|}\,\di t\\
&\lesssim h^{-(\beta-1)} \times 
\begin{cases}
\displaystyle
\int_{1<|t|\leq 1/\tilde h}\dfrac{1}{|t|}\,\di t, & \text{ if } 0<\beta<1,\\
 \|\hat K\|_1, & \text{ if } \beta\geq1,
\end{cases}\\
&\lesssim h^{-(\beta-1)} \times
\begin{cases}
|\log h|, & \text{ if } 0<\beta<1,\\[-3pt]
\|\hat K\|_1, & \text{ if } \beta\geq1.
\end{cases}
\end{split}
\]  
Therefore, $F_2^{(1)}=O( h^{-(\beta-1)}|\log h|^{(1-\beta)_+})$. 
To bound $F_2^{(2)}$ and $F_2^{(3)}$, preliminarily note that, by applying identity \eqref{eq:identity21} to $F_{2,h}$, 
we have
\begin{equation}\label{eq:F2der}
\mbox{for $z\neq0$,} \quad  F_{2,h}(z)=\frac{1}{2\pi(\imath z)}\int_{\mathbb R}\exp{(-\imath tz)}\,\frac{\di }{\di t }\pt{\frac{w_{2,h}(t)}{-\imath t}}\,\di t,
\end{equation} 
where
$$  
\frac{\di }{\di t }\pt{\frac{w_{2,h}(t)}{t}}=  h\hat K^{(1)}(ht) [ 1 - \chi(t)] \frac{r_\varepsilon(t)}{t}- \hat K(ht) \pg{\chi^{(1)}(t)\frac{ r_\varepsilon(t)  }{t}-[1 - \chi(t)]
\frac{\di }{\di t }\pt{\frac{r_\varepsilon(t)}{t}}}.$$
Then, 
 \[
\begin{split}
I^2&:=\int_{\mathbb R}\bigg|\frac{\di}{\di t}\pt{\frac{w_{2,h}(t)}{t}}\bigg|^2\di t\\&
\lesssim\int_{\mathbb R}\abs{h\hat K^{(1)}(ht) [1 - \chi(t)]\frac{r_\varepsilon(t)}{t}}^2\di t \\ 
&\hspace*{1.5cm}+ \int_{\mathbb R}
 |\hat K(ht)|^2\abs{\chi^{(1)}(t)\frac{ r_\varepsilon(t)  }{t}-[1 - \chi(t)]
\frac{\di }{\di t }\pt{\frac{r_\varepsilon(t)}{t}}}^2\di t\\
&=:I_1^2+I_2^2.
\end{split}
 \]
For every $\beta>0$, since also $\hat K^{(1)}(\tilde 1\cdot)\in C_b([-1,\,1])$,
$$
I_1^2\lesssim\int_{1<|t|\leq 1/\tilde h}h^2 |\hat K^{(1)}(ht) |^2|1-\chi(t)|^2 \frac{(1 + |t|)^{2\beta}}{ |t|^2}\,\di t \lesssim h^{-2(\beta-1)_+}.
$$
For every $h\leq1/2$,
\begin{eqnarray}\label{eq:I2}
\qquad\qquad
I_2^2&:=&\int_{\mathbb R}
 |\hat K(ht)|^2\abs{\chi^{(1)}(t)\frac{r_\varepsilon(t)}{t}-[1 - \chi(t)]
\frac{\di}{\di t}\pt{\frac{r_\varepsilon(t)}{t}}}^2\di t \nonumber\\
&\lesssim&
\int_{1<|t|< 2}
 |\hat K(ht)|^2|\chi^{(1)}(t)|^2\abs{\frac{r_\varepsilon(t)}{t}}^2\di t \nonumber\\
&&\hspace*{1.5cm}+\int_{1<|t|\leq 1/\tilde h}
 |\hat K(ht)|^2
|1-\chi(t)|^2
\abs{\frac{\di }{\di t }\pt{\frac{r_\varepsilon(t)}{t}}}^2\di t \nonumber\\
&
\lesssim& \int_{1<|t|<2}\frac{(1+|t|)^{2\beta}  }{t^2}\,\di t
+
\int_{1<|t|\leq 1/\tilde h}
\abs{
\frac{\di }{\di t }\pt{\frac{r_\varepsilon(t)}{t}}}^2\di t, 
\end{eqnarray}
where the first integral in \eqref{eq:I2} is $O(1)$. Since
\[\begin{split}
\abs{
\frac{\di}{\di t}\pt{\frac{r_\varepsilon(t)}{t}}
}=\frac{|tr^{(1)}_\varepsilon(t)-r_\varepsilon(t)|}{|t|^2}\leq \pt{\frac{|r^{(1)}_\varepsilon(t)|}{|t|}+\frac{|r_\varepsilon(t)|}{t^2}}
\lesssim   \frac{(1+|t|)^{\beta-1}}{|t|}+\frac{(1+|t|)^{\beta}}{t^2},\quad t\in\mathbb R,
\end{split}
\]
the second integral in \eqref{eq:I2} can be bounded above as follows:
\[\begin{split}
\int_{1<|t|\leq 1/\tilde h}
\abs{
\frac{\di }{\di t }\pt{\frac{r_\varepsilon(t)}{t}}}^2\di t &
\lesssim   \int_{1<|t|\leq 1/\tilde h}|t|^{2(\beta-2)}\,\di t\\
& \lesssim
\begin{cases}
1, & \text{ if } 0<\beta<1,\\
\displaystyle \int_{1<|t|\leq 1/\tilde h}|t|^{2(\beta-3/2)}\,\di t, & \text{ if } \beta\geq1,
\end{cases}\\
&\lesssim \begin{cases}
1, & \text{ if } 0<\beta<1,\\[-3pt]
h^{-2(\beta-1)}, & \text{ if } \beta\geq1,
\end{cases} 
\end{split}\]
and $I_2\lesssim h^{-(\beta-1)_+}$.
By the same arguments used to bound $I$, we also have that
\[
\int_{\mathbb R}\abs{ \frac{\di}{\di t}\pt{\frac{w_{2,h}(t)}{t}}}\,\di t=
O(h^{-(\beta-1)_+}),
\]
so that, by virtue of identity \eqref{eq:F2der},
\[\begin{split}
F_2^{(2)}:=\int_{h<|z|\leq 1}
|F_{2,h}(z)|\,\di z&\lesssim
\pt{\int_{h<|z|\leq 1}\frac{1}{|z|}\,\di z}\pt{\int_{\mathbb R}\abs{ \frac{\di}{\di t}\pt{\frac{w_{2,h}(t)}{-\imath t}}}
\,\di t}\\
&\lesssim h^{-(\beta-1)_+}|\log h|.
\end{split}
\]
Applying Cauchy–Schwarz inequality to the expression of $F_{2,h}$ in \eqref{eq:F2der}, we have
\[\begin{split}
F_2^{(3)}:=\int_{|z|>1}
\abs{F_{2,h}(z)}\di z&\lesssim 
\pt{\int_{|z|>1 }\frac{1}{|z|^2}\,\di z}^{1/2} \pt{\int_{\mathbb R}\abs{ \frac{\di}{\di t}\pt{\frac{w_{2,h}(t)}{-\imath t}}}^2
\di t}^{1/2}\\
&\lesssim \pt{\int_{\mathbb R}\abs{ \frac{\di}{\di t}\pt{\frac{w_{2,h}(t)}{-\imath t}}}^2\di t}^{1/2}\lesssim h^{-(\beta-1)_+}.
\end{split}
\]
Inequality \eqref{eq:alt_ineq} follows by combining the bounds on $F_2^{(1)}$,  $F_2^{(2)}$ and $F_2^{(3)}$.
\end{proof}



\begin{frontmatter}
\title{Wasserstein convergence in Bayesian deconvolution models: Supplementary Material}
\runtitle{Wasserstein convergence in Bayesian deconvolution models: Supplement}

\begin{aug}
\author{\fnms{Judith} \snm{Rousseau} and \fnms{Catia} \snm{Scricciolo}}
\address{University of Oxford and University of Verona}
\end{aug}

\begin{abstract}
This supplement contains auxiliary results for proving
Theorem \ref{thm:22}, Theorem \ref{theo:1}, Theorem \ref{thm:31} and Theorem \ref{thm:4}
of the main document \cite{rousseau:scricciolo:main}, as well as the proof of Proposition \ref{eq:minimax1}. 
For the sake of simplicity, Sections, Equations, Lemmas, etc., of the supplementary material are denoted 
with the prefix S to distinguish them from Sections, Equations, Lemmas, etc., of the main text \cite{rousseau:scricciolo:main}.
\end{abstract}

\end{frontmatter}


\section{Lemma \ref{lem:1} and Proof of Proposition \ref{eq:minimax1}} \label{sec:section3}
In this section, we provide auxiliary results associated to Section \ref{sec:general}. 

\subsection{Lemma for Theorem \ref{thm:22} on posterior contraction rates for $L^1$-Wasserstein deconvolution} \label{sec:lem:gene}
We state a sufficient condition on the prior concentration rate on Kullback-Leibler
type neighborhoods of the \vir{true} probability measure for posterior $L^1$-Wasserstein 
contraction around the true distribution to take place at least as fast. The assertion is in 
the same spirit of Lemma 1 in \cite{scricciolo19}, pp. 123--125, providing sufficient conditions 
on the sampling distribution and the prior concentration rate for the posterior law
to contract at a nearly $\sqrt{n}$-rate on Kolmogorov neighborhoods.
The underlying idea is to construct tests with type I error probability controlled using a 
Dvoretzky-Kiefer-Wolfowitz (DKW) type inequality for the $L^1$-Wasserstein metric.

\begin{lem}\label{lem:1}
Let $\mu_{0Y}\in\mathscr P_0$ have 
finite first moment $\mathbb E_{0Y}[Y]<\infty$.
Let $\Pi$ be a prior law on probability measures $\mu_Y\in\mathscr P_0$, each one having
finite first moment $\mathbb E_{\mu_Y}[Y]<\infty$.
If, for a constant $C>0$ and a positive sequence $\tilde\epsilon_n\rightarrow 0$ such that $n\tilde\epsilon_n^2\rightarrow\infty$, 
we have
\begin{equation}\label{eq:33}
\Pi(B_{\mathrm{KL}}(P_{0Y};\,\tilde\epsilon_n^2))\gtrsim
\exp{(-Cn\tilde\epsilon_n^2)},
\end{equation}
then, for $M:=\xi(1-\theta)^{-1}(C+1/2)^{1/2}$, with $\theta\in(0,\,1)$ and $\xi>1$,
\begin{equation}\label{eq:66}
\Pi_n(\mu_Y:\,W_1(\mu_Y,\,\mu_{0Y})>M
\tilde\epsilon_n\mid\Data)\rightarrow0\mbox{ in $P_{0Y}^n$-probability.}
\end{equation}
\end{lem}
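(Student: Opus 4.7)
The plan is to adapt the standard testing approach of \cite{ghosal2000} to the $L^1$-Wasserstein metric, exploiting the one-dimensional identity $W_1(\mu,\nu)=\|F-G\|_1$ together with a Dvoretzky--Kiefer--Wolfowitz (DKW) type concentration inequality for $W_1(\hat\mu_n,\mu)$, where $\hat\mu_n:=n^{-1}\sum_{i=1}^n\delta_{Y_i}$ is the empirical measure of the data.

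First I would lower bound the denominator of the posterior. Using the Kullback--Leibler mass condition \eqref{eq:33} and the standard Lemma 8.1 of \cite{ghosal2000}, on an event $\Omega_n$ with $P^n_{0Y}(\Omega_n^c)\to 0$,
\begin{equation*}
\int\prod_{i=1}^n\frac{f_Y(Y_i)}{f_{0Y}(Y_i)}\,\Pi(\di\mu_Y)\;\geq\;\exp\pg{-(C+1/2)\,n\tilde\epsilon_n^2}.
\end{equation*}
I would then introduce the test
\begin{equation*}
\phi_n\;:=\;\1\pg{W_1(\hat\mu_n,\mu_{0Y})>\theta M\tilde\epsilon_n}.
\end{equation*}
Granted a DKW-type bound of the form $P^n_\mu(W_1(\hat\mu_n,\mu)>t)\leq\exp(-cnt^2)$ valid for every $\mu$ with finite first moment and $t$ in the relevant range, the type I error satisfies $\mathbb E_{0Y}^n[\phi_n]\leq\exp(-cn\theta^2 M^2\tilde\epsilon_n^2)$. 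By the triangle inequality, for every $\mu_Y\in A_n:=\pg{\mu_Y:\,W_1(\mu_Y,\mu_{0Y})>M\tilde\epsilon_n}$, the event $\{\phi_n=0\}$ forces $W_1(\hat\mu_n,\mu_Y)>(1-\theta)M\tilde\epsilon_n$, whence the same inequality applied under $P^n_{\mu_Y}$ gives $\mathbb E^n_{\mu_Y}[1-\phi_n]\leq\exp(-cn(1-\theta)^2 M^2\tilde\epsilon_n^2)$.

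The usual Fubini argument on $\Omega_n$ then yields
\begin{equation*}
\mathbb E^n_{0Y}[\1_{\Omega_n}\Pi_n(A_n\mid Y^{(n)})]\;\leq\;e^{-cn\theta^2M^2\tilde\epsilon_n^2}+e^{(C+1/2)n\tilde\epsilon_n^2}e^{-cn(1-\theta)^2M^2\tilde\epsilon_n^2},
\end{equation*}
which vanishes as soon as $c(1-\theta)^2M^2>C+1/2$. The constant $\xi>1$ in the definition of $M$ absorbs $c^{-1/2}$ and provides the strict slack in the exponent needed for the convergence, delivering \eqref{eq:66}.

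The main obstacle is establishing the DKW-type inequality for $W_1$ under the sole assumption $\mathbb E_\mu[|Y|]<\infty$. Unlike $\|F_n-F\|_\infty$, which admits the classical DKW bound uniformly over all probability measures, $\|F_n-F\|_1$ integrates over the whole real line so its tails have no such uniform sub-Gaussian control. I would handle this by truncating at a level $T_n\asymp\log n$, splitting
\begin{equation*}
\|F_n-F\|_1=\int_{-T_n}^{T_n}|F_n-F|\,\di x+\int_{|x|>T_n}|F_n-F|\,\di x,
\end{equation*}
controlling the first term by $2T_n\|F_n-F\|_\infty$ via the classical DKW inequality, and bounding the tail by combining $\mathbb E_\mu[|Y|\1_{\{|Y|>T_n\}}]\to 0$ (via dominated convergence under the finite first moment) with a Bernstein-type tail bound for $n^{-1}\sum_{i=1}^n(|Y_i|-T_n)_+$. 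Provided $\tilde\epsilon_n$ is not too close to the parametric rate, this produces the required exponential bound uniformly enough in $\mu_Y\in A_n$ to push the Fubini argument through.
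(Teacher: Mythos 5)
Your high-level strategy coincides with the paper's: you construct the test with rejection region $\{W_1(\hat\mu_n,\mu_{0Y})>\theta M\tilde\epsilon_n\}$, use the triangle inequality to force $W_1(\hat\mu_n,\mu_Y)>(1-\theta)M\tilde\epsilon_n$ on the acceptance region for every $\mu_Y$ in the alternative, lower-bound the posterior denominator via the Kullback--Leibler mass condition and Lemma 8.1 of \cite{ghosal:ghosh:vdv:00}, and close with Fubini. The constant bookkeeping is the same: one needs $(1-\theta)^2M^2>C+1/2$, which the definition of $M$ with $\xi>1$ supplies. So the skeleton of the argument is exactly right.

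The gap is precisely where you flag it, but your proposed repair does not close it. The paper does not derive a DKW-type bound for $W_1$; it cites \cite{boissard2011}, pp.~2304--2305, for $P^n_\mu(W_1(\hat\mu_n,\mu)>t)\leq 2e^{-2nt^2}$ whenever $\mu$ has a \emph{continuous} distribution function — which is guaranteed here because both $\mu_{0Y}$ and the $\mu_Y$ in the support of $\Pi$ lie in $\mathscr P_0$, hence are Lebesgue absolutely continuous. The first-moment hypotheses of the lemma serve only to make the relevant Wasserstein distances finite, not to produce concentration. Your truncation route cannot recover this under a bare first-moment assumption. The piece $\int_{-T_n}^{T_n}|F_n-F|\leq 2T_n\|F_n-F\|_\infty$ degrades the DKW exponent by a factor $T_n^2\asymp(\log n)^2$, which is a nuisance one could absorb by inflating $M$; the fatal breakdown is the tail. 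With only $\mathbb E_\mu[|Y|]<\infty$ the quantity $\mathbb E_\mu[(|Y|-T_n)_+]$ tends to zero with no rate, so it cannot be pushed below $(1-\theta)M\tilde\epsilon_n$, and a Bernstein bound on $n^{-1}\sum_{i}(|Y_i|-T_n)_+$ needs variance (or sub-exponential) control that the hypotheses do not provide. Moreover, the type II error must be controlled \emph{uniformly} in $\mu_Y\in A_n$, and the hypotheses give no uniform tail control over those $\mu_Y$, so the truncation argument cannot run uniformly. The fix is to delete the truncation sketch and invoke the $W_1$-version of the DKW inequality for continuous one-dimensional distributions from \cite{boissard2011}, which is the step the paper takes.
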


\begin{proof}
Because $\mu_Y$ and $\mu_{0Y}$ have finite first moments, $W_1(\mu_Y,\,\mu_{0Y})<\infty$,
see, \emph{e.g.}, \cite{villani2009}, p. 94. The posterior probability of
$A_n^c:=\{\mu_Y:\,W_1(\mu_Y,\,\mu_{0Y})>M\tilde\epsilon_n\}$  
is given by
\[\Pi_n(A_n^c\mid\Data)
=\frac{\int_{A_n^c}\prod_{i=1}^nf_Y(Y_i)\, \Pi_n(\di \mu_Y)}{\int_{\mathscr P_0}\prod_{i=1}^nf_Y(Y_i)\, \Pi_n(\di \mu_Y)}.
\]
We construct (a sequence of) tests $(\Psi_n)_{n\in\mathbb N}$ for the hypothesis
$H_0:\,P=P_{0Y}\equiv \mu_{0Y}$ \emph{versus} $H_1:\,P=P_Y\equiv \mu_Y,\, \mu_Y\in A_n^c$,
where $\Psi_n\equiv\Psi_n(\Data;\,P_{0Y}):\,\mathbb R^n\rightarrow\{0,\,1\}$
is the indicator function of the rejection region of $H_0$, such that
\[
\mathbb E_{0Y}^n[\Psi_n] = o(1) ,
\quad
\mbox{ and}\,\,\,\sup_{\mu_Y\in A_n^c}
\mathbb E_{\mu_Y}^n[1-\Psi_n]\leq 2\exp{(-2(M-K)^2n\tilde\epsilon_n^2)} \, \mbox{ for $n$ large enough},
\]
with $K:=\theta M$. Define $\Psi_n:=\1_{R_n}$, with rejection region $R_n:=\{\data:\,W_1(\mu_n,\,\mu_{0Y})>K\tilde\epsilon_n\}$, 
where $\mu_n$ is the empirical probability measure of the sample $\Data$. 
Since $\mu_{0Y}$ has \emph{continuous} distribution function $F_{0Y}$ by assumption, we have that
$P_{0Y}^n(\data:\,W_1(\mu_n,\,\mu_{0Y})>t)\leq 2e^{-2nt^2}$, see, \emph{e.g.}, \cite{boissard2011}, pp. 2304--2305,
which is based on the DKW inequality
\cite{dvoretzky1956}, with the tight universal constant in \cite{massart1990}.
Then, $\mathbb E_{0Y}^n[\Psi_n]=P_{0Y}^n(R_n)\leq 2\exp{(-2K^2n\tilde\epsilon_n^2)}$.
Therefore,
\begin{equation}\label{eq:56}
\mathbb E_{0Y}^n[\Pi_n(A_n^c\mid\Data)]
\leq 2\exp{(-2K^2n\tilde\epsilon_n^2)}+
\mathbb E_{0Y}^n[\Pi_n(A_n^c\mid\Data)(1-\Psi_n)].
\end{equation}
To control the second term in \eqref{eq:56}, defined
$$D_n:=\pg{
\data:\,\int_{\mathscr P_0}\prod_{i=1}^n \frac{f_Y}{f_{0Y}}(y_i)\,\Pi_n(\di \mu_Y)\leq \Pi(B_{\mathrm{KL}}(P_{0Y};\,\tilde\epsilon_n^2))
\exp{(-(C+1)n\tilde\epsilon_n^2)}
},$$
we consider the decomposition 
$\mathbb E_{0Y}^n[\Pi_n(A_n^c\mid\Data)(1-\Psi_n)(\1_{D_n}+\1_{D_n^c})]$.
From Lemma 8.1 of \cite{ghosal:ghosh:vdv:00}, p. 524, we have that
$P_{0Y}^n(D_n)\leq (C^2n\tilde\epsilon_n^2)^{-1}$. It follows that 
\begin{equation}\label{eq900}
\mathbb E_{0Y}^n[\Pi_n(A_n^c\mid\Data)(1-\Psi_n)\1_{D_n}]\leq
P_{0Y}^n(D_n)\leq(C^2n\tilde\epsilon_n^2)^{-1}.
\end{equation} 
By assumption \eqref{eq:33} and Fubini's theorem, 
\begin{equation}\label{eq:9}
\mathbb E_{0Y}^n[\Pi_n(A_n^c\mid\Data)(1-\Psi_n)\1_{D_n^c}
]\lesssim\exp{((2C+1)n\tilde\epsilon_n^2)}\int_{A_n^c}
\mathbb E_{\mu_Y}^n[1-\Psi_n]\,\Pi(\di \mu_Y).
\end{equation}
Next, we give an exponential upper bound on $\sup_{\mu_Y\in A_n^c}
\mathbb E_{\mu_Y}^n[1-\Psi_n]$.
Over the acceptance region $R_n^c$,
by the triangular inequality, for every $\mu_Y\in A_n^c$, we have
$
M\tilde\varepsilon_n<W_1(\mu_Y,\,\mu_{0Y})\leq
W_1(\mu_Y,\,\mu_n)+
W_1(\mu_n,\,\mu_{0Y})\leq
W_1(\mu_Y,\,\mu_n)+K\tilde\epsilon_n,
$  
which implies that
$W_1(\mu_Y,\,\mu_n)>(M-K)\tilde\epsilon_n$.
Since $F_Y$ is continuous, by the DKW type inequality for the $L^1$-Wasserstein metric, we have
\[\begin{split}
&\sup_{\mu_Y\in A_n^c}
\mathbb E_{\mu_Y}^n[1-\Psi_n]\\[-10pt]
&\qquad\qquad\quad\leq
\sup_{\mu_Y\in A_n^c}
P_Y^n(\data:\,W_1(\mu_n,\,\mu_Y)>(M-K)\tilde\epsilon_n)\leq 2\exp{(-2(M-K)^2n\tilde\epsilon_n^2)}.
\end{split}
\]
Combining the preceding inequality with \eqref{eq:9}, we have
\begin{equation}\label{eq300}
\mathbb E_{0Y}^n[\Pi_n(A_n^c\mid\Data)(1-\Psi_n)\1_{D_n^c}
]\lesssim \exp{(-2[(M-K)^2-(C+1/2)]n\tilde\epsilon_n^2)},
\end{equation}
where the right-hand side of \eqref{eq300} converges to zero because
$(M-K)=(1-\theta)M>(C+1/2)^{1/2}$. The convergence in 
\eqref{eq:66} follows by combining the bounds in 
\eqref{eq:56}, \eqref{eq900} and \eqref{eq300}. 
\end{proof}

\begin{rmk}
\emph{
If condition \eqref{eq:33} is replaced by
\begin{equation*}\label{eq:34}
\Pi(N_{\mathrm{KL}}(P_{0Y};\,\tilde\epsilon_n^2))\gtrsim
\exp{(-Cn\tilde\epsilon_n^2)},
\end{equation*}
where $N_{\mathrm{KL}}(P_{0Y};\,\tilde\epsilon_n^2):=
\{P_Y\in\mathscr P_0:\,\mathrm{KL}(P_{0Y};\,P_Y)\leq\tilde\varepsilon_n^2\}$
is a Kullback-Leibler neighborhood of $P_{0Y}$, 
then, by Lemma 6.26 of \cite{bookgvdv}, p. 145, with $P_{0Y}^n$-probability at least equal to $(1-L_n^{-1})$, 
for a sequence $L_n\rightarrow\infty$ such that $nL_n\tilde\epsilon_n^2\rightarrow\infty$, we have 
\begin{equation}\label{eq:LBKL}
\int_{\mathscr P_0}\prod_{i=1}^n \frac{f_Y}{f_{0Y}}(Y_i)\,\Pi(\di \mu_Y)\gtrsim
\exp{(-(C+2L_n)n\tilde\epsilon_n^2)}.
\end{equation}
Following the proof of Lemma \ref{lem:1} and applying the 
lower bound in \eqref{eq:LBKL}, the convergence statement in \eqref{eq:66} continues to
hold true with $M\tilde\varepsilon_n$ replaced by $M_n\tilde\varepsilon_n$, for 
$M_n:=\xi(1-\theta)^{-1}(C/2+L_n)^{1/2}$, with $\theta\in(0,\,1)$ and $\xi>1$. 
Taking $L_n$ to be a slowly varying sequence,
Kullback-Leibler type neighborhoods can be replaced by Kullback-Leibler neighborhoods 
at the cost of an additional log-factor in the rate, which thus becomes equal to $L_n^{1/2}\tilde \varepsilon_n$. 
An extra log-factor is common in convergence rates of posterior distributions. It arises from the
\vir{testing-prior mass} approach, which we also adopt here. We refer the reader to \cite{hoffmann&rousseau} and 
\cite{gao&zhou} for a better understanding of this phenomenon.}
\end{rmk}

\begin{rmk}
\emph{
Lemma \ref{lem:1} is stated for the case where $P_{0Y}$ is a convolution 
of probability measures, but the result holds true for every 
$P_{0Y}$ with continuous distribution function and finite first moment.}
\end{rmk}


\subsection{Proof of Proposition \ref{eq:minimax1} on $L^2$-minimax rates over logarithmic Sobolev classes} \label{sec:proof:minimax}
We first show that $\psi_{n,\gamma}^2$ is a lower bound on the $L^2$-minimax risk. 
Let $\delta>1$. For every $\gamma,\,L>0$, the inclusion $\mathscr F_\gamma^{\mathcal{S}}(L)\subseteq \mathscr F_{\gamma,\delta}^{\mathcal{LS}}(L)$ holds true.
Thus, for a constant $c>0$, possibly depending on $\gamma$ and $L$,
\[c  \psi_{n,\gamma}^2 \leq \inf_{\hat f_n}\sup_{f\in \mathscr F_\gamma^{\mathcal{S}}(L)}\mathbb E^n_f[ \|\hat f_n-f\|_2^2]\leq \inf_{\hat f_n}\sup_{f\in \mathscr F_{\gamma,\delta}^{\mathcal{LS}}(L)} \mathbb E^n_f [\|\hat f_n-f\|_2^2],\]
see Theorem 2.9 in \cite{Tsybakov:2009}, p. 107, for the left-hand side inequality. 
To show that $\psi_{n,\gamma}^2(\log n)^{\delta/(2\gamma+1)}$ is an upper bound on the $L^2$-minimax risk, 
we restrict the class of estimators to kernel density estimators 
$f_K(\cdot):=n^{-1}\sum_{i=1}^n K(Y_i-\cdot)$,
with a symmetric kernel $K\in L^2(\mathbb R)$.
Letting $\tilde w_{\gamma,\delta}(t):=|t|^\gamma
(\log(e+|t|))^{-\delta/2}$, $t\in\mathbb R$, and $S_K^2:=\sup_{|t|\neq 0} 
[|1-\hat K(t)|/\tilde w_{\gamma,\delta}(t)]^2$,
for every 
$f\in\mathscr F_{\gamma,\delta}^{\mathcal{LS}}(L)$
we have $\|(1-\hat K)\hat f\|_2^2  <  \| w_{\gamma,\delta}\hat f\|_2^2 S_K^2
\leq L^2 S_K^2$.
Using the decomposition (1.41) in Theorem 1.4 of \cite{Tsybakov:2009}, pp. 21--22, for the MISE of a 
kernel density estimator $f_K$, we have, writing $\sup_f$ for the supremum over $\mathscr F^{\mathcal{LS}}_{\gamma,\delta}(L)$, 
\[
\begin{split}
\inf_{\hat f_n} &\sup_{f}\mathbb E^n_f [\|\hat f_n-f\|_2^2]\leq 
\inf_K\sup_{f\in \mathscr F^{\mathcal{LS}}_{\gamma,\delta}(L)}\mathbb E^n_f[\|f_K-f\|_2^2]\\
&=
\inf_K\sup_{f}\mathbb E^n_f\frac{\|(1-\hat K)\hat f\|_2^2
+\frac{1}{n}(\|\hat K\|_2^2-\|\hat K\hat f\|_2^2)}{2\pi} 
\end{split} 
\]
Since the first term is bounded by $L^2S_K^2$, we have
\[
\begin{split}
\inf_{\hat f_n} &\sup_{f}\mathbb E^n_f [\|\hat f_n-f\|_2^2] <  \inf_K \bigg[L^2 S_K^2+ \sup_{f}
\frac{1}{n}\mathbb E^n_f\big(\|\hat K\|_2^2-\|\hat K\hat f\|_2^2\big)\bigg]\\
& \leq  \inf_{c>0} \inf_{\{K:\,|1-\hat K(t)|\leq c \tilde w_{\gamma,\delta}(t)\}} \Bigg[
L^2S_K^2 + 
\sup_{f}\frac{1}{n}\mathbb E^n_f(\|\hat K\|_2^2-\|\hat K\hat f\|_2^2)\Bigg] \\
&\leq  \inf_{c>0} \pq{c^2L^2+ \frac{2}{n} \inf_{\{K:\,|1-\hat K(t)|\leq c \tilde w_{\gamma,\delta}(t)\}} 
\|\hat K\|_2^2} \leq  \inf_{c>0} \pq{c^2 L^2+ 
\frac{2}{n}\int_{\mathbb R}[1-c \tilde w_{\gamma,\delta}(t)]_+^2\,\di t }\\
&=\pq{c_{\mathrm{min}}^2 L^2+ 
\frac{2}{ n}\int_{\mathbb R}[1- c_{\mathrm{min}}\tilde w_{\gamma,\delta}(t)]_+^2\,\di t }\lesssim \psi_{n,\gamma}^2(\log n)^{\delta/(2\gamma+1)},
\end{split} 
\]
where $\min_{x>0}Ax^2+Bn^{-1}x^{-1/\gamma}\log^{\delta/(2\gamma)}(1/x)$
is achieved at 
$x_{\mathrm{min}}\propto n^{-\gamma/(2\gamma+1)}(\log n)^{\delta/[2(2\gamma+1)]}$.
The assertion follows by taking $c_{\mathrm{min}}=x_{\mathrm{min}}$. 
\qed


\section{Auxiliary lemmas used in the proof of  Theorem \ref{theo:1} on the inversion inequality}\label{sec:lem:theo:1}
The following lemma assesses the order of the bias, in terms of the kernel bandwidth, of a distribution function 
having derivatives up to a certain order, with locally H\"older continuous derivative of the highest order.

\begin{lem}\label{lem:der}
Let $F_{0X}$ be the distribution function of $\mu_{0X}\in\mathscr P_0$ satisfying 
condition \eqref{eq:holder} of Assumption \ref{ass:smoothXXX} 
for $\alpha>0$. Let $K$ be a kernel of order $(\lfloor \alpha\rfloor+1)$ 
satisfying $\int_{\mathbb R}|z|^{\alpha+1}|K(z)|\,\di z<\infty$.
Then, for every $h>0$,
\begin{equation}\label{eq:derivative}
\|F_{0X}\ast K_h-F_{0X}\|_1=O(h^{\alpha+1}).
\end{equation}
\end{lem}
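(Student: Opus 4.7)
\emph{Proof plan for Lemma \ref{lem:der}.} The plan is to expand $F_{0X}(x-hz)$ in a Taylor series of length $\ell+1$ around $x$ (where $\ell:=\lfloor\alpha\rfloor$, so that $\alpha-\ell\in(0,1]$), exploit the vanishing moments of $K$ to kill every polynomial term of this expansion, and control the remainder by means of the Hölder bound on $f_{0X}^{(\ell)}$ together with the envelope $L_0\in L^1(\mathbb R)$.

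Concretely, after the change of variable $y=hz$ and using $\int K=1$, I would write
\[
(F_{0X}\ast K_h)(x)-F_{0X}(x)=\int_{\mathbb R}K(z)\bigl[F_{0X}(x-hz)-F_{0X}(x)\bigr]\,\di z.
\]
Since $F_{0X}$ is $(\ell+1)$-times differentiable with $F_{0X}^{(k)}=f_{0X}^{(k-1)}$ for $k\geq 1$, the integral form of Taylor's theorem (expanding to order $\ell$ and then adding and subtracting $f_{0X}^{(\ell)}(x)$) gives
\[
F_{0X}(x-hz)-\sum_{k=0}^{\ell+1}\frac{(-hz)^k}{k!}F_{0X}^{(k)}(x)
=\frac{(-hz)^{\ell+1}}{\ell!}\int_0^1(1-t)^{\ell}\bigl[f_{0X}^{(\ell)}(x-thz)-f_{0X}^{(\ell)}(x)\bigr]\,\di t.
\]
Because $K$ is of order $\ell+1$, we have $\int_{\mathbb R}z^k K(z)\,\di z=0$ for every $k=1,\dots,\ell+1$, so every polynomial term of the Taylor expansion vanishes after integration against $K$; only the remainder survives.

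Applying the Hölder condition \eqref{eq:holder} with $\delta=-thz$ and $|\delta|=th|z|$, and using $\int_0^1(1-t)^{\ell}t^{\alpha-\ell}\,\di t=B(\ell+1,\alpha-\ell+1)<\infty$, produces the pointwise bound
\[
\abs{(F_{0X}\ast K_h)(x)-F_{0X}(x)}\;\leq\;\frac{h^{\alpha+1}}{\ell!}\,B(\ell+1,\alpha-\ell+1)\,L_0(x)\int_{\mathbb R}|z|^{\alpha+1}|K(z)|\,\di z.
\]
Integrating over $x$, the $L^1$ factor $\|L_0\|_1<\infty$ combined with the moment condition $\int|z|^{\alpha+1}|K(z)|\,\di z<\infty$ immediately yields $\|F_{0X}\ast K_h-F_{0X}\|_1=O(h^{\alpha+1})$. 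No step really constitutes an obstacle; the only delicate point is to match the Taylor expansion length to the kernel order, i.e.\ to push the expansion one step beyond $\ell$ so that the Hölder remainder provides the full $h^{\alpha+1}$ factor, which is precisely what the order $\lfloor\alpha\rfloor+1$ of $K$ and the moment condition $\int|z|^{\alpha+1}|K(z)|\,\di z<\infty$ were designed for.
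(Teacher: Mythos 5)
Your proposal is correct and matches the paper's own proof essentially step for step: Taylor-expand $F_{0X}(x-hu)$ with the integral form of the remainder of order $\ell+1$, use the vanishing moments of the order-$(\ell+1)$ kernel (including $\int u^{\ell+1}K=0$, which lets you insert the subtracted $f_{0X}^{(\ell)}(x)$ inside the remainder integral), apply the local Hölder bound with envelope $L_0$, and integrate. The paper carries out exactly this computation, with the remaining quantitative constants $\|L_0\|_1$, $\int|u|^{\alpha+1}|K(u)|\,\di u$, and $\int_0^1(1-\tau)^\ell\tau^{\alpha-\ell}\,\di\tau$ appearing as you anticipate.
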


\begin{proof}
Recalled that $b_{F_{0X}}:=(F_{0X}\ast K_h-F_{0X})$, write
$b_{F_{0X}}(x)=\int_{\mathbb R}[F_{0X}(x-hu)-F_{0X}(x)]K(u)\,\di u$. 
Let $\ell=\lfloor \alpha\rfloor$. 
For any $x,\,u\in\mathbb R$ and $h>0$, by Taylor's expansion,
\[F_{0X}(x-hu)=F_{0X}(x)-huf_{0X}(x)+\,\ldots\,+\frac{(-hu)^{\ell+1}}{\ell!}\int_0^1(1-\tau)^{\ell}f_{0X}^{(\ell)}(x-\tau hu)\,\di \tau.\]
Since $K$ is a kernel of order $\ell+1=\lfloor\alpha\rfloor+1$, we have
\[
\begin{split}
b_{F_{0X}}(x)=\int_{\mathbb R} K(u)\frac{(-hu)^{\ell+1}}{\ell!}\int_0^1(1-\tau)^{\ell}
\big[f_{0X}^{(\ell)}(x-\tau hu)-f_{0X}^{(\ell)}(x)\big]\,\di \tau\,\di u.
\end{split}
\]
Condition \eqref{eq:holder} yields that
\[
\begin{split}
\|b_{F_{0X}}\|_1&\leq\int_{\mathbb R}\int_{\mathbb R}|K(u)|\frac{|hu|^{\ell+1}}{\ell!}\int_0^1(1-\tau)^{\ell}
\big|f_{0X}^{(\ell)}(x-\tau hu)-f_{0X}^{(\ell)}(x)\big|\,\di \tau\,\di u\,\di x\\
&\leq h^{\alpha+1}
\|L_0\|_1\frac{1}{\ell!}\pt{\int_{\mathbb R}|u|^{\alpha+1}|K(u)|\,\di u}
\int_0^1(1-\tau)^{\ell}\tau^{\alpha-\ell}\,\di \tau.
\end{split}
\]
By the assumptions $L_0\in L^1(\mathbb R)$ and $\int_{\mathbb R}|z|^{\alpha+1}|K(z)|\,\di z<\infty$,
we conclude that $\|b_{F_{0X}}\|_1=O(h^{\alpha+1})$.
\end{proof}


Analogously to Lemma \ref{lem:der}, the next lemma assesses the order of the bias, 
in terms of the kernel bandwidth, of a distribution function with density in a Sobolev type space.

\begin{lem}\label{lem:sob}
Let $F_{0X}$ be the distribution function of $\mu_{0X}\in\mathscr P_0$ satisfying condition \eqref{ass:smoothsob} of Assumption 
\ref{ass:smoothXXX} for $\alpha>0$. 
Let $K\in L^1(\mathbb R)\cap L^2(\mathbb R)$ be symmetric, with 
$\int_{\mathbb R}|z||K(z)|\,\di z<\infty$, and $\hat K\in L^1(\mathbb R)$ such that
$\hat K\equiv 1$ on $[-1,\,1]$.
Then, for every $h>0$,
\begin{equation}\label{eq:derivative212}
\|F_{0X}\ast K_h-F_{0X}\|_1=O(h^{\alpha+1}).
\end{equation}
\end{lem}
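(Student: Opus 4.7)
The plan is to mimic the Fourier-analytic approach of Section \ref{subsec:inversion} and to represent the bias $b_{F_{0X}}=F_{0X}\ast K_h-F_{0X}$ as a convolution whose $L^1$-norm manifestly scales as $h^{\alpha+1}$. The derivative $b_{f_{0X}}=f_{0X}\ast K_h-f_{0X}$ has Fourier transform $\hat f_{0X}(t)[\hat K(ht)-1]$, which vanishes on $\{|t|\leq 1/h\}$ because $\hat K\equiv 1$ on $[-1,1]$. This cancellation removes the singularity at $t=0$ of the $1/(-it)$ factor and allows us to write formally
\begin{equation*}
\widehat{b_{F_{0X}}}(t) = \widehat{D^\alpha f_{0X}}(t)\cdot\hat G_h(t), \qquad \hat G_h(t):=\frac{\hat K(ht)-1}{(-it)^{\alpha+1}},
\end{equation*}
using $\widehat{D^\alpha f_{0X}}(t)=(-it)^\alpha\hat f_{0X}(t)$, which is well-defined since condition \eqref{ass:smoothsob} yields $\int|t|^\alpha|\hat f_{0X}(t)|\,\di t<\infty$.

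Next, letting $G_h$ denote the inverse Fourier transform of $\hat G_h$, the substitution $u=ht$ in the inversion formula will give the scaling identity $G_h(x)=h^\alpha G_1(x/h)$, where $G_1$ is the inverse Fourier transform of $(\hat K(u)-1)/(-iu)^{\alpha+1}$, whence $\|G_h\|_1=h^{\alpha+1}\|G_1\|_1$. Since $b_{F_{0X}}$ and $D^\alpha f_{0X}\ast G_h$ are both continuous, bounded, vanish at $\pm\infty$ and share the same Fourier transform, they coincide, and Young's convolution inequality then yields
\begin{equation*}
\|b_{F_{0X}}\|_1 \leq \|D^\alpha f_{0X}\|_1\|G_h\|_1 = h^{\alpha+1}\|D^\alpha f_{0X}\|_1\|G_1\|_1.
\end{equation*}
Combined with the fact that $D^\alpha f_{0X}\in L^1(\mathbb R)$ by \eqref{ass:smoothsob}, this delivers the claimed rate $\|b_{F_{0X}}\|_1=O(h^{\alpha+1})$, conditionally on $\|G_1\|_1<\infty$.

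The hard part will be to verify $G_1\in L^1(\mathbb R)$. The bound $|\hat G_1(u)|\leq(1+\|K\|_1)|u|^{-(\alpha+1)}$ on $\{|u|>1\}$, together with the vanishing of $\hat G_1$ on $[-1,1]$, puts $\hat G_1$ in $L^1(\mathbb R)\cap L^2(\mathbb R)$ for $\alpha>0$ and makes $G_1$ continuous, bounded and in $L^2(\mathbb R)$, but neither of these directly gives the required $L^1$-integrability. The necessary polynomial decay of $G_1$ at infinity will be extracted from the additional smoothness of $\hat K$ supplied by the super-smooth kernel structure: the assumption $\int|z||K(z)|\,\di z<\infty$ forces $\hat K\in C^1$ with $\hat K'$ vanishing on $[-1,1]$ (and, when $\hat K$ is compactly supported on $[-2,2]$, also outside $[-2,2]$), so that $\hat G_1\in C^1$ with integrable, compactly supported derivative. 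Iterated integration by parts in the inversion formula $G_1(z)=(2\pi)^{-1}\int e^{-iuz}\hat G_1(u)\,\di u$ will then produce $|G_1(z)|=O(|z|^{-N})$ with $N$ governed by the higher regularity of $\hat K$, which for the super-smooth kernels used here is ample to conclude $\|G_1\|_1<\infty$ and close the argument.
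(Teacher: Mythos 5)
Your representation $b_{F_{0X}}=D^\alpha f_{0X}\ast G_h$ together with Young's inequality is exactly the paper's reduction, and the scaling identity $\|G_h\|_1=h^{\alpha+1}\|G_1\|_1$ is a clean reformulation of the paper's direct estimate of $\|G_h\|_1$. The problem is the last step, where you try to show $G_1\in L^1(\mathbb R)$ by extracting decay $|G_1(z)|=O(|z|^{-N})$, $N>1$, from ``iterated integration by parts.'' Under the hypotheses of Lemma~\ref{lem:sob} the kernel $K$ satisfies only $\int_{\mathbb R}|z||K(z)|\,\di z<\infty$, which gives $\hat K\in C^1(\mathbb R)$ and nothing more; nowhere is $\hat K\in C^2$, let alone $\hat K\in C^N$, available. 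You may integrate by parts exactly once, obtaining $|G_1(z)|=O(|z|^{-1})$, which is a borderline estimate and does not put $G_1$ in $L^1(\mathbb R)$. Your reference to the ``super-smooth kernel structure'' imports regularity not assumed in the lemma, and in any case the lemma is stated (and used) without a higher-order smoothness hypothesis on $K$; this is a genuine gap, not a technicality.

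The paper closes this step differently, and the device is worth knowing: after the single integration by parts, it does \emph{not} estimate the inverse Fourier integral pointwise. Instead it splits the $x$-integral as $\int_{|x|\leq h}+\int_{|x|>h}$, handles the first piece by a crude $L^\infty$ bound on $G_h$, and for the second applies Cauchy--Schwarz in $x$, pairing $1/|x|$ against the $L^2$-norm (via Plancherel) of the derivative of the Fourier symbol. In your scaling language this reads
\[
\int_{|z|>1}|G_1(z)|\,\di z\;\leq\;\Bigl(\int_{|z|>1}\frac{\di z}{z^2}\Bigr)^{1/2}\,\|zG_1(z)\|_2\;=\;\sqrt{2}\,\|\hat G_1^{(1)}\|_2,
\]
and $\hat G_1^{(1)}\in L^2(\mathbb R)$ follows from $\hat K^{(1)}$ being bounded and continuous together with the $|u|^{-(\alpha+1)}$ and $|u|^{-(\alpha+2)}$ decays already in hand; no second derivative is needed. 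Replacing your iterated integration by parts with this one integration by parts plus Cauchy--Schwarz would repair the argument and make it a genuine (and slightly more elegant, via the scaling) variant of the paper's proof.
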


\begin{proof}
Recalled the notation $b_{F_{0X}}:=(F_{0X}\ast K_h-F_{0X})$,
by the same arguments used for the function $G_{2,h}$ in \cite{dedecker2015}, pp. 251--252, we have
\[\begin{split}
\|b_{F_{0X}}\|_1&=
\int_{\mathbb R}\bigg|\frac{1}{2\pi}\int_{|t|>1/h}
\exp{(-\imath t x)}\frac{1-\hat K(ht)}{(-\imath t)}\hat f_{0X}(t)\,\di t
\bigg|\di x
\end{split}\]
because $t\mapsto[1-\hat K(ht)][\hat f_{0X}(t)\1_{[-1,\,1]^c}(ht)/t]$ is in $L^1(\mathbb R)$ due to the first part of
condition \eqref{ass:smoothsob}. 
By Young's inequality and the second part of condition \eqref{ass:smoothsob} on 
$D^{\alpha}\hspace*{-1pt}f_{0X}\in L^1(\mathbb R)$, we have
 \[\begin{split}
\|b_{F_{0X}}\|_1&=\int_{\mathbb R}\bigg|\frac{1}{2\pi}\int_{|t|>1/h}
\exp{(-\imath t x)}\frac{1-\hat K(ht)}{(-\imath t)^{\alpha+1}}\underbrace{(-\imath t)^{\alpha}\hat f_{0X}(t)}_
{=\widehat{D^{\alpha}\hspace*{-1pt}f_{0X}}(t)}\,\di t
\bigg|\,\di x\\
&\leq\|D^{\alpha}\hspace*{-1pt}f_{0X}\|_1
\Bigg(
\underbrace{\int_{|x|\leq h}}_{=:B_1}+\underbrace{\int_{|x|>h}}_{=:B_2}
\Bigg)
\abs{\frac{1}{2\pi}\int_{|t|>1/h}
\exp{(-\imath t x)}\frac{1-\hat K(ht)}{(-\imath t)^{\alpha+1}}\,\di t
}\di x,
\end{split}
\]
where $B_1\lesssim h\int_{|t|>1/h}
[1+|\hat K(ht)|]|t|^{-(\alpha+1)}\,\di t\lesssim h^{\alpha+1}$ because $\|\hat K\|_\infty\leq \|K\|_1<\infty$.
Therefore, $B_1=O(h^{\alpha+1})$. To bound $B_2$, we recall that, for every $j\in \mathbb N$, letting
$\hat f^{(j)}$ denote the $j$th derivative of the Fourier transform $\hat f$ of a function $f:\,\mathbb R\rightarrow \mathbb C$, 
if $\hat f^{(j)}\in L^1(\mathbb R)$, then
\begin{equation}\label{eq:identity21}
\mbox{for $z\neq0$}, \quad f(z)=\frac{1}{2\pi(\imath z)^j}\int_{\mathbb R}\exp{(-\imath tz)}\hat f^{(j)}(t)\,\di t.
\end{equation}
Since $K\in L^1(\mathbb R)$ and $zK(z)\in L^1(\mathbb R)$ jointly imply that $\hat K$ is 
continuously differentiable with $|\hat K^{(1)}(t)|\rightarrow 0$ as $|t|\rightarrow \infty$, 
so that $\hat K^{(1)}\in C_b(\mathbb R)$, 
defined $\hat f(t):=[1-\hat K(ht)](\imath t)^{-(\alpha+1)}\1_{[-1,\,1]^c}(ht)$, $t\in\mathbb R$,
we have $\hat f^{(1)}(t)=\{-h\hat K^{(1)}(ht)(\imath t)^{-(\alpha+1)}-\imath(\alpha+1)[1-\hat K(ht)](\imath t)^{-(\alpha+2)
}\}\1_{[-1,\,1]^c}(ht)$. For $f(\cdot):=(2\pi)^{-1}\int_{|t|>1/h}\exp{(-\imath t \cdot)}\hat f(t)\,\di t$, which is well defined because
$\hat f\in L^1(\mathbb R)$, by the identity \eqref{eq:identity21} and the Cauchy–Schwarz inequality,
\[
\begin{split}
B_2:=  \int_{|x|>h}|f(x)|\,\di x &= \int_{|x|>h}\frac{1}{|x|}\abs{\frac{1}{2\pi}\int_{|t|>1/h}
\exp{(-\imath t x)}\hat f^{(1)}(t)\,\di t
}\di x\\ 
&\lesssim
\pt{ \int_{\mathbb R}\frac{1}{x^2}\1_{|x|>h}\,\di x}^{1/2}
\pt{\int_{|t|>1/h}|\hat f^{(1)}(t)|^2\,\di t}^{1/2}\\
&\lesssim h^{-1/2}h^{\alpha+3/2}\lesssim  h^{\alpha+1}.
\end{split}
\]
Conclude that $B_2=O( h^{\alpha+1})$.
The assertion follows by combining the bounds on $B_1$ and $B_2$.
\end{proof}

\begin{rmk}
\emph{
Constants in \eqref{eq:derivative} and \eqref{eq:derivative212}
may depend on the kernel $K$ and the distribution function $F_{0X}$.
}
\end{rmk}

The next  lemma  provides the order of the $L^1$-norm, in terms of the kernel bandwidth, 
of function arising when controlling the term $T_1$ in Theorem \ref{theo:1}.
We recall the notation. Let $\chi:\,\mathbb R\rightarrow \mathbb R$ 
be a symmetric, continuously differentiable function, equal to $1$ on $[-1,\,1]$ and to $0$ 
outside $[-2,\,2]$. Let $K$ be the kernel defined in Section \ref{subsec:Wrate}.
Recall that an $(\lfloor\alpha\rfloor+1)$-order kernel is used when $f_{0X}$ verifies (i) of Assumption \ref{ass:smoothXXX} as in Lemma \ref{lem:der},
whereas a superkernel is used when $f_{0X}$ verifies (ii) of Assumption \ref{ass:smoothXXX} as in Lemma \ref{lem:sob}.
Recall that the Fourier transform $\hat K$ of $K$ has compact support.
For $h>0$, let $w_{1,h}(t):=\hat K(ht)\chi(t)r_\varepsilon(t)$, $t\in\mathbb R$, where 
$r_\varepsilon$ is as defined in \eqref{eq:re} and satisfies Assumption \ref{ass:identifiability+error}.
The function $K_{1,h}(\cdot):=(2\pi)^{-1}\int_{\mathbb R}\exp{(-\imath t\cdot)}w_{1,h}(t)\,\di t$ is the inverse 
Fourier transform of $w_{1,h}$.

\begin{lem}\label{lem:bigO}
If $\mu_\varepsilon\in\mathscr P_0$ satisfies Assumption \ref{ass:identifiability+error} for $\beta>0$, then, for sufficiently small $h>0$,
$$\|K_{1,h}\|_1=O(1).$$
\end{lem}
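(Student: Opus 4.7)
The plan is to split the integral $\|K_{1,h}\|_1 = \int_{|z|\le 1}|K_{1,h}(z)|\,\di z + \int_{|z|>1}|K_{1,h}(z)|\,\di z$ and control each piece separately, exploiting the crucial fact that $w_{1,h}$ is supported in $[-2,\,2]$ (inherited from $\chi$) regardless of $h$. So, unlike the analysis of $K_{2,h}$ in Lemma \ref{lem:K2}, we never see arbitrarily large $|t|$ and therefore the growth of $|r_\varepsilon(t)|\lesssim(1+|t|)^\beta$ contributes only a harmless constant.

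For the near-zero part, I would use the trivial bound $|K_{1,h}(z)|\le (2\pi)^{-1}\|w_{1,h}\|_1$. Since $\chi$ has support in $[-2,\,2]$, $\hat K(h\cdot)\in C_b$ with $\|\hat K\|_\infty\le\|K\|_1$, and $|r_\varepsilon(t)|\lesssim(1+|t|)^\beta$ is bounded on $[-2,\,2]$, we immediately get $\|w_{1,h}\|_1=O(1)$ uniformly in $h$, whence $\int_{|z|\le 1}|K_{1,h}(z)|\,\di z=O(1)$.

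For the tail part, I would invoke identity \eqref{eq:identity21} with $j=1$ after checking that
$$w_{1,h}^{(1)}(t)=h\hat K^{(1)}(ht)\chi(t)r_\varepsilon(t)+\hat K(ht)\chi^{(1)}(t)r_\varepsilon(t)+\hat K(ht)\chi(t)r_\varepsilon^{(1)}(t)$$
lies in $L^1(\mathbb R)\cap L^2(\mathbb R)$. Each of the three summands is supported in $[-2,\,2]$; on this compact set, $\hat K(h\cdot)$ and $\hat K^{(1)}(h\cdot)$ are bounded (the latter because $K\in L^1$ with $\int|z||K(z)|\,\di z<\infty$ implies $\hat K^{(1)}\in C_b$), while $|r_\varepsilon(t)|\lesssim(1+|t|)^\beta$ and $|r_\varepsilon^{(1)}(t)|\lesssim(1+|t|)^{\beta-1}$ by \eqref{eq:deriv}. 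Consequently $\|w_{1,h}^{(1)}\|_2=O(1)$ uniformly in $h$. Applying \eqref{eq:identity21} gives $K_{1,h}(z)=(2\pi\imath z)^{-1}\int_{\mathbb R}e^{-\imath tz}w_{1,h}^{(1)}(t)\,\di t$ for $z\neq 0$, and Cauchy--Schwarz then yields
$$\int_{|z|>1}|K_{1,h}(z)|\,\di z\lesssim\Big(\int_{|z|>1}z^{-2}\,\di z\Big)^{1/2}\|w_{1,h}^{(1)}\|_2=O(1).$$

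There is no genuine obstacle here: the argument is a routine ``integration by parts in frequency'' of the same flavor as Lemma \ref{lem:K2}, but simpler because the compact support of $\chi$ eliminates the need for any dyadic decomposition or delicate bookkeeping in $h$. The only mild point to check is that the three terms produced by differentiating $w_{1,h}$ are uniformly $L^2$-bounded in $h$, which follows directly from the boundedness of $\hat K$, $\hat K^{(1)}$, $r_\varepsilon$ and $r_\varepsilon^{(1)}$ on the fixed compact set $[-2,\,2]$.
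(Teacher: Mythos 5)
Your proof is correct and rests on the same two ingredients as the paper's: uniform (in $h$) bounds on $\|w_{1,h}\|_2$ and $\|w_{1,h}^{(1)}\|_2$, obtained from the fixed compact support $[-2,2]$ of $\chi$ together with boundedness of $\hat K$, $\hat K^{(1)}$, $r_\varepsilon$, $r_\varepsilon^{(1)}$ there. The only difference is cosmetic: the paper invokes the inequality $\|K_{1,h}\|_1\le 2^{-1/2}(\|w_{1,h}\|_2^2+\|w_{1,h}^{(1)}\|_2^2)^{1/2}$ from \cite{Bobkov:2016} as a black box, whereas you re-derive a version of it inline via the split $\int_{|z|\le1}+\int_{|z|>1}$, the trivial sup bound on the first piece, and Cauchy--Schwarz plus identity \eqref{eq:identity21} on the second; this is essentially how the Bobkov bound is itself proved, so the two arguments are the same in substance.
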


\begin{proof}
Denoted by $w_{1,h}^{(1)}$ the derivative of $w_{1,h}$ with respect to $t$,
we have 
$\|K_{1,h}\|_1 \leq 2^{-1/2}(
\|w_{1,h}\|_2^2 +\|w^{(1)}_{1,h}\|_2^2)^{1/2}$,
see the proof of Theorem 4.2 in \cite{Bobkov:2016}, pp. 1030--1031.
For $h\leq1/2$, by condition \eqref{eq:deriv} with $l=0$, we have
$\|w_{1,h} \|_2^2\lesssim\int_{|t|\leq2}|\hat K(ht)|^2|\chi(t)|^2(1+|t|)^{2\beta}\,\di t\lesssim 
\|\chi\|_2^2<\infty$ as $\hat K $ is bounded on any compact set.  Analogously, for 
$w^{(1)}_{1,h}(t)=[h\hat K^{(1)}(ht)\chi(t) +\hat K(ht)\chi^{(1)}(t)] r_\varepsilon(t)+\hat K(ht)\chi(t) r^{(1)}_\varepsilon(t)$, $t\in\mathbb R$,
using condition \eqref{eq:deriv} with $l=1$ and $\beta>0$, we have
\[\begin{split}
\|w^{(1)}_{1,h}\|_2^2&\lesssim\int_{|t|\leq2}[h|\hat K^{(1)}(ht)||\chi(t)|+|\hat K(ht)||\chi^{(1)}(t)|]^2(1+|t|)^{2\beta}\,\di t \\
&\hspace*{6cm} +\int_{|t|\leq2} |\hat K(ht)|^2|\chi(t)|^2 (1+|t|)^{2(\beta-1)}\,\di t 
\\[3pt]
&\lesssim \|\chi\|_2^2 + \|\chi^{(1)}\|_2^2<\infty
\end{split}\]
because also $\hat K^{(1)}$ is bounded on any compact set by continuity. 
The assertion follows.
\end{proof}




\section{Lemmas for Theorem \ref{thm:31} on 
posterior contraction rates for Dirichlet Linnik-normal mixtures}\label{sec:th31:supp}

In the following lemmas we prove the existence of a compactly supported discrete 
mixing probability measure such that the corresponding Linnik-normal mixture
has Hellinger distance of the appropriate order from a Linnik mixture and, furthermore,
the prior law on Linnik-normal mixtures concentrates 
on Kullback-Leibler neighborhoods of the true density at optimal rate, up to a logarithmic factor.

\smallskip

The next lemma provides an upper bound on the remainder term (or truncation  error) 
associated  with  the $(r-1)$th order Taylor polynomial about zero of the complex exponential function, 
see, \emph{e.g.}, Lemma 10.1.5 in \cite{Athreya:2006}, pp. 320--321.

\begin{lem}\label{lem:diseg}
For every $r\in\mathbb{N}$, we have 
$$\abs{\exp{(\imath x)}-\sum_{k=0}^{r-1}\frac{(\imath x)^k}{k!}}\leq
\min\pg{\frac{|x|^{r}}{r!},\, \frac{2|x|^{r-1}}{(r-1)!}}, \quad x\in\mathbb R.$$
\end{lem}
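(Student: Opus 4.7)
The plan is to prove the two bounds inside the minimum separately and then combine them by taking the smaller one.

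For the first bound $|x|^r/r!$, I would use the integral form of the Taylor remainder applied to $f(x) = \exp(\imath x)$. Since $f^{(r)}(s) = \imath^r \exp(\imath s)$ has modulus one, writing
$$\exp(\imath x) - \sum_{k=0}^{r-1}\frac{(\imath x)^k}{k!} = \frac{\imath^r}{(r-1)!}\int_0^x (x-s)^{r-1}\exp(\imath s)\,\di s$$
and taking absolute values (treating $x<0$ symmetrically by a sign change in the parametrization) reduces the estimate to the elementary computation
$$\int_0^{|x|}(|x|-s)^{r-1}\,\di s = \frac{|x|^r}{r!}.$$

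For the second bound $2|x|^{r-1}/(r-1)!$, the quickest route is to observe that the $r$th remainder equals the $(r-1)$th remainder minus a single Taylor term:
$$\exp(\imath x) - \sum_{k=0}^{r-1}\frac{(\imath x)^k}{k!} = \pq{\exp(\imath x) - \sum_{k=0}^{r-2}\frac{(\imath x)^k}{k!}} - \frac{(\imath x)^{r-1}}{(r-1)!}.$$
Applying the first bound with $r$ replaced by $r-1$ to the bracketed remainder, together with the triangle inequality, gives $|x|^{r-1}/(r-1)! + |x|^{r-1}/(r-1)! = 2|x|^{r-1}/(r-1)!$. The degenerate case $r=1$ must be handled separately since the bracketed expression collapses to $\exp(\imath x)$ itself; here one simply uses $\abs{\exp(\imath x) - 1} \leq \abs{\exp(\imath x)} + 1 = 2$, which matches the claimed bound $2|x|^0/0! = 2$.

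Taking the minimum of the two bounds yields the stated inequality. There is no substantive obstacle: the entire argument is a standard exercise on the truncation error of the complex exponential series, and the only point requiring attention is the boundary case $r=1$ in the second bound, where the inductive step degenerates and must be replaced by the direct triangle estimate. The value added by the lemma is precisely the simultaneous availability of the small-$|x|$ bound $|x|^r/r!$ and the large-$|x|$ bound $2|x|^{r-1}/(r-1)!$, both of which are indispensable in the proof of Lemma \ref{lem:contapprox}.
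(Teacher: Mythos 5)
Your proof is correct. The paper does not actually give a proof of this lemma; it simply cites Lemma 10.1.5 of Athreya and Lahiri (2006), so there is no in-paper argument to compare against. That reference (like most textbook treatments, e.g.\ Billingsley's Section 26) obtains both bounds from the integral remainder
\[
e^{\imath x}-\sum_{k=0}^{r-1}\frac{(\imath x)^k}{k!}
=\frac{\imath^{\,r}}{(r-1)!}\int_0^x (x-s)^{r-1}e^{\imath s}\,\di s,
\]
deriving the small-$|x|$ bound $|x|^r/r!$ directly, and the large-$|x|$ bound $2|x|^{r-1}/(r-1)!$ by a second integration by parts that converts the integrand into $(x-s)^{r-2}(e^{\imath s}-1)$, after which one invokes $|e^{\imath s}-1|\leq 2$. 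Your route to the second bound is different and, if anything, more elementary: instead of reintegrating by parts, you peel off the top Taylor term, observe that
\[
\abs{e^{\imath x}-\sum_{k=0}^{r-1}\frac{(\imath x)^k}{k!}}
\leq\abs{e^{\imath x}-\sum_{k=0}^{r-2}\frac{(\imath x)^k}{k!}}+\frac{|x|^{r-1}}{(r-1)!},
\]
and apply the already-established first bound at order $r-1$ to the bracketed remainder. This avoids any further calculus at the cost of requiring the first bound as an ingredient, and necessitates the direct check at $r=1$, which you handle correctly via $|e^{\imath x}-1|\leq 2$. Both arguments are standard and valid; yours is a legitimate alternative that trades one integration by parts for a triangle inequality.
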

\smallskip

For later use, we recall that the bilateral Laplace transform of a function 
$f:\,\mathbb R\rightarrow \mathbb C$ is defined as 
$\mathcal B\{f\}(s): =\int_{\mathbb R}\exp{(-sx)} f(x)\,\di x$ for all $s\in\mathbb C$ such that
$\int_{\mathbb R}|\exp{(-sx)}f(x)|\,\di x=\int_{\mathbb R}\exp{(-\mathrm{Re}(s)x)}|f(x)|\,\di x<\infty$, 
where $\mathrm{Re}(s)$ denotes the real part of $s$. 
With abuse of notation, for a probability measure $\mu$ on $\mathbb R$, 
we define $\mathcal B\{\mu\}(s):=\int_{\mathbb R}\exp{(-sx)}\mu(\di x)$, $s\in\mathbb C$.
For all $t\in\mathbb R$ such that $\int_{\mathbb R}\exp{(tx)}\mu(\di x)<\infty$, the mapping $t\mapsto
M_{\mu}(t):=\int_{\mathbb R}\exp{(tx)}\mu(\di x)$
is the moment generating function of $\mu$ and $M_\mu(t) = \mathcal B\{\mu\}(-t)$, $t\in\mathbb R$.

\smallskip

In the following lemma, we prove the existence of a compactly supported discrete mixing
probability measure, with a sufficiently small number of support points, 
such that the corresponding Linnik-normal mixture
has Hellinger distance of the order $O(\sigma^{\beta})$ from the sampling density $f_{0Y}$.

\smallskip

\begin{lem}\label{lem:discrete}
Let $f_\varepsilon$ be a standard Linnik density with index $0<\beta\leq 2$. 
Let  $\mu_{0X}\in\mathscr P_0$ be a probability measure supported on $[-a,\,a]$, with density $f_{0X}$
such that $(e^{|\cdot|/2}f_{0X})\in L^2(\mathbb R)$.
For $\sigma>0$ small enough, there exists a discrete probability measure $\mu_H$ on $[-a,\,a]$, with at most 
$N=O((a/\sigma)|\log\sigma|^{1/2})$ 
support points, such that, for $f_Y:=f_\varepsilon\ast (\mu_H\ast \phi_\sigma)$ and 
$f_{0Y}:=f_\varepsilon\ast f_{0X}$,
\begin{equation*}\label{eq:discrete}
d_{\mathrm{H}}(f_Y,\,f_{0Y})\lesssim \delta_0^{-1/2}e^{a_0/2}\sigma^\beta, 
\end{equation*}
as soon as $P_{0X}(|X|\leq a_0)\geq \delta_0$ for some $0<a_0<a$ and $0<\delta_0<1$.
\end{lem}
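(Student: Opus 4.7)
The plan is to start from $d_{\mathrm H}^2(f_Y,f_{0Y})\le\int(f_Y-f_{0Y})^2/f_{0Y}\,\di y$. Using the scale-mixture representation $f_\varepsilon(u)=\int_0^\infty v e^{-v|u|}f_V(v;\beta)\,\di v$ of the Linnik density, restricting the $v$-integral to a fixed compact set where $f_V(\cdot;\beta)$ is bounded below yields $f_\varepsilon(u)\gtrsim e^{-|u|}$ with a constant depending only on $\beta$. Combined with $P_{0X}(|X|\le a_0)\ge\delta_0$ this gives $f_{0Y}(y)\gtrsim\delta_0 e^{-a_0}e^{-|y|}$, so $1/f_{0Y}(y)\lesssim\delta_0^{-1}e^{a_0}e^{|y|}$. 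Splitting $e^{|y|}\le e^{y}+e^{-y}$, the target $d_{\mathrm H}(f_Y,f_{0Y})\lesssim\delta_0^{-1/2}e^{a_0/2}\sigma^\beta$ reduces to proving
\[
\|e^{by}(f_Y-f_{0Y})\|_2\lesssim\sigma^\beta,\qquad b=\pm 1/2.
\]

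\textbf{Choice of $\mu_H$ and splitting.}
I construct $\mu_H$ by a Ghosal-Shen-Tokdar style moment-matching scheme: partition $[-a,a]$ into $N=O((a/\sigma)|\log\sigma|^{1/2})$ subintervals of length $\ell\asymp\sigma|\log\sigma|^{-1/2}$, and on each subinterval carrying positive $\mu_{0X}$-mass place $O(|\log\sigma|)$ atoms via Gaussian quadrature matching the first $k\asymp|\log\sigma|$ moments of $\mu_{0X}$ restricted there. Writing
\[
f_Y-f_{0Y}=f_\varepsilon\ast[(\mu_H-\mu_{0X})\ast\phi_\sigma]+f_\varepsilon\ast[\mu_{0X}\ast\phi_\sigma-\mu_{0X}],
\]
the first summand is controlled, in the weighted $L^2$ norm, by the standard Taylor-remainder estimate for moment matching; with the above choice of $\ell$ and $k$ this is $O(\sigma^p)$ for any fixed $p>0$, the weight $e^{by}$ contributing only a factor $\lesssim e^{a/2}$ since $\mu_H$ and $\mu_{0X}$ are supported on $[-a,a]$ and the Gaussian convolution absorbs the remaining exponential.

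\textbf{Linnik-damped smoothing error and main obstacle.}
The core step is to bound the second summand, namely
\[
\|e^{by}\{f_\varepsilon\ast(\mu_{0X}\ast\phi_\sigma-\mu_{0X})\}\|_2\lesssim\sigma^\beta.
\]
Using $e^{by}(f\ast g)=(e^{by}f)\ast(e^{by}g)$ and Plancherel, this norm squared equals, up to $(2\pi)^{-1}$, the integral $\int|\widehat{e^{by}f_\varepsilon}(t)|^2\,|\widehat{e^{by}f_{0X}}(t)|^2\,|\widehat{e^{by}\phi_\sigma}(t)-1|^2\,\di t$. The hypothesis $(e^{|\cdot|/2}f_{0X})\in L^2$ gives $\widehat{e^{by}f_{0X}}\in L^2$, and a direct computation yields $|\widehat{e^{by}\phi_\sigma}(t)-1|^2\lesssim\min\{1,\,\sigma^4(1+t^2)^2\}$; splitting at $|t|=1/\sigma$ then delivers the required $O(\sigma^{2\beta})$ bound, provided $|\widehat{e^{by}f_\varepsilon}(t)|\lesssim(1+|t|)^{-\beta}$ uniformly in $b=\pm 1/2$. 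The principal obstacle is precisely this last estimate: extending the decay $|\hat f_\varepsilon(t)|\lesssim(1+|t|)^{-\beta}$ of Assumption \ref{ass:identifiability+error} from the real line to the shifted line $t-ib$. For Laplace ($\beta=2$) this is explicit since $\hat f_\varepsilon(s)=(1+s^2)^{-1}$ is rational, whereas for Linnik with $0<\beta<2$ direct analytic continuation of $(1+|t|^\beta)^{-1}$ is delicate for non-integer $\beta$; the cleanest route is to write $\widehat{e^{by}f_\varepsilon}(t)$ as the $v$-integral of the Laplace characteristic function $v^2/[v^2+(t-ib)^2]$ against $f_V(v;\beta)\,\di v$, each factor being uniformly of the order $v^2/(v^2+t^2)$ on the shifted contour, and to reassemble the $(1+|t|)^{-\beta}$ decay from the tail behaviour of $f_V(\cdot;\beta)$ exactly as in condition (ii) of the extension discussed in Section \ref{sec:nonadaptive}.
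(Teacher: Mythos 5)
The outline you lay down — lower-bound $f_{0Y}$ via the sub-probability $P_{0X}(|X|\le a_0)\ge\delta_0$, reduce the Hellinger distance to the two weighted $L^2$ norms $\|e^{by}(f_Y-f_{0Y})\|_2$ for $b=\pm1/2$, go to Fourier side by Plancherel, and split at $|t|\asymp 1/\sigma$ — is the right skeleton, and it matches what the paper does in the Laplace case $\beta=2$. But for Linnik with $0<\beta<2$ there is a genuine gap, and you have also mis-counted the support points.

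\textbf{The gap.} For $0<\beta<2$ the Linnik density has power-law tails, $f_\varepsilon(u)\sim C_\beta|u|^{-(1+\beta)}$ as $|u|\to\infty$, so $e^{|y|/2}f_\varepsilon(y)$ is not in $L^1(\mathbb R)$ nor in $L^2(\mathbb R)$. The Fourier transform $\widehat{e^{by}f_\varepsilon}$ you need for the Plancherel reduction therefore does not exist, and the bound $|\widehat{e^{by}f_\varepsilon}(t)|\lesssim(1+|t|)^{-\beta}$ you treat as the principal obstacle cannot even be formulated. Your proposed ``cleanest route'' through the scale mixture makes this concrete: writing $\widehat{e^{by}f_\varepsilon}(t)=\int_0^\infty \mathcal B\{f_{1/v}\}(\psi_b(t))\,f_V(v;\beta)\,\di v$ requires evaluating the bilateral Laplace transform of the scaled Laplace $f_{1/v}$ at $s=\psi_b(t)$ with $\mathrm{Re}(s)=\mp1/2$, which is only defined for $v>1/2$; for $v\le 1/2$ the integral defining $\mathcal B\{f_{1/v}\}(s)$ diverges. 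So it is not true that ``each factor is uniformly of the order $v^2/(v^2+t^2)$ on the shifted contour'' — for small $v$ there is no such factor. The missing idea, which is where the paper's proof actually earns its keep, is to split the $v$-integral at $v=1$. The truncated density $\bar f_\varepsilon:=\int_1^\infty f_{1/v}\,f_V(v;\beta)\,\di v$ does have at least exponential tails, so $e^{|\cdot|/2}\bar f_\varepsilon\in L^1\cap L^2$ and your Plancherel argument applies to it, yielding the $\sigma^{2\beta}$ bound via the identity $\int_0^\infty \frac{v^2 f_V(v;\beta)}{v^2+t^2}\,\di v=1/(1+|t|^\beta)$ and the additional integral $\int_{|t|>1/\sigma}|t|^{-2\beta}|\widehat{e^{b\cdot}f_{0X}}|^2$. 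The $v\le 1$ part must be handled without ever forming an $e^{|y|/2}$ weight: one uses instead the $v$-dependent lower bound $f_{0Y}(y)\gtrsim \delta_0\,e^{-v(|y|+a_0)}\,\mathbb E[V\1_{\{V<v\}}]$, so the weight that appears under the square root of $1/f_{0Y}$ is $e^{v|y|/2}$ — matched to the exponential decay rate of the scaled Laplace $f_{1/v}$ and therefore harmless — together with Cauchy--Schwarz in $v$ and the mean-value theorem, plus an integrability condition of the form $\int_0^1 \frac{v^2 f_V(v;\beta)}{\mathbb E[V\1_{\{V<v\}}]}\,\di v<\infty$. Without this split, the Linnik case $0<\beta<2$ does not go through.

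\textbf{A secondary issue.} Your moment-matching scheme — partition $[-a,a]$ into $N\asymp(a/\sigma)|\log\sigma|^{1/2}$ cells and place $O(|\log\sigma|)$ quadrature nodes per cell — yields $O((a/\sigma)|\log\sigma|^{3/2})$ atoms in total, which overshoots the stated count. The paper instead uses a single global moment-matching on $[-a,a]$: match the moments $\int u^j$ for $j<J$ with $J\propto aM$ and $M\asymp\sigma^{-1}|\log\sigma|^{1/2}$, plus the two exponential moments $\int e^{\pm u/2}$, and invoke Lemma~A.1 of Ghosal and van der Vaart (2001) to obtain a discrete $\mu_H$ with at most $2(J+1)=O((a/\sigma)|\log\sigma|^{1/2})$ points. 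That construction also supplies the moment-matching error bound you would need (via the Taylor remainder and $J!\ge(J/e)^J$), so it is not merely a counting fix.
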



\begin{proof}
We begin by analysing the case of a Laplace error distribution, which corresponds to $\beta=2$.
We then use the fact that a Linnik density with index $0<\beta<2$ is a scale mixture of Laplace densities, see
\cite{kotz1996}, to complete the proof.

\begin{description}
\item[$\bullet$] \emph{Laplace error distribution \emph{($\beta=2$)}}: $f_\varepsilon(\cdot)=e^{-|\cdot|}/2$\\[2pt]
For $a_0$ and $\delta_0$ as in the statement, we have
\[\begin{split}
f_{0Y}(y)\geq\int_{|x|\leq a_0}f_\varepsilon(y-x)
f_{0X}(x)\,\di x \geq 
\frac{1}{2}e^{-(|y|+a_0)}P_{0X}(|X|\leq a_0)\geq\frac{\delta_0}{2}e^{-(|y|+a_0)},\quad y\in\mathbb R.
\end{split}
\]
Define 
\begin{equation}\label{eq:defU}
U(y):=e^{-y/2}+e^{y/2}.
\end{equation}
By the inequality $e^{|y|/2}\leq U(y)$ valid for every $y\in\mathbb R$,
we have 
$$d_{\mathrm H}^2(f_Y,\,f_{0Y}) \leq 
2\delta_0^{-1}e^{a_0}\int_{\mathbb R}[e^{|y|/2}(f_Y-f_{0Y})(y)]^2\,\di y\leq 2\delta_0^{-1}e^{a_0}
\|g_Y-g_{0Y}\|_2^2,$$
where $g_Y:=Uf_Y$ and $g_{0Y}:=Uf_{0Y}$.
For $b=\mp 1/2$, noting that $e^{b\cdot}f_{0Y}=(e^{b\cdot}f_\varepsilon)\ast(e^{b\cdot}f_{0X})$, 
where $e^{b\cdot}f_{0X}\in L^1(\mathbb R)$ for compactly supported $f_{0X}$ and
$e^{b\cdot}f_\varepsilon\in L^p(\mathbb R)$
for every $1\leq p\leq\infty$, we have 
$\|e^{b\cdot}f_{0Y}\|_p\leq \|e^{b\cdot}f_\varepsilon\|_p\times\|e^{b\cdot}f_{0X}\|_1<\infty$.
Analogously, $e^{b\cdot}f_Y=(e^{b\cdot}(f_\varepsilon\ast \phi_\sigma))\ast(e^{b\cdot}\mu_H)$, 
where 
$M_{\mu_H}(b)<\infty
$ 
for compactly supported $\mu_H$ and
$e^{b\cdot}(f_\varepsilon\ast \phi_\sigma)\in L^p(\mathbb R)$
for every $1\leq p\leq\infty$, we have 
$\|e^{b\cdot}f_Y\|_p\leq \|e^{b\cdot}(f_\varepsilon\ast \phi_\sigma)\|_p\times M_{\mu_H}(b)<\infty$.
Consequently, $g_Y,\,g_{0Y}\in L^1(\mathbb R)\cap L^2(\mathbb R)$ and 
the corresponding Fourier transforms $\hat g_Y(t):=\int_{\mathbb R}e^{\imath t y}g_{Y}(y)\,\di y$ and 
$\hat g_{0Y}(t):=\int_{\mathbb R}e^{\imath t y}g_{0Y}(y)\,\di y$, $t\in\mathbb R$, are well defined.
Also, $\|g_{0Y}\|_2^2=(2\pi)^{-1}\|\hat g_{0Y}\|_2^2$ and $\|g_Y\|_2^2=(2\pi)^{-1}\|\hat g_Y\|_2^2$.
For $\psi_b(t):=-(\imath t+b)$, let $\varrho_b(t):=[1-\psi_b^2(t)]$, $t\in\mathbb R$. Note that 
$\varrho_{-1/2}(t)=\overline{\varrho_{1/2}(t)}$ and $|\varrho_{-1/2}(t)|^2=|\varrho_{1/2}(t)|^2=(t^4+5t^2/2+9/16)$. 
Noting that 
$$\mathcal B\{f_\varepsilon(\cdot-x)\}(\psi_b(t))=
\frac{e^{-\psi_b(t)x}}{\varrho_b(t)},\quad t,\,x\in\mathbb R,$$
we have
\[
\begin{split}
r(t;\,x)&:=\int_{\mathbb R}e^{\imath t y}U(y)f_\varepsilon(y-x)\,\di y\\&=\sum_{b=\mp 1/2}\mathcal B\{f_\varepsilon(y-x)\}(\psi_b(t))
=\sum_{b=\mp 1/2}\frac{e^{-\psi_b(t)x}}{\varrho_{b}(t)}
,\quad t,\,x\in\mathbb R.
\end{split}
\]
Then,
$\hat g_{0Y}(t)
=\int_{|x|\leq a}r(t;\,x)f_{0X}(x)\,\di x
=\sum_{b=\mp 1/2}\mathcal B\{f_{0X}\}(\psi_b(t))/\varrho_{b}(t)$,
$t\in\mathbb R$.   
We derive the expression of $\hat g_Y$.
Since
\begin{equation}\label{eq:lapnorm}
\mathcal B\{\phi_\sigma(\cdot-u)\}(\psi_b(t))
=\exp{(-\psi_b(t)u+\sigma^2\psi_b^2(t)/2)}, \quad t,\,u\in\mathbb R,
\end{equation} 
we have
\[
\hat g_Y(t)
=\int_{|u|\leq a}\pt{\int_{\mathbb R}r(t;\,x)\phi_\sigma(x-u)\,\di x}\mu_H(\di u)
=\sum_{b=\mp 1/2}\frac{e^{\sigma^2\psi_b^2(t)/2}}{\varrho_{b}(t)}
\mathcal B\{\mu_H\}(\psi_b(t)), 
\quad t\in\mathbb R.
\] 
For ease of notation, we introduce the integrals
\[I_b:=\int_{\mathbb R}\frac{1}{|\varrho_b(t)|^2}
\bigg|e^{\sigma^2\psi_b^2(t)/2}
\mathcal B\{\mu_H\}(\psi_b(t))-\mathcal B\{f_{0X}\}(\psi_b(t))
\bigg|^2\di t,\quad b=\mp 1/2.\]
By Plancherel's theorem and the triangular inequality, 
$2\pi\|g_Y-g_{0Y}\|_2^2=\|\hat g_Y-\hat g_{0Y}\|_2^2\leq 2(I_{-1/2}+I_{1/2})$.
Both terms $I_{-1/2}$ and $I_{1/2}$ can be controlled using the same arguments, we therefore 
consider a unified treatment for $I_b$. For $M>0$, we have
\[
\begin{split}  
I_b
&\leq
\pt{\int_{|t|\leq M}+\int_{|t|>M}}\frac{|e^{\sigma^2\psi_b^2(t)/2}|^2}{{|\varrho_b(t)|^2}}
\big|(\mathcal B\{\mu_H\}-\mathcal B\{f_{0X}\})(\psi_b(t))\big|^2\di t\\
&\hspace*{3.5cm}+
\int_{\mathbb R}\frac{1}{{|\varrho_b(t)|^2}}
|e^{\sigma^2\psi_b^2(t)/2}-1|^2
\abs{\mathcal B\{f_{0X}\}(\psi_b(t))
}^2\di t=:\sum_{k=1}^3 I_b^{(k)}.
\end{split}
\]  
\noindent
\emph{Study of the term $I_b^{(1)}$}\\[5pt]
The term $I_b^{(1)}$ can be bounded similarly to $I_1$ in Lemma 2 of \cite{gao2016}, p. 616. 
Preliminarily note that,
for $\sigma<1/|b|=2$, we have
$|e^{\sigma^2\psi_b^2(t)/2}|^2=|e^{\sigma^2(-t^2+2\imath bt+b^2)/2}|^2
=e^{-\sigma^2(t^2-b^2)}=e^{-\sigma^2(t^2-1/4)}< e$.
Let $\mu_H$ be a discrete probability measure on $[-a,\,a]$ satisfying the constraints
\begin{equation}\label{eq:c2}
\begin{split}
\int u^j\mu_H(\di u) &=\int u^j f_{0X}(u)\,\di u, \quad\mbox{for } j=0,\,\ldots,\,J-1, \\
\int e^{bu}\mu_H(\di u)& =\int e^{bu}f_{0X}(u)\,\di u,\quad  b= \pm 1/2,
\end{split}
\end{equation}
where $J=\lceil\eta ea M\rceil$ for some $\eta>1$. Note that the last constraints can be written as $M_{\mu_H}(b)=M_{0X}(b)$, $b=\pm1/2$.
Using Lemma \ref{lem:diseg} with $r=J$,
by the inequality $J!\geq (J/e)^J$, we have 
\[\begin{split}
I_b^{(1)}&\lesssim \int_{|t|\leq M}\frac{1}{{|\varrho_b(t)|^2}}
\bigg|\int_{|u|\leq a}  \bigg[e^{\psi_b(t) u}-\sum_{j=0}^{J-1}\frac{[\psi_b(t)u]^j}{j!}\bigg](\mu_H-\mu_{0X})(\di u)
\bigg|^2\,\di t\\
&\lesssim[M_{\mu_H}(b)+M_{0X}(b)]^2\frac{1}{(J!)^2}
\int_{|t|\leq M}\frac{(a|t|)^{2J}}{{|\varrho_b(t)|^2}}\,\di t\\
&\lesssim \frac{a^{2J}}{(J!)^2}
\int_{0}^M t^{2(J-2)}\,\di t \\
&\lesssim \frac{a^{2J}}{(J!)^2}\times
\frac{M^{2J-3}}{2J-3}\lesssim M^{-4} \frac{a^{2J}}{(J!)^2}\times
\frac{M^{2J+1}}{2J-3}\lesssim  M^{-4} \pt{\frac{eaM}{J}}^{2J+1}\lesssim M^{-4}.
\end{split}\]   
\noindent
\emph{Study of the term $I_b^{(2)}$}\\[5pt]
Choosing $M$ so that  
$(\sigma M)^2\geq|\log\sigma|$, equivalently, $M\geq \sigma^{-1}|\log\sigma|^{1/2}
$, and using the facts that $|e^{\sigma^2\psi_b^2(t)/2}|^2=O(e^{-\sigma^2t^2})$ and $|\mathcal B\{\mu_H\}(\psi_b(t))|\leq M_{\mu_H}(b)=M_{0X}(b)$,
we have
$$I_b^{(2)}
\lesssim M_{0X}(b)
e^{-(\sigma M)^2}
\int_{|t|>M}\frac{1}{t^4}
\,\di t
\lesssim
e^{-(\sigma M)^2}M^{-3}\lesssim 
\sigma M^{-3}
\lesssim \sigma^4.$$
\noindent
\emph{Study of the term $I_b^{(3)}$}\\[5pt]
Noting that 
\begin{equation}\label{eq:disff}
\mathcal B\{f_{0X}\}(\psi_b(t))=(\widehat{e^{b\cdot}f_{0X}})(t), \quad t\in\mathbb R,
\end{equation} 
and, by Lemma \ref{lem:diseg},
\begin{equation}\label{eq:disf}
|e^{\sigma^2\psi_b^2(t)/2}-1|\leq \min\{2,\,\sigma^2(t^2+b^2)/2\}\leq\sigma^2(t^2+1/4)/2,
\end{equation}
we have
\[  
\begin{split}
I_b^{(3)}
&\leq \frac{\sigma^4}{2}\int_{\mathbb R}\frac{(t^4+b^4)}{{|\varrho_b(t)|^2}}
\abs{\mathcal B\{f_{0X}\}(\psi_b(t))
}^2\,\di t
\lesssim
\sigma^4
\int_{\mathbb R}
|(\widehat{e^{b\cdot}f_{0X}})(t)|^2\,\di t\lesssim
\sigma^4,
\end{split}   
\]  
where, by Plancherel's theorem,
$(2\pi)^{-1}\|\widehat{e^{b\cdot}f_{0X}}\|_2^2=\|e^{b\cdot}f_{0X}\|_2^2<\infty
$ by the assumption that $e^{b\cdot}f_{0X}\in L^2(\mathbb R)$.

\vspace*{-0.1cm}
\smallskip
The existence of a discrete probability measure $\mu_H$ supported on $[-a,\,a]$, with at most 
$2(J+1)\propto (a M)\gtrsim (a/\sigma)|\log\sigma|^{1/2}$ support 
points, is guaranteed by Lemma A.1 of \cite{ghosal:2001}, p. 1260.
Combining the bounds on $I_b^{(k)}$, for $k=1,\,2,\,3$, we conclude that 
$\|g_Y-g_{0Y}\|_2^2\lesssim 
\sigma^4$.
\end{description}


\begin{description}
\item[$\quad\bullet\,\,$] \emph{Linnik error distribution with index $0<\beta<2$}\\[2pt]
Every Linnik density $f_\varepsilon$ with index $0<\beta<2$ admits a
representation as a scale mixture of Laplace densities $f_{1/v}(\cdot):=ve^{-v|\cdot|}/2$, $v>0$,
\[
f_\varepsilon(u)=\int_0^\infty f_{1/v}(u)f_V(v;\,\beta)\,\di v,\quad u\neq0,
\]
with mixing density
\begin{equation}\label{eq:mixing}
f_V(v;\,\beta):=\pt{\frac{2}{\pi}\sin\frac{\pi\beta}{2}}\frac{v^{\beta-1}}
{1+v^{2\beta}+2v^\beta\cos(\pi\beta/2)},\quad v>0.
\end{equation}
Let $V$ be a random variable with the density in \eqref{eq:mixing}. Note that 
$\mathbb E[V]=\int_0^\infty v f(v;\,\beta)\,\di v<\infty$ for $1<\beta<2$ and infinite
for $0<\beta\leq1$, in which case $f_\varepsilon$ has an infinite peak at $u=0$.
Writing $(f_Y-f_{0Y})(\cdot)=[f_\varepsilon\ast (\mu_X-\mu_{0X})](\cdot)=\int_0^\infty
[f_{1/v}\ast(\mu_X-\mu_{0X})](\cdot)f_V(v;\,\beta)\,\di v$, we have
\[\begin{split}
d^2_{\mathrm{H}}(f_Y,\,f_{0Y})&<
\int_{\mathbb R}\bigg\{\pt{\int_0^1+\int_1^\infty}[f_{1/v}\ast(\mu_X-\mu_{0X})](y)
\frac{f_V(v;\,\beta)}{\sqrt{f_{0Y}(y)}}
\,\di v\bigg\}^2\di y\\
&\leq 2 
\int_{\mathbb R}\bigg\{\int_0^1[f_{1/v}\ast(\mu_X-\mu_{0X})](y)
\frac{f_V(v;\,\beta)}{\sqrt{f_{0Y}(y)}}
\,\di v\bigg\}^2\di y \\
&\hspace*{1cm} + 2\int_{\mathbb R}\bigg\{\int_1^\infty [f_{1/v}\ast(\mu_X-\mu_{0X})](y)
\frac{f_V(v;\,\beta)}{\sqrt{f_{0Y}(y)}}
\,\di v\bigg\}^2\di y
=:I_1 + I_2.
\end{split}
\]

\bigskip
\noindent
\emph{Study of the term $I_1$}\\[5pt]
For $z>0$, we define the function $z\mapsto E(z):=\mathbb E[V\1_{\{V<z\}}]=\int_0^z vf(v;\,\beta)\,\di v$.
Let $v>0$ be fixed and $a_0,\,\delta_0$ as in the statement. For every $y\in\mathbb R$,
\begin{equation}\label{eq:LB}
f_{0Y}(y)\geq P_{0X}(|X|\leq a_0)\int_0^{v}\frac{u}{2} e^{-u(|y|+a_0)} f_V(u;\,\beta) \,\di u \geq
\frac{\delta_0}{2}e^{- v(|y|+a_0)} E(v).
\end{equation}
By the Cauchy–Schwarz inequality,
\[\begin{split}
\frac{I_1}{2}&\leq\frac{2e^{a_0}}{\delta_0}
\int_{\mathbb R}\bigg[\int_0^1 e^{v|y|/2}
\bigg(\int_{|x|\leq a}e^{-v|y-x|}(\mu_X-\mu_{0X})(x)\,\di x\bigg)
\frac{vf_V(v;\,\beta)}{\sqrt{E(v)}}
\,\di v\bigg]^2\di y\\
&\leq \frac{2e^{a_0}}{\delta_0}\pt{\int_0^1f_V(v;\,\beta)\,\di v}\\
&\qquad\qquad\quad\times
\int_0^1\underbrace{
\int_{\mathbb R}
\bigg(e^{v|y|/2}\int_{|x|\leq a}e^{-v|y-x|}(\mu_X-\mu_{0X})(x)\,\di x\bigg)^2\,\di y}_{=:\mathcal I(v)}
\frac{v^2f_V(v;\,\beta)}{E(v)}
\,\di v\\
&\lesssim
\int_0^1\mathcal I(v)
\frac{v^2f_V(v;\,\beta)}{E(v)}
\,\di v.
\end{split}
\]
Since the function $v\mapsto \mathcal I(v)$ is continuous, by the mean value theorem, there exists $\bar v\in(0,\,1)$ such that 
$\int_0^1\mathcal I(v)
[{v^2f_V(v;\,\beta)}/{E(v)}]
\,\di v=\mathcal I(\bar v)\int_0^1[{v^2f_V(v;\,\beta)}/{E(v)}]\,\di v\lesssim \mathcal I(\bar v)$
because, for every $0<\gamma<1$,
$$E(v)\gtrsim\int_{(\gamma v)^\beta}^{v^\beta}\frac{z^{1/\beta}}{1+z^2+2z\cos(\pi \beta/2)}\,\di z\gtrsim 
v\int_{(\gamma v)^\beta}^{v^\beta}\frac{1}{(1+z)^2}\,\di z\gtrsim v^{1+\beta}$$
and, consequently,
$
\int_0^1[{v^2f_V(v;\,\beta)}/{E(v)}]
\,\di v\lesssim
\int_0^1
\{{1}/[{1+v^{2\beta}+2v^\beta \cos(\pi\beta/2)}]\}\,\di v<\infty$. The integral $\mathcal I(\bar v)$ can be bounded above by $\sigma^4$ 
using the same arguments as for the standard Laplace error distribution, 
with $\mp\bar v/2$ playing the role of $b$. 
Thus,
$I_1\lesssim \sigma^4\lesssim \sigma^{2\beta}$.

\bigskip
\noindent
\emph{Study of the term $I_2$}\\[5pt]
Taking $v=1$ in \eqref{eq:LB}, we have $
f_{0Y}(y)\geq\delta_0e^{-(|y|+a_0)}E(1)/2$, $y\in\mathbb R$. For 
$$\bar f_\varepsilon(u):=\int_1^\infty f_{1/v}(u)f_V(v;\,\beta)\,\di v,\quad u\neq0,$$
let $\bar g_{0Y}:=U (\bar f_\varepsilon\ast \mu_{0X})$ and $\bar g_Y:=U(\bar f_\varepsilon \ast \mu_X)$.
By the same arguments laid down for the standard Laplace error distribution case, we have
$\bar g_Y,\,\bar g_{0Y}\in L^1(\mathbb R)\cap L^2(\mathbb R)$ and 
the corresponding Fourier transforms $\hat {\bar g}_Y(t):=\int_{\mathbb R}e^{\imath t y}{\bar g}_Y(y)\,\di y$ and 
$\hat {\bar g}_{0Y}(y):=\int_{\mathbb R}e^{\imath t y}{\bar g}_{0Y}(y)\,\di y$, $t\in\mathbb R$, are well defined.
We have
\[\begin{split}
\frac{I_2}{2}
&\leq\frac{2e^{a_0}}{\delta_0E(1)}
\int_{\mathbb R}\pg{e^{|y|/2}\int_1^\infty 
[f_{1/v}\ast(\mu_X-\mu_{0X})](y)f_V(v;\,\beta) 
\,\di v}^2\,\di y\\&=\frac{2e^{a_0}}{\delta_0E(1)}
\int_{\mathbb R}\{e^{|y|/2}[\bar f_\varepsilon\ast(\mu_X-\mu_{0X})](y)\}
^2\,\di y\\&\leq\frac{2e^{a_0}}{\delta_0E(1)}\|\bar g_Y-\bar g_{0Y}\|_2^2=\frac{e^{a_0} }{\pi\delta_0 E(1)}\|\hat {\bar g}_Y-\hat {\bar g}_{0Y}\|_2^2.
\end{split}
\]
We derive the expressions of $\hat {\bar g}_{0Y}$ and $\hat {\bar g}_Y$:
\[
\begin{split}
\hat {\bar g}_{0Y}(t)&=\int_{|x|\leq a}\pt{\int_1^\infty f_V(v;\,\beta)\int_{\mathbb R}e^{\imath t y}U(y)f_{1/v}(y-x)\,\di y\,\di v}f_{0X}(x)\,\di x\\
&=\sum_{b=\mp1/2}\pt{\int_1^\infty \frac{vf_V(v;\,\beta)}{v^2-\psi_b^2(t)}\,\di v}\mathcal B\{f_{0X}\}(\psi_b(t)),\quad t\in\mathbb R,
\end{split}
\]
and, using the expression of $\mathcal B\{\phi_\sigma(\cdot-u)\}(\psi_b(t))$ in \eqref{eq:lapnorm},
\[
\hat {\bar g}_Y(t)=\sum_{b=\mp 1/2}
\pt{\int_1^\infty \frac{v f_V(v;\,\beta)}{v^2-\psi_b^2(t)}\,\di v}
{e^{\sigma^2\psi_b^2(t)/2}}\mathcal B\{\mu_H\}(\psi_b(t)), 
\quad t\in\mathbb R.
\] 
Thus, $\|\hat{\bar g}_Y-\hat{\bar g}_{0Y}\|_2^2\leq 2(\bar I_{-1/2}+\bar I_{1/2})$, where, for $b=\mp 1/2$,
\[\begin{split}
\bar I_b&:=\int_{\mathbb R}\abs{\int_1^\infty \frac{v f_V(v;\,\beta)}{v^2-\psi_b^2(t)}\,\di v}^2
\Big|e^{\sigma^2\psi_b^2(t)/2}
\mathcal B\{\mu_H\}(\psi_b(t))-\mathcal B\{f_{0X}\}(\psi_b(t))
\Big|^2\di t.
\end{split}\]
By identity (2) in \cite{kotz1996}, p. 63, (with their $\beta=2$), 
\[\begin{split}
\abs{\int_1^\infty \frac{v f_V(v;\,\beta)}{v^2-\psi_b^2(t)}\,\di v}&\leq \int_1^\infty \frac{v f_V(v;\,\beta)}{|v^2-\psi_b^2(t)|}\,\di v\lesssim\int_0^\infty \frac{v^2 f_V(v;\,\beta)}{v^2+t^2}\,\di v=\frac{1}{1+|t|^\beta},\quad
t\in\mathbb R.
\end{split}\]
For $M>0$, we have
\[
\begin{split}  
\bar I_b
&\leq
\pt{\int_{|t|\leq M}+\int_{|t|>M}}\frac{|e^{\sigma^2\psi_b^2(t)/2}|^2}{{(1+|t|^\beta)^2}}
\Big|(\mathcal B\{\mu_H\}-\mathcal B\{f_{0X}\})(\psi_b(t))\Big|^2\di t\\
&\hspace*{1.5cm}+
\pt{\int_{|t|\leq 1/\sigma}+\int_{|t|>1/\sigma}}\frac{1}{{(1+|t|^\beta)^2}}
|e^{\sigma^2\psi_b^2(t)/2}-1|^2
\abs{\mathcal B\{f_{0X}\}(\psi_b(t))
}^2\di t\\
&=:\sum_{k=1}^4 \bar I_b^{(k)}.
\end{split}
\] 
Using the same construction of $\mu_H$ on $[-a,a]$ as for the standard Laplace error distribution case, with at most
$2(J+1)\propto (a M)\gtrsim (a/\sigma)|\log\sigma|^{1/2}$, for $M\geq\sigma^{-1}|\log \sigma|^{1/2}$, 
support points such that the constraints in \eqref{eq:c2} hold true. 
Let $\sigma<1/|b|$. By the same arguments 
laid down for $I_b^{(1)}$ and  $I_b^{(2)}$ of the standard Laplace error distribution case,
\[
\bar I_b^{(1)}\lesssim 
\frac{a^{2J}}{(J!)^2}\int_0^M t^{2(J-\beta)}\,\di t \lesssim 
M^{-2\beta}\frac{a^{2J}}{(J!)^2}\times \frac{M^{2J+1}}{2J-2\beta+1}\lesssim M^{-2\beta}\lesssim \sigma^{2\beta}
\]
and
\[\begin{split}
\bar I_b^{(2)}\lesssim \int_{|t|>M}\frac{e^{-(\sigma t)^2}}{(1+|t|^{\beta})^2}\,\di t\lesssim M^{-2\beta}
\int_{|t|>M}e^{-(\sigma t)^2}\,\di t\lesssim
\sigma^{2\beta}.
\end{split}\]
Recalling the identity in \eqref{eq:disff} and the inequality in \eqref{eq:disf}, since $e^{b\cdot}f_{0X}\in L^2(\mathbb R)$ by assumption,
\[\begin{split}
\bar I_b^{(3)}&\lesssim \sigma^4\int_{|t|\leq 1/\sigma}\frac{(t^4+b^4)}{(1+|t|^\beta)^2}\abs{\mathcal B\{f_{0X}\}(\psi_b(t))}^2\,\di t\\
&\lesssim
\sigma^4\int_{|t|\leq 1/\sigma}|t|^{2(2-\beta)}\abs{\mathcal B\{f_{0X}\}(\psi_b(t))}^2\,\di t \lesssim \sigma^{2\beta}
\int_{\mathbb R}|(\widehat{e^{b\cdot}f_{0X}})(t)|^2\,\di t \lesssim \sigma^{2\beta}.
\end{split}\]
Also, 
\[
\bar I_b^{(4)}\lesssim \int_{|t|>1/\sigma}\frac{1}{|t|^{2\beta}}\abs{\mathcal B\{f_{0X}\}(\psi_b(t))}^2\,\di t \lesssim\sigma^{2\beta}
\int_{\mathbb R}|(\widehat{e^{b\cdot}f_{0X}})(t)|^2\,\di t \lesssim \sigma^{2\beta}.
\]

\smallskip

Combining the bounds on $\bar I_b^{(k)}$, for $k=1,\,\ldots,\,4$ and $b=\mp 1/2$,
we have 
$I_2\lesssim\|\hat{\bar g}_Y-\hat{\bar g}_{0Y}\|_2^2\lesssim \sum_{b=\mp1/2}\bar I_b\lesssim 
\sum_{b=\mp1/2}\sum_{k=1}^4\bar I_b^{(k)}\lesssim \sigma^{2\beta}$.
\end{description}
Conclude that $d^2_{\mathrm H}(f_Y,\,f_{0Y})\lesssim(I_1+I_2)\lesssim \delta_0^{-1} e^{a_0}\sigma^{2\beta}$, which completes the proof.
\end{proof}


The next lemma gives sufficient conditions on the prior law $\mathscr D_{H_0}\otimes \Pi_\sigma$ so that 
the induced probability measure on Linnik-normal mixtures 
$f_Y=f_\varepsilon\ast(\mu_H\ast \phi_\sigma)$ concentrates on Kullback-Leibler neighborhoods 
of a Linnik mixture $f_{0Y}=f_\varepsilon\ast f_{0X}$, with index $1<\beta\leq2$
and mixing density $f_{0X}$ having exponentially decaying tails, at a rate of the order $O(n^{-\beta/(2\beta+1)}(\log n)^\tau)$
for a suitable $\tau>0$.

\begin{lem}\label{lem:KL}
Let $f_{0Y}:=f_\varepsilon\ast f_{0X}$, where $f_\varepsilon$ is the density of a standard 
Linnik distribution with index $1<\beta\leq 2$ and $f_{0X}$ satisfies Assumption \ref{ass:twicwtailcond}. 
Let the model be $f_Y:=f_\varepsilon\ast (\phi_\sigma\ast \mu_H)$, with $\mu_H\in\mathscr P$.
If the base measure $H_0$ of the Dirichlet process prior $\mathscr D_{H_0}$ for $\mu_H$ 
satisfies Assumption \ref{ass:basemeasure1} and the prior $\Pi_\sigma$ for $\sigma$ satisfies 
Assumption \ref{ass:priorscale1} with $0<\gamma\leq1$, then
$(\mathscr D_{H_0}\otimes \Pi_\sigma)
(N_{\mathrm{KL}}(P_{0Y};\,\tilde\epsilon_n^2)
)\gtrsim \exp{(-C n\tilde\epsilon_n^2)}$ for $\tilde\epsilon_n=n^{-\beta/(2\beta+1)}(\log n)^{1/2+\beta(t_1\vee 3/2)/(2\beta+1)}$.
\end{lem}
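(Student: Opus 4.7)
\noindent\textbf{Proof plan for Lemma \ref{lem:KL}.}

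The plan is to build an explicit discrete candidate mixing measure $\mu_H^\ast$ and a bandwidth $\sigma_n$ whose induced mixture is within Hellinger distance $O(\tilde\epsilon_n)$ of $f_{0Y}$, and then lower bound the prior mass of a small $\mathcal D_{H_0}\otimes \Pi_\sigma$ ball around $(\mu_H^\ast,\sigma_n)$ inside which the Kullback--Leibler condition is satisfied. First, because $f_{0X}$ satisfies Assumption \ref{ass:twicwtailcond}, I would truncate $\mu_{0X}$ to $[-a_n,a_n]$ with $a_n\asymp \log n$, so that $\mu_{0X}([-a_n,a_n]^c)\lesssim n^{-(1+C_0)}$ is negligible; the renormalized restriction inherits an exponential tail bound and I take $a_0$, $\delta_0$ in Lemma \ref{lem:discrete} to be uniform (e.g. $a_0=1$, $\delta_0 = P_{0X}(|X|\le 1)$). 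Choosing $\sigma_n \asymp \tilde\epsilon_n^{1/\beta}$ and applying Lemma \ref{lem:discrete} to the truncated mixing density yields a discrete $\mu_H^\ast$ supported on $[-a_n,a_n]$ with $N_n \asymp (a_n/\sigma_n)|\log\sigma_n|^{1/2} \asymp \tilde\epsilon_n^{-1/\beta}(\log n)^{3/2}$ atoms such that $d_{\mathrm H}(f_\varepsilon\ast(\mu_H^\ast\ast\phi_{\sigma_n}),\,f_{0Y}) \lesssim \tilde\epsilon_n$.

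Next, I would construct the prior ball around $(\mu_H^\ast,\sigma_n)$. For the scale, take $\sigma \in [\sigma_n,\sigma_n(1+\tilde\epsilon_n^2)]$; by Assumption \ref{ass:priorscale1} with $\gamma\le 1$ this set has prior mass
$\gtrsim \tilde\epsilon_n^2\,\sigma_n^{-s_1}\exp(-D_1\sigma_n^{-\gamma}|\log\sigma_n|^{t_1}) \gtrsim \exp(-C_1 \tilde\epsilon_n^{-1/\beta}(\log n)^{t_1})$. For the mixing measure, partition $[-a_n,a_n]$ into $N_n$ cells $U_1,\dots,U_{N_n}$ of diameter $O(\sigma_n)$, each containing one atom $z_j^\ast$ of $\mu_H^\ast$, plus a \textit{tail} cell $U_0 = \mathbb R\setminus[-a_n,a_n]$. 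Using Lemma A.2 of Ghosal--van der Vaart (control of the Dirichlet distribution on a partition, with Assumption \ref{ass:basemeasure1} giving $H_0(U_j)\gtrsim \exp(-b_0 a_n^\delta)$ in the worst cell and $H_0(U_0)\lesssim \exp(-b_0 a_n^\delta)$ small), the event
$\{|\mu_H(U_j) - p_j^\ast| < \tilde\epsilon_n^2/N_n \,\,\forall j\ge 1,\,\mu_H(U_0)<\tilde\epsilon_n^2\}$
has $\mathcal D_{H_0}$-mass $\gtrsim \exp(-c_2 N_n \log(N_n/\tilde\epsilon_n^2))\gtrsim \exp(-C_2 \tilde\epsilon_n^{-1/\beta}(\log n)^{5/2})$ after absorbing logarithmic factors.

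On this product event, the triangle inequality combined with the Lipschitz continuity of the map $\mu_H\mapsto f_\varepsilon\ast(\mu_H\ast\phi_\sigma)$ in the $L^1$ metric (via Young's inequality, $\|f_\varepsilon\ast\phi_\sigma\ast(\mu_H-\mu_H^\ast)\|_1 \le \|\mu_H-\mu_H^\ast\|_{\mathrm{TV}}$) and the smoothness of $\sigma\mapsto \phi_\sigma$ gives $\|f_Y - f_{0Y}\|_1\lesssim \tilde\epsilon_n$. To upgrade this to a Kullback--Leibler neighborhood, I would use the standard device (cf.\ Lemma 10 in Ghosal--van der Vaart, 2007): $f_{0Y}$ is bounded below on any compact set by a constant multiple of the Linnik tails (since $f_{0X}$ is integrable and $f_\varepsilon$ is positive with heavy polynomial tails of order $|u|^{-(\beta+1)}$), and $f_Y$ inherits the same Linnik-type tail lower bound because the mixing measure is a probability measure; these tail comparisons yield $\log^+(f_{0Y}/f_Y)$ uniformly bounded by a polynomial in $|y|$, from which $\mathrm{KL}(P_{0Y};P_Y)\lesssim \|f_Y-f_{0Y}\|_1(\log(1/\|f_Y-f_{0Y}\|_1))^2 \lesssim \tilde\epsilon_n^2 (\log n)^2$ (and analogously for the second moment), after adjusting the definition of $\tilde\epsilon_n$ by the appropriate logarithmic factor.

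Combining the two prior lower bounds, the total mass of the KL-ball is at least
\[
\exp\bigl(-C' \tilde\epsilon_n^{-1/\beta}(\log n)^{t_1\vee 3/2 + 1}\bigr),
\]
and the choice $\tilde\epsilon_n = n^{-\beta/(2\beta+1)}(\log n)^{1/2+\beta(t_1\vee 3/2)/(2\beta+1)}$ equates $\tilde\epsilon_n^{-1/\beta}(\log n)^{t_1\vee 3/2}$ with $n\tilde\epsilon_n^2$ up to the allowed log-power, yielding the claim. The main obstacle in this scheme is the Hellinger-to-KL conversion: Linnik densities with $1<\beta<2$ have heavy polynomial tails, so one has to verify a uniform lower tail bound for $f_Y$ across the prior ball (to control $\mathbb E_{0Y}[(\log f_{0Y}/f_Y)^2]$), which is not automatic from the $L^1$-bound and is where the exponential-tail assumption on $f_{0X}$ and the Dirichlet tail cell estimate $\mu_H(U_0)\le \tilde\epsilon_n^2$ are both needed.
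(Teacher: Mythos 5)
Your overall architecture (truncate $\mu_{0X}$ to a logarithmic window, apply Lemma \ref{lem:discrete} at $\sigma_n\asymp\tilde\epsilon_n^{1/\beta}$ to get $d_{\mathrm H}\lesssim\tilde\epsilon_n$, lower bound the $\mathscr D_{H_0}$-mass of a simplex neighbourhood and the $\Pi_\sigma$-mass of a bandwidth interval, then convert to a KL-neighbourhood) is the paper's strategy. Two steps, however, contain genuine gaps.

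First, the conversion inequality you propose,
$\mathrm{KL}(P_{0Y};P_Y)\lesssim\|f_Y-f_{0Y}\|_1\bigl(\log(1/\|f_Y-f_{0Y}\|_1)\bigr)^2$,
is not a standard result and does not hold. The correct device, used in the paper, is Lemma B2 of Ghosal--Shen--Tokdar, which bounds $\mathrm{KL}(P_{0Y};P_Y)$ and $\mathbb E_{0Y}[(\log f_{0Y}/f_Y)^2]$ by $d_{\mathrm H}^2(f_{0Y},f_Y)\bigl(1+\log(1/\lambda)\bigr)$ plus a remainder that depends on the pointwise event $\{f_Y/f_{0Y}\le\lambda\}$. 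So you must work with the Hellinger distance, not $L^1$, and you must control $P_{0Y}\bigl((\log f_{0Y}/f_Y)\1_{\{f_Y/f_{0Y}\le\lambda\}}\bigr)$ directly. The paper does this by proving a pointwise lower bound $f_Y/f_{0Y}\gtrsim\lambda$ on a large region and then bounding the remaining integral via the moments of $f_{0Y}$; crucially, the cases $\beta=2$ (exponential Laplace tails, second moment of $f_{0Y}$) and $1<\beta<2$ (polynomial Linnik tails, only $(1+\delta)$-th moments for $\delta<\beta-1$) require different arguments for this remainder, and your proposal only gestures at this case distinction.

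Second, your event on $\mu_H$ is too weak to produce the pointwise lower bound on $f_Y$ that the ratio argument requires. Requiring $|\mu_H(U_j)-p_j^\ast|<\tilde\epsilon_n^2/N_n$ together with $\mu_H(U_0)<\tilde\epsilon_n^2$ keeps $\mu_H$ close to $\mu_H^\ast$ in $\ell_1$, but since some $p_j^\ast$ may be zero (the partition includes auxiliary cells without atoms), closeness alone permits $\mu_H(U_j)=0$ on some cell. Without a uniform floor on $\mu_H(U_j)$, the minorization $f_Y(y)\gtrsim f_\varepsilon(\cdot)\,\mu_H(U_{J(x)})/\sigma$, which is what gives $f_Y/f_{0Y}\gtrsim\lambda$ with $\lambda\asymp\sigma^{\text{poly}}$, simply fails. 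The paper's set $\mathscr P_\sigma$ therefore imposes \emph{both} an $\ell_1$ constraint $\sum_j|\mu_H(U_j)-p_j|\le 2\sigma^{2\beta+1}$ \emph{and} the floor $\min_j\mu_H(U_j)\ge\sigma^{2(2\beta+1)}/2$; it is the second condition, together with Lemma 10 of Ghosal--van der Vaart (2007) (which you cite for the wrong purpose — it is the Dirichlet-simplex mass estimate, not the ratio-to-KL conversion), that makes the pointwise lower bound and hence the remainder control go through.
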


\begin{proof} 
We show that, for some constant $C>0$, the prior probability of a 
Kullback-Leibler neighborhood of $P_{0Y}$ of radius $\tilde\epsilon_n^2$ is at least $\exp{(-C n\tilde\epsilon_n^2)}$.
We apply Lemma B2 of \cite{ghosal:shen:tokdar}, pp. 638--639, to relate 
$N_{\mathrm{KL}}(P_{0Y};\,\xi^2)$ to a Hellinger ball of appropriate radius. 
By Assumption \ref{ass:twicwtailcond}, there exists $C_0 >0$ such that  
$\mu_{0X}([-a,\,a]^c) \leq  e^{-(1+C_0)a}$ for $a$ large enough. 
Set $a_\eta:= a_0\log(1/\eta)$, with $a_0\geq2/(1+C_0)$ and $\eta>0$ small enough,
we have $\mu_{0X}([-a_\eta,\,a_\eta]^c) \leq\eta^2$. Then,
Lemma A.3 of \cite{ghosal2001}, p. 1261, shows that the $L^1$-distance between $f_{0Y}$ and 
$f_{0Y}^\ast:=f_\varepsilon\ast f_{0X}^\ast$, where $f_{0X}^\ast$ is the density of the renormalized restriction of $\mu_{0X}$ to $[-a_\eta,\,a_
\eta]$, denoted by $\mu_{0X}^\ast$, is bounded above by $2\eta^2$. From
$d_{\mathrm{H}}^2(f_{0Y},\,f^\ast_{0Y})\leq\|f_{0Y}-f^\ast_{0Y}\|_1\leq 2\eta^2$, we have $d_{\mathrm{H}}(f_{0Y},\,f^\ast_{0Y})\lesssim\eta$.
Lemma \ref{lem:discrete} applied to $\mu^\ast_{0X}$ (which plays the role of $\mu_{0X}$ in the statement) shows that, for $\sigma>0$ small enough, there exists a discrete probability measure $\mu_H^\ast$ supported on $[-a_\eta,\,a_\eta]$, with at most $N=O((a_\eta/\sigma)|\log\sigma|^{1/2})$ support points, such that $f_Y^\ast:=f_\varepsilon\ast (\mu_H^\ast\ast \phi_\sigma)$ satisfies 
$$d_{\mathrm{H}}(f_Y^\ast,\,f^\ast_{0Y})\lesssim \sigma^\beta.$$
An analogue of Corollary B1 in \cite{ghosal:shen:tokdar}, p. 16, shows that 
$\mu_H^\ast=\sum_{j=1}^Np_j\delta_{u_j}$ has support points inside $[-a_\eta,\,a_\eta]$, with at least 
$\sigma^{1+2\beta}$-separation between every pair of points $u_i\neq u_j$, 
and that $d_{\mathrm{H}}(f_Y^\ast,\,f^\ast_{0Y})\lesssim \sigma^{\beta}$.
Consider disjoint intervals $U_j$ centred at $u_1,\,\ldots,\,u_N$ with length $\sigma^{1+2\beta}$ each.
Extend $\{U_1,\,\ldots,\,U_N\}$ to a partition $\{U_1,\,\ldots,\,U_K\}$ of $[-a_\eta,\,a_\eta]$ such that each 
$U_j$, for $j=N+1,\,\ldots,\,K$, has length at most $\sigma$.
Further extend this to a partition $U_1,\,\ldots,\, U_M$ of $\mathbb R$ such that, for some constant $a_1>0$, 
we have $a_1\sigma\leq H_0(U_j)\leq 1$ 
for all $j=1,\,\ldots,\,M$. 
The whole process can be done with a total number $M$ of intervals of the same order as $N$.
Define $p_j=0$ for $j=N+1,\,\ldots,\,M$. Let $\mathscr P_\sigma$ be the set of probability measures $\mu_H\in\mathscr P$ 
with 
\begin{equation}\label{eq:diffprob}
\sum_{j=1}^K|\mu_H(U_j)-p_j|\leq 2\sigma^{2\beta+1} \quad \mbox{ and }\quad \min_{1\leq j\leq K}\mu_H(U_j)\geq \sigma^{2(2\beta+1)}/2.
\end{equation} 
Note that $\sigma^{2\beta+1}K<1$.
By Lemma 5 in \cite{ghosal2007}, p. 711, or 
Lemma B1 in \cite{ghosal:shen:tokdar}, p. 16, with $V_0:=\cup _{j>N}U_j$ and $V_j\equiv U_j$, $j=1,\,\ldots,\,N$, for any $\mu_H\in\mathscr P_\sigma$ 
we have $d_{\mathrm{H}}^2(f_Y,\,f^\ast_Y)\leq\|f_Y-f^\ast_Y\|_1\lesssim \sigma^{2\beta}$. Then, for 
$\eta=O(\sigma^\beta)$, we have
$d_{\mathrm{H}}^2(f_Y,\,f_{0Y}) \lesssim
d_{\mathrm{H}}^2(f_Y,\,f_Y^\ast) +d_{\mathrm{H}}^2(f_Y^\ast,\,f_{0Y}^*) + d_{\mathrm{H}}^2(f_{0Y}^*,\,f_{0Y})\lesssim \sigma^{2\beta}$.
To apply Lemma B2 of \cite{ghosal:shen:tokdar}, pp. 16--17, we study the quotient $(f_Y/f_{0Y})$.
Let $\mu_H\in\mathscr P_\sigma$.

\smallskip

\item[$\bullet$] \emph{Linnik error distribution with index $1<\beta<2$}

\noindent
For every $1<\beta<2$, we have $\|f_{0Y}\|_\infty<\mathbb E[V]<\infty$. 
Let $b_\sigma:=\sigma^{-2\beta/(\beta-1)}$. Since $a_\eta<b_\sigma$ and 
$f_\varepsilon(u)\sim\Gamma(1+\beta)\sin(\pi\beta/2)|u|^{-(1+\beta)}/\pi$, as $|u|\rightarrow\infty$, see 
the asymptotic expansion (4.3.39) in \cite{kotz1996}, p. 262,
for $|y|<b_\sigma$ with $\sigma$ small enough,
\[\begin{split}
\frac{f_Y}{f_{0Y}}(y)&\gtrsim \int_{|x|\leq a_\eta} f_\varepsilon (y-x)\int_{|x-u|\leq\sigma}\phi_\sigma(x-u)\mu_H(\di u)\,\di x\\
&\gtrsim \frac{a_\eta}{\sigma} f_\varepsilon(2b_\sigma)\mu_H(U_{J(x)})
\gtrsim a_\eta b_\sigma^{-(1+\beta)}\sigma^{4\beta+1},
\end{split}
\]
where $J(x)$ denotes the index $j\in\{1,\,\ldots,\,K\}$ for which $U_j\ni x$, because the interval $U_{J(x)}$ 
with length at most $\sigma$ is a subset of an interval of radius $\sigma$ centred at $x$. 
Analogously, for $|y|\geq b_\sigma$,
\[\begin{split}
\frac{f_Y}{f_{0Y}}(y)
&\gtrsim \int_{|x|\leq a_\eta} f_\varepsilon (y-x)\int_{|x-u|\leq\sigma}\phi_\sigma(x-u)\mu_H(\di u)\,\di x\\
&\gtrsim \frac{a_\eta}{\sigma}e^{-(|y|+a_\eta)}\mu_H(U_{J(x)})
\gtrsim a_\eta e^{-2|y|}\sigma^{4\beta+1}.
\end{split}
\]
For $\lambda=a_\eta b_\sigma^{-(1+\beta)}\sigma^{4\beta+1}$, 
we have $\log(1/\lambda)\lesssim \log(1/\sigma)$. 
Since $\{y:\,(f_Y/f_{0Y})(y)\leq\lambda\}\subseteq \{y:\,|y|\geq b_\sigma\}$, 
\begin{align*}
P_{0Y } \pt{ \left(\log\frac{f_{0Y}}{f_Y} \right)\1_{\big\{\frac{f_Y}{f_{0Y}}\leq \lambda\big\}}}
& \lesssim\int_{|y|\geq b_\sigma}
 \log\frac{f_{0Y}}{f_Y}(y) f_{0Y}(y)\,\di y\\&\lesssim\log(1/\sigma)\int_{|y|\geq b_\sigma}|y|f_{0Y}(y)\,\di y,
\end{align*}
where, by Assumption \ref{ass:twicwtailcond} on $f_{0X}$ that guarantees that
$\int_{\mathbb R}e^{|x|}f_{0X}(x)\,\di x<\infty$ and by Proposition 4.3.13 in \cite{Kotz2001}, p. 262, with $n=1$,
\[
\begin{split}
&\int_{|y|\geq b_\sigma}|y|f_{0Y}(y)\,\di y\\
&\hspace*{1.5cm}\leq \int_{|y|\geq b_\sigma}|y|
\int_{\mathbb R}\pq{\pt{e^{|x|}\int_0^1e^{-v|y|}+\int_1^\infty e^{-|y-x|}}vf_V(v;\,\beta)\,\di v}f_{0X}(x)\,\di x\,\di y\\
&\hspace*{1.5cm}\leq\pt{\int_{\mathbb R}e^{|x|}f_{0X}(x)\,\di x}
\int_{|y|\geq b_\sigma}|y|f_\varepsilon(y)\,\di y \lesssim
\int_{|y|\geq b_\sigma}|y|^{-\beta}\,\di y\lesssim b_\sigma^{-(\beta-1)}\lesssim \sigma^{2\beta}.
\end{split}
\]
It follows that 
$P_{0Y}(\log(f_{0Y}/f_Y)\1_{\{(f_Y/f_{0Y})\leq\lambda\}})\lesssim \sigma^{2\beta}$. 


\smallskip

\item[$\bullet$] \emph{Laplace error distribution \emph{($\beta=2$)}} 

\noindent 
Since $\|f_{0Y}\|_\infty\leq 1/2$, for $|y|<a_\eta$,
\begin{align*}
\frac{f_Y}{f_{0Y}}(y)
&\gtrsim \int_{|x|\leq a_\eta} f_\varepsilon (y-x)\int_{|x-u|\leq\sigma}\phi_\sigma(x-u)\mu_H(\di u)\,\di x\\
&\gtrsim \frac{a_\eta}{\sigma} e^{-2a_\eta}\mu_H(U_{J(x)}) 
\gtrsim a_\eta e^{-2a_\eta}\sigma^{4\beta+1},
\end{align*}
while, for $|y|\geq a_\eta$,
\[\begin{split}
\frac{f_Y}{f_{0Y}}(y)
&\gtrsim \int_{|x|\leq a_\eta} f_\varepsilon(y-x)\int_{|u|\leq a_\eta}\phi_\sigma(x-u)\mu_H(\di u)\,\di x
\gtrsim \frac{a_\eta}{\sigma}e^{-|y|}e^{-a_\eta}e^{-2(a_\eta/\sigma)^2},
\end{split}\]
where 
$\mu_H([-a_\eta,\,a_\eta])\geq 1-2\sigma^{2\beta+1}$ 
because of the first condition in \eqref{eq:diffprob}.
For $\lambda=a_\eta e^{-2a_\eta}\sigma^{4\beta+1}$, we have $\log(1/\lambda)\lesssim \log(1/\sigma)$.
Since $\{y:\,(f_Y/f_{0Y})(y)\leq\lambda\}\subseteq \{y:\,|y|\geq a_\eta\}$, 
\[
\begin{split}
P_{0Y}\hspace*{-0.1cm}\pt{\hspace*{-0.1cm}\pt{\log\frac{f_{0Y}}{f_Y}}\1_{\big\{\frac{f_Y}{f_{0Y}}\leq \lambda\big\}}}
&\lesssim\int_{|y|\geq a_\eta}\pt{\log\frac{f_{0Y}}{f_Y}(y)}f_{0Y}(y)\,\di y
\lesssim\frac{1}{\sigma^2}\int_{|y|\geq a_\eta}|y|^2f_{0Y}(y)\,\di y, 
\end{split}
\]
where
\[\begin{split}
\int_{|y|\geq a_\eta}y^2f_{0Y}(y)\,\di y
&\leq\pt{\int_{\mathbb R}e^{|x|}f_{0X}(x)\,\di x}
\int_{|y|\geq a_\eta}y^2f_\varepsilon(y)\,\di y\\
&\lesssim
\int_{|y|\geq a_\eta}e^{-|y|/2}\,\di y\lesssim e^{-a_\eta/2}\lesssim \sigma^6,
\end{split}
\]
see, \emph{e.g.}, Lemma A.7 in \cite{scricciolo:2011}, pp. 303--304,
provided that $a_0\geq\max\{12,\,2/(1+C_0)\}$. Consequently, 
$P_{0Y}(\log(f_{0Y}/f_Y)\1_{\{(f_Y/f_{0Y})\leq\lambda\}})\lesssim \sigma^4$.

\smallskip

\noindent
Thus, for every $1<\beta\leq 2$, we have $P_{0Y}(\log(f_{0Y}/f_Y)\1_{\{(f_Y/f_{0Y})\leq\lambda\}})\lesssim \sigma^{2\beta}$.
Lemma B2 of \cite{ghosal:shen:tokdar}, pp. 16--17, implies that $P_{0Y}(\log(f_{0Y}/f_Y))$ is bounded above by 
$\sigma^{2\beta}|\log \sigma|$. By Lemma 10 of \cite{ghosal2007}, p. 714, we have $\mathscr D_{H_0}(\mathscr P_\sigma)\gtrsim\exp{(-c_1K\log(1/\sigma))}
\gtrsim \exp{(-c_2(a_\eta/\sigma)\log^{3/2}(1/\sigma))}$ for constants $c_1,\,c_2>0$ that depend on $H_0(\mathbb R)$ and $a_1$.
Given $\sigma>0$, define $\mathscr S_\sigma:=\{\sigma':\,\sigma(1+\sigma^d)^{-1}\leq\sigma'\leq\sigma\}$ for a constant 
$0<d\leq s_1-1$. Then, $\Pi_\sigma(\mathscr S_\sigma)\gtrsim \exp{(-D_1\sigma^{-\gamma}\log^{t_1}(1/\sigma))}$.
Replace $\sigma$ at every occurrence with $\sigma'\in\mathscr S_\sigma$.
For $\xi:=\sigma^{\beta}\log^{1/2}(1/\sigma)$, noting that $\log(1/\sigma)\lesssim\log(1/\xi)$, 
since $\gamma\leq1$ we have
\[
\begin{split}
(\mathscr D_{H_0}\otimes \Pi_\sigma)(N_{\textrm{KL}}(P_{0Y};\,\xi^2))
&\gtrsim\exp{(-c_2(a_\eta/\sigma)\log^{3/2}(1/\sigma))}\exp{(-D_1\sigma^{-\gamma}\log^{t_1}(1/\sigma))}\\
&\gtrsim 
\exp{(-c_3(a_\eta/\sigma)\log^{(t_1\vee 3/2)}(1/\sigma))}\\
&\gtrsim 
\exp{(-c_4\xi^{-1/\beta}\log^{[1/(2\beta)+1+(t_1\vee 3/2)]}(1/\xi))}.
\end{split}
\]
Replacing $\xi$ with $\tilde\epsilon_n=n^{-\beta/(2\beta+1)}(\log n)^{1/2+\beta(t_1\vee 3/2)/(2\beta+1)}$,
for a suitable constant $C>0$ we have
$
(\mathscr D_{H_0}\otimes \Pi_\sigma)(N_{\textrm{KL}}(P_{0Y};\,\tilde\epsilon_n^2))\gtrsim \exp{(-C n\tilde\epsilon_n^2)}
$ and the proof is complete.
\end{proof} 


\section{Lemmas for Theorem \ref{thm:4} on adaptive
posterior contraction rates for Dirichlet Laplace-normal mixtures}


\smallskip

We introduce some more notation.
For $h=o(1)$, let $\delta=o(h)$. 
For $m\in\mathbb N$, $b=\mp1/2$ and $\sigma=o(1)$, we define the set
\begin{equation}\label{def:set}
A_{b,\sigma}:=\{x\in\mathbb R:\,\gamma h_{m,b,\sigma}(x)>-\bar h_{0,b}(x)/2\},
\end{equation}
with $\bar h_{0,b}$ and $h_{m,b,\sigma}$ as in \eqref{eq:barh} and \eqref{hm}, respectively,
as well as the function
\begin{equation}\label{def:functiong}
g_{b,\sigma}:=M_{0X}(b)e^{-b\cdot}\gamma h_{m,b,\sigma}\1_{A_{b,\sigma}}-\frac{1}{2}f_{0X}\1_{A_{b,\sigma}^c}.
\end{equation}

\smallskip

\allowdisplaybreaks

In the following lemma, we prove the existence of a compactly supported discrete mixing
probability measure, with a sufficiently small number of support points, 
such that the corresponding Laplace-normal mixture
has Hellinger distance of the order $O(\sigma^{\alpha+2})$ from the sampling density $f_{0Y}$
having an $\alpha$-Sobolev regular mixing density $f_{0X}$ with exponentially decaying tails.

\begin{lem}\label{lem:deltabound1}
Let $f_\varepsilon$ be the standard Laplace density.
Let $f_{0X}$ be a density 
satisfying Assumption \ref{ass:twicwtailcond}, Assumption \ref{ass:sobolevcond} for $\alpha>0$ and 
Assumption \ref{ass:smoothf0}.
For $\sigma>0$ small enough, there exist a constant $A_0>0$ and a discrete probability measure on $[-a_\sigma,\,a_\sigma]$, with 
$a_\sigma:=A_0|\log\sigma|$, having at most $N=O((a_\sigma/\sigma)|\log \sigma|^{1/2})$ support points, such that, for $f_Y:=f_\varepsilon\ast (\mu_H\ast\phi_\sigma)$ 
and $f_{0Y}:=f_\varepsilon\ast f_{0X}$,
$$d_{\mathrm H}(f_Y,\,f_{0Y})\lesssim \delta_0^{-1/2}e^{a_0/2}\sigma^{\alpha+2}$$
as soon as $P_{0X}(|X|\leq a_0)\geq \delta_0$ for some $0<a_0<a_\sigma$ and $0<\delta_0<1$.
\end{lem}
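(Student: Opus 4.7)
The plan is to mirror Lemma \ref{lem:discrete}, but upgrade the approximation rate from $\sigma^\beta$ to $\sigma^{\alpha+2}$ by exploiting the Sobolev regularity of $f_{0X}$ via the new approximation result of Lemma \ref{lem:contapprox}. As in Lemma \ref{lem:discrete}, the hypothesis $P_{0X}(|X|\leq a_0)\geq\delta_0$ together with the Laplace form of $f_\varepsilon$ gives the pointwise lower bound $f_{0Y}(y)\geq(\delta_0/2)e^{-(|y|+a_0)}$; with $U(y):=e^{-y/2}+e^{y/2}$ as in \eqref{eq:defU}, this reduces the squared Hellinger distance to
\begin{equation*}
d_{\mathrm H}^2(f_Y,\,f_{0Y})\lesssim \delta_0^{-1}e^{a_0}\sum_{b=\mp 1/2}\|e^{b\cdot}[f_\varepsilon\ast(\mu_H\ast\phi_\sigma-f_{0X})]\|_2^2,
\end{equation*}
so it suffices to exhibit $\mu_H$ making the right-hand side of order $\sigma^{2(\alpha+2)}$.

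Lemma \ref{lem:contapprox} already secures exactly such a bound for the signed approximation $\phi_\sigma\ast(T_{m,b,\sigma}f_{0X})$. To convert it into an honest probability density I would adopt the repair strategy encoded by $g_{b,\sigma}$ in \eqref{def:functiong}: on the \vir{safe} set $A_{b,\sigma}$ defined in \eqref{def:set}, keep the candidate $M_{0X}(b)^{-1}e^{-b\cdot}(\bar h_{0,b}+\gamma h_{m,b,\sigma})$, which is nonnegative by construction, and replace it by $f_{0X}/2$ on $A_{b,\sigma}^c$. The defining inequality $|\gamma h_{m,b,\sigma}|>\bar h_{0,b}/2$ on $A_{b,\sigma}^c$, combined with a pointwise bound for $h_{m,b,\sigma}$ obtained from the representation \eqref{hm} and the Hölder envelope in \eqref{eq:holderf0}, shows that $\mu_{0X}(A_{b,\sigma}^c)$ and the associated repair error in the weighted $L^2$ norm are controlled by a high moment of $L_0/f_{0X}$. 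The condition $R\geq m/\upsilon$ with $m\geq(\alpha+2)$ in Assumption \ref{ass:smoothf0} and the integrability \eqref{eq:ratiofo} are exactly tailored to ensure the resulting error is $O(\sigma^{\alpha+2})$ in both weighted norms $b=\pm 1/2$, while renormalization adds only $O(\sigma^{m-2})$ by \eqref{eq:bound}.

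I would then truncate the repaired probability density to $[-a_\sigma,\,a_\sigma]$ with $a_\sigma:=A_0|\log\sigma|$ and $A_0$ large enough, controlling the tail contribution in both weighted $L^2$ norms by $\int_{|x|>a_\sigma}e^{|x|}f_{0X}^2(x)\,\di x=o(\sigma^{2(\alpha+2)})$ via Assumption \ref{ass:twicwtailcond}. Finally, discretization proceeds as in the proof of Lemma \ref{lem:discrete}: Lemma A.1 of \cite{ghosal:2001} produces a discrete probability measure $\mu_H$ supported on $[-a_\sigma,\,a_\sigma]$ with at most $N\lesssim(a_\sigma/\sigma)|\log\sigma|^{1/2}$ atoms matching the first $J\asymp a_\sigma M$ ordinary moments of the continuous approximation together with its two exponential moments $M_{\mu_H}(\pm 1/2)$, where $M\asymp\sigma^{-1}|\log\sigma|^{1/2}$. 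The moment-matching estimates used for $I_b^{(1)}$ and $I_b^{(2)}$ in Lemma \ref{lem:discrete} carry over essentially verbatim to yield a discretization error of order $\sigma^{2(\alpha+2)}$ in both weighted norms. The main obstacle is the repair step: one must check that a \emph{single} probability density can simultaneously serve both weighted norms (the sets $A_{b,\sigma}$ depend on $b$) and that Assumption \ref{ass:smoothf0}, whose form is apparently dictated by this very construction, genuinely absorbs the repair error without costing any power of $\sigma$ beyond the $\sigma^{\alpha+2}$ budget.
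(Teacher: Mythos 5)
Your proposal reproduces the paper's strategy almost exactly: the reduction of $d_{\mathrm H}^2(f_Y,f_{0Y})$ to $\sum_{b=\mp 1/2}\|e^{b\cdot}\{f_\varepsilon\ast[(\mu_H\ast\phi_\sigma)-f_{0X}]\}\|_2^2$ via the pointwise lower bound on $f_{0Y}$ and the weight $U$, the invocation of Lemma~\ref{lem:contapprox} for the signed candidate $\phi_\sigma\ast(T_{m,b,\sigma}f_{0X})$, the non-negativity repair through $g_{b,\sigma}$ and $A_{b,\sigma}$, the renormalization, the truncation to a compact set, and the moment-matching discretization via Lemma~A.1 of \cite{ghosal:2001} --- this is precisely the five-term decomposition $V_1,\dots,V_5$ the paper uses.

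The one step that is genuinely off is the truncation. The paper does not truncate to the interval $[-a_\sigma,a_\sigma]$ and control the removed mass by $\int_{|x|>a_\sigma}e^{|x|}f_{0X}^2(x)\,\di x$. It truncates instead to the \emph{level set} $E_\sigma:=\{x:\,f_{0X}(x)>\sigma^T\}$ (which happens to sit inside $[-a_\sigma,a_\sigma]$ by Assumption~\ref{ass:twicwtailcond}). This choice is essential because the truncation error is $\|\bar h_{b,\sigma}\1_{E_\sigma^c}\|_1$, an $L^1$ quantity on the repaired density --- not the weighted $L^2$ tail of $f_{0X}$ --- and the repaired density contains the convolution piece $e^{-b\cdot}\gamma h_{m,b,\sigma}$, whose integral over $E_\sigma^c$ cannot be bounded by the exponential tail of $f_{0X}^2$ alone. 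The paper bounds it (inequality~\eqref{eq:setcomplem}) by expanding $\bar h_{0,b}\ast D^jH_\delta$ around $\bar h_{0,b}$ via the local H\"older envelope $L_0$, then applying H\"older's inequality and the ratio moment condition~\eqref{eq:ratiofo} on the set where $f_{0X}$ is small. Your proposal applies~\eqref{eq:ratiofo} only to the $A_{b,\sigma}^c$ repair (Lemma~\ref{lem:Asigma}), but it is used a second time, for the truncation, and without the $E_\sigma$ device that second use has no foothold. A related slip: the renormalization error $V_3$ is controlled by $\|f_{0X}+g_{b,\sigma}\|_1-1\lesssim\sigma^{\upsilon R}$ via~\eqref{eq:norm1}, not by the $O(\sigma^{m-2})$ of~\eqref{eq:bound}, which serves only to show $\int_{E_\sigma}e^{bu}\bar h_{b,\sigma}\,\di u=O(1)$ in the discretization step.

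Your closing observation that a single $\mu_H$ must match moments of a $b$-dependent target $\bar h_{b,\sigma}\1_{E_\sigma}/\|\bar h_{b,\sigma}\1_{E_\sigma}\|_1$ for both $b=\pm1/2$ is a fair one; the paper's constraints~\eqref{eq:c23} are stated for each $b$ but the discretization lemma produces one measure, and the reconciliation (e.g.\ a $b$-independent repair on $A_{+1/2,\sigma}\cap A_{-1/2,\sigma}$, or absorbing the $b$-discrepancy into the $O(\sigma^{\alpha+2})$ budget) is left implicit. Flagging it is to your credit, but it remains unresolved in the proposal as written.
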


\begin{proof}
Reasoning as in Lemma \ref{lem:discrete}, for $a_0,\,\delta_0$ as in the statement, 
$d_{\mathrm H}^2(f_{Y},\,f_{0Y}) \leq 2\delta_0^{-1}e^{a_0}\|g_{Y}-g_{0Y}\|_2^2$, 
where $g_{Y}:=Uf_{Y}$ and $g_{0Y}:=Uf_{0Y}$, with $U$ defined in \eqref{eq:defU}.
Note that $(e^{|\cdot|/2}f_{0X})\in L^1(\mathbb R)\cap L^2(\mathbb R)$ by Assumption \ref{ass:twicwtailcond}. 
Also, $g_Y,\,g_{0Y}\in L^1(\mathbb R)\cap L^2(\mathbb R)$ so that, not only are 
the corresponding Fourier transforms
$\hat g_{Y},\,\hat g_{0Y}$ well defined, but $\|g_{Y}\|_2^2=(2\pi)^{-1}\|\hat g_{Y}\|_2^2$ and  
$\|g_{0Y}\|_2^2=(2\pi)^{-1}\|\hat g_{0Y}\|_2^2$. In order to bound $\|g_{Y}-g_{0Y}\|_2^2$, 
some definitions and preliminary facts are exposed. For $T\geq \lceil(\alpha+2)/\vartheta\rceil$, with $\vartheta$ suitably chosen later on,  we define the set $E_\sigma:=\{x\in\mathbb R:\,f_{0X}(x)>\sigma^T\}$. The tail condition on $f_{0X}$ of Assumption \ref{ass:twicwtailcond} implies that $E_\sigma \subset \{ |x| \leq  A_0 
|\log\sigma|\} $ for some $A_0>0$. Note that $A_0$ can be chosen arbitrarily large by choosing $T$ large enough because 
$A_0$ is proportional to $T/(1+C_0)$. Set $B_0:=\mathbb E_{0X}[f_{0X}^{-\vartheta}(X)]<\infty$, then 
\begin{equation}\label{eq:Ecompl}
P_{0X}(E_\sigma^c)\leq B_0 \sigma^{\vartheta T} \lesssim \sigma^{\alpha+2} 
\end{equation}
by definition of $T$. Introduced the densities
$$\bar h_{b,\sigma}:=\frac{f_{0X}+g_{b,\sigma}}{\|f_{0X}+g_{b,\sigma}\|_1}, \quad \text{and} \quad \frac{ \bar h_{b,\sigma}\1_{E_\sigma}}{\|\bar h_{b,\sigma}\1_{E_\sigma}\|_1},$$
where $g_{b,\sigma}$ is as defined in \eqref{def:functiong}, we consider the decomposition
\[\begin{split}
\|g_{0Y}-g_Y\|_2^2&\lesssim
\sum_{b=\mp 1/2}\|e^{b\cdot}\{f_\varepsilon\ast [f_{0X}-\phi_\sigma\ast (T_{m,b,\sigma}f_{0X})]\}\|_2^2
\\ &\quad +\sum_{b=\mp 1/2}\|e^{b\cdot}\{f_\varepsilon\ast \phi_\sigma\ast [(T_{m,b,\sigma}f_{0X})-(f_{0X}+g_{b,\sigma})]\}\|_2^2\\
&\quad+\sum_{b=\mp 1/2}\|e^{b\cdot}\{f_\varepsilon\ast \phi_\sigma\ast [(f_{0X}+g_{b,\sigma})-\bar h_{b,\sigma}]\}\|_2^2\\
&\quad+\sum_{b=\mp 1/2}\|e^{b\cdot}\{f_\varepsilon\ast \phi_\sigma\ast[\bar h_{b,\sigma}-(\bar h_{b,\sigma}\1_{E_\sigma}/\|\bar h_{b,\sigma}\1_{E_\sigma}\|_1)]\}\|_2^2\\
&\quad+\sum_{b=\mp 1/2}
\|e^{b\cdot}\{f_\varepsilon\ast \phi_\sigma\ast[(\bar h_{b,\sigma}\1_{E_\sigma}/\|\bar h_{b,\sigma}\1_{E_\sigma}\|_1)-\mu_H]\}\|_2^2
\\
& \quad =:\sum_{r=1}^5 V_r.
\end{split}
\]
We show that each term $V_1,\,\ldots,\,V_5$ is of the order $O(\sigma^{2(\alpha+2)})$.
First by inequality \eqref{bound:tildehm12} of Lemma \ref{lem:contapprox}, we have $V_1\lesssim \sigma^{2(\alpha+2)}$.


\noindent
\emph{Study of the term $V_2$}\\
We recall that $\varrho_b(t):=[1-\psi_b^2(t)]$, with $\psi_b(t):=-(\imath t+b)$, $t\in\mathbb R$, and that 
\begin{equation*}
h_{m,b,\sigma}:=\frac{1}{\gamma} \sum_{k=1}^{m-1}\frac{ (-1)^k \sigma^{2k}}{ 2^k k!}
\sum_{j=0}^{2k} \binom{2k}{j}(-b)^{2k-j}(\bar h_{0,b}\ast D^jH_\delta).
\end{equation*}
We write 
$$e^{b\cdot}\Big\{f_\varepsilon\ast \phi_\sigma\ast 
\Big[M_{0X}(b)e^{-b\cdot}\gamma h_{m,b,\sigma}\1_{A_{b,\sigma}^c}-\frac{1}{2}f_{0X}\1_{A_{b,\sigma}^c}\Big]\Big\} $$ 
as 
$$ M_{0X}(b) \Big\{ e^{b\cdot}[f_\varepsilon\ast \phi_\sigma]\ast 
\Big[\Big(\gamma h_{m,b,\sigma}-\frac{1}{2}\bar h_{0,b}\Big)\1_{A_{b,\sigma}^c}\Big]\Big\}$$
so that,
using the definition of $g_{b,\sigma}$ in \eqref{def:functiong},
\[\begin{split}
V_2&=\sum_{b=\mp 1/2}\Big\|e^{b\cdot}\Big\{f_\varepsilon\ast \phi_\sigma\ast 
\Big[M_{0X}(b)e^{-b\cdot}\gamma h_{m,b,\sigma}\1_{A_{b,\sigma}^c}-\frac{1}{2}f_{0X}\1_{A_{b,\sigma}^c}\Big]\Big\}\Big\|_2^2\\
&\lesssim\sum_{b=\mp 1/2}
\int_{\mathbb R}  \frac{|e^{\sigma^2 \psi_b(t)^2/2}|^2}{|\varrho_b(t)|^2 } 
| \mathcal F\{[2\gamma h_{m,b,\sigma}-\bar h_{0,b}]\1_{A_{b,\sigma}^c}\}(t)|^2\,\di t\\&\lesssim 
\sum_{b=\mp 1/2}(\|\gamma h_{m,b,\sigma}\1_{A_{b,\sigma}^c}\|_1+\|\bar h_{0,b}\1_{A_{b,\sigma}^c}\|_1 )^2,
\end{split}
\] 
where we have used the facts that 
$$\|\mathcal F\{[2\gamma h_{m,b,\sigma}-\bar h_{0,b}]\1_{A_{b,\sigma}^c}\}\|_\infty  \leq \|\gamma h_{m,b,\sigma}\1_{A_{b,\sigma}^c}\|_1+\|\bar h_{0,b}\1_{A_{b,\sigma}^c}\|_1 $$
and
$$  \frac{|e^{\sigma^2 \psi_b(t)^2/2}|^2}{|\varrho_b(t)|^2 } \lesssim \frac{ 1 }{1+t^4}.$$ 
Finally, using the inequalities in \eqref{cm12} of Lemma \ref{lem:Asigma}, we obtain that
\[
V_2 \lesssim \sigma^{2\upsilon R}\lesssim \sigma^{2(\alpha+2)}.
\] 
\medskip

\noindent
\emph{Study of the term $V_3$}\\
By the inequalities in \eqref{cm12} and \eqref{eq:norm1} of Lemma \ref{lem:Asigma}, noting that $\|\mathcal F\{\bar h_{0,b}\}\|_\infty\leq 1$, we have
\[\begin{split}
V_3&=
\sum_{b=\mp 1/2}\pt{1-\frac{1}{\|f_{0X}+g_{b,\sigma}\|_1}}^2
\|e^{b\cdot}[f_\varepsilon\ast \phi_\sigma\ast (f_{0X}+g_{b,\sigma})]\|_2^2\\
&\lesssim \sigma^{2\upsilon R}\sum_{b=\mp 1/2}\int_{\mathbb R}  \frac{  |e^{\sigma^2 \psi_b(t)^2/2}|^2 }{|\varrho_b(t)|^2} 
[|\mathcal F\{\bar h_{0,b}\}(t)|^2+|\mathcal F\{\gamma h_{m,b,\sigma}\}(t)|^2]\,\di t.
\end{split}
\]
Recalling that $|\mathcal F\{D^j H_\delta\}(t)|\leq |t|^j |\mathcal F \{H\} (\delta t)|\leq |t|^j$ for $j=0,\,\ldots,\,2k$, then
\[\begin{split}
\hspace*{-1cm}\int_{\mathbb R}  \frac{  |e^{\sigma^2 \psi_b(t)^2/2}|^2 }{|\varrho_b(t)|^2} 
|\mathcal F\{\gamma h_{m,b,\sigma}\}(t)|^2\,\di t &\lesssim
\sum_{k=1}^{m-1} \frac{e^{\sigma^2/4}}{(2^k k!)^2}
\int_{\mathbb R}\frac{[\sigma^2(t^2+1/4)]^{2k}}{e^{(\sigma t)^2}|\varrho_b(t)|^2}
|\mathcal F\{\bar h_{0,b}\}(t)|^2 \,\di t\\&
\lesssim 
\|\widehat{e^{b\cdot}f_{0X}}\|_2^2\lesssim\|e^{|\cdot|/2}f_{0X}\|_2^2<\infty, 
\end{split}
\]
which in turns implies that 
\[
V_3
\lesssim \sigma^{2\upsilon R}\lesssim \sigma^{2(\alpha+2)}.
\]
\medskip


\noindent
\emph{Study of the term $V_4$}\\
Taking into account that $\|f_{0X}+g_{b,\sigma}\|_1\geq 1$, see inequalities
\eqref{eq:norm1} of Lemma \ref{lem:Asigma}, we have
\[\begin{split}
V_4&\lesssim\sum_{b=\mp 1/2}\Big(\|\bar h_{b,\sigma}\1_{E_\sigma^c}\|_1^2\times
\|e^{b\cdot}\{f_\varepsilon\ast \phi_\sigma\ast(\bar h_{b,\sigma}\1_{E_\sigma}/\|\bar h_{b,\sigma}\1_{E_\sigma}\|_1)\}\|_2^2\\
&\hspace*{3cm}+
\|e^{b\cdot}\{f_\varepsilon\ast \phi_\sigma\ast(\bar h_{b,\sigma}\1_{E_\sigma^c})\}\|_2^2\Big)
\\
&\lesssim \sum_{b=\mp 1/2}\|(f_{0X}+g_{b,\sigma})\1_{E^c_\sigma}\|_1^2 \times
\int_{\mathbb R}  \frac{  |e^{\sigma^2 \psi_b(t)^2/2}|^2 }{|\varrho_b(t)|^2}|
\mathcal F\{e^{b\cdot}\bar h_{b,\sigma}\1_{E_\sigma}/\|\bar h_{b,\sigma}\1_{E_\sigma}\|_1\}(t)|^2 \,\di t\\
&\hspace*{3cm} +\sum_{b=\mp 1/2}
\int_{\mathbb R}  \frac{|e^{\sigma^2 \psi_b(t)^2/2}|^2 }{|\varrho_b(t)|^2}|\mathcal F\{e^{b\cdot}\bar h_{b,\sigma}\1_{E_\sigma^c}\}(t)|^2 \,\di t.
\end{split}
\]
Note that 
\[\begin{split}
\|(f_{0X}+g_{b,\sigma})\1_{E^c_\sigma}\|_1&\leq \frac{3}{2}P_{0X}(E_\sigma^c)+M_{0X}(b) \int_{A_{b,\sigma}\cap E_\sigma^c} e^{-bx}|\gamma h_{m,b,\sigma}(x)|\,\di x\\&
\leq \frac{3B_0}{2}\sigma^{\vartheta T}+M_{0X}(b) \int_{A_{b,\sigma}\cap E_\sigma^c} e^{-bx}|\gamma h_{m,b,\sigma}(x)|\,\di x,
\end{split}\] where, as hereafter shown, 
\begin{equation}\label{eq:setcomplem}
\int_{A_{b,\sigma}\cap E_\sigma^c} e^{-bx}|\gamma h_{m,b,\sigma}(x)|\,\di x\lesssim\sigma^{\alpha+2},
\end{equation}
and 
\begin{equation}\label{eq:fourierEcompl}
\|\mathcal F\{e^{b\cdot}\bar h_{b,\sigma}\1_{E_\sigma^c}\}\|_\infty\lesssim \sigma^{\alpha+2}.
\end{equation}
To prove inequality \eqref{eq:setcomplem}, note that, by Lemma \ref{lem:HolderHm} and inequality \eqref{eq:Ecompl}, for every $j=0,\,\ldots,\,2k$, since $R>2$, by H\"older's inequality,
\[\begin{split}
&\int_{A_{b,\sigma}\cap E_\sigma^c}\abs{\int_{\mathbb R}e^{-b u}f_{0X}(x-u)D^jH_\delta(u)\,\di u}\di x\\
&\hspace*{0.1cm}\leq
\int_{A_{b,\sigma}\cap E_\sigma^c}\abs{\int_{\mathbb R}[e^{-b u}f_{0X}(x-u)-f_{0X}(x)]D^jH_\delta(u)\,\di u}\di x \\
&\hspace*{3.5cm}+\int_{A_{b,\sigma}\cap E_\sigma^c}f_{0X}(x)\int_{\mathbb R}|D^jH_\delta(u)|\,\di u\,\di x \\
&\hspace*{0.1cm}\lesssim C_j\delta^{-j+\upsilon}\int_{A_{b,\sigma}\cap E_\sigma^c}[L_0(x)+f_{0X}(x)]\,\di x+
C_{0,j}\int_{A_{b,\sigma}\cap E_\sigma^c}f_{0X}(x)\,\di x\\
&\hspace*{0.1cm}\lesssim C_j\delta^{-j+\upsilon}\int_{A_{b,\sigma}\cap E_\sigma^c}\pt{\frac{L_0(x)}{f_{0X}(x)}}f_{0X}^{1/R}(x)
f_{0X}^{1-1/R}(x)\,\di x+[C_{0,j}+C_j\delta^{-j+\upsilon}] P_{0X}(E_\sigma^c)\\
&\hspace*{0.2cm}\lesssim \delta^{-j+\upsilon}
\pt{\int_{A_{b,\sigma}\cap E_\sigma^c}f_{0X}(x)\pt{\frac{L_0(x)}{f_{0X}(x)}}^R\,\di x}^{1/R}
 P_{0X}(E_\sigma^c)^{1-1/R}+ \sigma^{\vartheta T-j+\upsilon}\\
&\hspace*{0.1cm}\lesssim \delta^{-j+\upsilon}
\pt{\int_{\mathbb R}f_{0X}(x)\pt{\frac{L_0(x)}{f_{0X}(x)}+1}^R\,\di x}^{1/R}[P_{0X}(E_\sigma^c)]^{1-1/R}
+\sigma^{\vartheta T-j+\upsilon}\\
&\hspace*{0.1cm}\lesssim \delta^{-j+\upsilon}
\pt{\int_{\mathbb R}f_{0X}(x)\pt{\frac{L_0(x)}{f_{0X}(x)}+1}^R\,\di x}^{1/R}\sigma^{\vartheta T(1-1/R)}
+\sigma^{\vartheta T-j+\upsilon }.
\end{split}\]
Consequently,
\[
\int_{A_{b,\sigma}\cap E_\sigma^c} e^{-bx} |\gamma h_{m,b,\sigma}(x)|\,\di x\lesssim
\sigma^{\upsilon+\vartheta T(1-1/R)}+\sigma^{\vartheta T}\lesssim \sigma^{\alpha +2}
\]
by choosing $[(\alpha+2)/T]\leq \vartheta<[1\wedge(\upsilon R/T)]$. 
Thus  $\|(f_{0X}+g_{b,\sigma})\1_{E^c_\sigma}\|_1\lesssim\sigma^{\alpha+2}$.
Inequality \eqref{eq:fourierEcompl} essentially follows from the tail condition on $f_{0X}$ of Assumption \ref{ass:twicwtailcond} 
and the definition of the set $E_\sigma^c$.

\medskip


\noindent
\emph{Study of the term $V_5$}\\
Recalling that $\mathcal B$ stands for the bilateral transform operator, we have
\[\begin{split}
V_5=&\sum_{b=\mp 1/2}
\int_{\mathbb R}  \frac{|e^{\sigma^2 \psi_b(t)^2/2}|^2 }{|\varrho_b(t)|^2} |[\mathcal B\{\bar h_{b,\sigma}\1_{E_\sigma}/\|\bar h_{b,\sigma}\1_{E_\sigma}\|_1\}-\mathcal B\{\mu_H\}](\psi_b(t))|^2\,\di t.
\end{split}\]
For $M>0$, split the integral domain into $|t|\leq M$ and $|t|>M$
and let the corresponding terms be denoted by $V_5^{(1)}$ and $V_5^{(2)}$.
Let $\mu_H$ be a discrete probability measure on $E_\sigma\subseteq [-a_\sigma,\,a_\sigma]$ satisfying the constraints
\begin{equation*}\label{eq:c22}
\int_{E_\sigma} u^j\mu_H(\di u)=
\int_{E_\sigma} u^j\frac{\bar h_{b,\sigma}(u)}{\|\bar h_{b,\sigma}\1_{E_\sigma}\|_1}\,\di u, \quad\mbox{for $j=1,\,\ldots,\,J-1$,}
\end{equation*}
with $J=\lceil\eta ea M\rceil$ for some $\eta>1$, together with
\begin{equation}\label{eq:c23}
\int_{E_\sigma}e^{bu}\mu_H(\di u)=\int_{E_\sigma} e^{bu}
\frac{\bar h_{b,\sigma}(u)}{\|\bar h_{b,\sigma}\1_{E_\sigma}\|_1}\,\di u,
\end{equation}
where the integral on the right-hand side of \eqref{eq:c23} is finite because 
$\int_{E_\sigma} e^{bu}
\bar h_{b,\sigma}(u)\,\di u\leq \int_{\mathbb R} e^{bu}
\bar h_{b,\sigma}(u)\,\di u\lesssim 
M_{0X}(b)[1+\int_{\mathbb R}\gamma h_{m,b,\sigma}(u)\,\di u]=
M_{0X}(b)\{1+\gamma [1+O(\sigma^{m-2})]\}$
by relationship \eqref{eq:bound} of Lemma \ref{lem:contapprox}. Thus, 
$$\int_{E_\sigma} e^{bu}
\bar h_{b,\sigma}(u)\,\di u=O(1).$$
Also, by the lower bound inequality in \eqref{eq:norm1} of Lemma \ref{lem:Asigma} and the previously proven fact that 
$\|(f_{0X}-g_{b,\sigma})\1_{E_\sigma^c}\|_1\lesssim \sigma^{\alpha+2}$, we have
$\|\bar h_{b,\sigma}\1_{E_\sigma}\|_1=1-\|\bar h_{b,\sigma}\1_{E_\sigma^c}\|_1\gtrsim1-\|(f_{0X}-g_{b,\sigma})\1_{E_\sigma^c}\|_1
\gtrsim1-\sigma^{\alpha+2}$. Therefore,
$$\|\mathcal B\{\bar h_{b,\sigma}\1_{E_\sigma}/\|\bar h_{b,\sigma}\1_{E_\sigma}\|_1\}(\psi_b)\|_\infty\leq
(\|\bar h_{b,\sigma}\1_{E_\sigma}\|_1)^{-1}\|e^{b\cdot}\bar h_{b,\sigma}\1_{E_\sigma}\|_1
\lesssim \int_{E_\sigma} e^{bu}
\bar h_{b,\sigma}(u)\,\di u.$$
Then, using Lemma \ref{lem:diseg} with $r=J$, by the inequality $J!\geq (J/e)^J$, we have 
\[\begin{split}
V_5^{(1)}&:=\sum_{b=\mp 1/2}
\int_{|t|\leq M}  \frac{|e^{\sigma^2 \psi_b(t)^2/2}|^2 }{|\varrho_b(t)|^2} |[\mathcal B\{\bar h_{b,\sigma}\1_{E_\sigma}/\|\bar h_{b,\sigma}\1_{E_\sigma}\|_1\}-\mathcal B\{\mu_H\}](\psi_b(t))|^2\,\di t\\
&\lesssim \frac{a^{2J}}{(J!)^2}
\int_{0}^M t^{2(J-2)}\,\di t \\
&\lesssim M^{-2(\alpha+2)} \times \frac{a^{2J}}{(J!)^2}\times
\frac{M^{2(J+\alpha)+1}}{2J-3}\lesssim  M^{-2(\alpha+2)} \pt{\frac{eaM}{J}}^{2J+1} M^{2\alpha}\lesssim M^{-2(\alpha+2)}
\end{split}\]  
because $(eaM/J)^{2J+1} M^{2\alpha}<e^{-2(\log\eta)J}M^{2\alpha}<1$. Choosing $M$ so that  
$(\sigma M)^2\geq (2\alpha+1)|\log\sigma|$, equivalently, $M\geq\sigma^{-1}[(2\alpha+1)|\log\sigma|]^{1/2}$, and taking into account that $|e^{\sigma^2\psi_b^2(t)/2}|^2=O(e^{-\sigma^2t^2})$, we have
\[\begin{split}
V_5^{(2)}&:=\sum_{b=\mp 1/2}
\int_{|t|>M}  \frac{|e^{\sigma^2 \psi_b(t)^2/2}|^2 }{|\varrho_b(t)|^2} |[\mathcal B\{\bar h_{b,\sigma}\1_{E_\sigma}/\|\bar h_{b,\sigma}\1_{E_\sigma}\|_1\}-\mathcal B\{\mu_H\}](\psi_b(t))|^2\,\di t\\
&\lesssim
e^{-(\sigma M)^2}
\int_{|t|>M}t^{-4}
\,\di t
\lesssim
e^{-(\sigma M)^2}M^{-3}\lesssim 
\sigma^{2\alpha+1} M^{-3}
\lesssim \sigma^{2(\alpha+2)}.
\end{split}
\]
Therefore, $V_5\lesssim \sigma^{2(\alpha+2)}$.

Finally $\|g_Y-g_{0Y}\|_2^2\lesssim\sum_{r=1}^5 V_r\lesssim \sigma^{2(\alpha+2)}$ and the assertion follows.
\end{proof}


\section{Technical lemmas}

\begin{lem}\label{lem:H}
For $r\geq 0$, $a\in \mathbb R$ and $j \in \{0\}\cup\mathbb N$ fixed, there exists 
a constant $C_{r,j}< \infty$ such that, for $h=o(1)$  and $\delta=o(h)$,
\begin{equation}\label{eq:constant}
  \int_{\mathbb R} |x|^r e^{a\delta x} |D^jH(x) |\,\di x \leq C_{r,j} .
\end{equation}
\end{lem}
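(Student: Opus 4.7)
The plan is to expand $D^j H$ via the Leibniz rule, absorb the factor $e^{a\delta x}$ into the Gaussian $e^{-(hx)^2/2}$ by completing the square, and then exploit the faster-than-polynomial decay of $\hat\tau^{(k)}$ asserted in \eqref{eq:poldec} to obtain integrability uniformly in $(h,\delta)$.

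First, from the definition $H(x)=(2\pi)^{-1}\hat\tau(x)\,e^{-(hx)^2/2}$, Leibniz's rule gives
\[
D^j H(x)=\frac{1}{2\pi}\sum_{k=0}^j\binom{j}{k}\hat\tau^{(k)}(x)\,\frac{\di^{j-k}}{\di x^{j-k}}\bigl[e^{-(hx)^2/2}\bigr],
\]
and each Gaussian derivative has the form $h^{j-k}P_{j-k}(hx)\,e^{-(hx)^2/2}$ with $P_{j-k}$ a Hermite-type polynomial of degree $j-k$. Since $h=o(1)$, this yields the pointwise bound
\[
|D^j H(x)|\lesssim \sum_{k=0}^j|\hat\tau^{(k)}(x)|\,(1+|hx|)^{j-k}e^{-(hx)^2/2}.
\]

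Next, I would split $e^{-(hx)^2/2}=e^{-(hx)^2/4}\cdot e^{-(hx)^2/4}$ and use the first factor to kill the polynomial via $\sup_{y\in\mathbb R}(1+|y|)^{j-k}e^{-y^2/4}<\infty$. The second factor is combined with $e^{a\delta x}$ by completing the square,
\[
a\delta x-\frac{(hx)^2}{4}=-\frac{h^2}{4}\Bigl(x-\frac{2a\delta}{h^2}\Bigr)^{\!2}+\frac{a^2\delta^2}{h^2},
\]
and since $\delta=o(h)$, the constant $e^{a^2\delta^2/h^2}$ stays bounded uniformly in $(h,\delta)$ for $h$ small. Combining these two observations yields
\[
e^{a\delta x}(1+|hx|)^{j-k}e^{-(hx)^2/2}\leq C_{j,k}
\]
uniformly in $x\in\mathbb R$ and in the parameters. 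Consequently,
\[
\int_{\mathbb R}|x|^r e^{a\delta x}|D^j H(x)|\,\di x\lesssim\sum_{k=0}^j C_{j,k}\int_{\mathbb R}|x|^r|\hat\tau^{(k)}(x)|\,\di x,
\]
and each integral on the right is finite by \eqref{eq:poldec} applied with $\nu>r+1$, giving the required constant $C_{r,j}$ depending only on $r,j,a$.

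The only genuinely delicate step is the uniform bound on $e^{a\delta x-(hx)^2/2}$: it is exactly the hypothesis $\delta=o(h)$ that keeps the residual term $a^2\delta^2/h^2$ bounded, so that no constants blow up as $h,\delta\to 0$. All remaining manipulations (Leibniz expansion, polynomial-times-Gaussian estimates, and integration against a rapidly decaying function) are routine.
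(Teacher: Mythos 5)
Your proof is correct and follows essentially the same route as the paper's: expand $D^jH$ by the Leibniz rule, recognise the Gaussian derivatives as Hermite-polynomial multiples of $e^{-(hx)^2/2}$, complete the square to absorb $e^{a\delta x}$ (with $\delta=o(h)$ keeping the residual $e^{O((\delta/h)^2)}$ bounded), and finish with the polynomial-decay property \eqref{eq:poldec} of $\hat\tau^{(k)}$. The only cosmetic difference is in bookkeeping: the paper keeps the extra powers $|x|^{j_3}$ from the Hermite polynomial and integrates $|x|^{r+j_3}|\hat\tau^{(i)}(x)|$ using $\nu>r+j+1$, whereas you factor the Gaussian into two halves and use one half to suppress the polynomial $(1+|hx|)^{j-k}$, leaving the slightly cleaner integral $\int|x|^r|\hat\tau^{(k)}(x)|\,\di x$ with $\nu>r+1$ sufficing.
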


\begin{proof}
Recalling that $H(x)  = (2\pi)^{-1}\hat \tau(x) e^{-(h x)^2/2}$, $x\in\mathbb R$,
we have 
$$D^jH(x) =\frac{1}{2\pi}\sum_{i=0}^j\binom{j}{i}\hat\tau^{(i)}(x)D^{j-i}
e^{-(h x)^2/2},\quad x\in\mathbb R,$$
where $D^{j-i}e^{-(h x)^2/2}$ is a linear combination of terms of the form $e^{-(h x)^2/2}(-1)^{j_1}h^{2j_2}x^{j_3}$, where $0\leq j_1,\,j_2,\,j_3\leq (j-i)$. Note that $ e^{a\delta x}  e^{-(h x)^2/2}  = e^{ -(hx-a\delta/h)^2/2}e^{(a\delta/h)^2/2} \leq e^{(a\delta/h)^2/2}$,
where $e^{(a\delta/h)^2/2}= 1 + o(1)$ because $(\delta/h)=o(1)$. Then, by condition \eqref{eq:poldec}, for $\nu>(r+j+1)$ and $0\leq j_1,\,j_2,\,j_3\leq (j-i)$,
$$  |x|^{r} e^{a\delta x}  |\hat\tau^{(i)}(x)|e^{-(h x)^2/2}|x|^{j_3}\lesssim
|x|^{r+j_3} |\hat\tau^{(i)}(x)|,$$
where the function on the right-hand side is integrable. The assertion follows.
\end{proof}

\begin{lem}\label{lem:HolderHm}
Suppose that $f_{0X}$ satisfies the local H\"{o}lder condition \eqref{eq:holderf0} of Assumption \ref{ass:smoothf0} for $0<\upsilon\leq1$ and $L_0\in L^1(\mathbb R)$.
For every $b\in\mathbb R$ and $j\in\{0\}\cup\mathbb N$, if $h=o(1)$ and $\delta=o(h)$, then
\begin{equation}\label{eq:f+L}
\abs{\int_{\mathbb R}  [e^{-bu}f_{0X}(x-u) - f_{0X}(x) ] D^jH_\delta(u)\,\di u}
\leq \frac{C_j}{\delta^j}\delta^\upsilon [L_0(x)+f_{0X}(x)],\quad x\in\mathbb R,
\end{equation}
where $C_j:=\max\{3C_{\upsilon,j},\, C_{1,j}\}>0$, with $C_{\upsilon,j}$ as in \eqref{eq:constant}.
\end{lem}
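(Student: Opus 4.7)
The proof hinges on a two-term decomposition that separates the effect of the exponential weight $e^{-bu}$ from that of the shift in $f_{0X}$. Specifically, I would write
\begin{equation*}
e^{-bu}f_{0X}(x-u) - f_{0X}(x) = e^{-bu}\bigl[f_{0X}(x-u) - f_{0X}(x)\bigr] + \bigl(e^{-bu}-1\bigr)f_{0X}(x),
\end{equation*}
and bound the integral of each piece separately against $|D^jH_\delta(u)|$, invoking Lemma \ref{lem:H} after the change of variable $u=\delta v$, which turns $D^jH_\delta(u)\,\di u$ into $\delta^{-j}(D^jH)(v)\,\di v$.

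For the first piece, the local H\"older condition \eqref{eq:holderf0} yields $|f_{0X}(x-u)-f_{0X}(x)|\leq L_0(x)|u|^\upsilon$, so after the substitution $u=\delta v$ the integral is at most
\begin{equation*}
L_0(x)\,\delta^{\upsilon-j}\int_{\mathbb R}|v|^\upsilon\, e^{-b\delta v}\,|D^jH(v)|\,\di v \leq L_0(x)\,\delta^{\upsilon-j}\,C_{\upsilon,j},
\end{equation*}
by Lemma \ref{lem:H} with $r=\upsilon$ and $a=-b$. For the second piece I use the elementary bound $|e^{y}-1|\leq|y|e^{|y|}$ in the form $|e^{-b\delta v}-1|\leq |b\delta v|\bigl(e^{b\delta v}+e^{-b\delta v}\bigr)$, which after the same change of variable gives
\begin{equation*}
f_{0X}(x)\,\delta^{-j}\!\int_{\mathbb R}|e^{-b\delta v}-1|\,|D^jH(v)|\,\di v \leq |b|\,\delta^{1-j}\,f_{0X}(x)\!\int_{\mathbb R}|v|\bigl(e^{b\delta v}+e^{-b\delta v}\bigr)|D^jH(v)|\,\di v.
\end{equation*}
Applying Lemma \ref{lem:H} twice, with $r=1$ and $a=\pm b$, bounds the remaining integral by a multiple of $C_{1,j}$. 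Since $\delta=o(1)$ and $0<\upsilon\leq 1$, we have $\delta\leq \delta^{\upsilon}$ for $\delta$ small enough, so the $\delta^{1-j}$ factor is absorbed into $\delta^{\upsilon-j}$.

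Summing the two contributions and using $L_0(x)\leq L_0(x)+f_{0X}(x)$ and $f_{0X}(x)\leq L_0(x)+f_{0X}(x)$ then yields a bound of the required form $(C_j/\delta^j)\delta^\upsilon[L_0(x)+f_{0X}(x)]$, where the specific prefactor $C_j=\max\{3C_{\upsilon,j},\,C_{1,j}\}$ is obtained by tracking the numerical constants that appear in the elementary inequality $|e^y-1|\leq |y|(e^y+e^{-y})$ and the splitting $e^{|b\delta v|}\leq e^{b\delta v}+e^{-b\delta v}$. I expect no serious obstacle here: the entire argument is a routine scaling-plus-H\"older computation, and the only care needed is the bookkeeping to make the constants fit the stated expression—in particular, keeping track of the factor produced when $e^{|b\delta v|}$ is split into two pieces so as to invoke Lemma \ref{lem:H} (whose bound is stated with a signed exponential $e^{a\delta x}$, not $e^{|a\delta x|}$).
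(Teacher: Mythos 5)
Your proof is correct, and it takes a genuinely different (and somewhat cleaner) route than the paper's. The paper decomposes $e^{-b\delta z}f_{0X}(x-\delta z)-f_{0X}(x)$ into \emph{three} pieces, $(e^{-b\delta z}-1)[f_{0X}(x-\delta z)-f_{0X}(x)]+f_{0X}(x)(e^{-b\delta z}-1)+[f_{0X}(x-\delta z)-f_{0X}(x)]$, and then bounds the exponential factor $|e^{-b\delta z}-1|$ pointwise by $2$ on the first piece in order to merge it with the third; that gives the factor $3C_{\upsilon,j}$ appearing in $C_j$. The pointwise bound $|e^{-b\delta z}-1|\leq 2$ (and later $|e^{-b\delta z}-1|\leq|b\delta z|$) is not literally valid uniformly in $z$, and implicitly relies on the super-exponential decay of $D^jH$ to make the large-$|z|$ region negligible; your two-term split $e^{-bu}[f_{0X}(x-u)-f_{0X}(x)]+(e^{-bu}-1)f_{0X}(x)$ avoids this issue entirely by keeping the exponential weight on the H\"older increment and invoking Lemma \ref{lem:H} (which is designed precisely to absorb such weights) with $a=-b$. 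You also handle $(e^{-bu}-1)$ via $|e^y-1|\leq|y|(e^y+e^{-y})$ and two further applications of Lemma \ref{lem:H} with $a=\pm b$, which is rigorous where the paper is informal. Your resulting constant in front of the $L_0$ term is $C_{\upsilon,j}$ rather than $3C_{\upsilon,j}$, so the bound you obtain is in fact sharper than the one stated, and certainly implies it; the only bookkeeping caveat, present in the paper's version as well, is that the contribution from the $f_{0X}$ piece carries a factor $|b|$ (and $\delta^{1-\upsilon}$), so for the stated $C_j$ to be $b$-independent one tacitly uses that $\delta$ is small and that in the applications $b=\mp 1/2$.
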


\begin{proof}
For every $x\in\mathbb R$, by the local H\"{o}lder condition \eqref{eq:holderf0} of Assumption \ref{ass:smoothf0} and Lemma \ref{lem:diseg}, 
\[
\begin{split}
&\delta^j\abs{\int_{\mathbb R}  [e^{-bu}f_{0X}(x-u) - f_{0X}(x) ] D^jH_\delta(u)\,\di u}\\
&\hspace*{1cm}=
\abs{\int_{\mathbb R}  [e^{-b\delta z}f_{0X}(x-\delta z) - f_{0X}(x) ] D^jH(z)\,\di z}\\
&\hspace*{1cm}\leq 
\int_{\mathbb R}  |e^{-b\delta z}-1||f_{0X}(x-\delta z)-f_{0X}(x)||D^jH(z)|\,\di z \\
&\hspace*{7.1cm}+ f_{0X}(x)
\int_{\mathbb R}  |e^{-b\delta z}-1| |D^jH(z)|\,\di z\\
&\hspace*{7.1cm} + \int_{\mathbb R}|f_{0X}(x-\delta z) - f_{0X}(x)||D^jH(z)|\,\di z\\
 & \hspace*{1cm}\leq 
3 \int_{\mathbb R}|f_{0X}(x-\delta z) - f_{0X}(x)||D^jH(z)|\,\di z +
f_{0X}(x)
\int_{\mathbb R}  |e^{-b\delta z}-1| |D^jH(z)|\,\di z \\
&\hspace*{1cm}\leq3\delta^{\upsilon}L_0(x) \int_{\mathbb R}|z|^{\upsilon}|D^jH(z)|\,\di z
+
b\delta f_{0X}(x) \int_{\mathbb R}|z||D^jH(z)|\,\di z\\
 & \hspace*{1cm}
\leq 
3\delta^{\upsilon} C_{\upsilon,j}L_0(x) + b\delta C_{1,j} f_{0X}(x)
<C_j\delta^{\upsilon} [L_0(x)+f_{0X}(x)].
\end{split}
\]
Inequality \eqref{eq:f+L} follows.
\end{proof}

\begin{lem} \label{lem:Asigma}
For $h=o(1)$, let $\delta=o(h)$. 
For $m\in\mathbb N$, $b=\mp1/2$ and $\sigma=o(1)$, let the set $A_{b,\sigma}$ 
be defined as in \eqref{def:set}.
Under Assumptions \ref{ass:twicwtailcond} and 
\ref{ass:smoothf0} on $f_{0X}$, the latter with $0<\upsilon\leq1$, $L_0\in L^1(\mathbb R)$ and 
any $R>0$,
there exists a constant $\bar C_{m}>0$, depending on $m$ and $\upsilon$, such that, for $\sigma$ small enough,
\begin{equation}\label{eq:inclus}
\forall\,b=\mp1/2,\quad A_{b,\sigma}^c\subseteq B_\sigma,
\end{equation}
with $B_\sigma:=
\{ x:\,[L_0(x)+f_{0X}(x)]> \bar C_{m}^{-1}\sigma^{-\upsilon} f_{0X}(x)\}$.
Furthermore, there exist constants $C_R,\,S_R>0$, depending on $m$, $\upsilon$ and $R$, so that
\begin{equation}\label{cm12}
\|\bar h_{0,b}\1_{A_{b,\sigma}^c}\|_1< C_R \sigma^{\upsilon R},\quad
\|\gamma h_{m,b,\sigma}\1_{A_{b,\sigma}^c}\|_1 <2C_R\sigma^{\upsilon R}
\end{equation}
and the function $f_{0X}+g_{b,\sigma}$, with $g_{b,\sigma}$ as defined in \eqref{def:functiong}, which is non-negative, has 
\begin{equation}\label{eq:norm1}
1\leq\|f_{0X}+g_{b,\sigma}\|_1\leq1+S_R\sigma^{\upsilon R}.
\end{equation}
\end{lem}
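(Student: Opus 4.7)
The plan is to first derive a uniform pointwise bound
\[
|M_{0X}(b)\,e^{-bx}\,\gamma h_{m,b,\sigma}(x)| \leq \bar C_m\,\sigma^{\upsilon}[L_0(x)+f_{0X}(x)], \quad x\in\mathbb R,
\]
from which all three conclusions follow. The starting point is the identity $M_{0X}(b)e^{-b\cdot}\gamma h_{m,b,\sigma}=T_{m,b,\sigma}f_{0X}-f_{0X}$, which is immediate from the definitions. Setting $I_j(x):=\int e^{-bu}f_{0X}(x-u)D^jH_\delta(u)\,\di u$, Lemma~\ref{lem:HolderHm} gives $|I_j(x)-f_{0X}(x)\int D^jH_\delta|\leq C_j\delta^{\upsilon-j}[L_0+f_{0X}]$. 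Integration by parts together with the rapid decay of $H_\delta$ shows $\int D^jH_\delta=0$ for $j\geq 1$, so the $j\geq 1$ contributions to $\gamma h_{m,b,\sigma}$ are of order $\sigma^{2k}\delta^{\upsilon-j}[L_0+f_{0X}]=O(\sigma^\upsilon)[L_0+f_{0X}]$ since $\delta\asymp\sigma$ and $j\leq 2k$. The $j=0$ contributions telescope via $\sum_{k=1}^{m-1}(-\sigma^2/8)^k/k!=\gamma+O(\sigma^{2m})$ into a term of size $O(\sigma^2)f_{0X}+O(\sigma^{2+\upsilon})[L_0+f_{0X}]$, which is absorbed in the stated bound because $\upsilon\leq 1$.

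Dividing this pointwise estimate by $\bar h_{0,b}(x)=e^{bx}f_{0X}(x)/M_{0X}(b)$ gives $|\gamma h_{m,b,\sigma}|/\bar h_{0,b}\leq\bar C_m\sigma^\upsilon[L_0/f_{0X}+1]$. Any $x\in A_{b,\sigma}^c$ forces this ratio to be at least $1/2$, which rearranges to $L_0(x)+f_{0X}(x)\geq(2\bar C_m)^{-1}\sigma^{-\upsilon}f_{0X}(x)$, proving \eqref{eq:inclus}. For the first inequality of \eqref{cm12}, I use that on $B_\sigma$ one has $L_0/f_{0X}\geq(2\bar C_m)^{-1}\sigma^{-\upsilon}$, so $1\leq(2\bar C_m)^R\sigma^{\upsilon R}(L_0/f_{0X})^R$. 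A Markov-type inequality combined with $e^{bx}\leq e^{|x|/2}$ and condition \eqref{eq:ratiofo} yields
\[
\|\bar h_{0,b}\1_{A_{b,\sigma}^c}\|_1 \leq \frac{(2\bar C_m)^R}{M_{0X}(b)}\,\sigma^{\upsilon R}\int_{\mathbb R}e^{|x|/2}f_{0X}(x)\pt{\frac{L_0}{f_{0X}}(x)}^R\di x\lesssim\sigma^{\upsilon R}.
\]
An analogous computation, using the pointwise bound and the sharper inequality $L_0/f_{0X}\leq(2\bar C_m\sigma^\upsilon)^{R-1}(L_0/f_{0X})^R$ on $A_{b,\sigma}^c$, gives $\|\gamma h_{m,b,\sigma}\1_{A_{b,\sigma}^c}\|_1\lesssim\sigma^\upsilon\cdot\sigma^{\upsilon(R-1)}=\sigma^{\upsilon R}$.

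For \eqref{eq:norm1}, non-negativity of $f_{0X}+g_{b,\sigma}$ follows from a direct case analysis: on $A_{b,\sigma}$ the defining inequality yields $M_{0X}(b)e^{-b\cdot}\gamma h_{m,b,\sigma}>-f_{0X}/2$ and hence $f_{0X}+g_{b,\sigma}>f_{0X}/2$; on $A_{b,\sigma}^c$, $f_{0X}+g_{b,\sigma}=f_{0X}/2$. The crux is then the vanishing-integral identity $\int M_{0X}(b)e^{-bx}\gamma h_{m,b,\sigma}(x)\,\di x=0$: integrating $T_{m,b,\sigma}f_{0X}-f_{0X}$ term by term, Fubini and repeated integration by parts reduce the $(k,j)$-contribution to $\binom{2k}{j}(-b)^{2k-j}b^j\mathcal B\{H_\delta\}(b)$, and the inner sum collapses via $\sum_{j=0}^{2k}\binom{2k}{j}(-b)^{2k-j}b^j=(b-b)^{2k}=0$ for $k\geq 1$. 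Consequently $\int g_{b,\sigma}\,\di x=-\int_{A_{b,\sigma}^c}M_{0X}(b)e^{-bx}\gamma h_{m,b,\sigma}\,\di x-\tfrac{1}{2}\int_{A_{b,\sigma}^c}f_{0X}\,\di x\geq 0$, since on $A_{b,\sigma}^c$ the first integrand is $\leq -f_{0X}/2$ and thus absorbs the second term. This gives $\|f_{0X}+g_{b,\sigma}\|_1\geq 1$, and the matching upper bound $1+S_R\sigma^{\upsilon R}$ follows at once from the two $O(\sigma^{\upsilon R})$ estimates already obtained.

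The main obstacle will be the first paragraph: the clean execution of the pointwise estimate requires tracking the double sum, exploiting the cancellation $\sigma^{2k}\delta^{\upsilon-j}\asymp\sigma^\upsilon$ together with the $j=0$ telescoping, and handling the transition between Lemma~\ref{lem:HolderHm} and the polynomial prefactors. Once this pointwise bound and the combinatorial identity $(b-b)^{2k}=0$ are in hand, the remainder of the argument is essentially mechanical Markov-type estimation against \eqref{eq:ratiofo}.
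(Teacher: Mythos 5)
Your proof is correct and reaches the same conclusions by a route that is close to but a little more streamlined than the paper's. The paper, for the inclusion $A_{b,\sigma}^c\subseteq B_\sigma$, works with the \emph{centered} quantity
\[
S(x):=\sum_{k=1}^{m-1}\frac{(-1)^k\sigma^{2k}}{2^kk!}\sum_{j=0}^{2k}\binom{2k}{j}(-b)^{2k-j}\int[\bar h_{0,b}(x-u)-\bar h_{0,b}(x)]D^jH_\delta(u)\,\di u,
\]
showing on $A_{b,\sigma}^c$ that $S(x)\leq -\bar h_{0,b}(x)/4$ (via the definition of $A_{b,\sigma}$ and the $j=0$ combinatorics) and bounding $|S(x)|$ directly by Lemma~\ref{lem:HolderHm}; centering is what makes the $j=0$ term automatically fall under the lemma's scope. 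You instead prove the \emph{uncentered} uniform pointwise bound $|T_{m,b,\sigma}f_{0X}(x)-f_{0X}(x)|\leq \bar C_m\sigma^\upsilon[L_0(x)+f_{0X}(x)]$ on all of $\mathbb R$, at the cost of having to absorb the extra $j=0$ contribution $[\gamma+O(\sigma^{2m})](\tau\ast\phi_h)(0)f_{0X}(x)=O(\sigma^2)f_{0X}(x)$ — which works precisely because $\upsilon\leq 1$ forces $\sigma^2\leq\sigma^\upsilon$. This is a genuine (if small) simplification: the single global pointwise bound yields the inclusion, both norm estimates in \eqref{cm12}, and the upper bound in \eqref{eq:norm1} uniformly, whereas the paper bounds $\|\gamma h_{m,b,\sigma}\1_{A_{b,\sigma}^c}\|_1$ by controlling each $\bar h_{0,b}\ast D^jH_\delta$ separately. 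The vanishing-integral step and the non-negativity case analysis coincide with the paper's. Two minor points worth tightening: (i) in the derivation of the second bound of \eqref{cm12} your inequality $L_0/f_{0X}\leq(2\bar C_m\sigma^\upsilon)^{R-1}(L_0/f_{0X})^R$ quietly uses $R>1$, which is guaranteed by Assumption~\ref{ass:smoothf0} ($R\geq m/\upsilon\geq\alpha+2$) but is worth flagging; and (ii) the phrase that the upper bound in \eqref{eq:norm1} ``follows at once from the two $O(\sigma^{\upsilon R})$ estimates'' is slightly loose — what is actually needed is $\int_{A_{b,\sigma}^c}f_{0X}\,\di x\lesssim\sigma^{\upsilon R}$ and $\int_{A_{b,\sigma}^c}M_{0X}(b)e^{-bx}|\gamma h_{m,b,\sigma}|\,\di x\lesssim\sigma^{\upsilon R}$, which are weighted variants of the \eqref{cm12} quantities, though both follow immediately from your pointwise bound together with the same Markov argument.
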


\begin{proof}
Let $b$ be fixed. We begin by proving the inclusion in \eqref{eq:inclus}. 
Assume that $x \in  A_{b,\sigma}^c$, \emph{i.e.},  $\gamma h_{m,b,\sigma} \leq - \bar h_{0,b}/2$. Recall that 
$$\gamma h_{m,b,\sigma} = \sum_{k=1}^{m-1}\frac{ (-1)^k \sigma^{2k}}{ 2^k k!}  \sum_{j=0}^{2k} \binom{2k}{j}(-b)^{2k-j}
\int_{\mathbb R} \bar h_{0,b}(x-u) D^jH_\delta(u)\, \di u $$
and note that 
 $\int_{\mathbb R}D^jH_\delta(u)\,\di u=[\int_{\mathbb R}H(x)\,\di x]\1_{\{0\}}(j)=(\tau\ast \phi_h)(0)\1_{\{0\}}(j)\leq 1$. 
 Then
 \begin{equation*}\label{eq:sufcond}
\begin{split}
 \sum_{k=1}^{m-1} & \frac{ (-1)^k \sigma^{2k} }{ 2^k k!}  \sum_{j=0}^{2k} \binom{2k}{j}(-b)^{2k-j}
\int_{\mathbb R} [ \bar h_{0,b}(x-u) -  \bar h_{0,b}(x)] D^jH_\delta(u)\, \di u \\
& = \gamma h_{m,b,\sigma}(x) - \bar h_{0,b} (x)\sum_{k=1}^{m-1}\frac{ (-1)^k \sigma^{2k}}{ 2^k k!}  \sum_{j=0}^{2k} \binom{2k}{j}(-b)^{2k-j}\int_{\mathbb R}  D^jH_\delta(u)\, \di u \\
&\leq - \bar h_{0,b}(x)\left( \frac{ 1 }{ 2 } - (\tau\ast\phi_h)(0)\frac{(b\sigma)^2  }{  2 } \sum_{k=0}^{m-2}\frac{ (-1)^k (b\sigma)^{2k}}{ 2^k (k+1)!}\right)\\ &\leq  - \frac{ \bar h_{0,b}(x)  }{ 2 }\left( 1  -  b^2\sigma^2 \right)<  - \frac{ \bar h_{0,b} (x) }{ 4 }.
\end{split}
\end{equation*}
Moreover, for $\delta=c_\delta\sigma$, with $0<c_\delta<1$ as in Lemma \ref{lem:contapprox}, when $\sigma$ is small enough, by Lemma \ref{lem:HolderHm},
\[\begin{split}
&\hspace*{-0.1cm}\abs{\sum_{k=1}^{m-1}\frac{ (-1)^k \sigma^{2k}}{ 2^k k!}  \sum_{j=0}^{2k} \binom{2k}{j}(-b)^{2k-j} \int_{\mathbb R} [ \bar h_{0,b}(x-u) -  \bar h_{0,b}(x)]  D^jH_\delta(u)\, \di u}\\
&\hspace*{0.5cm}\leq\frac{1}{M_{0X}(b)}\sum_{k=1}^{m-1}\frac{\sigma^{2k}}{ 2^k k!}  \sum_{j=0}^{2k} \binom{2k}{j}|b|^{2k-j} e^{bx}\int_{\mathbb R} |e^{-bu} f_{0X}(x-u) - f_{0X}(x)||  D^jH_\delta(u)|\, \di u\\
&\hspace*{0.5cm}<\frac{1}{M_{0X}(b)}\pt{\sum_{k=1}^{m-1}\frac{(1+|b|c_\delta\sigma)^{2k} }{(2c_\delta^2)^k k!}\max_{0\leq j\leq 2k}C_j}
\sigma^{\upsilon}e^{bx}[L_0(x)+f_{0X}(x)]\\
&\hspace*{0.5cm}<\frac{1}{M_{0X}(b)}\underbrace{
\pt{ \sum_{k=1}^{m-1}\frac{2^k\max_{0\leq j\leq 2k}C_j}{c_\delta^{2k} k!}}
}_{=:\tilde C_{m}}
\sigma^{\upsilon}e^{bx}[L_0(x)+f_{0X}(x)]\\
\end{split}\]
where $0<\tilde C_{m}<\infty$. 
Then, for $\bar C_m:=4\tilde C_m$, we have $A_{b,\sigma}^c  \subseteq B_\sigma$.

We now prove the inequalities in \eqref{cm12}. Concerning the first one,
$$\int_{A_{b,\sigma}^c} \bar h_{0,b}(x)\,\di x < \sigma^{\upsilon R}
\frac{\bar C_{m}^R}{M_{0X}(b)} \int_{B_\sigma} e^{bx} f_{0X}(x) \pt{\frac{ L_0(x) }{ f_{0X}(x) } +1}^R\di x \leq C_R\sigma^{\upsilon R},$$
where 
$$C_R = \sigma^{\upsilon R}\underbrace{\frac{\bar C_{m}^R}{M_{0X}(-1/2)\wedge M_{0X}(1/2)} \int_{\mathbb R} e^{|x|/2} f_{0X}(x) \pt{\frac{ L_0(x) }{ f_{0X}(x) } +1}^R\di x}_{=:C_R} > \infty$$
 because of condition \eqref{eq:ratiofo} and Assumption \ref{ass:twicwtailcond}.
Concerning the second inequality, from previous computations,
\begin{equation*}
\begin{split} 
\int_{A_{b,\sigma}^c}| (\bar h_{0,b}\ast D^jH_\delta)(x)|\,\di x& \leq   \delta^{-j}
\int_{A_{b,\sigma}^c}\int_{\mathbb R}|\bar h_{0,b}(x-\delta u) -\bar h_{0,b}(x)||D^jH(u)|\,\di u
+ C_{0,j} C_R \sigma^{\upsilon R}\\
& \leq \frac{C_j}{M_{0X}(b)}\delta^{-j+\upsilon}\int_{A_{b,\sigma}^c} e^{bx}f_{0X}(x)  \left(\frac{ L_0(x) }{ f_{0X}(x)} +1\right)\, \di x+  C_{0,j}C_R\sigma^{\upsilon R} \\
&< \pt{\frac{\bar C_{m}^{R-1}C_j}{M_{0X}(b)}(c_\delta\sigma)^{-j}+ C_{0,j}C_R} \sigma^{\upsilon R},\quad j\in\{0\}\cup\mathbb N,
\end{split}
\end{equation*}
which implies that 
$\|\gamma h_{m,b,\sigma}\1_{A_{b,\sigma}^c}\|_1<2C_R\sigma^{\upsilon R}$.

\smallskip

To prove the last part of the lemma, we begin by noting that 
$$f_{0X}+g_{b,\sigma}=[f_{0X}+M_{0X}(b)e^{-bx}\gamma h_{m,b,\sigma}]\1_{A_{b,\sigma}}+\frac{1}{2}f_{0X}\1_{A_{b,\sigma}^c}>
\frac{1}{2}f_{0X}\geq0$$
and
\[M_{0X}(b)\int_{\mathbb R}e^{-bx}\gamma h_{m,b,\sigma}(x)\,\di x=0.\]
In fact, since $\int_{\mathbb R}e^{-b\delta x}H(x)\,\di x<\infty$ because $(\delta/h)=o(1)$, we have
\[\begin{split}
M_{0X}(b)\int_{\mathbb R}e^{-bx}\gamma h_{m,b,\sigma}(x)\,\di x&
=\sum_{k=1}^{m-1}
\frac{(-1)^k\sigma^{2k}}{2^kk!}\sum_{j=0}^{2k}\binom{2k}{j}(-b)^{2k-j}\int_{\mathbb R}e^{-bu}D^jH_\delta(u)\,\di u\\
&=\sum_{k=1}^{m-1}
\frac{(-1)^k\sigma^{2k}}{2^kk!}\underbrace{\sum_{j=0}^{2k}\binom{2k}{j}(-b)^{2k-j}b^j}_{(-b+b)^{2k}=0}\int_{\mathbb R}e^{-b\delta x}H(x)\,\di x=0.
\end{split}\]
Then, since
$g_{b,\sigma}=M_{0X}(b)e^{-b\cdot}\gamma h_{m,b,\sigma}-[M_{0X}(b)e^{-b\cdot}\gamma h_{m,b,\sigma}+(f_{0X}/2)]\1_{A_{b,\sigma}^c}$,
we have
\[
\begin{split}
\int_{\mathbb R}(f_{0X}+g_{b,\sigma})(x)\,\di x&=1+\underbrace{\int_{\mathbb R}M_{0X}(b)e^{-bx}\gamma h_{m,b,\sigma}(x)\,\di x}_{=0}\\
&\hspace*{2.5cm}
-\int_{A_{b,\sigma}^c}
\pq{M_{0X}(b)e^{-bx}\gamma h_{m,b,\sigma}(x)+\frac{1}{2}f_{0X}(x)}\,\di x\\&= 1-\int_{A_{b,\sigma}^c}
\Big[\underbrace{M_{0X}(b)e^{-bx}\gamma h_{m,b,\sigma}(x)}_{\leq -\frac{1}{2}f_{0X}(x)}+\frac{1}{2}f_{0X}(x)\Big]\,\di x\geq 1.
\end{split}
\]
On the other side, using Lemma \ref{lem:HolderHm} and reasoning as in the first part of the present lemma,
\[
\begin{split}
\int_{\mathbb R}(f_{0X}+g_{b,\sigma})(x)\,\di x&=1
-\int_{A_{b,\sigma}^c}
\pq{M_{0X}(b)e^{-bx}\gamma h_{m,b,\sigma}(x)+\frac{1}{2}f_{0X}(x)}\,\di x\\&\leq
1+
\int_{A_{b,\sigma}^c}
\pq{M_{0X}(b)e^{-bx}|\gamma h_{m,b,\sigma}(x)|+\frac{1}{2}f_{0X}(x)}\,\di x\\
&\leq 1+\underbrace{2[M_{0X}(-1/2)\vee M_{0X}(1/2)]C_R}_{=:S_R}\sigma^{\upsilon R}.
\end{split}
\]
Conclude that $1\leq\|f_{0X}+g_{b,\sigma}\|_1\leq1+S_R\sigma^{\upsilon R}$. The proof is thus complete.
\end{proof}

\begin{rmk}
\emph{
Although in condition \eqref{eq:holderf0} of Assumption \ref{ass:smoothf0} the constant $R$ 
is such that $R\geq m/\upsilon$, with the smallest integer $m\geq(\alpha+2)$, in Lemma \ref{lem:Asigma}, indeed,
$R$ can be any positive real.
}
\end{rmk}


\section{\emph{Lemma for Theorem \ref{thm:5} on adaptive
posterior contraction rates for $L^1$-Wasserstein deconvolution of Laplace mixtures}}

The following lemma assesses the order of the bias of the distribution function
corresponding to a Gaussian mixture, where the mixing distribution 
is any probability measure on the real line and the scale 
parameter is equal to the kernel bandwidth times a logarithmic factor.
It shows that condition \eqref{eq:ass1} of Theorem \ref{theo:1} is verified 
for a universal constant $C_1$.

\begin{lem}\label{lem:biasmixgaus}
Let $F_{X}$ be the distribution function of $\mu_X=\mu_H\ast \phi_\sigma$, 
with $\mu_{H}\in\mathscr P$ and $\sigma>0$. 
Let $K\in L^1(\mathbb R)\cap L^2(\mathbb R)$ be symmetric, with 
$\int_{\mathbb R}|z||K(z)|\,\di z<\infty$, and $\hat K\in L^1(\mathbb R)$ such that
$\hat K\equiv 1$ on $[-1,\,1]$.
Given $h>0$, for $\sigma=O(\sqrt{2}h|\log h^{2\alpha+1}|^{1/2})$, we have
\begin{equation}\label{eq:derivative21}
\|F_{X}\ast K_h-F_{X}\|_1=O(h^{\alpha+1}).
\end{equation}
\end{lem}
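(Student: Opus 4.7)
The plan is to reduce to the "single Gaussian" case by factoring out a half-bandwidth Gaussian from $\mu_X$, and then bound the resulting single-Gaussian bias by Fourier inversion as in Lemma \ref{lem:sob}. Concretely, writing $\phi_\sigma = \phi_{\sigma/\sqrt 2}\ast \phi_{\sigma/\sqrt 2}$, I would factor
\[
\mu_X \;=\; \phi_{\sigma/\sqrt 2}\ast \nu, \qquad \nu := \mu_H \ast \phi_{\sigma/\sqrt 2},
\]
where $\nu$ is a \emph{probability density} on $\mathbb R$ with $\|\nu\|_1=1$. By Fubini, $F_X = F_{\phi_{\sigma/\sqrt 2}}\ast \nu$ and therefore
\[
F_X\ast K_h - F_X \;=\; \bigl(F_{\phi_{\sigma/\sqrt 2}}\ast K_h - F_{\phi_{\sigma/\sqrt 2}}\bigr)\ast \nu.
\]
Once the inner difference is shown to be in $L^1(\mathbb R)$ (which follows from the Gaussian decay of $\phi_{\sigma/\sqrt 2}$ and the boundedness of $F_{\phi_{\sigma/\sqrt 2}}\ast K_h - F_{\phi_{\sigma/\sqrt 2}}$ together with its vanishing at $\pm\infty$), Young's convolution inequality yields the $\mu_H$-free reduction
\[
\|F_X\ast K_h - F_X\|_1 \;\leq\; \|F_{\phi_{\sigma/\sqrt 2}}\ast K_h - F_{\phi_{\sigma/\sqrt 2}}\|_1 \cdot \|\nu\|_1 \;=\; \|F_{\phi_{\sigma/\sqrt 2}}\ast K_h - F_{\phi_{\sigma/\sqrt 2}}\|_1,
\]
which explains the formula $\|b_{F_X}\|_1 \le C_1 h^{\alpha+1}\|\mu_H\ast\phi_{\sigma/\sqrt 2}\|_1$ anticipated after \eqref{eq:ass1}.

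For the remaining single-Gaussian bias $b(x) := (F_{\phi_{\sigma/\sqrt 2}}\ast K_h - F_{\phi_{\sigma/\sqrt 2}})(x)$, I would follow the template of the proof of Lemma \ref{lem:sob}. Since $\hat K\equiv 1$ on $[-1,1]$ and $\widehat{\phi_{\sigma/\sqrt 2}}(t)=e^{-\sigma^2 t^2/4}$,
\[
b(x) \;=\; \frac{1}{2\pi}\int_{|t|>1/h} e^{-\imath t x}\,\frac{1-\hat K(ht)}{-\imath t}\, e^{-\sigma^2 t^2/4}\,\di t.
\]
Decomposing $\|b\|_1 = B_1 + B_2$ with $B_1 = \int_{|x|\le h}|b|\,\di x$ and $B_2 = \int_{|x|>h}|b|\,\di x$, I would bound $B_1$ crudely by $B_1 \le 2h\|b\|_\infty$ and the elementary tail estimate $\int_{1/h}^\infty t^{-1}e^{-\sigma^2 t^2/4}\,\di t \lesssim (h^2/\sigma^2)\,e^{-\sigma^2/(4h^2)}$. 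For $B_2$ I would integrate by parts in $t$ (identity \eqref{eq:identity21} with $j=1$) and then apply the Cauchy--Schwarz inequality together with Plancherel, so that $B_2 \lesssim h^{-1/2}\|g'\|_2$ with $g(t):=(1-\hat K(ht))e^{-\sigma^2 t^2/4}/t$ restricted to $\{|t|>1/h\}$.

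The calibration $\sigma = O(\sqrt 2\, h|\log h^{2\alpha+1}|^{1/2})$ is chosen exactly so that, at the cutoff $|t|=1/h$, the Gaussian factor yields $e^{-\sigma^2/(4h^2)} = O(h^{(2\alpha+1)/2})$. Plugging this into the two estimates above gives $B_1 \lesssim h^{\alpha+3/2}/|\log h|$ and $B_2 \lesssim h^{\alpha+3/2}\sqrt{|\log h|}$, both of which are $o(h^{\alpha+1})$ and in particular $O(h^{\alpha+1})$ with a constant depending only on $K$ and $\alpha$. The main (mild) obstacle will be the bookkeeping inside $\|g'\|_2$: the derivative of $(1-\hat K(ht))e^{-\sigma^2 t^2/4}/t$ splits into three contributions—from $\hat K'$, from the Gaussian-derivative factor $\sigma^2 t\, e^{-\sigma^2 t^2/4}$, and from the $t^{-2}$ term—which must each be integrated after the change of variable $u=\sigma t/\sqrt 2$; the $\sigma^4$ term is dominant and contributes $\|g'\|_2 \lesssim \sigma\, h^{\alpha+1}\sqrt{|\log h|}^{\,1/2}$. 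This part is purely computational, the conceptual work being entirely contained in the Gaussian factorization of the first paragraph.
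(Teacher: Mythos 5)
Your proposal is correct and uses the same key reduction as the paper: factor $\mu_X = \phi_{\sigma/\sqrt 2}\ast\nu$ with $\nu := \mu_H\ast\phi_{\sigma/\sqrt 2}$ a probability density, eliminate $\mu_H$ by Young's inequality, and then bound the single-Gaussian bias via Fourier inversion and Plancherel with the calibration $e^{-\sigma^2/(4h^2)}=O(h^{(2\alpha+1)/2})$. The only cosmetic difference is in the concluding estimate, where the paper invokes the one-shot inequality $\|f\|_1\leq 2^{-1/2}(\|\hat f\|_2^2+\|\hat f^{(1)}\|_2^2)^{1/2}$ from \cite{Bobkov:2016}, whereas you hand-roll an equivalent bound via the $B_1/B_2$ split of Lemma~\ref{lem:sob}; both yield the same $h^{\alpha+3/2}$ orders up to logarithmic factors.
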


\begin{proof}
Let $b_{F_X}:=(F_X\ast K_h-F_X)$. 
Defined $\hat f(t):=[1-\hat K(ht)][\hat\phi(\sigma t/\sqrt{2})\1_{[-1,\,1]^c}(ht)/t]$, $t\in\mathbb R$,
since $t\mapsto [\hat\mu_H(t)\hat\phi(\sigma t/\sqrt{2})]\hat f(t)$ is in $L^1(\mathbb R)$, 
arguing as for $G_{2,h}$ in \cite{dedecker2015}, pp. 251--252, we have
\[\begin{split}
\|b_{F_X}\|_1&=
\int_{\mathbb R}\bigg|\frac{1}{2\pi}\int_{|t|>1/h}
\exp{(-\imath t x)}\hat \mu_H(t)\hat\phi(\sigma t/\sqrt{2})\hat f(t)\,\di t
\bigg|\,\di x\\
&=\|\mu_H\ast\phi_{\sigma/\sqrt{2}}\ast f\|_1\leq \|\mu_H\ast\phi_{\sigma/\sqrt{2}}\|_1\times\| f\|_1=\| f\|_1
\leq (\|\hat f\|_2^2+\|\hat f^{(1)}\|_2^2)^{1/2}/\sqrt{2},
\end{split}\]
see, \emph{e.g.}, \cite{Bobkov:2016}, p. 1031, for the last inequality. 
Since $\int_{1/h}^\infty\hat\phi(\sigma t)\,\di t\lesssim (h/\sigma^2)\hat \phi(\sigma/h)$, we have
$\|\hat f\|_2^2\lesssim\hat\phi(\sigma/h)\int_{|t|>1/h}
[1+|\hat K(ht)|^2]t^{-2}\,\di t\lesssim h^{2(\alpha+1)}$ because $\|\hat K\|_\infty\leq \|K\|_1<\infty$.
Write
\[
\hat f^{(1)}(t)=-\pg{h\hat K^{(1)}(ht)+\pt{\frac{1}{t
}+\frac{\sigma^2}{2} t
}[1-\hat K(ht)]
}
\frac{\hat\phi(\sigma t/\sqrt{2})}{t}\1_{[-1,\,1]^c}(ht),\quad t\in\mathbb R.
\]
Since $K\in L^1(\mathbb R)$ and $zK(z)\in L^1(\mathbb R)$ jointly imply that $\hat K$ is 
continuously differentiable with $|\hat K^{(1)}(t)|\rightarrow 0$ as $|t|\rightarrow \infty$, 
so that $\hat K^{(1)}\in C_b(\mathbb R)$, we have
$\|\hat f^{(1)}\|_2^2\lesssim \int_{|t|>1/h}
\{h^4|\hat K^{(1)}(ht)|^2+(h^4+\sigma^4)[1+|\hat K(ht)|^2]\}\hat\phi(\sigma t)\,\di t \lesssim 
h^{2(\alpha+1)}$. Hence, $\|\hat f^{(1)}\|_2^2\lesssim h^{2(\alpha+1)}$. The assertion follows.
\end{proof}

\begin{rmk}
\emph{Due to the exponentially decaying tails of the Gaussian density and 
a suitable choice of the scale parameter $\sigma$ as a 
multiple of the kernel bandwidth $h$, times a logarithmic factor, 
a different argument than that used in Lemma \ref{lem:sob} is used to bound the bias of $F_X$. 
}
\end{rmk}

\bibliographystyle{imsart-number}
\bibliography{biblio}

\begin{thebibliography}{61}

\bibitem{Athreya:2006}
\begin{bbook}[author]
\bauthor{\bsnm{Athreya},~\bfnm{Krishna~B.}\binits{K.~B.}} \AND
  \bauthor{\bsnm{Lahiri},~\bfnm{Soumen~N.}\binits{S.~N.}}
(\byear{2006}).
\btitle{Measure {T}heory and {P}robability {T}heory (Springer Texts in
  Statistics)}.
\bpublisher{Springer-Verlag New York, Inc.}, \baddress{Secaucus, NJ, USA}.
\end{bbook}
\endbibitem

\bibitem{bissantz}
\begin{barticle}[author]
\bauthor{\bsnm{Bissantz},~\bfnm{Nicolai}\binits{N.}},
  \bauthor{\bsnm{Dümbgen},~\bfnm{Lutz}\binits{L.}},
  \bauthor{\bsnm{Holzmann},~\bfnm{Hajo}\binits{H.}} \AND
  \bauthor{\bsnm{Munk},~\bfnm{Axel}\binits{A.}}
(\byear{2007}).
\btitle{Non-parametric confidence bands in deconvolution density estimation}.
\bjournal{Journal of the Royal Statistical Society: Series B (Statistical
  Methodology)}
\bvolume{69}
\bpages{483-506}.
\end{barticle}
\endbibitem

\bibitem{Bobkov:2016}
\begin{barticle}[author]
\bauthor{\bsnm{Bobkov},~\bfnm{{Sergey G. }}\binits{S.}}
(\byear{2016}).
\btitle{Proximity of probability distributions in terms of
  {F}ourier-{S}tieltjes transforms}.
\bjournal{Russian Mathematical Surveys}
\bvolume{71}
\bpages{1021--1079}.
\bdoi{10.1070/RM9749}
\end{barticle}
\endbibitem

\bibitem{boissard2011}
\begin{barticle}[author]
\bauthor{\bsnm{Boissard},~\bfnm{Emmanuel}\binits{E.}}
(\byear{2011}).
\btitle{Simple Bounds for the Convergence of Empirical and Occupation Measures
  in 1-{W}asserstein Distance}.
\bjournal{Electron. J. Probab.}
\bvolume{16}
\bpages{2296--2333}.
\bdoi{10.1214/EJP.v16-958}
\end{barticle}
\endbibitem

\bibitem{bourgain}
\begin{bunpublished}[author]
\bauthor{\bsnm{Bourgain},~\bfnm{J.}\binits{J.}}
(\byear{1999}).
\btitle{On triples in arithmetic progression}.
\bnote{Unpublished manuscript}.
\end{bunpublished}
\endbibitem

\bibitem{butucea:tsybakov08}
\begin{barticle}[author]
\bauthor{\bsnm{Butucea},~\bfnm{C.}\binits{C.}} \AND
  \bauthor{\bsnm{Tsybakov},~\bfnm{A.~B.}\binits{A.~B.}}
(\byear{2008}).
\btitle{Sharp Optimality in Density Deconvolution with Dominating Bias. I}.
\bjournal{Theory of Probability \& Its Applications}
\bvolume{52}
\bpages{24-39}.
\bdoi{10.1137/S0040585X97982840}
\end{barticle}
\endbibitem

\bibitem{caroll:hall:88}
\begin{barticle}[author]
\bauthor{\bsnm{Carroll},~\bfnm{Raymond~J.}\binits{R.~J.}} \AND
  \bauthor{\bsnm{Hall},~\bfnm{Peter}\binits{P.}}
(\byear{1988}).
\btitle{Optimal Rates of Convergence for Deconvolving a Density}.
\bjournal{Journal of the American Statistical Association}
\bvolume{83}
\bpages{1184-1186}.
\bdoi{10.1080/01621459.1988.10478718}
\end{barticle}
\endbibitem

\bibitem{comte:etal:06}
\begin{barticle}[author]
\bauthor{\bsnm{Comte},~\bfnm{Fabienne}\binits{F.}},
  \bauthor{\bsnm{Rozenholc},~\bfnm{Yves}\binits{Y.}} \AND
  \bauthor{\bsnm{Taupin},~\bfnm{Marie-Luce}\binits{M.-L.}}
(\byear{2006}).
\btitle{Penalized contrast estimator for adaptive density deconvolution}.
\bjournal{Canadian Journal of Statistics}
\bvolume{34}
\bpages{431-452}.
\bdoi{https://doi.org/10.1002/cjs.5550340305}
\end{barticle}
\endbibitem

\bibitem{MR2906875}
\begin{barticle}[author]
\bauthor{\bsnm{Dattner},~\bfnm{I.}\binits{I.}},
  \bauthor{\bsnm{Goldenshluger},~\bfnm{A.}\binits{A.}} \AND
  \bauthor{\bsnm{Juditsky},~\bfnm{A.}\binits{A.}}
(\byear{2011}).
\btitle{On deconvolution of distribution functions}.
\bjournal{Ann. Statist.}
\bvolume{39}
\bpages{2477--2501}.
\end{barticle}
\endbibitem

\bibitem{MR3449779}
\begin{barticle}[author]
\bauthor{\bsnm{Dattner},~\bfnm{Itai}\binits{I.}},
  \bauthor{\bsnm{Rei\ss},~\bfnm{Markus}\binits{M.}} \AND
  \bauthor{\bsnm{Trabs},~\bfnm{Mathias}\binits{M.}}
(\byear{2016}).
\btitle{Adaptive quantile estimation in deconvolution with unknown error
  distribution}.
\bjournal{Bernoulli}
\bvolume{22}
\bpages{143--192}.
\end{barticle}
\endbibitem

\bibitem{dejonge}
\begin{barticle}[author]
\bauthor{\bparticle{de} \bsnm{Jonge},~\bfnm{R.}\binits{R.}} \AND
  \bauthor{\bparticle{van} \bsnm{Zanten},~\bfnm{J.~H.}\binits{J.~H.}}
(\byear{2010}).
\btitle{Adaptive nonparametric {B}ayesian inference using location-scale
  mixture priors}.
\bjournal{Ann. Statist.}
\bvolume{38}
\bpages{3300--3320}.
\bdoi{10.1214/10-AOS811}
\bmrnumber{2766853}
\end{barticle}
\endbibitem

\bibitem{dedecker2015}
\begin{barticle}[author]
\bauthor{\bsnm{Dedecker},~\bfnm{Jérôme}\binits{J.}},
  \bauthor{\bsnm{Fischer},~\bfnm{Aurélie}\binits{A.}} \AND
  \bauthor{\bsnm{Michel},~\bfnm{Bertrand}\binits{B.}}
(\byear{2015}).
\btitle{Improved rates for {W}asserstein deconvolution with ordinary smooth
  error in dimension one}.
\bjournal{Electron. J. Statist.}
\bvolume{9}
\bpages{234--265}.
\bdoi{10.1214/15-EJS997}
\end{barticle}
\endbibitem

\bibitem{Delaigle2004BootstrapBS}
\begin{barticle}[author]
\bauthor{\bsnm{Delaigle},~\bfnm{A.}\binits{A.}} \AND
  \bauthor{\bsnm{Gijbels},~\bfnm{I.}\binits{I.}}
(\byear{2004}).
\btitle{Bootstrap bandwidth selection in kernel density estimation from a
  contaminated sample}.
\bjournal{Annals of the Institute of Statistical Mathematics}
\bvolume{56}
\bpages{19-47}.
\end{barticle}
\endbibitem

\bibitem{diggle:hall:93}
\begin{barticle}[author]
\bauthor{\bsnm{Diggle},~\bfnm{Peter~J.}\binits{P.~J.}} \AND
  \bauthor{\bsnm{Hall},~\bfnm{Peter}\binits{P.}}
(\byear{1993}).
\btitle{A {F}ourier Approach to Nonparametric Deconvolution of a Density
  Estimate}.
\bjournal{Journal of the Royal Statistical Society. Series B (Methodological)}
\bvolume{55}
\bpages{523--531}.
\end{barticle}
\endbibitem

\bibitem{MR3706755}
\begin{barticle}[author]
\bauthor{\bsnm{Donnet},~\bfnm{Sophie}\binits{S.}},
  \bauthor{\bsnm{Rivoirard},~\bfnm{Vincent}\binits{V.}},
  \bauthor{\bsnm{Rousseau},~\bfnm{Judith}\binits{J.}} \AND
  \bauthor{\bsnm{Scricciolo},~\bfnm{Catia}\binits{C.}}
(\byear{2018}).
\btitle{Posterior concentration rates for empirical {B}ayes procedures with
  applications to {D}irichlet process mixtures}.
\bjournal{Bernoulli}
\bvolume{24}
\bpages{231--256}.
\bdoi{10.3150/16-BEJ872}
\bmrnumber{3706755}
\end{barticle}
\endbibitem

\bibitem{dvoretzky1956}
\begin{barticle}[author]
\bauthor{\bsnm{Dvoretzky},~\bfnm{A.}\binits{A.}},
  \bauthor{\bsnm{Kiefer},~\bfnm{J.}\binits{J.}} \AND
  \bauthor{\bsnm{Wolfowitz},~\bfnm{J.}\binits{J.}}
(\byear{1956}).
\btitle{Asymptotic Minimax Character of the Sample Distribution Function and of
  the Classical Multinomial Estimator}.
\bjournal{Ann. Math. Statist.}
\bvolume{27}
\bpages{642--669}.
\bdoi{10.1214/aoms/1177728174}
\end{barticle}
\endbibitem

\bibitem{fan1991}
\begin{barticle}[author]
\bauthor{\bsnm{Fan},~\bfnm{Jianqing}\binits{J.}}
(\byear{1991}).
\btitle{On the optimal rates of convergence for nonparametric deconvolution
  problems}.
\bjournal{Ann. Statist.}
\bvolume{19}
\bpages{1257--1272}.
\bdoi{10.1214/aos/1176348248}
\bmrnumber{1126324}
\end{barticle}
\endbibitem

\bibitem{fan1993}
\begin{barticle}[author]
\bauthor{\bsnm{Fan},~\bfnm{Jianqing}\binits{J.}}
(\byear{1993}).
\btitle{Local linear regression smoothers and their minimax efficiencies}.
\bjournal{Ann. Statist.}
\bvolume{21}
\bpages{196--216}.
\bdoi{10.1214/aos/1176349022}
\bmrnumber{1212173}
\end{barticle}
\endbibitem

\bibitem{gao&zhou}
\begin{barticle}[author]
\bauthor{\bsnm{Gao},~\bfnm{Chao}\binits{C.}} \AND
  \bauthor{\bsnm{Zhou},~\bfnm{Harrison~H.}\binits{H.~H.}}
(\byear{2016}).
\btitle{Rate exact {B}ayesian adaptation with modified block priors}.
\bjournal{Ann. Statist.}
\bvolume{44}
\bpages{318--345}.
\bdoi{10.1214/15-AOS1368}
\bmrnumber{3449770}
\end{barticle}
\endbibitem

\bibitem{gao2016}
\begin{barticle}[author]
\bauthor{\bsnm{Gao},~\bfnm{Fengnan}\binits{F.}} \AND
  \bauthor{\bparticle{van~der} \bsnm{Vaart},~\bfnm{Aad}\binits{A.}}
(\byear{2016}).
\btitle{Posterior contraction rates for deconvolution of {D}irichlet-{L}aplace
  mixtures}.
\bjournal{Electron. J. Statist.}
\bvolume{10}
\bpages{608--627}.
\bdoi{10.1214/16-EJS1119}
\end{barticle}
\endbibitem

\bibitem{ghosal:2001}
\begin{barticle}[author]
\bauthor{\bsnm{Ghosal},~\bfnm{S.}\binits{S.}}
(\byear{2001}).
\btitle{Convergence rates for density estimation with {B}ernstein polynomials}.
\bjournal{Ann. Statist.}
\bvolume{29}
\bpages{1264--1280}.
\end{barticle}
\endbibitem

\bibitem{ghosal:ghosh:vdv:00}
\begin{barticle}[author]
\bauthor{\bsnm{Ghosal},~\bfnm{S.}\binits{S.}},
  \bauthor{\bsnm{Ghosh},~\bfnm{J.~K.}\binits{J.~K.}} \AND
  \bauthor{\bparticle{van~der} \bsnm{Vaart},~\bfnm{A.}\binits{A.}}
(\byear{2000}).
\btitle{Convergence rates of posterior distributions}.
\bjournal{Ann. Statist.}
\bvolume{28}
\bpages{500--531}.
\end{barticle}
\endbibitem

\bibitem{ghosal2000}
\begin{barticle}[author]
\bauthor{\bsnm{Ghosal},~\bfnm{Subhashis}\binits{S.}},
  \bauthor{\bsnm{Ghosh},~\bfnm{Jayanta~K.}\binits{J.~K.}} \AND
  \bauthor{\bparticle{van~der} \bsnm{Vaart},~\bfnm{Aad~W.}\binits{A.~W.}}
(\byear{2000}).
\btitle{Convergence rates of posterior distributions}.
\bjournal{Ann. Statist.}
\bvolume{28}
\bpages{500--531}.
\bdoi{10.1214/aos/1016218228}
\end{barticle}
\endbibitem

\bibitem{ghosal:vdv:07}
\begin{barticle}[author]
\bauthor{\bsnm{Ghosal},~\bfnm{S.}\binits{S.}} \AND \bauthor{\bparticle{van~der}
  \bsnm{Vaart},~\bfnm{A.}\binits{A.}}
(\byear{2007}).
\btitle{Convergence rates of posterior distributions for non iid observations}.
\bjournal{Ann. Statist.}
\bvolume{35}
\bpages{192-223}.
\end{barticle}
\endbibitem

\bibitem{ghosal2007}
\begin{barticle}[author]
\bauthor{\bsnm{Ghosal},~\bfnm{Subhashis}\binits{S.}} \AND
  \bauthor{\bparticle{van~der} \bsnm{Vaart},~\bfnm{Aad}\binits{A.}}
(\byear{2007}).
\btitle{Posterior convergence rates of {D}irichlet mixtures at smooth
  densities}.
\bjournal{Ann. Statist.}
\bvolume{35}
\bpages{697--723}.
\bdoi{10.1214/009053606000001271}
\end{barticle}
\endbibitem

\bibitem{bookgvdv}
\begin{bbook}[author]
\bauthor{\bsnm{Ghosal},~\bfnm{Subhashis}\binits{S.}} \AND
  \bauthor{\bparticle{van~der} \bsnm{Vaart},~\bfnm{Aad}\binits{A.}}
(\byear{2017}).
\btitle{Fundamentals of nonparametric {B}ayesian inference}.
\bseries{Cambridge Series in Statistical and Probabilistic Mathematics}
\bvolume{44}.
\bpublisher{Cambridge University Press, Cambridge}.
\bdoi{10.1017/9781139029834}
\bmrnumber{3587782}
\end{bbook}
\endbibitem

\bibitem{ghosal2001}
\begin{barticle}[author]
\bauthor{\bsnm{Ghosal},~\bfnm{Subhashis}\binits{S.}} \AND
  \bauthor{\bparticle{van~der} \bsnm{Vaart},~\bfnm{Aad~W.}\binits{A.~W.}}
(\byear{2001}).
\btitle{Entropies and rates of convergence for maximum likelihood and {B}ayes
  estimation for mixtures of normal densities}.
\bjournal{Ann. Statist.}
\bvolume{29}
\bpages{1233--1263}.
\bdoi{10.1214/aos/1013203452}
\end{barticle}
\endbibitem

\bibitem{Gine:Nickl:11}
\begin{barticle}[author]
\bauthor{\bsnm{Gin\'{e}},~\bfnm{Evarist}\binits{E.}} \AND
  \bauthor{\bsnm{Nickl},~\bfnm{Richard}\binits{R.}}
(\byear{2011}).
\btitle{Rates of contraction for posterior distributions in {$L^r$}-metrics,
  {$1\leq r\leq\infty$}}.
\bjournal{Ann. Statist.}
\bvolume{39}
\bpages{2883--2911}.
\bdoi{10.1214/11-AOS924}
\bmrnumber{3012395}
\end{barticle}
\endbibitem

\bibitem{hall:lahiri08}
\begin{barticle}[author]
\bauthor{\bsnm{Hall},~\bfnm{Peter}\binits{P.}} \AND
  \bauthor{\bsnm{Lahiri},~\bfnm{Soumendra~N.}\binits{S.~N.}}
(\byear{2008}).
\btitle{{Estimation of distributions, moments and quantiles in deconvolution
  problems}}.
\bjournal{The Annals of Statistics}
\bvolume{36}
\bpages{2110 -- 2134}.
\bdoi{10.1214/07-AOS534}
\end{barticle}
\endbibitem

\bibitem{Haroske:1998}
\begin{barticle}[author]
\bauthor{\bsnm{Haroske},~\bfnm{Dorothee}\binits{D.}}
(\byear{1998}).
\btitle{Some logarithmic function spaces, entropy numbers, applications to
  spectral theory}.
\bjournal{Dissertationes Math. (Rozprawy Mat.)}
\bvolume{373}
\bpages{59}.
\bmrnumber{1605421}
\end{barticle}
\endbibitem

\bibitem{Haroske:2000}
\begin{barticle}[author]
\bauthor{\bsnm{Haroske},~\bfnm{Dorothee~D.}\binits{D.~D.}}
(\byear{2000}).
\btitle{Logarithmic {S}obolev spaces on {${\bf R}^n$}; entropy numbers, and
  some applications}.
\bjournal{Forum Math.}
\bvolume{12}
\bpages{257--313}.
\bdoi{10.1515/form.2000.007}
\bmrnumber{1748482}
\end{barticle}
\endbibitem

\bibitem{heinrich2018}
\begin{barticle}[author]
\bauthor{\bsnm{Heinrich},~\bfnm{Philippe}\binits{P.}} \AND
  \bauthor{\bsnm{Kahn},~\bfnm{Jonas}\binits{J.}}
(\byear{2018}).
\btitle{Strong identifiability and optimal minimax rates for finite mixture
  estimation}.
\bjournal{Ann. Statist.}
\bvolume{46}
\bpages{2844--2870}.
\bdoi{10.1214/17-AOS1641}
\end{barticle}
\endbibitem

\bibitem{hoffmann&rousseau}
\begin{barticle}[author]
\bauthor{\bsnm{Hoffmann},~\bfnm{Marc}\binits{M.}},
  \bauthor{\bsnm{Rousseau},~\bfnm{Judith}\binits{J.}} \AND
  \bauthor{\bsnm{Schmidt-Hieber},~\bfnm{Johannes}\binits{J.}}
(\byear{2015}).
\btitle{On adaptive posterior concentration rates}.
\bjournal{Ann. Statist.}
\bvolume{43}
\bpages{2259--2295}.
\bdoi{10.1214/15-AOS1341}
\bmrnumber{3396985}
\end{barticle}
\endbibitem

\bibitem{knapik2018}
\begin{barticle}[author]
\bauthor{\bsnm{Knapik},~\bfnm{Bartek}\binits{B.}} \AND
  \bauthor{\bsnm{Salomond},~\bfnm{Jean-Bernard}\binits{J.-B.}}
(\byear{2018}).
\btitle{A general approach to posterior contraction in nonparametric inverse
  problems}.
\bjournal{Bernoulli}
\bvolume{24}
\bpages{2091--2121}.
\bdoi{10.3150/16-BEJ921}
\end{barticle}
\endbibitem

\bibitem{korolev}
\begin{barticle}[author]
\bauthor{\bsnm{Korolev},~\bfnm{V.}\binits{V.}},
  \bauthor{\bsnm{Gorshenin},~\bfnm{A.}\binits{A.}} \AND
  \bauthor{\bsnm{Zeifman},~\bfnm{A.}\binits{A.}}
(\byear{2020}).
\btitle{On mixture representations for the generalized {L}innik distribution
  and their applications in limit theorems}.
\bjournal{J. Math. Sci. (N.Y.)}
\bvolume{246}
\bpages{503--518}.
\bdoi{10.1007/s10958-020-04755-8}
\bmrnumber{4134320}
\end{barticle}
\endbibitem

\bibitem{Kotz2001}
\begin{bbook}[author]
\bauthor{\bsnm{Kotz},~\bfnm{Samuel}\binits{S.}},
  \bauthor{\bsnm{Kozubowski},~\bfnm{Tomasz~J.}\binits{T.~J.}} \AND
  \bauthor{\bsnm{Podg\'{o}rski},~\bfnm{Krzysztof}\binits{K.}}
(\byear{2001}).
\btitle{The {L}aplace distribution and generalizations}.
\bpublisher{Birkh\"{a}user Boston, Inc., Boston, MA}
\bnote{A revisit with applications to communications, economics, engineering,
  and finance}.
\bdoi{10.1007/978-1-4612-0173-1}
\bmrnumber{1935481}
\end{bbook}
\endbibitem

\bibitem{kotz1996}
\begin{barticle}[author]
\bauthor{\bsnm{Kotz},~\bfnm{Samuel}\binits{S.}} \AND
  \bauthor{\bsnm{Ostrovskii},~\bfnm{I.~V.}\binits{I.~V.}}
(\byear{1996}).
\btitle{A mixture representation of the {L}innik distribution}.
\bjournal{Statist. Probab. Lett.}
\bvolume{26}
\bpages{61--64}.
\bdoi{10.1016/0167-7152(94)00252-5}
\bmrnumber{1385663}
\end{barticle}
\endbibitem

\bibitem{kruijer2010}
\begin{barticle}[author]
\bauthor{\bsnm{Kruijer},~\bfnm{Willem}\binits{W.}},
  \bauthor{\bsnm{Rousseau},~\bfnm{Judith}\binits{J.}} \AND
  \bauthor{\bparticle{van~der} \bsnm{Vaart},~\bfnm{Aad}\binits{A.}}
(\byear{2010}).
\btitle{Adaptive {B}ayesian density estimation with location-scale mixtures}.
\bjournal{Electron. J. Stat.}
\bvolume{4}
\bpages{1225--1257}.
\bdoi{10.1214/10-EJS584}
\bmrnumber{2735885}
\end{barticle}
\endbibitem

\bibitem{kruijer:rousseau:vdv:10}
\begin{barticle}[author]
\bauthor{\bsnm{Kruijer},~\bfnm{W.}\binits{W.}},
  \bauthor{\bsnm{Rousseau},~\bfnm{J.}\binits{J.}} \AND
  \bauthor{\bparticle{van~der} \bsnm{Vaart},~\bfnm{A.}\binits{A.}}
(\byear{2010}).
\btitle{{Adaptive Bayesian density estimation with location-scale mixtures}}.
\bjournal{Electron. J. Stat.}
\bvolume{4}
\bpages{{1225-1257}}.
\end{barticle}
\endbibitem

\bibitem{lecam1973}
\begin{barticle}[author]
\bauthor{\bsnm{LeCam},~\bfnm{L.}\binits{L.}}
(\byear{1973}).
\btitle{Convergence of Estimates Under Dimensionality Restrictions}.
\bjournal{Ann. Statist.}
\bvolume{1}
\bpages{38--53}.
\bdoi{10.1214/aos/1193342380}
\end{barticle}
\endbibitem

\bibitem{linnik}
\begin{barticle}[author]
\bauthor{\bsnm{Linnik},~\bfnm{Yu.~V.}\binits{Y.~V.}}
(\byear{1953}).
\btitle{On some identically distributed statistics}.
\bjournal{Doklady Akad. Nauk SSSR (N.S.)}
\bvolume{89}
\bpages{9--11}.
\bmrnumber{0056223}
\end{barticle}
\endbibitem

\bibitem{massart1990}
\begin{barticle}[author]
\bauthor{\bsnm{Massart},~\bfnm{P.}\binits{P.}}
(\byear{1990}).
\btitle{The Tight Constant in the {D}voretzky-{K}iefer-{W}olfowitz Inequality}.
\bjournal{Ann. Probab.}
\bvolume{18}
\bpages{1269--1283}.
\bdoi{10.1214/aop/1176990746}
\end{barticle}
\endbibitem

\bibitem{Meister:2009}
\begin{bbook}[author]
\bauthor{\bsnm{Meister},~\bfnm{Alexander}\binits{A.}}
(\byear{2009}).
\btitle{{D}econvolution {P}roblems in {N}onparametric {S}tatistics (Lecture
  Notes in Statistics)}.
\bpublisher{Springer Berlin Heidelberg}.
\end{bbook}
\endbibitem

\bibitem{mittnik}
\begin{barticle}[author]
\bauthor{\bsnm{Mittnik},~\bfnm{Stefan}\binits{S.}} \AND
  \bauthor{\bsnm{Rachev},~\bfnm{Svetlozar~T.}\binits{S.~T.}}
(\byear{1993}).
\btitle{Modeling asset returns with alternative stable distributions}.
\bjournal{Econometric Rev.}
\bvolume{12}
\bpages{261--389}.
\bnote{With comments by P. C. B. Phillips, F. X. Diebold and R. T. Baillie and
  a reply by the authors}.
\bdoi{10.1080/07474939308800266}
\bmrnumber{1249625}
\end{barticle}
\endbibitem

\bibitem{nguyen2013}
\begin{barticle}[author]
\bauthor{\bsnm{Nguyen},~\bfnm{XuanLong}\binits{X.}}
(\byear{2013}).
\btitle{Convergence of latent mixing measures in finite and infinite mixture
  models}.
\bjournal{Ann. Statist.}
\bvolume{41}
\bpages{370--400}.
\bdoi{10.1214/12-AOS1065}
\bmrnumber{3059422}
\end{barticle}
\endbibitem

\bibitem{nguyen:2013}
\begin{barticle}[author]
\bauthor{\bsnm{Nguyen},~\bfnm{X.}\binits{X.}}
(\byear{2013}).
\btitle{{C}onvergence of latent mixing measures in finite and infinite mixture
  models}.
\bjournal{Ann. Statist.}
\bvolume{41}
\bpages{370--400}.
\end{barticle}
\endbibitem

\bibitem{nickl:sohl}
\begin{barticle}[author]
\bauthor{\bsnm{Nickl},~\bfnm{Richard}\binits{R.}} \AND
  \bauthor{\bsnm{Söhl},~\bfnm{Jakob}\binits{J.}}
(\byear{2017}).
\btitle{{Nonparametric Bayesian posterior contraction rates for discretely
  observed scalar diffusions}}.
\bjournal{The Annals of Statistics}
\bvolume{45}
\bpages{1664 -- 1693}.
\bdoi{10.1214/16-AOS1504}
\end{barticle}
\endbibitem

\bibitem{weed2020minimax}
\begin{barticle}[author]
\bauthor{\bsnm{Niles-Weed},~\bfnm{Jonathan}\binits{J.}} \AND
  \bauthor{\bsnm{Berthet},~\bfnm{Quentin}\binits{Q.}}
(\byear{2020}).
\btitle{Minimax estimation of smooth densities in Wasserstein distance}.
\end{barticle}
\endbibitem

\bibitem{rousseau:scricciolo:supp}
\begin{bunpublished}[author]
\bauthor{\bsnm{Rousseau},~\bfnm{J.}\binits{J.}} \AND
  \bauthor{\bsnm{Scricciolo},~\bfnm{C.}\binits{C.}}
(\byear{2021}).
\btitle{Wasserstein convergence in Bayesian deconvolution models: supplementary
  material}.
\bnote{Unpublished manuscript}.
\end{bunpublished}
\endbibitem

\bibitem{rousseau:scricciolo:main}
\begin{bunpublished}[author]
\bauthor{\bsnm{Rousseau},~\bfnm{J.}\binits{J.}} \AND
  \bauthor{\bsnm{Scricciolo},~\bfnm{C.}\binits{C.}}
(\byear{2021}).
\btitle{Wasserstein convergence in Bayesian deconvolution models}.
\bnote{Unpublished manuscript}.
\end{bunpublished}
\endbibitem

\bibitem{scricciolo:2011}
\begin{barticle}[author]
\bauthor{\bsnm{Scricciolo},~\bfnm{Catia}\binits{C.}}
(\byear{2011}).
\btitle{Posterior rates of convergence for {D}irichlet mixtures of exponential
  power densities}.
\bjournal{Electron. J. Stat.}
\bvolume{5}
\bpages{270--308}.
\bdoi{10.1214/11-EJS604}
\bmrnumber{2802044}
\end{barticle}
\endbibitem

\bibitem{scricciolo:12}
\begin{barticle}[author]
\bauthor{\bsnm{Scricciolo},~\bfnm{Catia}\binits{C.}}
(\byear{2014}).
\btitle{Adaptive {B}ayesian density estimation in {$L^p$}-metrics with
  {P}itman-{Y}or or normalized inverse-{G}aussian process kernel mixtures}.
\bjournal{Bayesian Anal.}
\bvolume{9}
\bpages{475--520}.
\bdoi{10.1214/14-BA863}
\bmrnumber{3217004}
\end{barticle}
\endbibitem

\bibitem{scricciolo:18}
\begin{barticle}[author]
\bauthor{\bsnm{Scricciolo},~\bfnm{Catia}\binits{C.}}
(\byear{2018}).
\btitle{Bayes and maximum likelihood for {$L^1$}-{W}asserstein deconvolution of
  {L}aplace mixtures}.
\bjournal{Stat. Methods Appl.}
\bvolume{27}
\bpages{333--362}.
\bdoi{10.1007/s10260-017-0400-4}
\bmrnumber{3807373}
\end{barticle}
\endbibitem

\bibitem{scricciolo19}
\begin{bincollection}[author]
\bauthor{\bsnm{Scricciolo},~\bfnm{Catia}\binits{C.}}
(\byear{2019}).
\btitle{Bayesian {K}antorovich deconvolution in finite mixture models}.
In \bbooktitle{New statistical developments in data science}.
\bseries{Springer Proc. Math. Stat.}
\bvolume{288}
\bpages{119--134}.
\bpublisher{Springer, Cham}.
\bdoi{10.1007/978-3-030-21158-5_1}
\bmrnumber{4008311}
\end{bincollection}
\endbibitem

\bibitem{ghosal:shen:tokdar}
\begin{barticle}[author]
\bauthor{\bsnm{Shen},~\bfnm{W.}\binits{W.}},
  \bauthor{\bsnm{Tokdar},~\bfnm{S.}\binits{S.}} \AND
  \bauthor{\bsnm{Ghosal},~\bfnm{S.}\binits{S.}}
(\byear{2013}).
\btitle{Adaptive {B}ayesian multivariate density estimation with {D}irichlet
  mixtures}.
\bjournal{Biometrika}
\bvolume{100}
\bpages{623--640}.
\end{barticle}
\endbibitem

\bibitem{shorack&wellner}
\begin{bbook}[author]
\bauthor{\bsnm{Shorack},~\bfnm{Galen~R.}\binits{G.~R.}} \AND
  \bauthor{\bsnm{Wellner},~\bfnm{Jon~A.}\binits{J.~A.}}
(\byear{1986}).
\btitle{Empirical processes with applications to statistics}.
\bseries{Wiley Series in Probability and Mathematical Statistics: Probability
  and Mathematical Statistics}.
\bpublisher{John Wiley \& Sons, Inc., New York}.
\bmrnumber{838963}
\end{bbook}
\endbibitem

\bibitem{su2020nonparametric}
\begin{bmisc}[author]
\bauthor{\bsnm{Su},~\bfnm{Ya}\binits{Y.}},
  \bauthor{\bsnm{Bhattacharya},~\bfnm{Anirban}\binits{A.}},
  \bauthor{\bsnm{Zhang},~\bfnm{Yan}\binits{Y.}},
  \bauthor{\bsnm{Chatterjee},~\bfnm{Nilanjan}\binits{N.}} \AND
  \bauthor{\bsnm{Carroll},~\bfnm{Raymond~J.}\binits{R.~J.}}
(\byear{2020}).
\btitle{Nonparametric Bayesian Deconvolution of a Symmetric Unimodal Density}.
\end{bmisc}
\endbibitem

\bibitem{Tsybakov:2009}
\begin{bbook}[author]
\bauthor{\bsnm{Tsybakov},~\bfnm{Alexandre~B.}\binits{A.~B.}}
(\byear{2009}).
\btitle{Introduction to nonparametric estimation}.
\bseries{Springer Series in Statistics}.
\bpublisher{Springer, New York}
\bnote{Revised and extended from the 2004 French original, Translated by
  Vladimir Zaiats}.
\bdoi{10.1007/b13794}
\bmrnumber{2724359}
\end{bbook}
\endbibitem

\bibitem{vdV+vZanten}
\begin{barticle}[author]
\bauthor{\bparticle{van~der} \bsnm{Vaart},~\bfnm{A.~W.}\binits{A.~W.}} \AND
  \bauthor{\bparticle{van} \bsnm{Zanten},~\bfnm{J.~H.}\binits{J.~H.}}
(\byear{2009}).
\btitle{{Adaptive Bayesian estimation using a Gaussian random field with
  inverse Gamma bandwidth}}.
\bjournal{The Annals of Statistics}
\bvolume{37}
\bpages{2655 -- 2675}.
\bdoi{10.1214/08-AOS678}
\end{barticle}
\endbibitem

\bibitem{villani2009}
\begin{bbook}[author]
\bauthor{\bsnm{Villani},~\bfnm{C\'{e}dric}\binits{C.}}
(\byear{2009}).
\btitle{Optimal {T}ransport}.
\bseries{Grundlehren der Mathematischen Wissenschaften [Fundamental Principles
  of Mathematical Sciences]}
\bvolume{338}.
\bpublisher{Springer-Verlag, Berlin}
\bnote{Old and new}.
\bdoi{10.1007/978-3-540-71050-9}
\bmrnumber{2459454}
\end{bbook}
\endbibitem

\bibitem{williamson:56}
\begin{barticle}[author]
\bauthor{\bsnm{Williamson},~\bfnm{R.~E.}\binits{R.~E.}}
(\byear{1956}).
\btitle{Multiply monotone functions and their {L}aplace transforms}.
\bjournal{Duke Math. J.}
\bvolume{23}
\bpages{189--207}.
\bmrnumber{0077581 (17,1061d)}
\end{barticle}
\endbibitem

\end{thebibliography}


\end{document}